\makeatletter\@addtoreset {equation}{section}\makeatother
\newtheorem{thm}{Theorem}[section]
\newtheorem{prop}[thm]{Proposition}
\newtheorem{lem}[thm]{Lemma}
\theoremstyle{definition}
\newtheorem{mydef}[thm]{Definition}
\newtheorem{rhp}[thm]{Riemann-Hilbert problem}
\newtheorem{dbrhp}[thm]{$\overline{\partial}$-Riemann-Hilbert problem}
\newtheorem{delbar}[thm]{$\overline{\partial}$-Problem}
\newtheorem{remark}[thm]{Remark}
\DeclareMathOperator{\diag}{diag}
\DeclareMathOperator{\sech}{sech}
\DeclareMathOperator{\Imag}{Im}
\DeclareMathOperator{\Real}{Re}
\DeclareMathOperator{\C}{\mathbb{C}}
\DeclareMathOperator{\R}{\mathbb{R}}
\DeclareMathOperator{\N}{\mathbb{N}}
\DeclareMathOperator{\eps}{\varepsilon}
\DeclareMathOperator{\db}{\overline{\partial}\!}
\DeclareMathOperator{\dist}{dist}
\def\res{\mathop{\mathrm{Res}}}
\begin{document}

\title{Long-time asymptotics for the massive Thirring model}

\author{Aaron Saalmann\footnote{Mathematisches Institut, Universit\"{a}t zu K\"{o}ln, 50931 K\"{o}ln, Germany, e-mail: asaalman@math.uni-koeln.de}}
\date{\today}
\maketitle
\begin{abstract}
    We consider the massive Thirring model and establish pointwise long-time behavior of its solutions in weighted Sobolev spaces. For soliton-free initial data we can show that the solution converges to a linear solution modulo a phase correction caused by the cubic nonlinearity. For initial data that support finitely many solitons we obtain long-time behavior in the form of a multi-soliton which in turn splits into a sum of localized solitons. This phenomenon is known as soliton resolution. The methods we will is the nonlinear steepest descent of Deift and Zhou and the paper also relies on recent progress in the inverse scattering transform for the massive Thirring model.
\end{abstract}
\section{Introduction}\label{s intro}
Our goal is to explore long-time asymptotics for the massive Thirring model (MTM) which can be written in laboratory coordinates as
\begin{equation}\label{e mtm}
  \left\{
    \begin{array}{ll}
      i(u_t+u_x)+v+u|v|^2=0,\\
      i(v_t-v_x)+u+|u|^2v=0,
    \end{array}
  \right.
\end{equation}
for the two unknowns $u,v:\R\times\R\to\C$ with $u(0,x)=u_0(x)$ and $v(0,x)=v_0(x)$ for given initial data $(u_0,v_0)$. For $u_0,v_0\in H^s(\R)$ with $s\geq 0$, the problem is globally well-posed, see \cite{Candy2011}. The model (\ref{e mtm}) was introduced by Thirring in \cite{Thirring1958} in the context of general relativity and describes the self interaction of a Dirac field in the space of one dimension. The MTM system is integrable in the sense that it admits a Lax pair. This Lax pair was already presented and studied in \cite{Mikhailov76,Kaup1977,Kuznetsov1977}. It allows one to solve the equation with the inverse scattering transform. This procedure may be described as follows:
\begin{description}
  \item[Step 1:]
  For generic initial data $(u_0,v_0)$ compute the scattering data $\mathcal{S}(u_0,v_0)=(p, \{\lambda_j,C_j\}_{j=1}^N)$, where $p$ is a function $p:\R\cup i\R\to\C$, called \emph{reflection coefficient}, and $\lambda_j$ and $C_j$ are complex numbers, called \emph{eigenvalues} and \emph{norming constants}. The precise connection between $(u_0,v_0)$ and $\mathcal{S}(u_0,v_0)$ is achieved through a spectral problem obtained from the Lax pair, see (\ref{e psi x=L psi}).
  \item[Step 2:]
  As it is stated later in Theorem \ref{t time evolution} the time evolution of the scattering data is trivial if $(u,v)$ evolves according to the MTM flow. Namely, at any time $t$ we then have $\mathcal{S}(u(t),v(t))=(p(\lambda) e^{-it(\lambda^2+\lambda^{-2})/2}, \{\lambda_j,C_j e^{-it(\lambda_j^2+\lambda_j^{-2})/2}\}_{j=1}^N)$.
  \item[Step 3:]
  Use the inverse map $(p, \{\lambda_j,C_j\}_{j=1}^N)\mapsto (u,v)$ and the time evolution of the scattering data to compute the solution of (\ref{e mtm}) at time $t>0$.
\end{description}
Even though the inverse scattering machinery is a powerful tool in many nonlinear equations, so far it has not been applied to the question of pointwise asymptotic behavior of (\ref{e mtm}). Certainly, one reason for this might be the fact that particularly Step 3 of the method is presented in \cite{Kuznetsov1977} only formally. In \cite{Villarroel1991} the treatment of the inverse problem in terms of Riemann-Hilbert problems is somehow sketchy and in \cite{Zhou89} an abstract condition for solvability through Riemann-Hilbert problems is given. But, thanks to the transformation (\ref{e transformation of psi}) discovered in the recent work \cite{PelinovskySaalmann2017b}, it is known how the inverse scattering transform can be developed rigorously with the Riemann-Hilbert approach. Once a differential equation is transformed into an oscillatory Riemann-Hilbert problem, the steepest descent method of Deift and Zhou \cite{DeiftZhou1993} (extended in various ways, see \cite{Do2011} for instance and the references therein) can be used in evaluating the long-time behavior of the respective equation. As an example for the success of this method we want to mention the series of papers \cite{Liu2016,LiuPerrySulem2017,JenkinsLiuPerrySulem2018a, JenkinsLiuPerrySulem2018b}, where the asymptotic behavior of the derivative nonlinear Schr\"{odinger} equation is obtained in four steps: inverse scattering and global well-posedness without solitons, \cite{Liu2016}; asymptotic behavior for soliton-free initial data, \cite{LiuPerrySulem2017}; inverse Scattering and global well-posedness with solitons, \cite{JenkinsLiuPerrySulem2018a}; and finally soliton resolution, \cite{JenkinsLiuPerrySulem2018b}. The present paper together with the preparatory work \cite{PelinovskySaalmann2017b} can be seen as an analogue to these four papers. As in \cite{LiuPerrySulem2017} our first main result deals with the specific class of initial data $(u_0,v_0)\in\mathcal{G}_0$, see Definition \ref{d G}. These are soliton-free initial data, i.e. functions for which the scattering data consists of only the reflection coefficient (i.e. $N=0$). $\mathcal{G}_0$ contains a neighborhood of the zero solution $(0,0)$ and it is therefore of great importance and of special interest. The following theorem states that solutions for (\ref{e mtm}) in $\mathcal{G}_0$ scatter to a linear solution as $t\to\infty$ inside the light cone:
\begin{thm}\label{t main}
    Let $(u_0,v_0)\in\mathcal{G}_0\cap X_{2,1}$ and fix $\eps>0$. Then there exist positive constants $C=C(u_0,v_0,\eps)$ and $\tau_0=\tau_0(u_0,v_0,\eps)$ and bounded functions $f_{\pm}:(-1,1)\to\C$ such that
    \begin{equation}\label{e asymptotic u and v}
        \begin{aligned}
            \left|u(t,x)-\frac{1}{\sqrt{t-x}}\left(e^{i\tau+i |f_-(\frac{x}{t})|^2\ln(\tau)} f_-\left(\frac{x}{t}\right)+e^{-i\tau+i |f_+(\frac{x}{t})|^2\ln(\tau)} f_+\left(\frac{x}{t}\right)\right)\right|&\leq C\tau^{-3/4},\\
            \left|v(t,x)-\frac{1}{\sqrt{t+x}}\left(e^{i\tau+i |f_-(\frac{x}{t})|^2\ln(\tau)} f_-\left(\frac{x}{t}\right)-e^{-i\tau+i |f_+(\frac{x}{t})|^2\ln(\tau)} f_+\left(\frac{x}{t}\right)\right)\right|&\leq C\tau^{-3/4}
        \end{aligned}
    \end{equation}
    for all $\tau:=\sqrt{t^2-x^2}>\tau_0$ and $\eps\leq \sqrt{\frac{t+x}{t-x}}\leq \eps^{-1}$. Moreover, there exists a $\lambda_0=\lambda_0(\eps)$  such that for all initial data satisfying
    \begin{equation}\label{e assumption u_0,v_0}
        \lambda:=\|u_0\|_{H^2(\R)\cap H^{1,1}(\R)}+ \|v_0\|_{H^2(\R)\cap H^{1,1}(\R)}\leq \lambda_0,
    \end{equation}
    the constant $C$ in (\ref{e asymptotic u and v}) can be chosen as $C=c\lambda$ where the constant $c$ does depend on $\eps$ only.
\end{thm}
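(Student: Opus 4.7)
My plan is to implement the nonlinear steepest descent method of Deift and Zhou on the oscillatory Riemann--Hilbert problem that encodes the inverse scattering map for the MTM. Using the transformation $(\ref{e transformation of psi})$ from \cite{PelinovskySaalmann2017b}, the solution $(u(t,x),v(t,x))$ can be extracted from the large-$\lambda$ and small-$\lambda$ asymptotics of a piecewise analytic matrix $M(\lambda;t,x)$ solving an RHP whose jump contour is $\R\cup i\R$, whose jump matrix is built from the reflection coefficient $p(\lambda)$ and the oscillatory factor $e^{-it(\lambda^2+\lambda^{-2})/2}$, and whose residue conditions are absent thanks to $(u_0,v_0)\in\mathcal{G}_0$ ($N=0$). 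The assumption $(u_0,v_0)\in X_{2,1}$ translates, by the direct scattering estimates of \cite{PelinovskySaalmann2017b}, into $p$ lying in a weighted Sobolev class sufficient for all manipulations below.

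Next I would perform the phase analysis. Inside the light cone with $\sqrt{(t+x)/(t-x)}\in[\eps,\eps^{-1}]$, the combined phase in the transformed RHP has the form $t\,\theta(\lambda;x/t)$ with four nondegenerate stationary points $\pm\lambda_0,\pm i\lambda_0$ where $\lambda_0=((t+x)/(t-x))^{1/4}$, all bounded away from $0$ and $\infty$. The natural large parameter is $\tau=\sqrt{t^2-x^2}$. I would introduce a scalar function $\delta(\lambda)$ solving a scalar RHP on $\R\cup i\R$ to upper/lower triangularize the jump; this $\delta$ carries the branch structure that will eventually produce the logarithmic phase $|f_\pm|^2\ln\tau$. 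After conjugation by $\diag(\delta,\delta^{-1})$, the jump factors as a product of upper/lower triangular matrices that decay on complementary sides of the contour, so the contour can be deformed to the steepest descent paths through the four stationary points.

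On neighborhoods of each stationary point the deformed RHP is approximated by a parabolic cylinder (Weber) model problem, exactly as in the Zakharov--Shabat treatment of NLS. I would solve this model explicitly, then glue the four local parametrices to a global one using a small-norm RHP whose error, by standard $L^2$ singular integral estimates, is $O(\tau^{-3/4})$. Collecting contributions: the pair $\pm\lambda_0$ produces the $e^{-i\tau}$-mode with amplitude $f_+(x/t)$ and the pair $\pm i\lambda_0$ produces the $e^{+i\tau}$-mode with amplitude $f_-(x/t)$; the prefactors $1/\sqrt{t\pm x}$ emerge from the residue-at-$\infty$ and residue-at-$0$ reconstruction formulae for $u$ and $v$ respectively combined with the Jacobian $\partial\theta^2/\partial\lambda^2$ at the stationary points, while the $|f_\pm|^2\ln\tau$ phase comes from the leading behavior of $\delta$ at the stationary points.

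The main obstacle is twofold. First, the jump contour $\R\cup i\R$ has an intersection at the origin and four stationary points, so the triangular factorization and the scalar $\delta$-function must be set up carefully so that the relevant decay on each ray of the steepest descent deformation is genuine; one has to track signs of $\Imag\theta$ on all eight sectors. Second, the small-norm error estimate has to be uniform for $x/t$ in the closed subset determined by $\eps$, which requires Lipschitz control of the stationary points in $x/t$ and a uniform bound on $\|p\|$ in the relevant weighted space. Once these are in place, the small-data statement $C=c\lambda$ follows by noting that the direct scattering map of \cite{PelinovskySaalmann2017b} is Lipschitz near $0$, so $\|p\|\lesssim\lambda$, and every step of the steepest descent argument is linear in the reflection coefficient at leading order, giving the required dependence on $\eps$ alone once $\lambda\leq\lambda_0(\eps)$.
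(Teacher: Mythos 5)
Your sketch differs from the paper in two structural respects and has one genuine gap.

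\textbf{Structural differences.} The paper does not set up a single RHP in $\lambda$ on $\R\cup i\R$. It uses the pair of transformed spectral problems in the variables $w=\lambda^{-2}$ and $z=\lambda^2$ from (\ref{e transformation of psi}), so that $u$ is recovered from an RHP on $\R$ in $w$ (RHP~\ref{rhp M}) and $v$ from a second RHP on $\R$ in $z$ (RHP~\ref{rhp M hat}). It then introduces the self-similar coordinates $(\tau,w_0)$, $(\tau,z_0)$ of (\ref{e def tau w0 z0}) so that \emph{both} RHPs are conjugate to one universal problem (RHP~\ref{rhp M^0}) with the Joukowsky phase $\tau Z(\zeta)$, which has exactly two nondegenerate stationary points at $\zeta=\pm1$, independent of $(x,t)$. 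This buys two things you would have to fight for in your $\lambda$-picture: the contour, phase, and stationary points are frozen, so uniformity in $x/t$ over the fixed cone is essentially automatic (only the jump data depend on $w_0$, via $\rho,\breve\rho$ in (\ref{e def rho from r}), (\ref{e def rho from r hat})), and the intersection of $\R$ and $i\R$ at the origin is gone. In your picture you would have to redo the local analysis at four moving stationary points and also deal with the self-intersection at $0$, neither of which is addressed beyond being named as an obstacle.

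\textbf{The gap.} You propose to triangularize with $\delta$ and then \emph{deform the contour to the steepest-descent paths}. That is the classical Deift--Zhou step, and it requires the off-diagonal jump entries (i.e.\ the reflection coefficient) to extend analytically off the contour, or at least to admit a rational approximation scheme with controlled error. Here $(u_0,v_0)\in X_{2,1}$ gives only $r,\widehat r\in H^{1,1}(\R)\cap L^{2,-2}(\R)$ by Theorem~\ref{t forward scattering}; there is no analytic continuation, and a rational-approximation version of the deformation would not deliver the $\tau^{-3/4}$ error you claim. The paper circumvents this by replacing contour deformation with the $\overline{\partial}$-extension of steepest descent: one builds a non-analytic interpolant $\mathcal{W}$ (eq.\ (\ref{e def W})) whose boundary values realize the triangular factorization, at the price of introducing a $\overline{\partial}$-term; the resulting mixed $\overline{\partial}$-RHP is then split into a pure RHP on $\Sigma^{(3)}$ (handled by parabolic-cylinder parametrices at $\pm1$, as you anticipate) plus a pure $\overline{\partial}$-problem for the correction $D$, and it is precisely the $\overline{\partial}$-estimates of Lemma~\ref{l existence and asymptotic of db problem} that produce the $O(\tau^{-3/4})$ remainder. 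Without this device, the core step of your plan ("so the contour can be deformed") simply cannot be executed under the stated hypotheses, and the claimed error rate is unsupported.
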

The function space $X_{2,1}$ used in Theorem \ref{t main} and in the entire paper is defined by
\begin{equation*}
  X_{2,1}:= (H^2(\mathbb{R}) \cap H^{1,1}(\mathbb{R}))\times (H^2(\mathbb{R}) \cap H^{1,1}(\mathbb{R})).
\end{equation*}
Here, $H^k$ is the standard Sobolev space and $H^{1,1}$ is defined to be
\begin{equation*}
H^{1,1}(\mathbb{R}) = \left\{ u \in L^{2,1}(\mathbb{R}), \quad \partial_x u \in L^{2,1}(\mathbb{R}) \right\}
\end{equation*}
with $\|f\|_{L^{p,s}}^p:=\int (1+x^2)^{ps/2} |f(x)|^pdx$. As it is pointed out in \cite[Remark 10]{PelinovskySaalmann2017b}, the space $X_{2,1}$ for $(u,v)$ is optimal for using the method of inverse scattering.
We need to mention that in \cite{Candy16} the same asymptotic behavior (\ref{e asymptotic u and v}) is already derived, even with a better error term and without the restriction $\eps\leq \sqrt{\frac{t+x}{t-x}}\leq \eps^{-1}$. However, our result has three main features:
\begin{itemize}
  \item [(i)] Compared to \cite{Candy16} our assumptions $(u_0,v_0)\in\mathcal{G}_0\cap X_{2,1}$ and (\ref{e assumption u_0,v_0}) on the initial data are an improvement.
  \item [(ii)] Whereas in \cite{Candy16} the functions $f_{\pm}$ are given implicitly, we are able to express them explicitly in terms of the reflection coefficient, see (\ref{e |f pm|^2}) and (\ref{e arg(f pm)}).
  \item [(iii)] The method used to establish (\ref{e asymptotic u and v}) is readily extended to prove the soliton resolution conjecture for the massive Thirring model, see Theorem \ref{t solres} below. Thus, the question of asymptotic behavior is fully answered. This shows that the inverse scattering transform is best suited for the pointwise analysis of the MTM.
\end{itemize}
Our second main result is stronger than Theorem \ref{t main} and the result in \cite{Candy16} because it holds for initial data that contain finitely many solitons. Solitons are explicit solutions of (\ref{e mtm}) with specific properties, \cite{Newell1985,Drazin1989}. So-called $N$-solitons can be characterized as functions having a vanishing reflection coefficient and only discrete spectrum $\mathcal{D}=\{\lambda_j,C_j\}_{j=1}^N$.  We denote them by $(u_{sol}(t,x;\mathcal{D}),v_{sol}(t,x;\mathcal{D}))$. In general we use the notation $\mathcal{G}_N$ to denote $(u,v)$ which admit $N$ eigenvalues (but not necessarily a vanishing reflection coefficient). Hence, $N$-solitons are contained in $\mathcal{G}_N$ and as it turns out also neighborhoods of $N$-solitons belong to $\mathcal{G}_N$. Moreover, according to the following theorem all solutions to initial data in $\mathcal{G}_N$ converge to reflection-free solutions.
\begin{thm}\label{t solres}
  Let $(u_0,v_0)\in\mathcal{G}_N\cap X_{2,1}$ with scattering data $\mathcal{S}(u_0,v_0)=(p, \{\lambda_j,C_j\}_{j=1}^N)$ and fix $\eps>0$. Then there exist complex non-zero constants $\widetilde{C}_1,...,\widetilde{C}_N$ and positive constants $t_0$ and $C$ such that for $\mathcal{D}= \{\lambda_j,\widetilde{C}_j\}_{j=1}^N$
  \begin{equation}\label{e convergence to N sol}
    \begin{aligned}
      &\left|u(t,x)- u_{sol}(t,x;\mathcal{D})\right|\leq C t^{-1/2},\\
      &\left|v(t,x)- v_{sol}(t,x;\mathcal{D})\right|\leq C t^{-1/2},
    \end{aligned}
  \end{equation}
  for all $t>t_0$ and $\eps\leq \sqrt{\frac{t+x}{t-x}}\leq \eps^{-1}$. The constants $t_0$ and $C$ depend on $u_0,v_0$ and $\eps$ only.
\end{thm}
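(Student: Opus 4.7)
The plan is to reduce Theorem \ref{t solres} to (a generalization of) Theorem \ref{t main} by stripping the soliton contributions out of the Riemann--Hilbert problem (RHP) one by one, and tracking how each stripping modifies the norming constants of the remaining discrete data. In the inverse-scattering set-up of \cite{PelinovskySaalmann2017b}, the solution $(u(t,x),v(t,x))$ is encoded by a matrix-valued RHP on $\R\cup i\R$ with jump built from $p(\lambda)e^{-it\theta(\lambda;x,t)}$, where $\theta$ is (up to $x/t$-dependent dilation) $\tfrac{1}{2}(\lambda^{2}+\lambda^{-2})$, together with residue conditions at $\{\lambda_j\}_{j=1}^N$ carrying the norming constants $C_je^{-it\theta(\lambda_j;x,t)}$. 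Inside the cone $\eps\leq\sqrt{(t+x)/(t-x)}\leq\eps^{-1}$ the four stationary points of $\theta$ stay in a compact annulus away from $0$ and $\infty$, and each eigenvalue $\lambda_j$ lands stably in one of the four open regions of the signature table of $\Real(i\theta)$.

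First I would introduce the scalar function $\delta(\lambda)$ solving a scalar RHP whose jump along $\R\cup i\R$ is $1+|p|^2$ (as for the soliton-free case) but twisted by a Blaschke-type product with zeros/poles at the $\lambda_j$. Conjugating the matrix RHP by $\delta^{\sigma_3}$ simultaneously (a) factorizes the continuous-spectrum jump in the upper/lower-triangular form needed to open lenses into the regions where $e^{-it\theta}$ decays, and (b) replaces each norming constant by a new one of the shape $\widetilde{C}_j=C_j\,\delta(\lambda_j)^{-2}\prod_{k\neq j}(\text{Blaschke factor at }\lambda_k)$. These $\widetilde{C}_j$ are $(t,x)$-independent and depend only on $p$ and the spectrum, so they are exactly the constants claimed in the theorem.

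Next I would open lenses along the steepest-descent rays of $\theta$. Because $p$ is only Sobolev-regular, I would use the $\db$-variant of Deift--Zhou in the spirit of \cite{JenkinsLiuPerrySulem2018b}: the continuous-spectrum jump on the deformed contour is exponentially small away from the four stationary points, the local parametrices near each stationary point are again the parabolic-cylinder models from the proof of Theorem \ref{t main} and each contributes $O(t^{-1/2})$ (which dictates the stated error), and the non-analytic extensions produce a $\db$-problem whose small-norm solution contributes $O(t^{-3/4})$. What remains after subtracting all of this is a RHP with no jumps and only residue conditions at $\{\lambda_j\}$ with the twisted norming constants $\widetilde{C}_j$; by uniqueness its reconstruction formula is, by definition, $(u_{sol}(t,x;\mathcal{D}),v_{sol}(t,x;\mathcal{D}))$ with $\mathcal{D}=\{\lambda_j,\widetilde{C}_j\}_{j=1}^N$.

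The main obstacle is controlling the interaction between the discrete and continuous components uniformly as $(t,x)$ ranges over the cone. Concretely, one must show that the pure-soliton RHP (with the time-dependent residues $\widetilde{C}_je^{-it\theta(\lambda_j;x,t)}$) is uniformly solvable in $(t,x)$; this reduces to a finite linear algebraic system whose Gram matrix $I+C^{*}C$, for a Cauchy-type operator $C$ attached to the $\lambda_j$, is positive definite with a lower bound depending only on the eigenvalue configuration. A secondary but genuinely new difficulty relative to the NLS/DNLS literature is that the jump contour contains the imaginary axis: the $\lambda\mapsto\lambda^{-1}$ symmetry of $p$ and of the residues established in \cite{PelinovskySaalmann2017b} must be preserved through the $\delta$-conjugation, the lens opening, and the $\db$-extensions, so that the reconstruction formula continues to output a genuine MTM pair $(u,v)$ rather than a spurious matrix entry.
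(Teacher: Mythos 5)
Your proposal does not match the paper's strategy, and I think it would run into trouble as sketched. The paper proves Theorem~\ref{t solres} in two stages, neither of which your outline reproduces faithfully. First, Theorem~\ref{t solres1} shows that inside a space-time cone $\mathcal{K}$, the solution is \emph{exponentially} close to a ``reduced'' solution whose spectrum only contains eigenvalues with $L_1\leq|\lambda_j|^2\leq L_2$, with reflection coefficient and surviving norming constants multiplied by Blaschke-type factors; this step uses a B\"acklund/Blaschke argument (Lemmas 3.1--3.2 of \cite{Saalmann2015}) and explicitly does \emph{not} rely on steepest descent. Second, after partitioning the given cone into narrow subcones so that each $\Lambda(\mathcal{K}_k)$ is either empty or contains eigenvalues all of a \emph{single} modulus, Theorem~\ref{t main} (no poles) or Theorem~\ref{t sol stab} (poles of one modulus) applies in each. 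Your single-pass plan---modify $\delta$ by a Blaschke product, open lenses, and hope the residual problem has only the desired residues---conflates these two levels. If you Blaschke-twist $\delta$ at \emph{all} $\lambda_j$, you remove all poles and there is nothing left to call $u_{sol}$; if you leave the poles in, you still face the fact that solitons travelling at velocities outside the cone have \emph{not} been stripped, and their residue conditions have no reason to shrink like $t^{-1/2}$ in the cone without the exponential-decay argument.

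A second, more structural gap: you set up the RHP in the original spectral variable $\lambda$, with contour $\R\cup i\R$ and four stationary points, and then worry about preserving the $\lambda\mapsto\lambda^{-1}$ symmetry through the deformations. But the paper never faces this issue, because it works entirely in the transformed variables $w=\lambda^{-2}$ and $z=\lambda^2$ (RHP's~\ref{rhp M} and~\ref{rhp M hat}), where the contour is $\R$, there are only two stationary points $\pm 1$, and the operators $\mathcal{L},\widehat{\mathcal{L}}$ have no singularity at the spectral origin or at infinity. This transformation, taken from \cite{PelinovskySaalmann2017b}, is what makes rigorous inverse scattering possible here at all; an attempt to carry out $\db$-steepest descent directly on $\R\cup i\R$ in the $\lambda$-variable would run into exactly the singularities of $L$ at $\lambda=0,\infty$ that the transformations are designed to eliminate. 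Finally, for the eigenvalues that do remain in a narrow subcone, the paper's argument relies crucially on the observation that the corresponding poles $\zeta_j=w_j/w_0$ sit in the regions $\Omega_9\cup\Omega_{10}$ (where $\mathcal{W}\equiv 1$), so that the $\db$-extension does not disturb the meromorphic structure; this requires the cone to be narrow enough that $|L_0^{-1}-w_0|\leq\tau^{-1/2}$, and leads naturally to the decomposition $M^{(2)}=D\,M^{(mer)}\,M^{(3)}$ with the explicit norming-constant shift (\ref{e def new C tilde j}). Your proposal never addresses where the poles land after deformation nor why the modified soliton RHP is uniformly solvable across the cone, both of which are essential to get the stated $t^{-1/2}$ bound.
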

Estimates (\ref{e convergence to N sol}) can be viewed as asymptotic stability of $N$-solitons. Thus, the theorem complements the orbital stability of solitons, proved in \cite{PelinovskyShimabokuro2014}.
\medskip

The paper is organized as follows: We begin with a review of the scattering and inverse scattering transform in Section \ref{s ist}. Section \ref{s summary} gives a summary of the proof of Theorem \ref{t main}. All technical details of the steepest descent can be found in Sections \ref{s scalar RHP}--\ref{s pure db}. Theorem \ref{t solres} is proved in Section \ref{s solitons}. 
\section{Method of Inverse Scattering}\label{s ist}

The method of inverse scattering relies on the fact that the MTM system (\ref{e mtm}) has the following zero curvature representation
\begin{equation}\label{e mtm presentation}
  [\partial_x-L,\partial_t-A]=0,
\end{equation}
where the $2\times 2$-matrices $L$ and $A$ are given by
\begin{equation}\label{e def L}
  L=\frac{i}{4}(|u|^2-|v|^2)\sigma_3-
  \frac{i\lambda}{2}
  \left(
    \begin{array}{cc}
      0 & \overline{v} \\
      v & 0 \\
    \end{array}
  \right)+
  \frac{i}{2\lambda}
  \left(
    \begin{array}{cc}
      0 & \overline{u} \\
      u & 0 \\
    \end{array}
  \right)+
  \frac{i}{4}\left(\lambda^2-\frac{1}{\lambda^2}\right) \sigma_3
\end{equation}
and
\begin{equation}\label{e def A}
  A=-\frac{i}{4}(|u|^2+|v|^2)\sigma_3-
  \frac{i\lambda}{2}
  \left(
    \begin{array}{cc}
      0 & \overline{v} \\
      v & 0 \\
    \end{array}
  \right)-
  \frac{i}{2\lambda}
  \left(
    \begin{array}{cc}
      0 & \overline{u} \\
      u & 0 \\
    \end{array}
  \right)+
  \frac{i}{4}\left(\lambda^2+\frac{1}{\lambda^2}\right) \sigma_3,
\end{equation}
with is the third Pauli matrix $\sigma_3=\diag(1,-1)$.
Freezing the time variable $t$ we can define two particular matrix-valued solutions $\psi^{(\pm)}(\lambda;x)$ of the spectral problem
\begin{equation}\label{e psi x=L psi}
    \psi_x=L\psi
\end{equation}
by the following asymptotic behavior:
\begin{equation*}
    \psi^{(\pm)}(\lambda;x)\sim e^{ix\sigma_3(\lambda^2-\lambda^{-2})/4},\qquad\text{as } x\to\pm\infty.
\end{equation*}
For $u,v\in L^1(\R)\cap L^2(\R)$ it can be shown that those solutions exist for any $\lambda\in(\R\cup i\R)\setminus\{0\}$. Moreover, writing $\psi^{(\pm)}=[\psi^{(\pm)}_1|\psi_2^{(\pm)}]$ with column vectors $\psi^{(\pm)}_j$, it can be shown that $\psi^{(+)}_1$ and $\psi_2^{(-)}$ admit analytic continuations to the set $\{\lambda\in\C:\Imag(\lambda^2)>0\}$ whereas $\psi^{(-)}_1$ and $\psi_2^{(+)}$ admit analytic continuations to the set $\{\lambda\in\C:\Imag(\lambda^2)<0\}$.
From elementary ODE theory it follows that there exists a unique matrix $T(\lambda)$ not depending on $x$ such that $\psi^{(-)}(\lambda;x)=\psi^{(+)}(\lambda;x)T(\lambda)$. As it is shown in \cite{PelinovskySaalmann2017b} this matrix, which is commonly referred to as the \emph{transition matrix}, takes the form
\begin{equation}\label{e orig trans matr}
    T(\lambda)=
    \left[
      \begin{array}{cc}
        \alpha(\lambda) & -\overline{\beta(\overline{\lambda})} \\
        \beta(\lambda) & \overline{\alpha(\overline{\lambda})} \\
      \end{array}
    \right].
\end{equation}
Applying Cramer's rule and $\det\psi^{(\pm)}=1$ we find
\begin{equation}\label{e alpha=det}
    \alpha(\lambda) =\det[\psi_1^{(-)}(\lambda;x) |\psi_2^{(+)}(\lambda;x)].
\end{equation}
From the latter equation we firstly learn that $\alpha$ can be extended analytically from $\R\cup i\R$ to the second and fourth quadrant $\C_{II}\cup\C_{IV}$. Secondly, (\ref{e alpha=det}) tells us that for any zero $\lambda_j\in\C_{II}\cup\C_{IV}$ of the analytic continuation of $\alpha$ we can find a constant $\gamma_j$ such that
\begin{equation}\label{e def norming constant}
    \psi_1^{(-)}(\lambda_j;x) =\gamma_j\psi_2^{(+)}(\lambda_j;x).
\end{equation}
By symmetries we have $\alpha(-\lambda)=\alpha(\lambda)$ and thus each zero $\lambda_j$ located in the second quadrant corresponds to a zero $-\lambda_j$ in the fourth quadrant. We call zeroes $\lambda_j$ \emph{simple}, if $\alpha'(\lambda_j)\neq0$. Zeroes of $\alpha$ with $\Imag(\lambda_j^2)=0$ are called \emph{resonances} and simple zeroes with $\Imag(\lambda_j^2)\neq0$ are called \emph{eigenvalues}. All initial data for (\ref{e mtm}) considered in the present paper are \emph{generic} potentials in the following sense, \cite{Beals1984,Beals1985,Beals1988}.
\begin{mydef}\label{d G}
    For $N\in\N_0$ we define $\mathcal{G}_N\subset (L^1(\R)\cap L^2(\R))\times (L^1(\R)\cap L^2(\R))$ to be the set of functions $(u,v)$ such that the associated function $\alpha$ admits no zeroes on $\R\cup i\R$ (i.e. no resonances) and exactly $N$ simple zeroes in the second quadrant. Moreover we define the set
    \begin{equation*}
        \mathcal{G}=\bigcup_{N=0}^{\infty}\mathcal{G}_N.
    \end{equation*}
    Functions $(u,v)\in\mathcal{G}$ are called \emph{generic} potentials.
\end{mydef}

Formal results of inverse scattering can be found in \cite{Kuznetsov1977}. According to this reference we define the scattering data of $(u,v)$ in the following way.
\begin{mydef}\label{d Scattering data}
    Let $N\in\N_0$. We call
    \begin{equation}\label{e def Scattering data}
        \mathcal{S}(u,v)=(p, \{\lambda_j,C_j\}_{j=1}^N),
    \end{equation}
    the \emph{scattering data} of $(u,v)\in\mathcal{G}_N$. The \emph{reflection coefficient} $p:\R\cup i\R\to\C$ is defined in terms of the transition matrix by $p(\lambda)=\beta(\lambda)/\alpha(\lambda)$. $\lambda_1,...,\lambda_N$ are the pair-wise distinct simple zeroes of $\alpha$ in the second quadrant and $C_1,...,C_N$ are non-zero constants defined by $C_j=\gamma_j/\alpha'(\lambda_j)$ where $\gamma_j$ is supposed to be the constant as in (\ref{e def norming constant}).
\end{mydef}
Now, let us state the evolution of the scattering data in time.
This remarkable fact demonstrates the significance of the direct scattering transform because it linearizes the MTM system and thus makes the MTM system integrable.
\begin{thm}\label{t time evolution}
  Assume that $(u(t),v(t))$ solves (\ref{e mtm}) for initial data $(u_0,v_0)$ with scattering data $\mathcal{S}(u_0,v_0)=(p(\lambda;0), \{\lambda_j(0),C_j(0)\}_{j=1}^N)$. Then, at any time $t$ the scattering data $\mathcal{S}(u(t),v(t))$ of the solution $(u(t),v(t))$ is calculated from $\mathcal{S}(u_0,v_0)$ by the following:
  \begin{equation}\label{e time evolution scattering data}
    \mathcal{S}(u(t),v(t))=(p(\lambda;0) e^{-it(\lambda^2+\lambda^{-2})/2}, \{\lambda_j(0),C_j(0) e^{-it(\lambda_j^2+\lambda_j^{-2})/2}\}_{j=1}^N).
  \end{equation}
  In particular, the number of zeroes does not vary in time and the sets $\mathcal{G}_N$ are invariant under the MTM flow.
\end{thm}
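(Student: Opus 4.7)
The plan is to exploit the zero-curvature representation (\ref{e mtm presentation}): the MTM flow is equivalent to the compatibility of the spatial problem $\psi_x = L\psi$ with the temporal problem $\psi_t = A\psi$. First I would introduce explicit time-dependence into the Jost solutions $\psi^{(\pm)}(\lambda;x,t)$ and observe that, although they solve the $x$-equation at each fixed $t$, their $x$-normalization at spatial infinity is $t$-independent whereas $A$ approaches the nonzero constant matrix $\tfrac{i}{4}(\lambda^2+\lambda^{-2})\sigma_3$ as $x\to\pm\infty$ (because $u,v\to 0$ there). The residual $\varphi^{(\pm)} := \psi^{(\pm)}_t - A\psi^{(\pm)}$ therefore also solves $\varphi_x = L\varphi$ by compatibility, and matching its asymptotics at $x=\pm\infty$ with those of $\psi^{(\pm)}\sigma_3$ via uniqueness for the linear $x$-ODE produces the modified evolution
\[
\psi_t^{(\pm)} \;=\; A\psi^{(\pm)} \;-\; \tfrac{i}{4}(\lambda^2+\lambda^{-2})\,\psi^{(\pm)}\sigma_3.
\]

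With this in hand, differentiating $\psi^{(-)} = \psi^{(+)}T(\lambda;t)$ in $t$ and using the modified evolution on both sides makes the $A$-terms cancel, leaving the Lax-type equation $T_t = \tfrac{i}{4}(\lambda^2+\lambda^{-2})[\sigma_3,T]$. The explicit form (\ref{e orig trans matr}) of $T$ then decouples into $\alpha_t = 0$ and $\beta_t = -\tfrac{i}{2}(\lambda^2+\lambda^{-2})\beta$. The first identity pins the eigenvalues $\lambda_j$ (and their multiplicities) as time-independent, so $\mathcal{G}_N$ is invariant under the flow; the second integrates to the stated phase rotation of $p=\beta/\alpha$.

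For the norming constants, I would differentiate the characterization $\psi_1^{(-)}(\lambda_j;x,t) = \gamma_j(t)\,\psi_2^{(+)}(\lambda_j;x,t)$ in $t$. Inserting the modified evolution (taking care that right-multiplication by $\sigma_3$ flips sign between the first and the second columns) and using the relation itself to simplify the $A$-contribution makes all operator terms cancel, leaving the scalar ODE $\dot\gamma_j = -\tfrac{i}{2}(\lambda_j^2+\lambda_j^{-2})\gamma_j$. Since $\alpha$ and hence $\alpha'(\lambda_j)$ are $t$-independent, the stated evolution of $C_j = \gamma_j/\alpha'(\lambda_j)$ follows at once.

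The main obstacle I anticipate is the rigorous justification of the asymptotic-matching step for $\varphi^{(\pm)}$: this presupposes that the Jost solutions are sufficiently smooth in $t$ and that their spatial derivatives enjoy uniform-in-$t$ decay suitable for interchanging limits and differentiation. For $(u_0,v_0)\in X_{2,1}$ this control is supplied by the direct/inverse scattering analysis of \cite{PelinovskySaalmann2017b}, which furnishes the necessary regularity of $\psi^{(\pm)}$ in both $x$ and $t$ and thereby legitimizes every manipulation above.
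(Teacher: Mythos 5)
Your proof is correct and follows the standard zero-curvature argument for computing the time evolution of scattering data in a Lax-pair framework. The paper itself does not contain a proof of Theorem \ref{t time evolution}: the result is stated without proof, being attributed to the earlier formal inverse-scattering literature (Kuznetsov--Mikhailov and others). So there is nothing in the paper to compare against, and your derivation supplies exactly the argument the author is implicitly relying on.

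The mechanics check out. Since $u,v\to 0$ as $x\to\pm\infty$, the operator $A$ in (\ref{e def A}) tends to $\tfrac{i}{4}(\lambda^2+\lambda^{-2})\sigma_3$, and because the Jost normalization $e^{ix\sigma_3(\lambda^2-\lambda^{-2})/4}$ is $t$-independent, the matching-of-asymptotics argument indeed yields the corrected evolution $\psi^{(\pm)}_t = A\psi^{(\pm)} - \tfrac{i}{4}(\lambda^2+\lambda^{-2})\psi^{(\pm)}\sigma_3$. Differentiating $\psi^{(-)}=\psi^{(+)}T$ then gives $T_t=\tfrac{i}{4}(\lambda^2+\lambda^{-2})[\sigma_3,T]$. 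With the form (\ref{e orig trans matr}) of $T$ this decouples into $\alpha_t=0$ and $\beta_t=-\tfrac{i}{2}(\lambda^2+\lambda^{-2})\beta$, hence $p(\lambda;t)=p(\lambda;0)e^{-it(\lambda^2+\lambda^{-2})/2}$ and the invariance of the zero set and multiplicities of $\alpha$ (so of $\mathcal{G}_N$). Reading off the columns and differentiating $\psi_1^{(-)}(\lambda_j)=\gamma_j\psi_2^{(+)}(\lambda_j)$ gives $\dot\gamma_j=-\tfrac{i}{2}(\lambda_j^2+\lambda_j^{-2})\gamma_j$, and since $\alpha'(\lambda_j)$ is time-independent this is the claimed evolution of $C_j$. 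Your closing remark on the regularity needed to justify differentiating the Jost solutions in $t$ and interchanging limits is the honest caveat; this control is indeed what \cite{PelinovskySaalmann2017b} provides for $(u_0,v_0)\in X_{2,1}$.
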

In order to describe the map $(u,v)\mapsto \mathcal{S}(u,v)$ rigorously, we need to specify spaces which $\mathcal{S}$ maps into. This issue is treated with the help of explicit transformations of (\ref{e psi x=L psi}) in the following subsection.
\subsection{Transformation of the scattering data}
The key element of \cite{PelinovskySaalmann2017b} is the following pair of transformations:
\begin{equation}\label{e transformation of psi}
    \Psi(w;x)=
    \left(
      \begin{array}{cc}
        1 & 0 \\
        u(x) & \lambda^{-1} \\
      \end{array}
    \right)\psi(\lambda;x),\qquad
    \widehat{\Psi}(z;x)=
    \left(
      \begin{array}{cc}
        1 & 0 \\
        v(x) & \lambda \\
      \end{array}
    \right)\psi(\lambda;x),
\end{equation}
where $\psi(\lambda;x)$ is supposed to be a solution of the spectral problem (\ref{e psi x=L psi}). The new spectral parameters are given by
\begin{equation}\label{def w z}
    w:=\lambda^{-2},\qquad z:=\lambda^2.
\end{equation}
It is shown by direct computations that the transformations (\ref{e transformation of psi}) make (\ref{e psi x=L psi}) equivalent to spectral problems
\begin{equation}\label{e transformed spectral problems}
    \Psi_x(w;x)=\mathcal{L}(w;x)\Psi(w;x),\qquad \widehat{\Psi}_x(z;x)= \widehat{\mathcal{L}}(z;x)\widehat{\Psi}(z;x),
\end{equation}
with matrix operators $\mathcal{L}$ and $\widehat{\mathcal{L}}$. Now, the procedure described above for the operator $L$ can be adapted to the transformed operators. More precisely, for $w,z\in\R$ we define solutions $\Psi^{(\pm)}(w;x)$ and $\widehat{\Psi}^{(\pm)}(z;x)$ of (\ref{e transformed spectral problems}) satisfying the following asymptotic behavior:
\begin{equation*}
  \begin{aligned}
    &\Psi^{(\pm)}(w;x)\sim e^{-ix\sigma_3(w-w^{-1})/4},&\qquad\text{as } x\to\pm\infty,\\
    &\widehat{\Psi}^{(\pm)}(z;x)\sim e^{ix\sigma_3(z-z^{-1})/4},&\qquad\text{as } x\to\pm\infty.
  \end{aligned}
\end{equation*}
Again, this determines $x$-independent transition matrices $T_1(w)$ and $T_2(z)$ such that $\Psi^{(-)}(w;x)=\Psi^{(+)}(w;x)T_1(w)$ and $\widehat{\Psi}^{(-)}(z;x)=\widehat{\Psi}^{(+)}(z;x)T_2(z)$. The transformations (\ref{e transformation of psi}) change some symmetries and so we find $T_1$ and $T_2$ of the following form:
\begin{equation*}
    T_1(w)=
    \left[
      \begin{array}{cc}
        a(w) & -\overline{b(\overline{w})} \\
        w\,b(w) & \overline{a(\overline{w})} \\
      \end{array}
    \right],
    \qquad
    T_2(z)=
    \left[
      \begin{array}{cc}
        \widehat{a}(z) & -\overline{\widehat{b}(\overline{z})} \\
        z\,\widehat{b}(z) & \overline{\widehat{a}(\overline{z})} \\
      \end{array}
    \right].
\end{equation*}
Since the transformations (\ref{e transformation of psi}) are given in a very explicit way, we can observe the following relations between the entries of the original transition matrix $T(\lambda)$, (\ref{e orig trans matr}), and the entries of $T_1(w)$ and $T_2(w)$, respectively:
\begin{equation}\label{e rel scattering coeff}
    \begin{aligned}
        &\alpha(\lambda)=a(\lambda^{-2})= \widehat{a}(\lambda^2),\\
        &\lambda \beta(\lambda)=b(\lambda^{-2})= \lambda^2\widehat{b}(\lambda^2).
    \end{aligned}
\end{equation}
Defining the transformed reflection coefficients by  $r(w)=b(w)/a(w)$ and $\widehat{r}(z)=\widehat{b}(z)/\widehat{a}(z)$ we find that they are actually related to the original reflection coefficient $p(\lambda)$ in the following way.
\begin{equation}\label{e new reflection coeff}
    \begin{aligned}
        &r(w)=\lambda p(\lambda),&&w=\lambda^{-2}\in\R,\\
        &\widehat{r}(z)=\lambda^{-1}p(\lambda),&& z=\lambda^2\in\R.
    \end{aligned}
\end{equation}
These relations also imply a relation of $r$ and $\widehat{r}$ amongst each other:
\begin{equation}\label{e relation r and r hat}
    \widehat{r}(z)=\frac{r(z^{-1})}{z}, \qquad z\in\R\setminus\{0\}.
\end{equation}
Moreover, assuming that $a(w)$ does not vanish at any $w\in\R$ it follows by $\det T_1(w)=1$ and the definition of $r$ that $1+w|r(w)|^2>0$ for all $w\in\R$. If additional $r(w)\to 0$ as $w\to\pm\infty$ we can find a positive constant $c$ such that
\begin{equation}\label{e 1+w|r|^2>c}
    1+z|\widehat{r}(z)|^2=1+w|r(w)|^2\geq c,\quad\text{for all }w=z^{-1}\in\R.
\end{equation}
From the expressions in (\ref{e rel scattering coeff}) we also learn that $a$ admits an analytic continuation into $\C^+$, whereas $\widehat{a}$ is continued analytically into $\C^-$. Moreover, let $\{\lambda_1,...,\lambda_N\}$ be the set of zeroes of $\alpha(\lambda)$ in the second quadrant and set
\begin{equation}\label{e rel new poles}
    w_j=\lambda_j^{-2},\qquad z_j=\lambda_j^2,\qquad j=1,...,N.
\end{equation}
Then, $\{w_1,...,w_N\}\subset\C^+$ is exactly the set of zeroes of $a(w)$ in the upper half plane and, analogously, $\{z_1,...,z_N\}\subset\C^-$ forms the set of zeroes of $\widehat{a}$. The meaning of zeroes of $a$ and $\widehat{a}$ is similar to the meaning of zeroes of $\alpha$: if $a(w_j)=0$, there exists a constant $\Gamma_j$ such that $\Psi_1^{(-)}(w_j;x) =\Gamma_j\Psi_2^{(+)}(w_j;x)$. Correspondingly, if $a(z_j)=0$, then there exists a constant $\widehat{\Gamma}_j$ such that $\widehat{\Psi}_1^{(-)}(z_j;x) =\widehat{\Gamma}_j\widehat{\Psi}_2^{(+)}(z_j;x)$. The \emph{transformed norming constants} are now defined by $c_j=\Gamma_j/a'(w_j)$ and $\widehat{c}_j=\widehat{\Gamma}_j/\widehat{a}'(z_j)$. They satisfy the following relations
\begin{equation}\label{e rel new c_j}
    c_j=\frac{-2C_j}{\lambda_j^4},\qquad \widehat{c}_j=2C_j.
\end{equation}
The full set of \emph{transformed scattering data} reads as follows:
\begin{equation}\label{e transformed scatterin data}
        \mathcal{S}_w(u,v) =(r, \{w_j,c_j\}_{j=1}^N),\qquad \mathcal{S}_z(u,v) =(\widehat{r},\{z_j,\widehat{c}_j\}_{j=1}^N).
\end{equation}
Since $\mathcal{S}_w(u,v)$ and $\mathcal{S}_z(u,v)$ can be computed also without solving the spectral problems (\ref{e transformed spectral problems}) but only by making use of relations (\ref{e relation r and r hat}), (\ref{e rel new poles}) and (\ref{e rel new c_j}) one might wonder why we have actually introduced the transformations (\ref{e transformation of psi}).
The usefulness of (\ref{e transformation of psi}) lies in the structure of the matrix operators $\mathcal{L}$ and $\widehat{\mathcal{L}}$. Namely, in contrast to $L$, $\mathcal{L}$ and $\widehat{\mathcal{L}}$ do not admit singularities at $w=\infty$ and $z=\infty$, respectively. This fact is the crucial investigation in \cite{PelinovskySaalmann2017b} and allows a rigorous analysis of the scattering maps $(u,v)\mapsto\mathcal{S}_w(u,v)$ and $(u,v)\mapsto\mathcal{S}_z(u,v)$. The following is proven in \cite[Theorem 2 and Remark 9]{PelinovskySaalmann2017b} and was not obtained in the formal works \cite{Kuznetsov1977} and \cite{Villarroel1991}.
\begin{thm}\label{t forward scattering}
    For $(u,v)\in\mathcal{G}\cap X_{2,1}$ we have
    \begin{equation}\label{e regularity r}
        \begin{aligned}
        &r(w)\in H_w^{1,1}(\R)\cap L^{2,-2}_w(\R), &&wr(w)\in H_w^{1,1}(\R)\cap L^{2,-2}_w(\R),\\
        &\widehat{r}(z)\in H_z^{1,1}(\R)\cap L^{2,-2}_z(\R), &&z\widehat{r}(z)\in H_z^{1,1}(\R)\cap L^{2,-2}_z(\R).\\
        \end{aligned}
    \end{equation}
    In particular, relation (\ref{e 1+w|r|^2>c}) holds. Moreover, for each $N\in\N_0$ the maps
    \begin{equation*}
        \mathcal{S}_w: \mathcal{G}_N\cap X_{2,1} \to H^{1,1}(\R)\cap L^{2,-2}(\R) \times (\C^+)^N\times (\C^*)^N
    \end{equation*}
    and
    \begin{equation*}
        \mathcal{S}_z: \mathcal{G}_N\cap X_{2,1} \to H^{1,1}(\R)\cap L^{2,-2}(\R) \times (\C^-)^N\times (\C^*)^N
    \end{equation*}
    are Lipschitz continuous.
\end{thm}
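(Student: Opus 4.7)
The plan is to derive both the regularity and the Lipschitz statements from a direct analysis of the Volterra integral equations for the Jost solutions of the transformed spectral problems $\Psi_x = \mathcal{L}\Psi$ and $\widehat{\Psi}_x = \widehat{\mathcal{L}}\widehat{\Psi}$, exploiting the crucial feature pointed out after (\ref{e transformed spectral problems}): neither $\mathcal{L}$ nor $\widehat{\mathcal{L}}$ is singular at the spectral infinity. First I would convert the ODE problems into Volterra integral equations by splitting off the leading oscillating exponential $e^{-ix(w-w^{-1})\sigma_3/4}$ (respectively $e^{ix(z-z^{-1})\sigma_3/4}$), so that the free resolvent kernel is a uniformly bounded oscillation multiplied by entries depending linearly on $(u,v)$ and their derivatives. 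Standard Picard iteration in an appropriate weighted Banach space then produces Jost solutions $\Psi^{(\pm)}(w;x), \widehat{\Psi}^{(\pm)}(z;x)$ whose norms are controlled by $\|u\|_{H^2\cap H^{1,1}}+\|v\|_{H^2\cap H^{1,1}}$.

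Next I would read off the scattering coefficients from the $x\to\pm\infty$ asymptotics of $\Psi^{(-)} = \Psi^{(+)} T_1(w)$ (and similarly for $T_2(z)$). This yields integral representations of $b(w)$ and $\widehat{b}(z)$ of Fourier type, that is, the Fourier transform in $x$ of expressions linear in $(u,v)$ weighted against the Jost solutions, evaluated on $\R$. For the $H^{1,1}$ regularity one applies Plancherel twice: the one $x$-derivative weight on $(u,v)$ translates into one $w$-derivative weight on $r$, while the $\langle x\rangle$-weight in $L^{2,1}$ translates into an $L^2$ bound for the $w$-derivative. Two $x$-derivatives ($u,v\in H^2$) can be absorbed through one integration by parts to produce the $L^{2,-2}_w$ decay of $r$ at $|w|\to\infty$ — this is exactly where the absence of singularities in the new operators pays off. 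The additional factor $w$ in the bound on $wr(w)$ comes from a second integration by parts and uses both derivative and weight on $(u,v)$. Since $(u,v)\in\mathcal G$ excludes zeros of $a$ on $\R$, the quotient $r = b/a$ inherits the regularity of $b$, and (\ref{e 1+w|r|^2>c}) follows from $\det T_1=1$. The corresponding statements for $\widehat r$ follow verbatim, and the relation (\ref{e relation r and r hat}) makes the two sets of estimates mutually consistent.

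For Lipschitz continuity I would compare two potentials $(u_1,v_1),(u_2,v_2)\in\mathcal{G}_N\cap X_{2,1}$ through the Volterra equations: the difference of Jost solutions satisfies an inhomogeneous Volterra equation whose forcing is linear in the difference of potentials, so the same contraction argument gives Lipschitz control of $\Psi^{(\pm)},\widehat{\Psi}^{(\pm)}$ in $(u,v)$, hence of $a,b,\widehat{a},\widehat{b}$, and finally of the reflection coefficients in the weighted Sobolev norm via the Fourier representation. The discrete data are handled by the implicit function theorem: because $a'(w_j)\neq 0$ on $\mathcal{G}_N$, each simple zero $w_j$ depends Lipschitz-continuously on $(u,v)$, and the norming constants $c_j = \Gamma_j/a'(w_j)$ inherit Lipschitz continuity through the defining relation for $\Gamma_j$, which is again a linear functional of the Jost solutions.

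The main obstacle is the simultaneous control of $r$ at the two ends $w\to 0$ and $w\to\infty$ of the spectral line — a single transformation is not enough, because the original $L$ in (\ref{e def L}) has singularities at both $\lambda=0$ and $\lambda=\infty$ in the spectral variable. Setting up the machinery described above in parallel for \emph{both} transformations in (\ref{e transformation of psi}) — and then translating between the two via (\ref{e rel scattering coeff})–(\ref{e relation r and r hat}) — is the device that removes the singularities one at a time and produces the full $H^{1,1}\cap L^{2,-2}$ regularity; this is precisely the contribution of \cite{PelinovskySaalmann2017b} that Theorem \ref{t forward scattering} records.
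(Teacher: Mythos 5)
The paper does not prove Theorem \ref{t forward scattering}; the result is imported directly from \cite[Theorem 2 and Remark 9]{PelinovskySaalmann2017b}, and the only guidance the present text supplies is the remark preceding the theorem that the nonsingularity of $\mathcal{L}$ and $\widehat{\mathcal{L}}$ at $w=\infty$ and $z=\infty$, respectively, is the crucial ingredient of that reference. Your sketch is built around precisely this device --- Volterra/Picard iteration for the Jost solutions of the transformed spectral problems (\ref{e transformed spectral problems}), Fourier-type representations of $b$ and $\widehat{b}$, Plancherel together with integration by parts to trade the $H^2\cap H^{1,1}$ regularity of $(u,v)$ for $H^{1,1}\cap L^{2,-2}$ control of $r$ and $\widehat{r}$, and the use of both transformations in (\ref{e transformation of psi}) together with (\ref{e rel scattering coeff})--(\ref{e relation r and r hat}) to handle the two spectral ends $w\to 0$ and $w\to\infty$ --- so it is a consistent, plausible reconstruction of the cited argument, including the implicit-function-theorem treatment of the discrete data. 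However, because the paper defers the proof wholly to the reference, there is nothing inside the paper against which your attempt can be verified at the level of detail of a proof; your proposal should be read as a roadmap to the cited work rather than as something this paper can confirm or refute.
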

Using the relations (\ref{e new reflection coeff}), (\ref{e rel new poles}) and (\ref{e rel new c_j}) and the time evolution (\ref{e time evolution scattering data}) we can calculate the time evolution of the transformed scattering data. Assume that $(u(t),v(t))\in \mathcal{G}\cap X_{2,1}$ solves the MTM system (\ref{e mtm}) and at time $t=0$ the transformed scattering data are given as in (\ref{e transformed scatterin data}). Then, at any other time $t$ we have
\begin{equation*}
    \begin{aligned}
      \mathcal{S}_w(u(t),v(t)) &=(r(w)e^{-it(w+w^{-1})/2}, \{w_j,c_je^{-it(w_j+w_j^{-1})/2}\}_{j=1}^N),\\
      \mathcal{S}_z(u(t),v(t))& =(\widehat{r}(z)e^{-it(z+z^{-1})/2 },\{z_j,\widehat{c}_je^{-it(z_j+z_j^{-1})/2}\}_{j=1}^N).
    \end{aligned}
\end{equation*}

\subsection{Inverse scattering problem}
The inverse scattering problem is the construction of a map  $(p, \{\lambda_j,C_j\}_{j=1}^N)\mapsto (u,v)$. In fact we will work with the transformed scattering data (\ref{e transformed scatterin data}). The potential $u$ will be reconstructed from $\mathcal{S}_w(u,v)$ and $v$ will be reconstructed from $\mathcal{S}_z(u,v)$. The inverse scattering map is based on the analyticity of the functions $\Psi_x^{\pm}(\cdot;x)$ and $\widehat{\Psi}_x^{\pm}(\cdot;x)$ solving the transformed spectral problems (\ref{e transformed spectral problems}). According to \cite[eq. (4.1)]{PelinovskySaalmann2017b} the relation $\Psi^{(-)}(w;x)=\Psi^{(+)}(w;x)T_1(w)$ can be used to define a matrix-valued function $M(t,x;w)$ from  $\Psi^{(\pm)}$ and $a(w)$ that yields a solution for the following Riemann-Hilbert problem (RHP).
\begin{samepage}
\begin{framed}
\begin{rhp}\label{rhp M}
For given scattering data $(r, \{w_j,c_j\}_{j=1}^N)$ and each $(t,x)\in\R^2$, find a $2\times 2$-matrix-valued function $\C\setminus\R\ni w\mapsto M(t,x;w)$ satisfying
\begin{enumerate}
  \item $M(t,x;\cdot)$ is meromorphic in $\C\setminus\R$.
  \item $M(t,x;w)=1+\mathcal{O}\left(\frac{1}{w}\right)$ as $|w|\to\infty$.
  \item The non-tangential boundary values $M_{\pm}(t,x;w)$ exist for $w\in\R$ and satisfy the jump relation
      \begin{equation*}
        M_+=M_-(1+R)
      \end{equation*}
      where $R=R(t,x;w)$ is defined by $R(t,x;0)=0$ and
      \begin{equation}\label{e jump M}
        R(t,x;w)=
        \left[
         \begin{array}{cc}
           w|r(w)|^2  & \overline{r(w)}e^{- \frac{i}{2}(w-w^{-1})x+ \frac{i}{2}(w+w^{-1})t} \\
          wr(w)e^{ \frac{i}{2}(w-w^{-1})x- \frac{i}{2}(w+w^{-1})t} & 0\\
         \end{array}
        \right]
      \end{equation}
      for $w\in\R\setminus\{0\}$.
  \item $M(t,x;\cdot)$ has simple poles at $w_1,...,w_N,\overline{w}_1,...,\overline{w}_N$ with
      \begin{equation}\label{e res M}
         \begin{aligned}
            \res_{w=w_j}M(t,x;w)&=\lim_{w\to w_j} M(t,x;w)
            \left[
            \begin{array}{cc}
            0 & 0 \\
            w_jc_je^{\frac{i}{2}(w_j-w_j^{-1})x- \frac{i}{2}(w_j+w_j^{-1})t} & 0 \\
          \end{array}
            \right],\\
        \res_{w=\overline{w}_j}M(t,x;w)&=\lim_{w\to \overline{w}_j} M(t,x;w)
        \left[
          \begin{array}{cc}
            0 &
            -\overline{c}_j e^{ -\frac{i}{2}(\overline{w}_j- \overline{w}_j^{-1})x+ \frac{i}{2}(\overline{w}_j+ \overline{w}_j^{-1})t}\\
            0&0 \\
          \end{array}
        \right].\\
    \end{aligned}
\end{equation}
\end{enumerate}
\end{rhp}
\end{framed}
\end{samepage}
An expansion of $\Psi^{(\pm)}$ for large $w$ shows that the following holds:
\begin{equation}\label{e rec u 2}
   \overline{u}(t,x)e^{-\frac{i}{2} \int_{x}^{+\infty}|u|^2+|v|^2dy} =\lim_{|w|\to\infty}w\cdot\left[M(t,x;w)\right]_{12}.
\end{equation}
Moreover, we find that  $M$ is continuous at $w=0$ with
\begin{equation}\label{e M(0)}
  \left[M(t,x;0)\right]_{11}=e^{-\frac{i}{2} \int_{x}^{+\infty}|u|^2+|v|^2dy}.
\end{equation}
In either of the equations (\ref{e rec u 2}) and (\ref{e M(0)}) we use the notation $[\cdot]_{ij}$ to denote the $i$-$j$-entry of the matrix inside the brackets.
As it is explained in \cite{PelinovskySaalmann2017b,Pelinovsky2016}, RHP \ref{rhp M} is uniquely solvable under the assumptions (\ref{e 1+w|r|^2>c}) and (\ref{e regularity r}) for the case of no eigenvalues, i.e. $N=0$. As explained in \cite[Lemma 5.3]{Saalmann2017}, poles can always be added by means of a B\"{a}cklund transformation. Thus, solvability of RHP \ref{rhp M} is inductively guaranteed for any $N\in\N$. The solvability of the Riemann--Hilbert problem and the reconstruction formulae (\ref{e rec u 2}) and (\ref{e M(0)}) guarantee the existence of a map $\mathcal{S}_w(u,v)\mapsto u$. This is virtually half of the inverse map for the direct scattering map $(u,v)\mapsto\mathcal{S}_w(u,v)$. The remaining part $\mathcal{S}_z(u,v)\mapsto v$ is found by means of a second RHP. We refer to \cite[eq. (4.1)]{PelinovskySaalmann2017b}, where a matrix-valued function $\widehat{M}$ is defined in terms of $\widehat{\Psi}^{(\pm)}$ and $\widehat{a}(w)$ that yields a solution for the following Riemann--Hilbert problem:
\begin{samepage}
\begin{framed}
\begin{rhp}\label{rhp M hat}
For given scattering data $(\widehat{r}, \{z_j,\widehat{c}_j\}_{j=1}^N)$ and each $(t,x)\in\R^2$, find a $2\times 2$-matrix-valued function $\C\setminus\R\ni z\mapsto \widehat{M}(t,x;z)$ satisfying
\begin{enumerate}
  \item $\widehat{M}(t,x;\cdot)$ is meromorphic in $\C\setminus\R$.
  \item $\widehat{M}(t,x;z)=1+\mathcal{O}\left(\frac{1}{z}\right)$ as $|z|\to\infty$.
  \item The non-tangential boundary values $\widehat{M}_{\pm}(t,x;z)$ exist for $z\in\R$ and satisfy the jump relation
      \begin{equation*}
        \widehat{M}_+=\widehat{M}_-(1+\widehat{R})
      \end{equation*}
      where $\widehat{R}=\widehat{R}(t,x;z)$ is defined by $\widehat{R}(t,x;0)=0$ and
      \begin{equation}\label{e jump M hat}
       \widehat{R}(t,x;z)=
        \left[
         \begin{array}{cc}
           0  & -\overline{\widehat{r}(z)}e^{ \frac{i}{2}(z-z^{-1})x+ \frac{i}{2}(z+z^{-1})t} \\
          -z\widehat{r}(z)e^{ -\frac{i}{2}(z-z^{-1})x- \frac{i}{2}(z+z^{-1})t} & z|\widehat{r}(z)|^2 \\
         \end{array}
        \right].
      \end{equation}
      for $z\in\R\setminus\{0\}$.
  \item $\widehat{M}(t,x;\cdot)$ has simple poles at $z_1,...,z_N,\overline{z}_1,...,\overline{z}_N$ with
      \begin{equation}\label{e res M hat}
      \begin{aligned}
        \res_{z=z_j}\widehat{M}(t,x;z)&=\lim_{z\to z_j} \widehat{M}(t,x;w)
        \left[
          \begin{array}{cc}
            0 & 0 \\
            z_j\widehat{c}_je^{ -\frac{i}{2}(z_j-z_j^{-1})x- \frac{i}{2}(z_j+z_j^{-1})t} & 0 \\
          \end{array}
        \right],\\
        \res_{z=\overline{z}_j}\widehat{M}(t,x;z)&=\lim_{z\to \overline{z}_j} \widehat{M}(t,x;z)
        \left[
          \begin{array}{cc}
            0 &
            -\overline{\widehat{c}}_j e^{+ \frac{i}{2} (\overline{z}_j-\overline{z}_j^{-1})x+ \frac{i}{2}(\overline{z}_j+ \overline{z}_j^{-1})t}\\
            0&0 \\
          \end{array}
        \right].\\
      \end{aligned}
      \end{equation}
\end{enumerate}
\end{rhp}
\end{framed}
\end{samepage}
Analogously to (\ref{e rec u 2}) and (\ref{e M(0)}), the following reconstruction formulae are available.
\begin{equation}\label{e rec v 2}
 \overline{v}(t,x)e^{\frac{i}{2} \int_{x}^{+\infty}|u|^2+|v|^2dy} =\lim_{|z|\to\infty} z\cdot\left[\widehat{M}(t,x;z)\right]_{12},
\end{equation}
\begin{equation}\label{e M hat (0)}
        \left[\widehat{M}(t,x;0)\right]_{11} =e^{\frac{i}{2} \int_{x}^{+\infty}|u|^2+|v|^2dy}.
\end{equation}
Again, solvability of RHP \ref{rhp M hat} is ensured by assuming (\ref{e 1+w|r|^2>c}) and (\ref{e regularity r}). Thus the construction of the map $\mathcal{S}_z(u,v)\mapsto v$ is completed. For the following rigorous result we refer again to \cite{PelinovskySaalmann2017b} for the case of $N=0$ and to \cite{Saalmann2017} for the case of $N\geq 1$.
\begin{thm}\label{t inverse map}
   Assume data $(r, \{w_j,c_j\}_{j=1}^N)$ and $(\widehat{r},\{z_j,\widehat{c}_j\}_{j=1}^N)$ to satisfy relations (\ref{e relation r and r hat}), (\ref{e rel new poles}) and (\ref{e rel new c_j}). Moreover, let the reflection coefficients $r$ and $\widehat{r}$ admit the regularity (\ref{e regularity r}). Then the pair $(u(t,\cdot),v(t,\cdot))$ obtained by means of RHP's \ref{rhp M} and \ref{rhp M hat} and formulae (\ref{e rec u 2}), (\ref{e M(0)}), (\ref{e rec v 2}) and (\ref{e M hat (0)}) lies in the space $X_{2,1}$ for all $t\in\R$ and solves (\ref{e mtm}).
\end{thm}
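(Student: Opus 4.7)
The plan is to establish the three claims of the theorem in sequence: solvability of the two RHPs, membership of $(u(t,\cdot),v(t,\cdot))$ in $X_{2,1}$, and verification that the reconstructed pair satisfies the MTM system. As a preliminary reduction, I would peel off the solitons via the Bäcklund transformation of \cite{Saalmann2017}: given a candidate solution $M^{(0)}$ of RHP \ref{rhp M} with trivial discrete data, an explicit rational dressing factor $B(w)$ in $w$ having the prescribed poles and residues produces $M=B\,M^{(0)}$ that satisfies the full RHP; conversely, one may strip poles off the general $M$ to reduce to $N=0$. This dressing is an isometry on weighted Sobolev scales and preserves both the $X_{2,1}$ property and the MTM flow, so throughout the analysis I may assume $N=0$ and restore the solitons at the end.

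For the soliton-free RHP, I would rewrite it as a singular integral equation by Beals-Coifman factorization: set $R=(\mathbf{b}_-)^{-1}\mathbf{b}_+-I$ with $\mathbf{b}_\pm$ triangular, let $\mu=M_+\mathbf{b}_+^{-1}=M_-\mathbf{b}_-^{-1}$, and obtain the equation $(I-\mathcal{C}_R)\mu=I$, where $\mathcal{C}_R$ is built from the $L^2$-bounded Cauchy projections on $\R$. The hypothesis (\ref{e 1+w|r|^2>c}) makes $\mathbf{b}_\pm$ uniformly invertible, and regularity (\ref{e regularity r}) makes the operator $\mathcal{C}_R$ compact-type; the Zhou vanishing lemma (combined with the Schwarz symmetry of $R$ arising from $\overline{R(\overline{w})}^\dagger=R(w)$) gives triviality of the kernel and hence invertibility of $I-\mathcal{C}_R$. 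This yields a unique $M$ with the required normalization, and the reconstruction $\overline{u}\,e^{-\frac{i}{2}\int_x^\infty(|u|^2+|v|^2)}=\lim_{|w|\to\infty}w[M]_{12}$ is well defined; the same for $v$ via RHP \ref{rhp M hat}.

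The heart of the matter — and the step I expect to be hardest — is the $X_{2,1}$ estimate, because it requires trading regularity of $r$ in the spectral variable for regularity and spatial decay of $u$ and $v$. The large-$w$ expansion of $M$ gives $u(t,x)$ as a Cauchy integral of $\mu\cdot R$ against the oscillatory phase $e^{\frac{i}{2}(w-w^{-1})x-\frac{i}{2}(w+w^{-1})t}$. Differentiating in $x$ brings down a factor of $w$ (and $w^{-1}$), which is controlled by $wr(w)\in H^{1,1}\cap L^{2,-2}$ from Theorem \ref{t forward scattering}. Multiplying by $x$ is traded, via integration by parts in $w$, for a derivative $\partial_w$ on $r$; the $L^{2,-2}$ weight absorbs the singularity at $w=0$ coming from $w^{-1}$ in the phase. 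Commuting these operations with the resolvent $(I-\mathcal{C}_R)^{-1}$ — using that $\partial_w \mathcal{C}_R$ is controlled by $r'\in L^{2,1}$ — one obtains $u\in H^2\cap H^{1,1}$ uniformly on compact time intervals; the same scheme with $\widehat{r}$, $z\widehat{r}$ gives $v\in H^2\cap H^{1,1}$.

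Finally, to check that $(u,v)$ solves MTM, I would recover the eigenfunctions $\Psi^{(\pm)}(w;x)$ and $\widehat{\Psi}^{(\pm)}(z;x)$ from $M$ and $\widehat{M}$, then invert the transformations (\ref{e transformation of psi}) to obtain $\psi^{(\pm)}(\lambda;x)$. Differentiating the RHP with respect to $x$ and $t$ shows that $\psi^{(\pm)}$ satisfy linear first-order ODEs whose coefficient matrices must coincide, by the uniqueness of solutions with the prescribed asymptotics, with the Lax operators $L$ and $A$ in (\ref{e def L})--(\ref{e def A}) computed from the reconstructed $(u,v)$; in particular, the $\lambda$- and $\lambda^{-1}$-coefficients of $\psi_x\psi^{-1}$ produce the claimed expressions in $u$ and $v$. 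The compatibility condition (\ref{e mtm presentation}) then yields the MTM system. The $N\geq 1$ case follows by applying the Bäcklund dressing, which respects both the Lax pair structure and the $X_{2,1}$ bounds obtained in the soliton-free step.
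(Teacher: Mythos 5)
The paper gives no proof of Theorem \ref{t inverse map}; it cites \cite{PelinovskySaalmann2017b} for $N=0$ and \cite{Saalmann2017} (B\"acklund addition of poles) for $N\geq 1$, together with the brief remarks preceding the theorem statement. Your sketch --- B\"acklund reduction to $N=0$, Beals--Coifman/vanishing-lemma solvability using $1+w|r|^2\geq c>0$, trading the $H^{1,1}\cap L^{2,-2}$ regularity of $r,\widehat{r}$ for $X_{2,1}$ bounds on $(u,v)$, and verifying the Lax pair by differentiating the RHP in $x$ and $t$ --- is essentially the strategy of those cited references, so it is consistent with what the paper relies on rather than a different route.
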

\begin{remark}
  Without requiring relations (\ref{e relation r and r hat}), (\ref{e rel new poles}) and (\ref{e rel new c_j}) in Theorem \ref{t inverse map} we can still obtain a pair of functions $(u(t,\cdot),v(t,\cdot))\in X_{2,1}$ but in general it is not going to be a solution of (\ref{e mtm}).
\end{remark}
\begin{remark}\label{r e^i alpha u}
  Assume that $M$ solves RHP \ref{rhp M} for scattering data   $(r, \{w_j,c_j\}_{j=1}^N)$. Moreover, assume that $\widetilde{M}$ is a solution for RHP \ref{rhp M} with scattering data   $(e^{i\alpha} r, \{w_j,e^{i\alpha}c_j\}_{j=1}^N)$, where $\alpha$ is a fixed real number. Then a direct computation shows that
  \begin{equation*}
    \widetilde{M}(t,x;w)=
    \left[
      \begin{array}{cc}
        e^{-i\alpha/2} & 0 \\
        0 & e^{i\alpha/2} \\
      \end{array}
    \right]
    M(t,x;w)
    \left[
      \begin{array}{cc}
        e^{i\alpha/2} & 0 \\
        0 & e^{-i\alpha/2} \\
      \end{array}
    \right]
  \end{equation*}
  for all $w\in\C\setminus\R$. Thus we may conclude that $[M(t,x;0)]_{11}=[\widetilde{M}(t,x;0)]_{11}$ and
  \begin{equation*}
    \lim_{|w|\to\infty} w\cdot\left[\widetilde{M}(t,x;w)\right]_{12}=
    e^{-i\alpha}\lim_{|w|\to\infty} w\cdot\left[M(t,x;w)\right]_{12}.
  \end{equation*}
  Taking into account the reconstruction formulae (\ref{e rec u 2}) and (\ref{e M(0)}) we learn from this observation, that if a function $u$ belongs to scattering data $(r, \{w_j,c_j\}_{j=1}^N)$, then $e^{i\alpha}u$ belongs to scattering data $(e^{i\alpha} r, \{w_j,e^{i\alpha}c_j\}_{j=1}^N)$. Repeating the same argument for RHP \ref{rhp M hat} we may summarize the statements as follows. If $\mathcal{S}(u,v)=(p, \{\lambda_j,C_j\}_{j=1}^N)$, then $\mathcal{S}(e^{i\alpha}u,e^{i\alpha}v)=(e^{i\alpha}p, \{\lambda_j,e^{i\alpha}C_j\}_{j=1}^N)$. This property of the scattering map will be useful in the proof of Theorem \ref{t solres} and represents the invariance of (\ref{e mtm}) under phase shifts $(u,v)\mapsto (e^{i\alpha}u,e^{i\alpha}v)$.
\end{remark}
We close the overview by introducing the notion of \emph{soliton solutions}:
\begin{mydef}\label{d solitons}
  In the case where the initial data generate eigenvalues $\lambda_1,...,\lambda_N$ but $p(\lambda)=0$ for all $\lambda\in(\R\cup i\R)\setminus\{0\}$ we call the solution of (\ref{e mtm}) an \emph{$N$-soliton solution} or \emph{multi-soliton solution}. For $\mathcal{D}= \{\lambda_j,C_j\}_{j=1}^N\subset (\C_{II})^N\times (\C^*)^N$ we use the notation
  \begin{equation*}
    (u_{sol}(t,x;\mathcal{D}),v_{sol}(t,x;\mathcal{D}))
  \end{equation*}
  for the associated $N$-soliton.
\end{mydef}
For $N=1$ and $r\equiv 0$ and $\widehat{r}\equiv 0$, respectively, RHP's \ref{rhp M} and \ref{rhp M hat} reduce to a system of linear equations which can be solved explicitly. As a result of straight forward computations we obtain the known explicit one-soliton solutions for (\ref{e mtm}).
\begin{equation}\label{e 1-sol}
  \begin{aligned}
    u_{sol}(t,x;\{\lambda_1,C_1\})&=\delta^{-1} \sin(\gamma) \,\sech\left[E(x-\nu t-x_0)-i\frac{\gamma}{2}\right]e^{-i\beta(t+\nu x)+i\phi_0}\\
    v_{sol}(t,x;\{\lambda_1,C_1\})&=-\delta \sin(\gamma) \,\sech\left[E(x-\nu t-x_0)+i\frac{\gamma}{2}\right]e^{-i\beta(t+\nu x)+i\phi_0}
  \end{aligned}
\end{equation}
Here we set $\lambda_1=\delta e^{i\gamma/2}$ and the physical parameters are
\begin{equation*}
  E=\frac{\delta^2+\delta^{-2}}{2} \sin(\gamma),\quad \beta  =\frac{\delta^2+\delta^{-2}}{2} \cos(\gamma),\quad \nu = \frac{\delta^{-2}-\delta^{2}}{\delta^{-2}+\delta^{2}}.
\end{equation*}
The parameters $x_0$ and $\phi_0$ are determined by $C_1$. The exact formulas can be found in \cite{Kuznetsov1977}. Explicit expressions for multi-solitons can be in principal obtained by solving the corresponding RHP's. However, expressions get very bulky. Instead we want to mention that we can apply Theorem \ref{t solres1} to find the following: If $\mathcal{D}=\{\lambda_j,C_j\}_{j=1}^N$ and $|\lambda_j|\neq |\lambda_k|$ for $j\neq k$, then
\begin{equation*}
  (u_{sol}(t,x;\mathcal{D}),v_{sol}(t,x;\mathcal{D}))\sim
  \sum_{j=1}^{N} (u_{sol}(t,x;\{\lambda_j,C_j^{(\pm)}\}), v_{sol}(t,x;\{\lambda_j,C_j^{(\pm)}\})),\quad\text{as } t\to \pm\infty.
\end{equation*}
Thus, a multi-soliton breaks up into individual one-solitons of the form (\ref{e 1-sol}). The modified norming constants $C_j^{(\pm)}$ are given seperately for $+$ and $-$ as described in \cite{Kuznetsov1977}.  
\section{Strategy of the proof of Theorem \ref{t main} }\label{s summary}
The summary of forward and inverse scattering in the preceding section shows that to analyze the asymptotics of (\ref{e mtm}) we need to analyze RHP's \ref{rhp M} and \ref{rhp M hat} and evaluate formulae (\ref{e rec u 2}), (\ref{e M(0)}), (\ref{e rec v 2}) and (\ref{e M hat (0)}).
\subsection{New coordinates}
Although the asymptotics (\ref{e asymptotic u and v}) are given in laboratory coordinates, inside the light-cone $\{(x,t)\in\R^2:t>|x|\}$ which forms  the physically interesting region for the massive Thirring model, we define two sets of coordinates, $(\tau,w_0)$ and $(\tau,z_0)$, by the following:
\begin{equation}\label{e def tau w0 z0}
  \tau:=\sqrt{t^2-x^2}\in\R^+,\qquad w_0:= \sqrt{\frac{t+x}{t-x}}\in\R^{+},\qquad z_0:=w_0^{-1}.
\end{equation}
As it is illustrated in Figure \ref{f lightcone}, \begin{figure}
\begin{center}
\begin{tikzpicture}[domain=-5:5]

\draw[very thin,color=gray] (-5.1,-.1) grid (5.1,5.1);
\draw[->] (-5.2,0) -- (5.2,0) node[below] {$x$};
\draw[->] (0,-0.2) -- (0,5.6) node[left] {$t$};
\draw[domain=-1.4:1.4] plot (\x,{sqrt(1+\x*\x)});
\draw[domain=-2.4:2.4] plot (\x,{sqrt(4+\x*\x)});
\draw[domain=-3.4:3.4] plot (\x,{sqrt(9+\x*\x)});
\draw[domain=-3:3] plot (\x,{sqrt(16+\x*\x)});
\draw[domain=0:3.65] plot (\x,1.38*\x) ;
\draw[domain=0:2.58] plot (\x,1.96*\x) ;
\draw[domain=0:1.64] plot (\x,3.08*\x) ;
\draw[domain=0:0.8] plot (\x,6.31*\x) ;
\draw[domain=-3.65:0] plot (\x,-1.38*\x) ;
\draw[domain=-2.58:0] plot (\x,-1.96*\x) ;
\draw[domain=-1.64:0] plot (\x,-3.08*\x) ;
\draw[domain=-0.8:0] plot (\x,-6.31*\x);
\draw[domain=-5.05:0, dashed] plot (\x,-\x) ;
\draw[domain=0:5.05, dashed] plot (\x,\x) ;

\node[rotate=42] at (1.7,2.) {\tiny$\tau=1$};
\node[rotate=34] at (1.7,2.8) {\tiny$\tau=2$};
\node[rotate=34] at (2.4,4.) {\tiny$\tau=3$};
\node[rotate=-45] at (-6.2,6.2) {$w_0=0$, $z_0=\infty$};
\node[rotate=45] at (6.2,6.2) {$w_0=\infty$, $z_0=0$};
\node[rotate=-54] at (-4.5,6.3) {$w_0=\varepsilon$, $z_0=\varepsilon^{-1}$};
\node[rotate=54] at (4.55,6.4){$w_0=\varepsilon^{-1}$, $z_0=\varepsilon$};

\path[fill=black,opacity=0.21,domain=-1.052:1.052] plot (\x,{sqrt(1+\x*\x)}) ;
\path[fill=black,opacity=0.21] (1.052,1.451) -- (3.65,5.037) -- (-3.65,5.037) -- (-1.052,1.451) -- (1.052,1.451);

\end{tikzpicture}
\end{center}
  \caption{Illustration of the new coordinates $(\tau,w_0)$ and $(\tau,z_0)$.%\\ The colored area is showing the light cone $\{\eps<w_0<\eps^{-1},\tau>1\}$%
  }\label{f lightcone}
\end{figure}
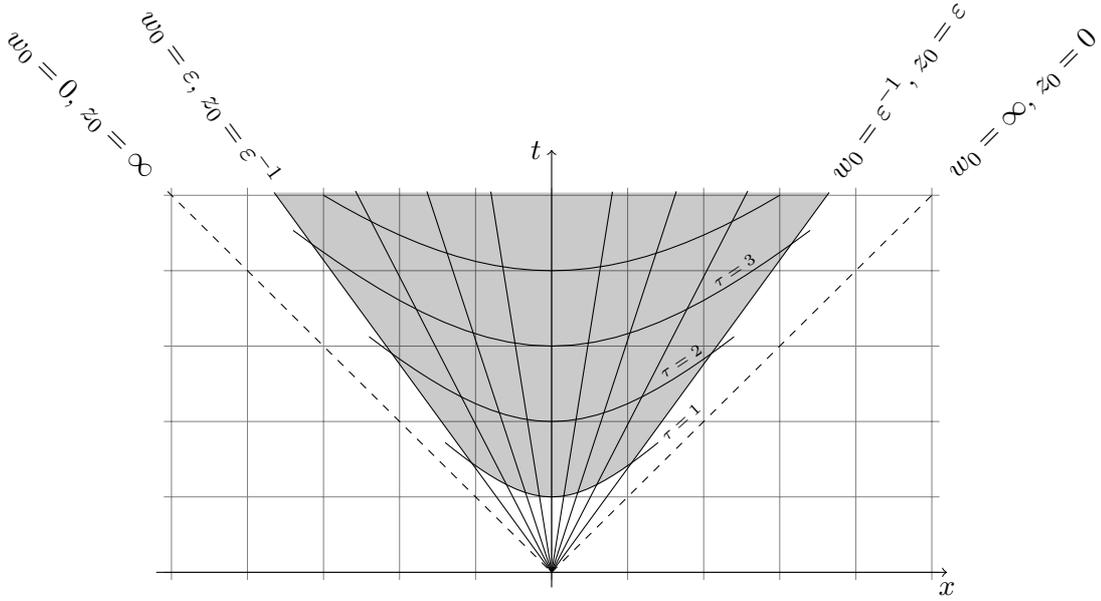
$w_0$ and $z_0$ are constant along rays where $\frac{x}{t}=const.$
The purpose of these new coordinates is as follows. Using (\ref{e def tau w0 z0}) it is easily checked that the phase functions occurring in (\ref{e jump M}) and (\ref{e jump M hat}), respectively, can be simplified in the following way:
\begin{equation*}
    \begin{aligned}
        -\frac{i}{2}(w-w^{-1})x+ \frac{i}{2}(w+w^{-1})t&=i\tau Z\left(\frac{w}{w_0}\right), \phantom{\int_{\int_{\int}}}\\
        +\frac{i}{2}(z-z^{-1})x+ \frac{i}{2}(z+z^{-1})t &=i\tau Z\left(\frac{z}{z_0}\right),
    \end{aligned}
\end{equation*}
where $Z$ is the \emph{Joukowsky} transform defined by
\begin{equation}\label{e Joukowsky}
        Z(\zeta)= \frac{1}{2}\left(\zeta+\frac{1}{\zeta}\right).
\end{equation}
These new coordinates show that RHP's \ref{rhp M} and \ref{rhp M hat} in case of $N=0$ are structurally similar to the following Riemann-Hilbert problem:
\begin{samepage}
\begin{framed}
\begin{rhp}\label{rhp M^0}
For given functions $\rho,\breve{\rho}$ and $\tau\in\R$, find a $2\times 2$-matrix valued function $\C\setminus\R \ni \zeta\mapsto M^{(0)}(\tau;\zeta)$ which satisfies
\begin{enumerate}
  \item $M^{(0)}(\tau;\cdot)$ is analytic in $\C\setminus\R$ .
  \item $M^{(0)}(\tau;\zeta)=1+\mathcal{O} \left(\frac{1}{\zeta}\right)$ as $|\zeta|\to\infty$.
  \item The non-tangential boundary values $M^{(0)}_{\pm}(\tau;\zeta)$ exist for $\zeta\in\R$ and satisfy the jump relation
      \begin{equation*}
        M^{(0)}_+=M^{(0)}_-(1+R_{\tau}^{(0)}),
      \end{equation*}
      where
      \begin{equation}\label{e def R^0}
         R^{(0)}_{\tau}(\zeta)=
         \left[
           \begin{array}{cc}
           0 & \breve{\rho}(\zeta)e^{i\tau Z(\zeta)} \\
           \rho(\zeta)e^{-i\tau Z(\zeta)} & \rho(\zeta)\breve{\rho}(\zeta) \\
         \end{array}
      \right].
\end{equation}
\end{enumerate}
\end{rhp}
\end{framed}
\end{samepage}
The following Lemma shows precisely how one can make a connection between the two RHP's \ref{rhp M} and \ref{rhp M hat} and the Joukowsky type RHP \ref{rhp M^0}. Thus, the Lemma describes a way of eliminating the coordinates $w_0$ and $z_0$.
\begin{lem}\label{l M and M hat <-> M^0}
    \begin{itemize}
      \item[(i)]  RHP \ref{rhp M} with $N=0$ and RHP \ref{rhp M^0} are equivalent for the following choice of $\rho$ and $\breve{\rho}$ in  (\ref{e def R^0}):
          \begin{equation}\label{e def rho from r}
             \rho(\zeta):=\frac{w_0\cdot\zeta\cdot r(w_0\cdot\zeta)} {d_-(w_0\cdot\zeta) \cdot d_+(w_0\cdot\zeta)},\qquad
             \breve{\rho}(\zeta) :=\overline{r(w_0\cdot\zeta)} \cdot d_-(w_0\cdot\zeta) \cdot d_+(w_0\cdot\zeta),
          \end{equation}
          where
          \begin{equation}\label{e def d}
            \left\{
              \begin{array}{ll}
                \displaystyle d(w)=\exp\left\{\frac{1}{2\pi i} \int_{-\infty}^{\infty} \frac{\log(1+\varsigma |r(\varsigma)|^2)}{\varsigma-w}d\varsigma\right\}, & w\in\C\setminus\R, \\
                \displaystyle
                d_{\pm}(w)=\lim_{\eps\searrow 0}d(w\pm i \eps), & w\in\R.
              \end{array}
            \right.
          \end{equation}
          The solution $M(t,x;w)$ of RHP \ref{rhp M} can be obtained from the solution $M^{(0)}(\tau,\zeta)$ of RHP \ref{rhp M^0} as
          \begin{equation}\label{e M <-> M^0}
              M(t,x;w)=M^{(0)} \left(\tau;\frac{w}{w_0}\right) \left[d\left( w\right)\right]^{\sigma_3}.
          \end{equation}
      \item [(ii)] RHP \ref{rhp M hat} with $N=0$ and RHP \ref{rhp M^0} are equivalent for the following choice of $\rho$ and $\breve{\rho}$ in (\ref{e def R^0}):
      \begin{equation}\label{e def rho from r hat}
        \rho(\zeta):=-z_0\cdot\zeta\cdot \widehat{r}(z_0\cdot\zeta),\qquad
        \breve{\rho}(\zeta) :=-\overline{\widehat{r}(z_0\cdot\zeta)}.
      \end{equation}
          The solution $\widehat{M}(t,x,z)$ of RHP \ref{rhp M hat} can be obtained from the solution $M^{(0)}(\tau,\zeta)$ of RHP \ref{rhp M^0} as
          \begin{equation}\label{e M hat <-> M^0}
              \widehat{M}(t,x,z)=M^{(0)} \left(\tau;\frac{z}{z_0}\right).
          \end{equation}
    \end{itemize}
\end{lem}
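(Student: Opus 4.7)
The plan is to produce $M^{(0)}$ in each case by an explicit transformation of the respective Riemann--Hilbert data: in part (ii) a pure rescaling $z=z_0\zeta$ suffices, while in part (i) one needs an additional scalar conjugation by $[d(w)]^{\sigma_3}$ to move the diagonal contribution of the jump from the $(1,1)$ position in RHP \ref{rhp M} to the $(2,2)$ position in RHP \ref{rhp M^0}. The phase identities highlighted just before the lemma already guarantee that, after rescaling, the oscillatory factors in (\ref{e jump M}) and (\ref{e jump M hat}) become exactly $e^{\pm i\tau Z(\zeta)}$, so only the algebraic structure of the jump and the correct normalisation at $\infty$ have to be checked. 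Uniqueness of each of the three RHP's (with $N=0$) then promotes these explicit constructions to a genuine equivalence.

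For part (ii) I would simply set $M^{(0)}(\tau;\zeta):=\widehat{M}(t,x;z_0\zeta)$. Analyticity off the real line and the normalisation $M^{(0)}=I+\mathcal{O}(1/\zeta)$ are inherited from $\widehat{M}$. A direct substitution of $z=z_0\zeta$ into (\ref{e jump M hat}) gives the $(1,2)$-entry $-\overline{\widehat{r}(z_0\zeta)}e^{i\tau Z(\zeta)}$, the $(2,1)$-entry $-z_0\zeta\,\widehat{r}(z_0\zeta)\,e^{-i\tau Z(\zeta)}$, and the $(2,2)$-entry $z_0\zeta|\widehat{r}(z_0\zeta)|^2$. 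With $\rho,\breve\rho$ as in (\ref{e def rho from r hat}) these three expressions are precisely $\rho e^{-i\tau Z}$, $\breve\rho e^{i\tau Z}$ and $\rho\breve\rho$, i.e.\ the entries of $R^{(0)}_\tau$. This proves (\ref{e M hat <-> M^0}).

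For part (i) I would first record that the scalar function $d$ of (\ref{e def d}) solves the scalar RHP
\begin{equation*}
    d_+(w)=d_-(w)\bigl(1+w|r(w)|^2\bigr),\qquad d(w)\to 1\ \text{as}\ |w|\to\infty,
\end{equation*}
is analytic and nonvanishing on $\C\setminus\R$, and has nontangential boundary values on $\R$; all of this follows from the bound (\ref{e 1+w|r|^2>c}) together with the regularity in Theorem \ref{t forward scattering}, which place $\log(1+w|r|^2)$ in $L^1\cap L^2(\R)$ so that the Plemelj--Sokhotski theorem applies. Define $M^{(0)}(\tau;\zeta):=M(t,x;w_0\zeta)\,[d(w_0\zeta)]^{-\sigma_3}$. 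Analyticity, the absence of poles (since $N=0$), and the behaviour $M^{(0)}=I+\mathcal{O}(1/\zeta)$ at infinity are immediate from the corresponding properties of $M$ and $d$. For the jump one conjugates (\ref{e jump M}) with $[d_\mp]^{\mp\sigma_3}$: using $d_+/d_-=1+w|r|^2$ a short algebraic computation yields
\begin{equation*}
    [d_-]^{\sigma_3}(I+R)[d_+]^{-\sigma_3}
    =\begin{pmatrix}1 & d_-d_+\,\overline{r}\,e^{i\tau Z(\zeta)}\\ (d_-d_+)^{-1}w r\,e^{-i\tau Z(\zeta)} & 1+w|r|^2\end{pmatrix},
\end{equation*}
and setting $w=w_0\zeta$ and using (\ref{e def rho from r}) this is exactly $I+R^{(0)}_\tau$, with $\rho\breve\rho=w|r|^2$ as required.

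The technical point that has to be watched is the behaviour of $d$ at the endpoints $w=0$ and $|w|\to\infty$: the formula (\ref{e M <-> M^0}) must make sense pointwise, and the Cauchy integral defining $d$ must produce boundary values that obey the multiplicative jump relation. The former uses continuity of $d$ at $w=0$, which follows from the $L^{2,-2}$-decay of $r$ built into (\ref{e regularity r}); the latter is the standard Plemelj--Sokhotski statement applicable thanks to the uniform lower bound (\ref{e 1+w|r|^2>c}) that prevents the logarithm from developing singularities. Once $d$ is under control, the two correspondences are bijections on the level of solutions because RHP \ref{rhp M}, RHP \ref{rhp M hat} and RHP \ref{rhp M^0} are each uniquely solvable under the standing hypotheses, so the forward construction above can be inverted by applying $[d(w_0\zeta)]^{\sigma_3}$ (resp.\ the trivial identity) to any solution of RHP \ref{rhp M^0}.
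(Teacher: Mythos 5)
Your argument is correct and follows the same route as the paper's proof: for part (ii) a direct substitution showing $\widehat{R}(t,x;z_0\zeta)=R^{(0)}_\tau(\zeta)$, and for part (i) a scalar conjugation by $[d]^{\sigma_3}$ using $d_+=d_-(1+w|r|^2)$, with both conjugation directions being equivalent. One small transcription slip: in part (ii) you match the $(1,2)$-entry $-\overline{\widehat{r}(z_0\zeta)}e^{i\tau Z(\zeta)}$ with $\rho e^{-i\tau Z}$ and the $(2,1)$-entry with $\breve\rho e^{i\tau Z}$, but these pairings should be swapped — $(1,2)$ corresponds to $\breve\rho e^{i\tau Z}$ and $(2,1)$ to $\rho e^{-i\tau Z}$; the identification of the matrices is nevertheless correct.
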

\begin{proof}
    Part $(ii)$ of the Lemma is immediate. Indeed, for $\rho$ and $\breve{\rho}$ defined as in (\ref{e def rho from r hat}) we have $\widehat{R}(t,x;z)=R^{(0)}_{\tau}(z/z_0)$. This leads to (\ref{e M hat <-> M^0}). In order to understand part $(i)$ of the Lemma we notice that the right hand side of (\ref{e M <-> M^0}) has a jump on $\R$ with jump matrix
    \begin{equation*}
        \left[d_-\left( w\right)\right]^{-\sigma_3} R^{(0)}_{\tau}\left(\frac{w}{w_0}\right) \left[d_+\left( w\right)\right]^{\sigma_3}.
    \end{equation*}
    Because of the fact, that $d_+(w)=d_-(w)(1+w|r(w)|^2)$ for $w\in\R$ we find that this jump matrix is nothing but $R(w;t,x)$ defined in (\ref{e jump M}).
\end{proof}
Fix $\eps>0$ and assume $\eps <z_0=w_0^{-1}<\eps^{-1}$. Furthermore, assume that functions $r,\widehat{r}\in H^{1,1}(\R)\cap L^{2,-2}(\R)$ satisfy (\ref{e 1+w|r|^2>c}). Then, the functions $\rho$ and $\breve{\rho}$ given by either of the formulae (\ref{e def rho from r}) or (\ref{e def rho from r hat}) satisfy
\begin{equation}\label{e assumption1}
    \rho,\breve{\rho}\in H^{1,1}(\R),
\end{equation}
\begin{equation}\label{e assumption2}
    \rho(0)=\breve{\rho}(0)=0,
\end{equation}
\begin{equation}\label{e assumption3}
    \Gamma_0:=\left\|\rho\right\|_{L^{\infty}(\R)} +\left\|\breve{\rho}\right\|_{L^{\infty}(\R)} +\left\|\frac{\rho}{1+\rho\breve{\rho}} \right\|_{L^{\infty}(\R)} +\left\|\frac{\breve{\rho}}{ 1+\rho\breve{\rho}}\right\|_{L^{\infty}(\R)}<\infty
\end{equation}
and
\begin{equation}\label{e assumption4} \rho(\zeta)\breve{\rho}(\zeta)\in\R\text{ for } \zeta\in\R.
\end{equation}
Moreover, by the Lipschitz continuity of the forward scattering, Theorem \ref{t forward scattering}, there is a constant $c>0$ depending on $\eps$ but not depending on $z_0$ and $w_0$ such that
\begin{equation}\label{e rho leq r}
    \|\rho\|_{H^{1,1}(\R)}+\|\breve{\rho}\|_{H^{1,1}(\R)}\leq c \left(\|u_0\|_{H^2(\R)\cap H^{1,1}(\R)}+ \|v_0\|_{H^2(\R)\cap H^{1,1}(\R)}\right).
\end{equation}
Particularly with regard to the reconstruction formulas (\ref{e rec u 2}), (\ref{e M(0)}), (\ref{e rec v 2}) and (\ref{e M hat (0)}) we are interested in behavior of $M^{(0)}(\tau;0)$ as $\tau\to\infty$ and the function defined by the following,
\begin{equation}\label{e def q^0}
    q^{(0)}(\tau):=\lim_{\zeta\to\infty} \zeta\left[M^{(0)}(\tau;\zeta)\right]_{12}.
\end{equation}
In order to formulate the asymptotic expansion of $M^{(0)}(\tau;0)$ and $q^{(0)}(\tau)$, it is convenient to work with the following package of definitions. Let functions $\rho$ and $\breve{\rho}$ be given:
\begin{equation}\label{e notation 1}
  \nu(\zeta):=\frac{1}{2\pi} \log\left(1+\rho(\zeta)\breve{\rho}(\zeta) \right),\qquad \nu_0^{\pm}:=\nu(\pm 1),
\end{equation}
\begin{equation}\label{e notation 2}
  \delta(\zeta):=\exp\left\{\frac{1}{i} \int_{-1}^{1}\frac{\nu(s)}{s-\zeta}ds\right\},\quad \zeta\in\C\setminus [-1,1],
\end{equation}
\begin{equation}\label{e notation 3}
    \delta^{\pm}_0=
    \exp\left\{\pm\frac{1}{i}\int_0^{\pm1} \frac{\nu(s)\mp s\cdot\nu^{\pm}_0}{s\mp 1}ds \mp \frac{1}{i} \int_0^{\mp1} \frac{\nu(s)}{s\mp 1}ds \mp i\nu_0^{\pm}\right\}.
\end{equation}
Over the course of the present paper we will repeat each of the above definitions and explain their meanings. The integral appearing in the expression for $\delta(\zeta)$ is well-defined for $\zeta=0$ due to (\ref{e assumption4}). For the moment we need definitions (\ref{e notation 1})--(\ref{e notation 3}) only in order to express the following result:
\begin{thm}\label{t main thm general}
    There exist positive constants $\Gamma_0$, $c$ and $\tau_0$ such that for any two functions $\rho$ and $\breve{\rho}$ satisfying the assumptions (\ref{e assumption1})--(\ref{e assumption4}), the Riemann-Hilbert problem \ref{rhp M^0} admits a unique solution $M^{(0)}(\tau;\zeta)$ for which the following holds for all $\tau\geq \tau_0$:
    \begin{equation}\label{e M^0(0)}
        \left|M^{(0)}(\tau;0)-\left[\delta(0)\right]^{-\sigma_3}\right| \leq c\left(\|\rho\|_{H^{1,1}(\R)}+ \|\breve{\rho}\|_{H^{1,1}(\R)}\right) \tau^{-1/2}.
    \end{equation}
    Moreover, the function $q^{(0)}(\tau)$ defined in (\ref{e def q^0}) satisfies, for all $\tau\geq \tau_0$,
    \begin{equation}\label{e asymptotics q}
        |q^{(0)}(\tau)-q^{(as)}(\tau)|\leq c\left(\|\rho\|_{H^{1,1}(\R)}+ \|\breve{\rho}\|_{H^{1,1}(\R)}\right) \tau^{-3/4},
    \end{equation}
    where the limit function $q^{(as)}(\tau)$ is given by
    \begin{equation}\label{e def q^as}
        q^{(as)}(\tau)=
        \frac{e^{-i\tau }e^{i\nu_0^{-}\ln(\tau)}}{\tau^{1/2}} \frac{\sqrt{2\pi}e^{\pi\nu_0^-/2} e^{-i\pi/4}} {\rho(-1)(\delta_0^-)^2\Gamma(i\nu_0^-)}+ \frac{e^{i\tau }e^{-i\nu_0^{+}\ln(\tau)}}{\tau^{1/2}} \frac{\sqrt{2\pi}e^{\pi\nu_0^+/2} e^{i\pi/4}} {\rho(1)(\delta_0^+)^2\Gamma(-i\nu_0^+)}
     \end{equation}
     in the case of $\rho(\pm 1)\neq 0$. If either $\rho(-1)=0$ or $\rho(1)=0$ the corresponding summand in (\ref{e def q^as}) has to be set to zero. In particular we have
     \begin{equation}\label{e |q^0|<tau^1/2}
        |q^{(0)}(\tau)|\leq c\left(\|\rho\|_{H^{1,1}(\R)}+ \|\breve{\rho}\|_{H^{1,1}(\R)}\right) \tau^{-1/2},
    \end{equation}
    for all $\tau\geq \tau_0$.
\end{thm}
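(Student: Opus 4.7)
The plan is to apply the Deift--Zhou nonlinear steepest descent method, in the $\db$-variant of McLaughlin--Miller and Dieng--McLaughlin that is adapted to reflection coefficients with only Sobolev regularity. The phase $Z(\zeta)=(\zeta+\zeta^{-1})/2$ has exactly two stationary points on $\R$, namely $\zeta=\pm1$; since $Z''(\pm1)=\pm1$, these will produce the two oscillatory contributions $e^{\pm i\tau}$ in $q^{(as)}(\tau)$. Reading the signature table of $\Imag Z$, the jump matrix $I+R^{(0)}_\tau$ wants an upper/lower factorization on $(-1,0)\cup(1,\infty)$ and a lower/diagonal/upper factorization on $(-\infty,-1)\cup(0,1)$, the two sides being bridged by the scalar conjugator $\delta(\zeta)$ of (\ref{e notation 2}), which solves the scalar RHP $\delta_+=\delta_-(1+\rho\breve\rho)$ on $[-1,1]$ and equals $1$ outside. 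Note $\delta$ is well-defined at $\zeta=0$ thanks to assumption (\ref{e assumption4}).

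I would first set $M^{(1)}(\tau;\zeta):=M^{(0)}(\tau;\zeta)\,\delta(\zeta)^{-\sigma_3}$ (this is Section \ref{s scalar RHP}). The new problem has triangular jumps with factors of the form $\rho\,\delta^{-2}e^{-i\tau Z}$ and $\breve\rho\,\delta^{2}e^{i\tau Z}$, concentrated near $\pm1$. Since $\rho,\breve\rho\in H^{1,1}(\R)$ are not analytic, I cannot deform contours literally; instead I replace $\rho$ and $\breve\rho$ by $\db$-extensions $\mathcal{R}(\zeta)$ into sectors emanating from $\pm1$ such that $\mathcal{R}|_\R$ is the original function and $\db\mathcal{R}$ is supported where $e^{\pm i\tau Z}$ decays. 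The construction of these extensions uses the $H^{1,1}$ bound to control both $\mathcal{R}$ and $\db\mathcal{R}$ linearly in $\|\rho\|_{H^{1,1}}+\|\breve\rho\|_{H^{1,1}}$, and transforms the RHP into a mixed RHP/$\db$-problem $M^{(2)}$ posed on opened lenses through $\pm1$. Outside small discs around $\pm1$ and away from the lens supports, the jumps are exponentially small in $\tau$ and the global parametrix is simply the identity.

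Inside the disc around each critical point I solve a local model exactly in terms of parabolic cylinder functions $D_\nu$, with parameters $\mp i\nu_0^{\pm}$; this is the content of (\ref{e notation 1}) and is a standard computation producing the explicit prefactor $\sqrt{2\pi}\,e^{\pi\nu_0^\pm/2}e^{\pm i\pi/4}/(\rho(\pm1)(\delta_0^\pm)^2\Gamma(\mp i\nu_0^\pm))$ that appears in (\ref{e def q^as}). The power $\tau^{\mp i\nu_0^\pm}$ and the constants $\delta_0^\pm$ of (\ref{e notation 3}) arise precisely from the local behavior of $\delta(\zeta)$ near $\pm1$ after matching to the parabolic cylinder solution. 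When $\rho(\pm1)=0$ the corresponding local problem degenerates to the identity up to $O(\tau^{-1})$, whence that summand is absent, as asserted.

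The remaining small-norm RHP plus pure $\db$-problem (Section \ref{s pure db}) is then estimated: the RHP part contributes $O(\tau^{-1})$ from matching on the disc boundaries, while the $\db$-solid-Cauchy integral contributes $O(\tau^{-3/4})$ in $L^\infty$, linearly in the $H^{1,1}$-norms of $\rho,\breve\rho$. This last step is the main obstacle: one must quantify the decay of $e^{\pm i\tau Z}$ against $\db\mathcal{R}$ in a weighted $L^p$ framework and invert $I-C_{\db}$ on $L^\infty$, which is where the $\tau^{-3/4}$ rate (rather than $\tau^{-1}$) comes from and where the $H^{1,1}$ regularity is genuinely used. Finally, (\ref{e M^0(0)}) is obtained by unwinding $M^{(0)}(\tau;0)=M^{(2)}(\tau;0)\,\delta(0)^{\sigma_3}$ and observing that at $\zeta=0$ all local parametrices and the $\db$-correction contribute only error, while (\ref{e asymptotics q}) follows by extracting the $\zeta^{-1}$ coefficient of $M^{(0)}$ at infinity, for which only the two local parabolic cylinder contributions survive to leading order and reassemble into (\ref{e def q^as}); the bound (\ref{e |q^0|<tau^1/2}) is then immediate from $|q^{(as)}(\tau)|\lesssim\tau^{-1/2}$.
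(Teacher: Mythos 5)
Your plan follows the same overall road map as the paper---conjugation by the scalar $\delta$, a $\db$-opening of lenses around the stationary points $\pm1$, parabolic cylinder local models, and a small-norm plus pure $\db$ estimate---but two of your concrete bookkeeping statements are wrong in a way that would derail the computation.

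First, the sign of the conjugation. You set $M^{(1)}:=M^{(0)}\,\delta^{-\sigma_3}$, whereas (\ref{e def M^1}) uses $M^{(1)}=M^{(0)}[\delta]^{+\sigma_3}$. Only the latter absorbs the middle diagonal factor of the $UDL$ factorization of $1+R^{(0)}_\tau$ on $(-1,1)$: one then gets $\delta_-^{-1}\delta_+=1+\rho\breve\rho$ in the $(1,1)$ slot and $\delta_-(1+\rho\breve\rho)\delta_+^{-1}=1$ in the $(2,2)$ slot, matching (\ref{e def R^1}). With your sign the diagonal becomes $(1+\rho\breve\rho)^{-1}$ and $(1+\rho\breve\rho)^2$ and does not cancel, so the factorization into two triangular factors is lost. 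The sign also propagates into your final unwinding: with your convention $M^{(0)}(\tau;0)=M^{(2)}(\tau;0)\,\delta(0)^{\sigma_3}$, and since $M^{(2)}(\tau;0)\to1$, you would land on $\delta(0)^{+\sigma_3}$, contradicting (\ref{e M^0(0)}).

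Second, your partition of $\R$ for the lower-upper versus upper-diagonal-lower factorizations is inconsistent with the signature table (\ref{e sign of i Z}). You propose upper/lower on $(-1,0)\cup(1,\infty)$ and lower/diagonal/upper on $(-\infty,-1)\cup(0,1)$. But $\Real(iZ)<0$ in $\{\Imag\zeta>0,\ |\zeta|>1\}$, so $e^{i\tau Z}$ decays in the upper half plane precisely for $|\zeta|>1$, and dually $e^{-i\tau Z}$ decays in the lower half plane precisely there. Hence the lower--upper factorization is used on all of $(-\infty,-1)\cup(1,\infty)$ and the upper--diagonal--lower factorization on all of $(-1,1)$, as in (\ref{e def R^1L and R^1R}). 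With your grouping, the $\db$-extensions on $(-1,0)$ and on $(-\infty,-1)$ would be forced into regions where the relevant exponential grows in $\tau$, and the contour deformation fails. Also, $\delta$ is not identically $1$ on $\R\setminus[-1,1]$; it only has no jump there and is normalized by $\delta\to1$ at infinity.

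Beyond these fixable errors, your route does differ structurally from the paper's. You use a global identity parametrix plus local parabolic-cylinder parametrices on shrinking discs around $\pm1$, with matching on the disc boundaries. The paper does not form a parametrix: it compares the exact solution $M^{(3)}$ of the pure RHP to the product $M^{(4-)}M^{(4+)}$ of solutions on the two truncated crosses (Proposition \ref{p M^3 approx M^4}, error $\mathcal{O}(\tau^{-3/4})$ coming from the interaction between the two crosses), and then compares each $M^{(4\pm)}$ to the exact model $M^{(5\pm)}$ on a full cross (Proposition \ref{p M^4 approx M^5}, error $\mathcal{O}(\tau^{-1})$). Both routes are standard; the disc approach, when carried out carefully, typically gives $\mathcal{O}(\tau^{-1})$ for the RHP part as you claim, at the cost of an explicit outer/local matching argument that the paper's cross-separation avoids. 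Either way the $\db$-correction dominates at $\mathcal{O}(\tau^{-3/4})$ and produces (\ref{e asymptotics q}).
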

The proof of this theorem is the main part of the present paper. Theorem \ref{t main} is a direct consequence of Theorem \ref{t main thm general} and Lemma \ref{l M and M hat <-> M^0}. Substituting (\ref{e def rho from r}) and (\ref{e def rho from r hat}) into (\ref{e def q^as}) we obtain after lengthy calculations (see Appendix \ref{a forumlae})
\begin{equation}\label{e |f pm|^2}
    \left|f_{\pm}\left(\frac{x}{t}\right)\right|^2= \pm \widehat{\kappa}(\pm z_0)
    \end{equation}
and
\begin{equation}\label{e arg(f pm)}
       \begin{aligned}
           \arg \left(f_{\pm}\left(\frac{x}{t}\right)\right) =&\mp \frac{\pi}{4}+\arg(\widehat{r}(\pm z_0)) +\arg(\Gamma(\mp i\widehat{\kappa}(\pm z_0))\\
           &\mp 2\int_{0}^{\pm z_0}\frac{\widehat{\kappa}(s)\mp \frac{s}{z_0} \widehat{\kappa}(\pm z_0)}{s\mp z_0}ds \pm 2 \int_0^{\mp z_0}\frac{\widehat{\kappa}(s)}{s\mp z_0} ds
           \mp \widehat{\kappa}(\pm z_0)+\int_{-z_0}^{z_0} \frac{\widehat{\kappa}(s)}{s}ds
       \end{aligned}
\end{equation}
with
\begin{equation}\label{e kappa z_0}
       \widehat{\kappa}(z)= \frac{1}{2\pi}\log(1\pm z|\widehat{r}(z)|^2)
\end{equation}
for the functions in (\ref{e asymptotic u and v}). In (\ref{e arg(f pm)}), $\Gamma$ is the gamma function.

\subsection{Summary of the proof of Theorem \ref{t main thm general}}

The long-time behavior results (\ref{e M^0(0)}) and (\ref{e asymptotics q}) are obtained through a sequence of transformations of RHP's.  The initial RHP is RHP \ref{rhp M^0} above and it has contour $\R$ and jump matrix $R^{(0)}_{\tau}$. We will use the notation RHP($\Sigma^{(j)}$,$R^{(j)}$) to denote the (normalized) Riemann-Hilbert problem \ref{rhp M^0} where $\R$ is replaced by the contour $\Sigma^{(j)}$ and the jump matrix $R^{(0)}_{\tau}$ is replaced by $R^{(j)}_{\tau}$. Then, by $M^{(j)}(\tau;\zeta)$ we will denote the solution of RHP($\Sigma^{(j)}$,$R^{(j)}_{\tau}$) and set
\begin{equation}\label{e def q^j}
 q^{(j)}(\tau) :=\lim_{|\zeta|\to\infty}\zeta\cdot[M^{(j)}(\tau;\zeta)]_{12}.
\end{equation}
The sequence of the assigned functions $q^{(j)}$ is thus determined by the sequence of pairs of contours and jump matrices which reads as follows.
\begin{equation*}
    (\R,R^{(0)})\to (\R,R^{(1)})\to  (\Sigma^{(3)},R^{(3)})
    \begin{aligned}
        \nearrow\\
        \searrow
    \end{aligned}
    \begin{aligned}
        (\Sigma^{(4-)},R^{(4-)})\to (\Sigma^{(5-)},R^{(5-)})\phantom{\int}\\
        (\Sigma^{(4+)},R^{(4+)})\to (\Sigma^{(5+)},R^{(5+)}) \phantom{\int}
    \end{aligned}
\end{equation*}
The pair $(\Sigma^{(2)},R^{(2)})$ does not appear in this schematic graph because as we will see later in this summary, in the second step it is necessary to consider a mixed $\db$-RHP instead of a pure RHP.  In what follows we give a summary of the computations without many details. We refer to the subsequent sections for full calculations.
\\
\\
\textbf{Step 1:}
The first step is standard in proofs of long-time behavior of oscillatory Riemann-Hilbert problems. In order to prepare the initial Riemann-Hilbert problem RHP($\R$,$R_{\tau}^{(0)}$) for the method of steepest descent by (\ref{e def M^1}) below, we first have to solve the following scalar Riemann-Hilbert problem.
\begin{samepage}
\begin{framed}
\begin{rhp}\label{rhp delta}
For given functions $\rho,\breve{\rho}\in L^2(\R)$ find a scalar function $\C\setminus\R\ni\zeta\mapsto \delta(\zeta)$ which satisfies
\begin{enumerate}
  \item $\delta(\zeta)$ is analytic in $\C\setminus\R$.
  \item $\delta(\zeta)=1+\mathcal{O} \left(\zeta^{-1}\right)$ as $|\zeta|\to\infty$.
  \item The non-tangential boundary values $\delta_{\pm}(\zeta)$ exist for $\zeta\in\R$ and satisfy the jump relation
      \begin{equation*}
        \delta_+(\zeta)=
        \left\{
          \begin{array}{ll}
            \delta_-(\zeta), & \zeta\in\R\setminus [-1,1], \\
            \delta_-(\zeta)\left(1+\rho(\zeta) \breve{\rho}(\zeta)\right), & \zeta\in [-1,1].
          \end{array}
        \right.
      \end{equation*}
\end{enumerate}
\end{rhp}
\end{framed}
\end{samepage}
In Section \ref{s scalar RHP} we list properties of $\delta$ and give details about the solvability of the scalar Riemann-Hilbert problem \ref{rhp delta}. In particular we find an explicit solution formula for $\delta(\zeta)$, see (\ref{e def delta}). The function $\delta$ is finally used to define the following transfomation:
\begin{equation}\label{e def M^1}
  M^{(1)}(\tau;\zeta):=M^{(0)}(\tau;\zeta) [\delta(\zeta)]^{\sigma_3}.
\end{equation}
It is easy to verify that we obtain a solution of a new Riemann-Hilbert problem RHP($\R$,$R^{(1)}_{\tau}$), where
\begin{equation}\label{e def R^1}
  R^{(1)}_{\tau}(\zeta)=
  \left\{
    \begin{array}{ll}\vspace{2mm}
      \left[
         \begin{array}{cc}
           0& \breve{\rho}\delta^{-2}e^{i\tau Z} \\
           \rho\delta^{+
         2}e^{-i\tau Z} & \rho\breve{\rho} \\
         \end{array}
        \right]
      , & \hbox{if }|\zeta|\geq 1, \\
      \left[
         \begin{array}{cc}
           \rho\breve{\rho}& \frac{\breve{\rho}\delta_-^{-2}} {1+\rho\breve{\rho}}e^{i\tau Z} \\
          \frac{\rho\delta_+^{2}}{1+\rho\breve{\rho}} e^{-i\tau Z} & 0 \\
         \end{array}
        \right]
      , & \hbox{if }|\zeta|<1.
    \end{array}
  \right.
\end{equation}
Since the factor $[\delta]^{\sigma_3}$ is diagonal the manipulation (\ref{e def M^1}) does not affect the reconstruction formula  (\ref{e def q^j}) and we have $q^{(1)}(\tau)=q^{(0)}(\tau)$ and moreover $M^{(1)}(\tau;0)=M^{(0)}(\tau;\zeta)[\delta(0)]^{\sigma_3}$.\\
\textbf{Step 2:}
The next transformation deforms the contour $\R$ to a new contour $\Sigma^{(2)}$ a picture of which is given in Figure \ref{f Sigma2}.
\begin{figure}
\begin{center}
\begin{tikzpicture}
\draw 	[->, thick]  	(-4.5,2.5) -- (-3,1) ;
\draw 	[->, thick]  	(-2,0) -- (-1,1) ;
\draw 	[->, thick]  	(-4.5,-2.5) -- (-3,-1) ;
\draw 	[->, thick]  	(-2,0) -- (-1,-1) ; ;
\draw 	[->, thick]  	(0,2) -- (1,1) ;
\draw 	[->, thick]  	(2,0) -- (3,1) ;
\draw 	[->, thick]  	(0,-2) -- (1,-1) ;
\draw 	[->, thick]  	(2,0) -- (3,-1) ;
\draw 	[thick]  	(-3,1) -- (-2,0) ;
\draw 	[thick]  	(-1,1) -- (0,2) ;
\draw 	[thick]  	(-3,-1) -- (-2,0) ;
\draw 	[thick]  	(-1,-1) -- (0,-2) ;
\draw 	[thick]  	(1,1) -- (2,0) ;
\draw 	[thick]  	(3,1) -- (4.5,2.5) ;
\draw 	[thick]  	(1,-1) -- (2,0) ;
\draw 	[thick]  	(3,-1) -- (4.5,-2.5) ;
\draw 	[thick]  	(-2,.1) -- (-2,-.1) ;
\draw 	[thick]  	(2,.1) -- (2,-.1) ;

\node at (-2,-.5) {$-1$};
\node at (2,-.5) {$1$};
\node at (-4.5,1.9) {$\Sigma^{(2)}_1$};
\node at (-1.1,1.4) {$\Sigma^{(2)}_2$};
\node at (1.2,1.4) {$\Sigma^{(2)}_3$};
\node at (4.5,1.9) {$\Sigma^{(2)}_4$};
\node at (-4.5,-1.9) {$\Sigma^{(2)}_5$};
\node at (-1.1,-1.5) {$\Sigma^{(2)}_6$};
\node at (1.1,-1.5) {$\Sigma^{(2)}_7$};
\node at (4.5,-1.9) {$\Sigma^{(2)}_8$};
\end{tikzpicture}
\end{center}
  \caption{The augmented contour $\Sigma^{(2)}=\Sigma^{(2)}_1\cup...\cup\Sigma^{(2)}_8$.} \label{f Sigma2}
\end{figure}
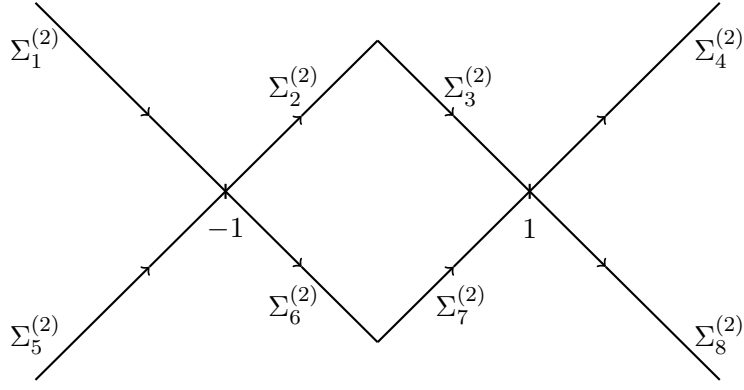
The transformation is based on the fact that $R^{(1)}_{\tau}$ defined in (\ref{e def R^1}) admits a factorisation of the form
\begin{equation}\label{e factorisation R^1}
  1+R^{(1)}_{\tau}= (1+R^{(1)}_{L})(1+R^{(1)}_{R}),
\end{equation}
where
\begin{equation}\label{e def R^1L and R^1R}
  (R^{(1)}_{L},R^{(1)}_{R})=
  \left\{
    \begin{array}{ll}\vspace{2mm}
      \left(
      \left[
         \begin{array}{cc}
           0& 0 \\
           \rho\delta^{
         2}e^{-i\tau Z} & 0 \\
         \end{array}
        \right],
       \left[
         \begin{array}{cc}
           0& \breve{\rho}\delta^{-2}e^{i\tau Z} \\
           0& 0\\
         \end{array}
        \right]
      \right), & \hbox{if }|\zeta|\geq 1,\\
      \left(
      \left[
         \begin{array}{cc}
           0&\frac{\breve{\rho}} {1+\rho\breve{\rho}}\delta_-^{-2}e^{i\tau Z} \\
         0& 0 \\
         \end{array}
        \right],
        \left[
         \begin{array}{cc}
          0&0\\
          \frac{\rho}{1+\rho\breve{\rho}} \delta_+^{2}e^{-i\tau Z} & 0 \\
         \end{array}
        \right]
      \right), & \hbox{if }|\zeta|<1.
    \end{array}
  \right.
\end{equation}
As we will specify in Section \ref{s dbextension} there exists a matrix-valued function $\zeta\to\mathcal{W}(\tau;\zeta)$ of the form (\ref{e def W}) which is continuous on $\C\setminus (\R\cup\Sigma^{(2)})$ and satisfies for $\zeta\in\R$ the following.
\begin{equation}\label{e condition1 W}
  \begin{aligned}
    \mathcal{W}_+&=1-R^{(1)}_{R},\\
    \mathcal{W}_-&=1+R^{(1)}_{L}.
  \end{aligned}
\end{equation}
Here $\mathcal{W}_{\pm}$ are the boundary values of $\mathcal{W}$ as $\pm\Imag(\zeta)\downarrow 0$. It can be verified easily by the triangularity of $R^{(1)}_{R}$ and $R^{(1)}_{L}$ that the new unknown
\begin{equation}\label{e def M^2}
    M^{(2)}(\tau;\zeta):=M^{(1)}(\tau;\zeta) \mathcal{W}(\tau;\zeta),
\end{equation}
has no jump on the real axis. The discontinuity of $\mathcal{W}$ on $\Sigma^{(2)}$ will be arranged in such a way that $M^{(2)}_+=M^{(2)}_-(1+R_{\tau}^{(2)})$ on $\Sigma^{(2)}$ with
\begin{equation}\label{e def R^2-}
    R_{\tau}^{(2)}(\zeta):=
    \left\{
      \begin{array}{ll}    \vspace{1mm}
        \left[
          \begin{array}{cc}
             0& \breve{\rho}(-1)(\delta_0^{-})^{-2} (-(\zeta+1))^{-2i\nu_0^{-}}e^{i\tau Z(\zeta)} \\
            0 & 0 \\
          \end{array}
        \right]
        , & \zeta\in\Sigma^{(2)}_1\\ \vspace{1mm}
        \left[
          \begin{array}{cc}
            0 & 0 \\
            \frac{\rho(-1)}{1+\rho(-1)\breve{\rho}(-1)} (\delta_0^{-})^{2}(-(\zeta+1))^{2i\nu_0^{-}} e^{-i\tau Z(\zeta)}(1-\chi(\zeta)) & 0 \\
          \end{array}
        \right]
        , &  \zeta\in\Sigma^{(2)}_2,\\ \vspace{1mm}
        \left[
          \begin{array}{cc}
            0 & 0 \\
            \rho(-1)(\delta_0^{-})^{2}(-(\zeta+ 1))^{2i\nu_0^{-}}e^{-i\tau Z(\zeta)} & 0 \\
          \end{array}
        \right], & \zeta\in\Sigma^{(2)}_5,\\ \vspace{1mm}
        \left[
          \begin{array}{cc}
             0& \frac{\breve{\rho}(-1)}{1+\rho(-1)\breve{\rho}(-1)} (\delta_0^{-})^{-2}(-(\zeta+1))^{-2i\nu_0^{-}} e^{i\tau Z(\zeta)}(1-\chi(\zeta)) \\
            0 & 0 \\
          \end{array}
        \right], &\zeta\in\Sigma^{(2)}_6, \\
      \end{array}
    \right.
\end{equation}
and
\begin{equation}\label{e def R^2+}
    R^{(2)}_{\tau}(\zeta):=
    \left\{
      \begin{array}{ll}    \vspace{1mm}
        \left[
          \begin{array}{cc}
             0&0 \\
            \frac{\rho(1)}{1+\rho(1)\breve{\rho}(1)} (\delta_0^{+})^{2}(\zeta-1)^ {-2i\nu_0^{+}}e^{-i\tau Z(\zeta)}(1-\chi(\zeta)) & 0 \\
          \end{array}
        \right]
        , & \zeta\in\Sigma^{(2)}_3\\ \vspace{1mm}
        \left[
          \begin{array}{cc}
            0 &  \breve{\rho}(1)(\delta_0^{+})^{-2} (\zeta-1)^{2i\nu_0^{+}} e^{i\tau Z(\zeta)}  \\
            0 & 0 \\
          \end{array}
        \right]
        , &  \zeta\in\Sigma^{(2)}_4,\\ \vspace{1mm}
        \left[
          \begin{array}{cc}
             0& \frac{\breve{\rho}(1)}{1+\rho(1)\breve{\rho}(1)} (\delta_0^{+})^{-2}(\zeta-1)^ {2i\nu_0^{+}}e^{i\tau Z(\zeta)}(1-\chi(\zeta)) \\
            0 & 0 \\
          \end{array}
        \right]
        , & \zeta\in\Sigma^{(2)}_7,\\ \vspace{1mm}
        \left[
          \begin{array}{cc}
            0 & 0 \\
            \rho(1)(\delta_0^{+})^{2}(\zeta-1)^{-2i\nu_0^{+}} e^{-i\tau Z(\zeta)} & 0 \\
          \end{array}
        \right]
        , &\zeta\in\Sigma^{(2)}_8, \\
      \end{array}
    \right.
\end{equation}
We notice that $R^{(2)}_{\tau}$ is determined as follows. Scattering data are replaced by their values at $-1$, see (\ref{e def R^2-}), or by their values at $+1$, see (\ref{e def R^2+}). Powers of $\delta$ are replaced by their asymptotic forms near $-1$, see (\ref{e def R^2-}), or by their asymptotic forms near $+1$, see (\ref{e def R^2+}). We refer to Proposition \ref{p delta near +-1} which provides these asymptotics of $\delta$ near $\pm1$. The function $\chi$ is a smooth cut-off function introduced for technical reasons. The crucial point in the definition of $R^{(2)}_{\tau}$, (\ref{e def R^2-}) and (\ref{e def R^2+}), is the fact that on $\Sigma^{(2)}$ any of the factors $e^{\pm i\tau Z(\zeta)}$ is decaying exponentially as $\tau\to\infty$. Indeed, let us make the following observation, see Figure \ref{f signaturetable},
\begin{figure}
\begin{center}
\begin{tikzpicture}
\filldraw[fill=black!9,draw=black!9]
(-5,0) rectangle (5,3);
\filldraw[fill=black!21,draw=black!21]
(-5,0) rectangle (5,-3);
\filldraw[fill=black!21,draw=black!21]
(0,0) -- (2.5,0) arc (0:180:2.5) -- cycle;
\filldraw[fill=black!9,draw=black!9]
(0,0) -- (2.5,0) arc (0:-180:2.5) -- cycle;
\draw[dashed] (0,0) circle (2.5);
\draw[dashed] (-5,0) -- (5,0) ;
\node at (-3.5,2) {$\Real(iZ)<0$};
\node at (3,-2.4) {$\Real(iZ)>0$};
\node at (0,.8) {$\Real(iZ)>0$};
\node at (0,-.8) {$\Real(iZ)<0$};
\draw [fill] (2.5,0) circle [radius=0.05];
\draw [fill] (-2.5,0) circle [radius=0.05];
\node at (-2.9,-.3) {$-1$};
\node at (2.8,-.3) {$1$};
\end{tikzpicture}
\end{center}
  \caption{Signature table for $\Real(iZ)$.}\label{f signaturetable}
\end{figure}
\begin{equation}\label{e sign of i Z}
  \Real(iZ(\zeta))
  \left\{
    \begin{array}{ll}
      >0, & \text{if}
      \left\{
        \begin{array}{ll}
          &\Imag(\zeta)>0\text{ and }|\zeta|<1,\\
          \text{or}&\Imag(\zeta)<0\text{ and }|\zeta|>1,
        \end{array}
      \right.
      \\
      <0, & \text{if}
      \left\{
        \begin{array}{ll}
          &\Imag(\zeta)>0\text{ and }|\zeta|>1,\\
          \text{or}&\Imag(\zeta)<0\text{ and }|\zeta|<1.
        \end{array}
      \right.
    \end{array}
  \right.
\end{equation}
We want to mention that the purpose of the first step (\ref{e def M^1}) was exactly to alow this construction and the contour $\Sigma^{(2)}$ fits in an optimal way to the signature table for $\Real(iZ)$.  Note that in (\ref{e factorisation R^1}) the sign of $\pm i\tau Z(\zeta)$ in $R_L^{(1)}$ and $R_R^{(1)}$ is depending on wether $\zeta\in[-1,1]$ or not.\\
In general $\mathcal{W}$ cannot be chosen as a  holomorphic function. Because it has the specific form (\ref{e def W}) we find
\begin{equation}\label{e db M^2}
    \db M^{(2)}=M^{(1)}\db\mathcal{W} =M^{(2)}\mathcal{W}^{-1}\db\mathcal{W} =M^{(2)}\db\mathcal{W}.
\end{equation}
The jump condition given by (\ref{e def R^2-}) and (\ref{e def R^2+}) and the lack of analyticity are summarized in the following mixed $\db$-RHP:
\begin{samepage}
\begin{framed}
\begin{dbrhp}\label{dbrhp M^2}
For given functions $\rho,\breve{\rho}$ and $\tau\in\R$, find a $2\times 2$-matrix valued function $\C\setminus\Sigma^{(2)} \ni \zeta\mapsto M^{(2)}(\tau;\zeta)$ which satisfies
\begin{enumerate}
  \item $M^{(2)}(\tau;\cdot)$ has continuous first partial derivatives in $\C\setminus\Sigma^{(2)}$ (with respect to $\zeta$).
  \item $M^{(2)}(\tau;\zeta)=1+\mathcal{O} \left(\frac{1}{\zeta}\right)$ as $|\zeta|\to\infty$.
  \item The non-tangential boundary values $M^{(2)}_{\pm}(\tau;\zeta)$ exist for $\zeta\in\Sigma^{(2)}$ and satisfy the jump relation
      \begin{equation}\label{e jump M^2}
        M^{(2)}_+=M^{(2)}_-(1+R_{\tau}^{(2)}),
      \end{equation}
      where $R_{\tau}^{(2)}=R_{\tau}^{(2)}(\zeta)$ is given in (\ref{e def R^2-}) and (\ref{e def R^2+}).
  \item The relation (\ref{e db M^2}) holds in $\C\setminus\Sigma^{(2)}$.
\end{enumerate}
\end{dbrhp}
\end{framed}
\end{samepage}
\textbf{Step 3:} The idea of the third step is to split the mixed $\db$-RHP \ref{dbrhp M^2} into a pure RHP and a pure $\db$-problem.  For consistency of notation we set
\begin{equation}\label{e def R^3 Sigma^3}
    \Sigma^{(3)}:=\Sigma^{(2)},\qquad
    R_{\tau}^{(3)}:=R_{\tau}^{(2)},
\end{equation}
and define $M^{(3)}$ to be the solution of the normalized pure Riemann-Hilbert problem RHP($\Sigma^{(3)}$,$R^{(3)}$). Notice that RHP($\Sigma^{(3)}$,$R^{(3)}$) and $\db$-RHP \ref{dbrhp M^2} would coincide if $\mathcal{W}$ was analytic. In general we have $M^{(2)}\neq M^{(3)}$ and seek for a function $D$ such that
\begin{equation}\label{e equ M^2=DM^3}
    M^{(2)}(\tau;\zeta)=D(\tau;\zeta)M^{(3)}(\tau;\zeta).
\end{equation}
It follows, that $D$ has to be continuous in the entire $\C$-plane. Furthermore, the following equation must hold:
\begin{equation}\label{e db D}
    \db D(\tau;\zeta)=D(\tau;\zeta) \Upsilon(\tau;\zeta),\quad\text{ where }\Upsilon(\tau;\zeta):=M^{(3)}(\tau;\zeta) \db\mathcal{W}(\tau;\zeta) \left[M^{(3)}(\tau;\zeta)\right]^{-1}.
\end{equation}
It is easy to check (\ref{e db D}) by direct calculations and we arrive at the following pure $\db$-problem for $D$.
\begin{samepage}
\begin{framed}
\begin{delbar}\label{db}
For each $\tau\in\R^+$, find a $2\times 2$-matrix valued function $\C\ni \zeta\mapsto D(\tau;\zeta)$ which satisfies
\begin{enumerate}
  \item $D(\tau;\zeta)$ is continuous in $\C$ (with respect to the parameter $\zeta$).
  \item $D(\tau;\zeta)\to1$ as $|\zeta|\to\infty$.
  \item The relation (\ref{e db D}) is satisfied.
\end{enumerate}
\end{delbar}
\end{framed}
\end{samepage}
We will solve $\db$-Problem \ref{db} in Section \ref{s pure db}. It turns out that the contribution of $D$ in (\ref{e equ M^2=DM^3}) is of order $\tau^{-3/4}$, see Lemma \ref{l existence and asymptotic of db problem}. Thus, the asymptotic (\ref{e def q^as}) is mainly determined by $M^{(3)}$ which represents the Riemann-Hilbert part of the mixed $\db$-RHP \ref{dbrhp M^2}. Section \ref{s pure RHP} is devoted to RHP($\Sigma^{(3)}$,$R^{(3)}$). In contrast to, for instance, NLS and DNLS equation, at this point RHP($\Sigma^{(3)}$,$R^{(3)}$) is not solvable directly and some further work is required. The following two remaining steps are necessary.\\
\\
\textbf{Step 4:} The next step separates out the influence of the jumps on the two crosses $\Sigma^{(4-)}$ and $\Sigma^{(4+)}$, see Figure \ref{f Sigma4}.
\begin{figure}
\begin{center}
\begin{tikzpicture}
\draw 	[->]  	(-6,0) -- (-1,0) ;
\draw 	[->]  	(1,0) -- (6,0) ;
\draw 	[->]  	(-2,-2.5) -- (-2,2.5) ;
\draw 	[->]  	(2,-2.5) -- (2,2.5) ;
\draw 	[->, thick]  	(-5.5,2.5) -- (-4,1) ;
\draw 	[->, thick]  	(-3,0) -- (-2.5,.5) ;
\draw 	[->, thick]  	(-5.5,-2.5) -- (-4,-1) ;
\draw 	[->, thick]  	(-3,0) -- (-2.5,-.5) ; ;
\draw 	[->, thick]  	(2,1) -- (2.5,0.5) ;
\draw 	[->, thick]  	(3,0) -- (4,1) ;
\draw 	[->, thick]  	(2,-1) -- (2.5,-0.5) ;
\draw 	[->, thick]  	(3,0) -- (4,-1) ;
\draw 	[thick]  	(-4,1) -- (-3,0) ;
\draw 	[thick]  	(-2,1) -- (-2.5,.5) ;
\draw 	[thick]  	(-4,-1) -- (-3,0) ;
\draw 	[thick]  	(-2,-1) -- (-2.5,-.5) ;
\draw 	[thick]  	(2.5,.5) -- (3,0) ;
\draw 	[thick]  	(4,1) -- (5.5,2.5) ;
\draw 	[thick]  	(2.5,-.5) -- (3,0) ;
\draw 	[thick]  	(4,-1) -- (5.5,-2.5) ;
\draw 	[thick]  	(-3,.1) -- (-3,-.1) ;
\draw 	[thick]  	(3,.1) -- (3,-.1) ;

\node at (-3,-.4) {$-1$};
\node at (3,-.4) {$1$};
\node at (-5.6,1.9) {$\Sigma^{(4-)}_1$};
\node at (-2.5,1.1) {$\Sigma^{(4-)}_2$};
\node at (2.6,1.1) {$\Sigma^{(4+)}_1$};
\node at (5.55,1.9) {$\Sigma^{(4+)}_2$};
\node at (-5.5,-1.9) {$\Sigma^{(4-)}_3$};
\node at (-2.6,-1.1) {$\Sigma^{(4-)}_4$};
\node at (2.6,-1) {$\Sigma^{(4+)}_3$};
\node at (5.6,-1.9) {$\Sigma^{(4+)}_4$};

\draw [fill] (-2,1) circle [radius=0.05];
\draw [fill] (-2,-1) circle [radius=0.05];
\draw [fill] (2,1) circle [radius=0.05];
\draw [fill] (2,-1) circle [radius=0.05];
\end{tikzpicture}
\end{center}
  \caption{The two crosses $\Sigma^{(4-)}$ and $\Sigma^{(4+)}$.} \label{f Sigma4}
\end{figure}
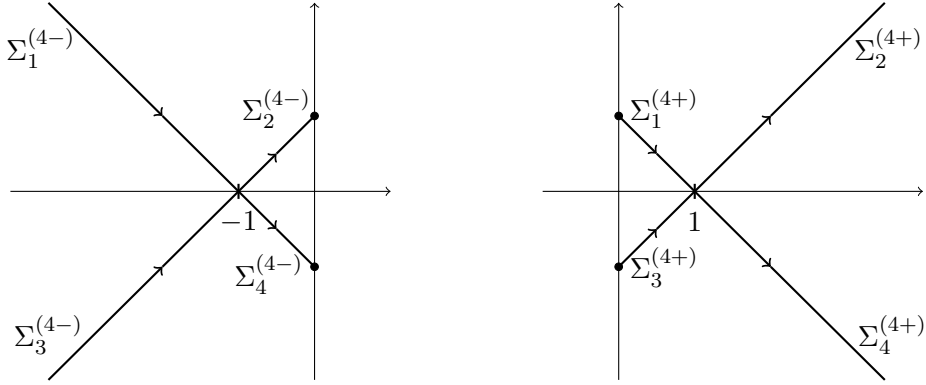
Let us split the jump matrix $R_{\tau}^{(3)}$ ($=R_{\tau}^{(2)}$) in the following way.
\begin{equation*}
    R_{\tau}^{(3)}(\zeta)= R_{\tau}^{(4-)}(\zeta) +R_{\tau}^{(4+)}(\zeta),
\end{equation*}
where
\begin{equation*}
    R_{\tau}^{(4-)}(\zeta)=0\quad\text{for }\zeta\in\Sigma^{(4+)},\qquad
    R_{\tau}^{(4+)}(\zeta)=0\quad\text{for }\zeta\in\Sigma^{(4-)}.
\end{equation*}
To each cross and corresponding jump matrix we associate a Riemann-Hilbert problem. That is, we look at solutions $M^{(4\pm)}$ of RHP($\Sigma^{(4\pm)}$,$R^{(4\pm)}$) and consider the assigned functions $q^{(4\pm)}(\tau)$. In Proposition \ref{p M^3 approx M^4} we will show that $M^{(3)}$ is approximated by the product $M^{(4-)}M^{(4+)}$ and thus $q^{(3)}(\tau)$ is approximated by the sum $q^{(4-)}(\tau)+q^{(4+)}(\tau)$. Since there are no explicit expressions available for $M^{(4-)}$ and $M^{(4+)}$ we introduce the final step 5.\\
\\
\textbf{Step 5:}
In the last step we end up with two model RHP's for which explicit solutions are known. The first model RHP is obtained from RHP($\Sigma^{(4-)}$,$R_{\tau}^{(4-)}$), by replacing the phase $Z(\zeta)$ occurring in $R_{\tau}^{(4-)}$ with its second-order Taylor expansion around the negative stationary phase point $-1$:
\begin{equation*}
    Z(\zeta)=-1-\frac{1}{2}(\zeta+1)^2 +\mathcal{O}\left(|\zeta+1|^3\right),\quad\text{as }\zeta\to -1.
\end{equation*}
We also enlarge the rays $\Sigma^{(4-)}_2$ and $\Sigma^{(4-)}_4$ and define, see Figure \ref{f Sigma5},
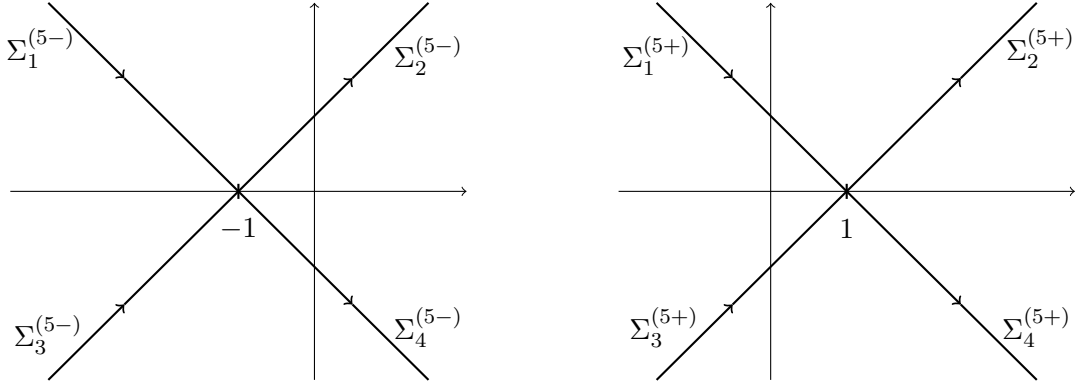
\begin{figure}
\begin{center}
\begin{tikzpicture}
\draw 	[->]  	(-7,0) -- (-1,0) ;
\draw 	[->]  	(1,0) -- (7,0) ;
\draw 	[->]  	(-3,-2.5) -- (-3,2.5) ;
\draw 	[->]  	(3,-2.5) -- (3,2.5) ;
\draw 	[->, thick]  	(-6.5,2.5) -- (-5.5,1.5) ;
\draw 	[->, thick]  	(-4,0) -- (-2.5,1.5) ;
\draw 	[->, thick]  	(-6.5,-2.5) -- (-5.5,-1.5) ;
\draw 	[->, thick]  	(-4,0) -- (-2.5,-1.5) ;
\draw 	[->, thick]  	(1.5,2.5) -- (2.5,1.5) ;
\draw 	[->, thick]  	(4,0) -- (5.5,1.5) ;
\draw 	[->, thick]  	(1.5,-2.5) -- (2.5,-1.5) ;
\draw 	[->, thick]  	(4,0) -- (5.5,-1.5) ;
\draw 	[thick]  	(-5.5,1.5) -- (-4,0) ;
\draw 	[thick]  	(-1.5,-2.5) -- (-2.5,-1.5) ;
\draw 	[thick]  	(-5.5,-1.5) -- (-4,0) ;
\draw 	[thick]  	(-1.5,2.5) -- (-2.5,1.5) ;
\draw 	[thick]  	(2.5,1.5) -- (4,0) ;
\draw 	[thick]  	(6.5,-2.5) -- (5.5,-1.5) ;
\draw 	[thick]  	(2.5,-1.5) -- (4,0) ;
\draw 	[thick]  	(6.5,2.5) -- (5.5,1.5) ;
\draw 	[thick]  	(-4,.1) -- (-4,-.1) ;
\draw 	[thick]  	(4,.1) -- (4,-.1) ;

\node at (-4,-.5) {$-1$};
\node at (4,-.5) {$1$};
\node at (-6.6,1.9) {$\Sigma^{(5-)}_1$};
\node at (-1.5,1.8) {$\Sigma^{(5-)}_2$};
\node at (1.5,1.8) {$\Sigma^{(5+)}_1$};
\node at (6.55,1.9) {$\Sigma^{(5+)}_2$};
\node at (-6.5,-1.9) {$\Sigma^{(5-)}_3$};
\node at (-1.5,-1.8) {$\Sigma^{(5-)}_4$};
\node at (1.6,-1.8) {$\Sigma^{(5+)}_3$};
\node at (6.5,-1.8) {$\Sigma^{(5+)}_4$};

\end{tikzpicture}
\end{center}
  \caption{The augmented crosses $\Sigma^{(5-)}$ and $\Sigma^{(5+)}$.} \label{f Sigma5}
\end{figure}
\begin{equation*}
  \Sigma^{(5-)}=\Sigma^{(5-)}_1\cup...\cup\Sigma^{(5-)}_4:= \left[(e^{-i\pi/4}\R_-)\cup (e^{i\pi/4}\R_+)\cup ( e^{i\pi/4}\R_-)\cup (e^{-i\pi/4}\R_+)\right]-1.
\end{equation*}
Now by extending $R_{\tau}^{(4-)}$ in a natural way we set
\begin{equation}\label{e def R^5-}
    R_{\tau}^{(5-)}(\zeta):=
    \left\{
      \begin{array}{ll}    \vspace{1mm}
        \left[
          \begin{array}{cc}
             0& \breve{\rho}(-1)(\delta_0^{-})^{-2} (-(\zeta+1))^{-2i\nu_0^{-}} e^{-i\tau\left(1+\frac{1}{2}(\zeta+1)^2\right)} \\
            0 & 0 \\
          \end{array}
        \right]
        , & \zeta\in\Sigma^{(5-)}_1\\ \vspace{1mm}
        \left[
          \begin{array}{cc}
            0 & 0 \\
            \frac{\rho(-1)}{1+\rho(-1)\breve{\rho}(-1)} (\delta_0^{-})^{2} (-(\zeta+1))^{2i\nu_0^{-}}e^{i\tau \left(1+\frac{1}{2}(\zeta+1)^2\right)} & 0 \\
          \end{array}
        \right]
        , &  \zeta\in\Sigma^{(5-)}_2,\\ \vspace{1mm}
        \left[
          \begin{array}{cc}
            0 & 0 \\
            \rho(-1)(\delta_0^{-})^{2} (-(\zeta+1))^{2i\nu_0^{-}} e^{i\tau\left(1+\frac{1}{2}(\zeta+1)^2\right)} & 0 \\
          \end{array}
        \right], & \zeta\in\Sigma^{(5-)}_3,\\ \vspace{1mm}
        \left[
          \begin{array}{cc}
             0& \frac{\breve{\rho}(-1)} {1+\rho(-1)\breve{\rho}(-1)}(\delta_0^{-})^{-2} (-(\zeta+1))^{-2i\nu_0^{-}} e^{-i\tau\left(1+\frac{1}{2}(\zeta+1)^2\right)} \\
            0 & 0 \\
          \end{array}
        \right], &\zeta\in\Sigma^{(5-)}_4. \\
      \end{array}
    \right.
\end{equation}
Analogously we derive a model RHP from RHP($\Sigma^{(4+)}$,$R^{(4+)}$) by setting
\begin{equation*}
  \Sigma^{(5+)}=\Sigma^{(5+)}_1\cup...\cup\Sigma^{(5+)}_4:= \left[(e^{-i\pi/4}\R_-)\cup (e^{i\pi/4}\R_+)\cup ( e^{i\pi/4}\R_-)\cup (e^{-i\pi/4}\R_+)\right]+1
\end{equation*}
and defining
\begin{equation}\label{e def R^5+}
    R_{\tau}^{(5+)}(\zeta):=
    \left\{
      \begin{array}{ll}    \vspace{1mm}
        \left[
          \begin{array}{cc}
             0&0 \\
            \frac{\rho(1)}{1+\rho(1)\breve{\rho}(1)} (\delta_0^{+})^{2} (\zeta-1)^{-2i\nu_0^{+}}e^{-i\tau \left(1+\frac{1}{2}(\zeta-1)^2\right)} & 0 \\
          \end{array}
        \right]
        , & \zeta\in\Sigma^{(5+)}_1\\ \vspace{1mm}
        \left[
          \begin{array}{cc}
            0 &  \breve{\rho}(1)(\delta_0^{+})^{-2} (\zeta-1)^{2i\nu_0^{+}} e^{i\tau\left(1+\frac{1}{2}(\zeta-1)^2\right)}  \\
            0 & 0 \\
          \end{array}
        \right]
        , &  \zeta\in\Sigma^{(5+)}_2,\\ \vspace{1mm}
        \left[
          \begin{array}{cc}
             0& \frac{\breve{\rho}(1)} {1+\rho(1)\breve{\rho}(1)} (\delta_0^{+})^{-2}(\zeta-1)^{2i\nu_0^{+}} e^{i\tau \left(1+\frac{1}{2}(\zeta-1)^2\right)} \\
            0 & 0 \\
          \end{array}
        \right]
        , & \zeta\in\Sigma^{(5+)}_3,\\ \vspace{1mm}
        \left[
          \begin{array}{cc}
            0 & 0 \\
            \rho(1)(\delta_0^{+})^{2} (\zeta-1)^{-2i\nu_0^{+}} e^{-i\tau\left(1+\frac{1}{2}(\zeta-1)^2\right)} & 0 \\
          \end{array}
        \right]
        , &\zeta\in\Sigma^{(5+)}_4. \\
      \end{array}
    \right.
\end{equation}
To compute the solution of RHP($\Sigma^{(5-)}$,$R^{(5-)}$) is subject of Subsection \ref{s two model RHPs}. In subsection \ref{s M^4 approx M^5} we show that the two solutions $M^{(5\pm)}$ approximate the two solutions  $M^{(4\pm)}$ in the sense that $M^{(4\pm)}=F^{(\pm)}M^{(5\pm)}$ with matrix functions $F^{(\pm)}$ close to identity, see (\ref{e solution formula M^4}) and Proposition \ref{p M^4 approx M^5}.\\
\\
\textbf{Regrouping of the transformations:}
We close this summary with regrouping the above explained transformations. Recalling successively (\ref{e def M^1}), (\ref{e def M^2}) and  (\ref{e equ M^2=DM^3}) we obtain
\begin{equation*}
  M^{(0)}(\tau;\zeta)= D(\tau;\zeta)\,M^{(3)}(\tau;\zeta)\, \left[\mathcal{W}(\tau;\zeta)\right]^{-1} [\delta(\zeta)]^{-\sigma_3}.
\end{equation*}
Using $\mathcal{W}(\tau;\zeta)=1$ for $\zeta=0$ and $\zeta\in\Omega_9\cup\Omega_{10}$ and making use of the fact that $[\delta(\zeta)]^{-\sigma_3}$ is diagonal we derive the following two solution formulas for the expressions we want to evaluate in Theorem \ref{t main thm general}:
\begin{equation}\label{e M^0 and q^0 solution formula}
  \begin{aligned}
     M^{(0)}(\tau;0)&= D(\tau;0)\,M^{(3)}(\tau;0)\, [\delta(0)]^{-\sigma_3},\\
     q^{(0)}(\tau)&=  \lim_{\zeta\to\infty} \zeta\left[D(\tau;\zeta)\right]_{12} +\lim_{\zeta\to\infty} \zeta\left[M^{(3)}(\tau;\zeta)\right]_{12}.
  \end{aligned}
\end{equation}
Based on these formulas, Theorem \ref{t main thm general} is a direct consequence of Propositions \ref{p M^5} -- \ref{p M^3 approx M^4} and Lemma \ref{l existence and asymptotic of db problem} below. Therein the following is shown:
\begin{center}
\begin{tabular}{|c|l|}
  \hline
  % after \\: \hline or \cline{col1-col2} \cline{col3-col4} ...
  Propositions \ref{p M^5} -- \ref{p M^3 approx M^4} & $\begin{array}{l}
  \displaystyle M^{(3)}(\tau;0)=
       \left[
          \begin{array}{cc}
            1 & 0 \\
            0 & 1 \\
          \end{array}
        \right]+\mathcal{O}(\tau^{-1/2}) \phantom{\int^{\int}}\\
     \displaystyle \lim_{\zeta\to\infty} \zeta\left[M^{(3)}(\tau;\zeta)\right]_{12} =q^{(as)}(\tau)+ \mathcal{O}(\tau^{-1}) \phantom{\int_{\int}}\\
    
   \end{array}$
   \\ \hline
  Lemma \ref{l existence and asymptotic of db problem} & 
   $\begin{array}{l}
     \displaystyle D(\tau;0)=
       \left[
          \begin{array}{cc}
            1 & 0 \\
            0 & 1 \\
          \end{array}
        \right]+\mathcal{O}(\tau^{-3/4}) \phantom{\int^{\int}}\\
     \displaystyle \lim_{\zeta\to\infty} \zeta\left[D(\tau;\zeta)\right]_{12} =\mathcal{O}(\tau^{-3/4}) \phantom{\int_{\int}}
   \end{array}$
  \\
  \hline
\end{tabular}
\end{center}
Estimates (\ref{e M^0(0)}) and (\ref{e asymptotics q}) of Theorem \ref{t main thm general} follow easily by substituting these results into (\ref{e M^0 and q^0 solution formula}).

\section{Preparation for Steepest Descent}\label{s scalar RHP}
Scalar Riemann-Hilbert problems such as RHP \ref{rhp delta} are well understood and explicit representations for their solution are available in terms of the Cauchy operator, see (\ref{e def delta}) below. We will state important global properties of the solution $\delta$ of RHP \ref{rhp delta} in the following subsection and compute the asymptotic behavior at the stationary phase points $\pm 1$ in the subsequent subsection.
\subsection{The scalar Riemann-Hilbert problem}
We define
\begin{equation}\label{e def nu}
  \nu(\zeta):=\frac{1}{2\pi} \log\left(1+\rho(\zeta)\breve{\rho}(\zeta) \right)
\end{equation}
and consider the following function
\begin{equation}\label{e def delta}
  \delta(\zeta):=\exp\left\{\frac{1}{i} \int_{-1}^{1}\frac{\nu(s)}{s-\zeta}ds\right\},\quad \zeta\in\C\setminus [-1,1].
\end{equation}
The following can be found in many works, see for instance \cite{Deift1994}.
\begin{prop}\label{p properties delta}
    The function $\delta$ defined in (\ref{e def delta}) satisfies the following:
    \begin{itemize}
      \item[(i)] $\delta$ is a solution of Riemann-Hilbert problem \ref{rhp delta}.
      \item[(ii)] For $\mp \Imag(\zeta)>0$ we have $|\delta^{\pm 1}(\zeta)|\leq 1$.
      \item[(iii)] For $\zeta\notin [-1,1]$ we have $e^{-\|\nu\|_{L^{\infty}}/2}\leq  |\delta(\zeta)|\leq e^{\|\nu\|_{L^{\infty}}/2}$.
    \end{itemize}
\end{prop}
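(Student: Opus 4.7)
The plan is to verify parts (i)--(iii) directly from the explicit representation (\ref{e def delta}). The core tools are the Plemelj--Sokhotski formula for part (i), and the reality of $\nu$ on $[-1,1]$---a consequence of (\ref{e assumption4}) together with the positivity $1+\rho\breve{\rho} > 0$ guaranteed by (\ref{e 1+w|r|^2>c})---which lets me write $\log|\delta|$ as a Poisson-kernel-weighted integral of $\nu$ for parts (ii) and (iii).

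For (i), I would check the three defining properties in turn. Analyticity in $\C\setminus[-1,1]$ is immediate because $\nu$ is bounded on the compact interval $[-1,1]$ and the Cauchy kernel $(s-\zeta)^{-1}$ is analytic in $\zeta$ off the support. Expanding the kernel in geometric series for large $|\zeta|$ gives $\int_{-1}^1 \nu(s)/(s-\zeta)\,ds = -\zeta^{-1}\int_{-1}^1\nu + O(\zeta^{-2})$, hence $\delta(\zeta) = 1+O(\zeta^{-1})$. The jump condition follows from the Plemelj--Sokhotski formula
\begin{equation*}
  \log\delta_{\pm}(\zeta) = \frac{1}{i}\,\mathrm{PV}\!\int_{-1}^1 \frac{\nu(s)}{s-\zeta}\,ds \pm \pi\nu(\zeta),\qquad \zeta\in(-1,1),
\end{equation*}
so $\delta_+/\delta_- = e^{2\pi\nu} = 1+\rho\breve{\rho}$ on $(-1,1)$, while on $\R\setminus[-1,1]$ the kernel is continuous in $\zeta$ and consequently $\delta_+ = \delta_-$.

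For parts (ii) and (iii), taking the real part of the exponent in (\ref{e def delta}) and exploiting that $\nu$ is real-valued on $[-1,1]$ yields the identity
\begin{equation*}
  \log|\delta(\zeta)| = \mathrm{Im}(\zeta)\int_{-1}^{1} \frac{\nu(s)}{|s-\zeta|^2}\,ds.
\end{equation*}
Part (ii) is then read off from the sign structure of this expression: the integrand inherits its sign from $\nu$, and one pairs this with the sign of $\mathrm{Im}(\zeta)$ to conclude. For (iii), I would use the elementary Poisson-kernel computation
\begin{equation*}
  |\mathrm{Im}(\zeta)|\int_{-1}^1 \frac{ds}{|s-\zeta|^2} = \arctan\!\left(\frac{1-\mathrm{Re}(\zeta)}{|\mathrm{Im}(\zeta)|}\right)+ \arctan\!\left(\frac{1+\mathrm{Re}(\zeta)}{|\mathrm{Im}(\zeta)|}\right),
\end{equation*}
which is uniformly bounded by $\pi$ for $\zeta\notin[-1,1]$, to obtain a bound $|\log|\delta(\zeta)|| \lesssim \|\nu\|_{L^\infty}$. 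To sharpen the constant to the form stated in (iii), I would alternatively invoke the maximum modulus principle in each half-plane: on $\R\setminus[-1,1]$ the identity above gives $|\delta|\equiv 1$ (the integrand becomes real), while on $[-1,1]$ the Plemelj--Sokhotski step from part (i) yields $|\delta_{\pm}(s)| = e^{\pm\pi\nu(s)}$. The main technical obstacle is ensuring $|\delta|$ remains bounded up to the endpoints $\pm 1$, where the Cauchy integral is nearly singular; this is handled by the Hölder regularity of $\nu$ inherited from $\rho,\breve{\rho}\in H^{1,1}(\R)$ and the standard local analysis of Cauchy integrals with Hölder densities near the endpoints of their support.
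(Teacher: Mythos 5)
The paper offers no proof of Proposition \ref{p properties delta} --- it simply cites \cite{Deift1994} --- so there is no in-text argument to compare against; I review your attempt on its own merits. Part (i) is fine: Plemelj--Sokhotski applied to the exponent of (\ref{e def delta}) gives $\delta_+/\delta_- = e^{2\pi\nu} = 1+\rho\breve\rho$ on $(-1,1)$, analyticity and the normalization at infinity are routine, and off $[-1,1]$ the kernel is continuous so there is no jump. The real-part identity $\log|\delta(\zeta)| = \Imag(\zeta)\int_{-1}^1 \nu(s)\,|s-\zeta|^{-2}\,ds$ is also correct; it identifies $\log|\delta|$ with $\pi$ times the Poisson integral of $\nu\,\mathbb{1}_{[-1,1]}$.

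The gap is in part (ii). You assert that ``the integrand inherits its sign from $\nu$'', which is only decisive if $\nu$ is sign-definite on $[-1,1]$. For either choice of $(\rho,\breve\rho)$ arising from Lemma \ref{l M and M hat <-> M^0}, one has $\rho(\zeta)\breve\rho(\zeta) = w_0\zeta\,|r(w_0\zeta)|^2$ (resp.\ $z_0\zeta\,|\widehat r(z_0\zeta)|^2$), which is negative for $\zeta<0$ and positive for $\zeta>0$; nothing in (\ref{e assumption1})--(\ref{e assumption4}) rules this out. Hence $\nu = \frac{1}{2\pi}\log(1+\rho\breve\rho)$ changes sign at $\zeta=0$, so $\int_{-1}^1 \nu(s)|s-\zeta|^{-2}\,ds$ has no fixed sign and the conclusion cannot be ``read off.'' Concretely, for $t\in(-1,0)$ with $\nu(t)<0$, sending $\eps\downarrow 0$ in $\zeta=t+i\eps$ gives $\log|\delta(\zeta)|\to\pi\nu(t)<0$ while $\Imag\zeta>0$, so the pairing-of-signs step fails. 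The argument needs the extra hypothesis $\nu\geq 0$ (true for defocusing NLS, the setting of the cited reference, but not here), and absent a different statement part (ii) cannot be established this way.

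Two smaller points on (iii). Your Poisson bound yields $|\log|\delta(\zeta)||\leq\pi\|\nu\|_{L^\infty}$, i.e.\ $e^{-\pi\|\nu\|_\infty}\leq|\delta|\leq e^{\pi\|\nu\|_\infty}$, and you yourself compute the boundary values $|\delta_\pm(s)|=e^{\pm\pi\nu(s)}$ on the cut, which shows $\pi\|\nu\|_\infty$ is the correct (and sharp) exponent; neither the Poisson integral nor the maximum-modulus step can reproduce the stated factor $1/2$, which strongly suggests the constant in the statement is written for $\|\log(1+\rho\breve\rho)\|_{L^\infty}$ rather than the $\nu$ of (\ref{e notation 1}). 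Second, the worry about endpoint behavior near $\pm 1$ is unnecessary: the inequality $|\Imag\zeta|\int_{-1}^1 |s-\zeta|^{-2}\,ds\leq\pi$ holds uniformly for $\zeta\in\C\setminus[-1,1]$, including arbitrarily close to $\pm1$, so no H\"older-regularity or local Cauchy-integral analysis is needed for boundedness; that machinery only becomes relevant in Proposition \ref{p delta near +-1}.
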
 
\subsection{Asymptotics near the Stationary Phase Points}
In this subsection we address the asymptotic behavior of $\delta(\zeta)$ near the stationary phase points $-1$ and $+1$. Using the notation introduced in (\ref{e def nu}) we set
\begin{equation}\label{e def nu0}
    \nu_0^{\pm}:=\nu(\pm 1).
\end{equation}
Now, let $\zeta\in\C\setminus [-1,\infty)$ and use $-\pi<\arg(-(\zeta+1))<\pi$ to define $(-(\zeta+1))^{i\nu_0^{-}}$.
Then, for $\zeta\in [-1,\infty)$ we compute,
\begin{equation*}
    \lim_{\eps \downarrow 0}(-(\zeta+i\eps+1))^{i\nu_0^{-}}=
    \left(\lim_{\eps \downarrow 0}(-(\zeta-i\eps+ 1))^{i\nu_0^{-}}\right)(1+\rho(-1)\breve{\rho}(-1)),
\end{equation*}
and we learn that locally for $\zeta\to -1$ the two functions $(-(\,\cdot\,+1))^{i\nu_0^{-}}$ and $\delta(\,\cdot\,)$ as given in (\ref{e def delta}) satisfy the same jump condition. Analogously, let $\zeta\in\C\setminus (-\infty,1]$ and use $-\pi<\arg(\zeta-1)<\pi$ to define $(\zeta-1)^{i\nu_0^{+}}$. It then turns out that the function $(\,\cdot\,-1)^{i\nu_0^{+}}$ fulfills for $\zeta\to +1$ the same jump condition as the function $\delta$ defined in (\ref{e def delta}). In fact, we have the following asymptotics.
\begin{prop}\label{p delta near +-1}
    Let $\rho,\breve{\rho}\in H^{1}(\R)$ and $\delta$ given by (\ref{e def delta}). There exists a constants $\delta^{\pm}_0\in\C$ with $|\delta^{\pm}_0|=1$ and $c>0$ such that
    \begin{equation}\label{e asymptotic delta -1}
    \left|\delta(\zeta)-\delta_0^{-}\cdot (-(\zeta+1))^{i\nu_0^{-}}\right|\leq c \left(\|\rho\|_{H^{1}(\R)}+ \|\breve{\rho}\|_{H^{1}(\R)}\right)|\zeta+1|^{1/2}, \quad \text{for all }\zeta\in\C\setminus[-1,\infty),
    \end{equation}
    and
    \begin{equation}\label{e asymptotic delta +1}
    \left|\delta(\zeta)-\delta_0^{+}\cdot(\zeta- 1)^{-i\nu_0^{+}}\right|\leq c\left(\|\rho\|_{H^{1}(\R)}+ \|\breve{\rho}\|_{H^{1}(\R)}\right)|\zeta- 1|^{1/2}, \quad \text{for all }\zeta\in\C\setminus(-\infty,1].
    \end{equation}
    Here the complex powers of $\mp(\zeta\pm1)$ are defined with the branch of the logarithm as described above: $-\pi<\arg(-(\zeta+1))<\pi$ in (\ref{e asymptotic delta -1}) and $-\pi<\arg(\zeta-1)<\pi$ in (\ref{e asymptotic delta +1}), respectively.
\end{prop}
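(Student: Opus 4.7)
The strategy is to extract the explicit singular behavior of $\delta$ at $\zeta=-1$ by algebra, define $\delta_0^-$ from the $\zeta$-independent remainder, and then control the error by using the $H^1$ (hence Hölder-$\tfrac{1}{2}$) regularity of $\nu$. I focus on $-1$; the case of $+1$ is symmetric. The starting point is the splitting
\begin{equation*}
\tfrac{1}{i}\int_{-1}^1\tfrac{\nu(s)}{s-\zeta}\,ds = \tfrac{\nu_0^-}{i}\int_{-1}^1\tfrac{ds}{s-\zeta} + \tfrac{1}{i}\int_{-1}^1\tfrac{\nu(s)-\nu_0^-}{s-\zeta}\,ds,
\end{equation*}
in which the first term evaluates to $\tfrac{\nu_0^-}{i}\log\bigl((1-\zeta)/(-(\zeta+1))\bigr)$ with principal branches on $\C\setminus[-1,\infty)$, and hence exponentiates to $(-(\zeta+1))^{i\nu_0^-}(1-\zeta)^{-i\nu_0^-}$. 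In the second integral I decompose $\frac{1}{s-\zeta}=\frac{1}{s+1}+\frac{\zeta+1}{(s-\zeta)(s+1)}$ and absorb the $\zeta$-independent part into
\begin{equation*}
\delta_0^- := 2^{-i\nu_0^-}\exp\Bigl\{\tfrac{1}{i}\int_{-1}^1\tfrac{\nu(s)-\nu_0^-}{s+1}\,ds\Bigr\}.
\end{equation*}
Assumption (\ref{e assumption4}) forces $\nu$ and hence $\nu_0^-$ to be real, so both factors above have modulus $1$, and $|\delta_0^-|=1$. (Arithmetic rearrangement recovers the more symmetric form (\ref{e notation 3}).)

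The chain rule $\nu'=\tfrac{1}{2\pi}(\rho\breve\rho)'/(1+\rho\breve\rho)$, together with (\ref{e assumption3}) and the embedding $H^1\hookrightarrow L^\infty$, yields $\|\nu\|_{H^1(\R)}\leq c\bigl(\|\rho\|_{H^1}+\|\breve\rho\|_{H^1}\bigr)$; and $\nu(-1)=\nu_0^-$ combined with $H^1\hookrightarrow C^{1/2}$ gives $|\nu(s)-\nu_0^-|\leq c\|\nu\|_{H^1}|s+1|^{1/2}$ for $s\in[-1,1]$. Substituting this bound into
\begin{equation*}
E(\zeta):=\tfrac{\zeta+1}{i}\int_{-1}^1\tfrac{\nu(s)-\nu_0^-}{(s-\zeta)(s+1)}\,ds,
\end{equation*}
the desired estimate $|E(\zeta)|\leq c\|\nu\|_{H^1}|\zeta+1|^{1/2}$ reduces, after the substitution $u=s+1$, to the direction-uniform inequality
\begin{equation*}
|\zeta+1|\int_0^2\frac{du}{|u-(\zeta+1)|\,u^{1/2}}\leq c\,|\zeta+1|^{1/2},\qquad \zeta\in\C\setminus[-1,\infty).
\end{equation*}
Writing $\zeta+1=re^{i\theta}$ with $e^{i\theta}\notin\R_+$, I would split the $u$-integral at the locus $|u-r\cos\theta|=r|\sin\theta|$: on the near disc $|u-(\zeta+1)|\geq r|\sin\theta|$ from the imaginary part alone, while on its complement $|u-(\zeta+1)|\geq|u-r\cos\theta|/\sqrt{2}$. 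Both contributions then turn out to be $O(r^{-1/2})$ uniformly in $\theta$, which is the main technical input of the proof.

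The proof closes by writing $\delta(\zeta)=\delta_0^-(-(\zeta+1))^{i\nu_0^-}\bigl((1-\zeta)/2\bigr)^{-i\nu_0^-}e^{E(\zeta)/i}$, observing that $\bigl((1-\zeta)/2\bigr)^{-i\nu_0^-}=1+O(|\zeta+1|)$ near $-1$, and using the elementary inequality $|e^A-1|\leq|A|e^{|A|}$ together with the uniform bound $|(-(\zeta+1))^{i\nu_0^-}|\leq e^{\pi|\nu_0^-|}$ and Proposition \ref{p properties delta}(iii) to convert the bound on the exponent $E(\zeta)$ into the stated bound (\ref{e asymptotic delta -1}). The main obstacle is the direction-uniform integral estimate above: $\zeta$ may approach $-1$ tangentially to the excluded ray $[-1,\infty)$, and it is exactly the splitting at $|u-r\cos\theta|=r|\sin\theta|$ that trades the apparent $|\sin\theta|^{-1}$ blow-up in each region for a cancellation that leaves a $\theta$-independent constant. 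The argument at $+1$ is identical with the roles of $\pm 1$ interchanged.
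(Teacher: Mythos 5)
Your decomposition---peeling off the constant $\nu_0^-$, evaluating $\tfrac{\nu_0^-}{i}\int_{-1}^1\frac{ds}{s-\zeta}$ explicitly, and absorbing the $\zeta$-independent residue into $\delta_0^-$---is structurally the same as the paper's (which subtracts the piecewise-linear interpolant $\varphi$ vanishing at both $\pm1$, then splits off the explicit parts $\mathfrak{b}^{(\pm)}$), and your formula for $\delta_0^-$ matches (\ref{e notation 3}) after rearrangement. The genuine gap is the scalar inequality you reduce everything to,
\begin{equation*}
|\zeta+1|\int_0^2\frac{du}{|u-(\zeta+1)|\,u^{1/2}}\leq c\,|\zeta+1|^{1/2},\qquad \zeta\in\C\setminus[-1,\infty),
\end{equation*}
which is \emph{false} uniformly in direction: it fails by a logarithm as $\arg(\zeta+1)\to 0$. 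Take $\zeta+1=r+i\varepsilon$ with $0<\varepsilon\ll r<1$. On $[r/2,2r]$ one has $u^{1/2}\leq\sqrt{2r}$ while $\int_{r/2}^{2r}\bigl((u-r)^2+\varepsilon^2\bigr)^{-1/2}du\asymp\log(r/\varepsilon)$, so the left-hand side is at least $c'\sqrt{r}\,\log(r/\varepsilon)$, which blows up for fixed $r$ as $\varepsilon\downarrow 0$. Your near/far splitting does control the near piece (where the $\sqrt{r}\,|\sin\theta|$ factors cancel), but the far region picks up exactly this $\log|\cot\theta|$ from the interval $u\approx r\cos\theta$. This is the standard obstruction to proving Plemelj--Privalov type H\"older continuity of Cauchy integrals by termwise absolute-value estimates: replacing $\nu(s)-\nu_0^-$ by its pointwise bound $c\|\nu\|_{H^1}|s+1|^{1/2}$ throws away the cancellation in the Cauchy kernel on which the true estimate depends.

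The paper avoids this by applying \cite[Lemma 3.4]{Cuccagna2014} to the Cauchy transform $\beta$ of the $H^1$-function $\nu-\varphi$; the proof there is not a pointwise bound but a Cauchy--Schwarz (equivalently Fourier) argument. Within your more elementary framework the fix is to integrate by parts before estimating: with $f=\nu-\nu_0^-$, $f(-1)=0$, one has
\begin{equation*}
\int_{-1}^1\frac{f(s)}{s-\zeta}\,ds-\int_{-1}^1\frac{f(s)}{s+1}\,ds=\int_{-1}^1 f'(t)\Bigl[\log\tfrac{1-\zeta}{t-\zeta}-\log\tfrac{2}{t+1}\Bigr]dt,
\end{equation*}
and Cauchy--Schwarz in $t$ reduces the claim to an $L^2_t$-bound on the bracketed logarithmic kernel, which genuinely is $O(|\zeta+1|^{1/2})$ uniformly up to (but not on) the cut. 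The remaining ingredients in your write-up---$|\delta_0^-|=1$ from reality of $\nu$, the expansion $\bigl((1-\zeta)/2\bigr)^{-i\nu_0^-}=1+O(|\zeta+1|)$, the uniform bound $|(-(\zeta+1))^{i\nu_0^-}|\leq e^{\pi|\nu_0^-|}$, and $|e^A-1|\leq|A|e^{|A|}$---are all sound and would assemble correctly once the Cauchy-integral estimate is repaired.
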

\begin{proof}
    In the following we give some standard computations. Let us define the following two functions,
    \begin{equation*}
        \varphi(s):=-s\nu_0^-\mathbb{1}_{[-1,0]}(s) +s\nu_0^+\mathbb{1}_{[0,1]}(s),\qquad \beta(\zeta):=\frac{1}{i} \int_{-1}^{1}\frac{\nu(s)-\varphi(s)}{s-\zeta}ds
    \end{equation*}
    and denote the exponent in (\ref{e def delta}) by $\gamma$ such that $\delta(\zeta)=\exp(\gamma(\zeta))$ and
    \begin{equation*}
        \gamma(\zeta) = \beta(\zeta)-\frac{\nu_0^-}{i} \int_{-1}^{0}\frac{s}{s-\zeta}ds+ \frac{\nu_0^+}{i} \int_{0}^{1}\frac{s}{s-\zeta}ds,\qquad \zeta\notin [-1,1].
    \end{equation*}
    Using the notation
    \begin{equation*}
        \mathfrak{b}^{(-)}(\zeta):=-\frac{\nu_0^-}{i} \int_{-1}^{0}\frac{s}{s-\zeta}ds,\qquad \mathfrak{b}^{(+)}(\zeta):=\frac{\nu_0^+}{i} \int_{0}^{1}\frac{s}{s-\zeta}ds,
    \end{equation*}
    we find $\gamma(\zeta)=\beta(\zeta)+\mathfrak{b}^{(-)}(\zeta) +\mathfrak{b}^{(+)}(\zeta)$. The functions $\mathfrak{b}^{(\pm)}$ are continuous around $\mp 1$. Indeed, we have
    \begin{equation}\label{e mathfrac b}
        |\mathfrak{b}^{(\pm)}(\zeta) -\mathfrak{b}^{(\pm)}(\mp1)|\leq c|\nu_0^{\pm}|\cdot |\zeta\pm1|,
    \end{equation}
    if $\zeta$ is closed to $\mp1$. Since the function $s\mapsto\nu(s)-\varphi(s)$ is in $H^1(\R)$ and has zeroes at $\pm1$ of at least order $1/2$, the values $\beta(\pm1)$ exist and, moreover, as shown in \cite[Lemma 3.4]{Cuccagna2014} we have
    \begin{equation}\label{e |beta(zeta)-beta(+-1)|}
        |\beta(\zeta)-\beta(\mp1)|\leq c\|\nu\|_{H^1(\R)} |\zeta\pm1|^{1/2}.
    \end{equation}
    Now, let $\zeta\in\C\setminus[-1,\infty)$ and choose $-\pi<\arg(-(\zeta+1))<\pi$. An explicit calculation of $\mathfrak{b}^{(-)}$ yields
    \begin{equation*}
        \mathfrak{b}^{(-)}(\zeta)=i\nu_0^-+i\nu_0^-\zeta \log(-\zeta)-i\nu_0^-(\zeta+1)\log(-(\zeta+1))+i\nu_0 \log(-(\zeta+1)),
    \end{equation*}
    and we learn that $|\mathfrak{b}^{(-)}(\zeta)-i\nu_0^--i\nu_0 \log(-(\zeta+1))|\leq c|\nu_0^-|\cdot |\zeta+1|$ if $\zeta$ is closed to $-1$. Making use of (\ref{e mathfrac b}) and (\ref{e |beta(zeta)-beta(+-1)|}), we easily obtain
    \begin{equation*}
        |\gamma(\zeta)-\beta(-1)-\mathfrak{b}^{(+)}(-1)-i\nu_0^--i\nu_0 \log(-(\zeta+1))|  \leq c\|\nu\|_{H^1(\R)}|\zeta+1|^{1/2}.
    \end{equation*}
    This is turn implies that (\ref{e asymptotic delta -1}) holds for
    \begin{equation}\label{e def delta0-}
        \delta_0^-=\exp\{\beta(-1)+\mathfrak{b}^{(+)}(-1) +i\nu_0^-\}.
    \end{equation}
    Using very similar computations estimates around $+1$ can be derived and it follows that
    (\ref{e asymptotic delta +1}) holds for
    \begin{equation}\label{e def delta0+}
        \delta_0^+=\exp\{\beta(1)+\mathfrak{b}^{(-)}(1) -i\nu_0^+\}.
    \end{equation}
    The property $|\delta^{\pm}_0|=1$ is obvious and thus the Proposition is proved.
\end{proof}  
\section{$\overline{\partial}$-extensions of jump factorization}\label{s dbextension}
In this section we will specify the transformation $M^{(1)}\to  M^{(2)}$, given explicitly by (\ref{e def M^2}). As explained in the summary the purpose of this deformation is  to remove the jump along the real axis and introduce jumps on $\Sigma^{(2)}$, where $\Sigma^{(2)}=\Sigma^{(2)}_1\cup...\cup\Sigma^{(2)}_8$ is depicted in Figure \ref{f Sigma2}. Therefore we need to construct the function $\mathcal{W}$ piecewise on $\Omega_1,...,\Omega_{10}$, where we denote by $\Omega_j$ the components of $\C\setminus(\R\cup\Sigma^{(2)})$ as sketched in Figure \ref{f Omega1-8}. 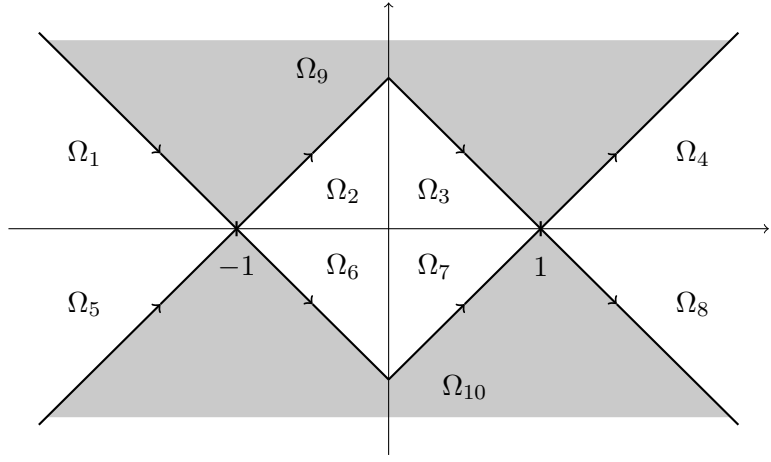
\begin{figure}
\begin{center}
\begin{tikzpicture}
\draw 	[->, thick]  	(-4.6,2.6) -- (-3,1) ;
\draw 	[->, thick]  	(-2,0) -- (-1,1) ;
\draw 	[->, thick]  	(-4.6,-2.6) -- (-3,-1) ;
\draw 	[->, thick]  	(-2,0) -- (-1,-1) ; ;
\draw 	[->, thick]  	(0,2) -- (1,1) ;
\draw 	[->, thick]  	(2,0) -- (3,1) ;
\draw 	[->, thick]  	(0,-2) -- (1,-1) ;
\draw 	[->, thick]  	(2,0) -- (3,-1) ;
\draw 	[->]  	(-5,0) -- (5,0) ;
\draw 	[->]  	(0,-3) -- (0,3) ;
\draw 	[thick]  	(-3,1) -- (-2,0) ;
\draw 	[thick]  	(-1,1) -- (0,2) ;
\draw 	[thick]  	(-3,-1) -- (-2,0) ;
\draw 	[thick]  	(-1,-1) -- (0,-2) ;
\draw 	[thick]  	(1,1) -- (2,0) ;
\draw 	[thick]  	(3,1) -- (4.6,2.6) ;
\draw 	[thick]  	(1,-1) -- (2,0) ;
\draw 	[thick]  	(3,-1) -- (4.6,-2.6) ;
\draw 	[thick]  	(-2,.1) -- (-2,-.1) ;
\draw 	[thick]  	(2,.1) -- (2,-.1) ;

\node at (-2,-.5) {$-1$};
\node at (2,-.5) {$1$};
\node at (-4,1) {$\Omega_1$};
\node at (-.6,.5) {$\Omega_2$};
\node at (.6,.5) {$\Omega_3$};
\node at (4,1) {$\Omega_4$};
\node at (-4,-1) {$\Omega_5$};
\node at (-.6,-.5) {$\Omega_6$};
\node at (.6,-.5) {$\Omega_7$};
\node at (4,-1) {$\Omega_8$};
\node at (-1,2.1) {$\Omega_9$};
\node at (1,-2.1) {$\Omega_{10}$};
\path[fill=black,opacity=0.21] (-4.5,2.5) -- (-2,0) -- (0,2) -- (2,0) -- (4.5,2.5) -- (-4.5,2.5);
\path[fill=black,opacity=0.21] (-4.5,-2.5) -- (-2,0) -- (0,-2) -- (2,0) -- (4.5,-2.5) -- (-4.5,-2.5);
\end{tikzpicture}
\end{center}
  \caption{Decomposition of $\C\setminus(\R\cup\Sigma^{(2)})$ into ten connected components.}\label{f Omega1-8}
\end{figure} 
Assume that functions $R_k:\overline{\Omega}_k\to\C$, $k=1,...,8$ satisfy the following boundary conditions:
\begin{subequations}\label{e def R1...R8}
\begin{align}
   R_1(\zeta)&=
   \left\{
   \begin{aligned}
    &\breve{\rho}(\zeta)\delta^{-2}(\zeta),\\
    &\breve{\rho}(-1)(\delta_0^{-})^{-2} (-(\zeta+1))^{-2i\nu_0^{-}},
   \end{aligned}
   \right.&&
   \begin{aligned}
    &\zeta\in(-\infty,-1) ,\\
    & \zeta\in\Sigma_1^{(2)},
   \end{aligned}
   \label{e def R1}
   \\
   R_2(\zeta)&=
   \left\{
   \begin{aligned}
    &\frac{\rho(\zeta)}{1+\rho(\zeta)\breve{\rho}(\zeta)} \delta_+^{2}(\zeta),\\
    &\frac{\rho(-1)}{1+\rho(-1)\breve{\rho}(-1)} (\delta_0^{-})^{2}(-(\zeta+1))^{2i\nu_0^{-}}(1-\chi(\zeta)),
   \end{aligned}
   \right.&&
   \begin{aligned}
    &\zeta\in(-1,0) ,\\
    & \zeta\in\Sigma_2^{(2)},
   \end{aligned}
   \label{e def R2}
   \\
   R_3(\zeta)&=
   \left\{
   \begin{aligned}
    &\frac{\rho(\zeta)}{1+\rho(\zeta)\breve{\rho}(\zeta)}  \delta_+^{2}(\zeta),\\
    &\frac{\rho(1)}{1+\rho(1)\breve{\rho}(1)} (\delta_0^{+})^{2}(\zeta-1)^{-2i\nu_0^{+}}(1-\chi(\zeta)),
   \end{aligned}
   \right.&&
   \begin{aligned}
    &\zeta\in(0,1) ,\\
    & \zeta\in\Sigma_3^{(2)},
   \end{aligned}
   \label{e def R3}
   \\
   R_4(\zeta)&=
   \left\{
   \begin{aligned}
    &\breve{\rho}(\zeta)\delta^{-2}(\zeta),\\
    & \breve{\rho}(1)(\delta_0^{+})^{-2}(\zeta-1)^{2i\nu_0^{+}},
   \end{aligned}
   \right.&&
   \begin{aligned}
    &\zeta\in(1,\infty) ,\\
    & \zeta\in\Sigma_4^{(2)},
   \end{aligned}
   \label{e def R4}
   \\
   R_5(\zeta)&=
   \left\{
   \begin{aligned}
    &\rho(\zeta)\delta^2(\zeta),\\
    &\rho(-1)(\delta_0^{-})^{2}(-(\zeta+ 1))^{2i\nu_0^{-}},
   \end{aligned}
   \right.&&
   \begin{aligned}
    &\zeta\in(-\infty,-1) ,\\
    &\zeta\in\Sigma_5^{(2)},
   \end{aligned}
   \label{e def R5}
   \\
   R_6(\zeta)&=
   \left\{
   \begin{aligned}
    &\frac{\breve{\rho}(\zeta)} {1+\rho(\zeta)\breve{\rho}(\zeta)}\delta_-^{-2}(\zeta),\\
    &\frac{\breve{\rho}(-1)}{1+\rho(-1)\breve{\rho}(-1)} (\delta_0^{-})^{-2}(-(\zeta+1))^{-2i\nu_0^{-}}(1-\chi(\zeta)),
   \end{aligned}
   \right.&&
   \begin{aligned}
    &\zeta\in(-1,0) ,\\
    & \zeta\in\Sigma_6^{(2)},
   \end{aligned}
   \label{e def R6}
   \\
   R_7(\zeta)&=
   \left\{
   \begin{aligned}
    &\frac{\breve{\rho}(\zeta)} {1+\rho(\zeta)\breve{\rho}(\zeta)}\delta_-^{-2}(\zeta),\\
    & \frac{\breve{\rho}(1)}{1+\rho(1)\breve{\rho}(1)} (\delta_0^{+})^{-2}(\zeta-1)^{2i\nu_0^{+}}(1-\chi(\zeta)),
   \end{aligned}
   \right.&&
   \begin{aligned}
    &\zeta\in(0,1) ,\\
    & \zeta\in\Sigma_7^{(2)},
   \end{aligned}
   \label{e def R7}
   \\
   R_8(\zeta)&=
   \left\{
   \begin{aligned}
    &\rho(\zeta)\delta^{
         2}(\zeta),\\
    &\rho(1)(\delta_0^{+})^{2}(\zeta-1)^{-2i\nu_0^{+}},
   \end{aligned}
   \right.&&
   \begin{aligned}
    &\zeta\in(1,\infty) ,\\
    & \zeta\in\Sigma_8^{(2)}.
   \end{aligned}
   \label{e def R8}
\end{align}
\end{subequations}
The function $\chi:\C\to [0,1]$ is a smooth cutoff function supported on a neighborhood of $\pm i$ such that
\begin{equation}\label{e chi}
    \chi(\zeta)=
    \left\{
      \begin{array}{ll}
        1, & \text{ if }|\zeta+ i|<3/5\text{ or }|\zeta-i|<3/5, \\
        0, & \text{ if }|\zeta+ i|>4/5\text{ and }|\zeta-i|>4/5.
      \end{array}
    \right.
\end{equation}
We set
\begin{equation}\label{e def W}
    \mathcal{W}(\tau;\zeta):=
    \left\{
      \begin{array}{ll}    \vspace{1mm}
        \left[
          \begin{array}{cc}
            1 & -R_k(\zeta)e^{i\tau Z(\zeta)} \\
            0 & 1 \\
          \end{array}
        \right]
        , & \zeta\in\Omega_k,\quad k\in\{1,4\}\\ \vspace{1mm}
        \left[
          \begin{array}{cc}
            1 & 0 \\
            -R_k(\zeta)e^{-i\tau Z(\zeta)} & 1 \\
          \end{array}
        \right]
        , &  \zeta\in\Omega_k,\quad k\in\{2,3\},\\ \vspace{1mm}
        \left[
          \begin{array}{cc}
            1 & 0 \\
            R_k(\zeta)e^{i\tau Z(\zeta)} & 1 \\
          \end{array}
        \right], & \zeta\in\Omega_k,\quad k\in\{5,8\}, \\ \vspace{1mm}
        \left[
          \begin{array}{cc}
            1 & R_k(\zeta)e^{i\tau Z(\zeta)} \\
            0 & 1 \\
          \end{array}
        \right], &\zeta\in\Omega_k,\quad k\in\{6,7\}, \\
        \left[
          \begin{array}{cc}
            1 & 0 \\
            0 & 1 \\
          \end{array}
        \right]
        ,&\zeta\in\Omega_9\cup\Omega_{10},
      \end{array}
    \right.
\end{equation}
and by (\ref{e def R1})--(\ref{e def R8}) (first line in each case) we find that $\mathcal{W}$ satisfies (\ref{e condition1 W}). Moreover it follows from (\ref{e def R1})--(\ref{e def R8}) (second line in each case) that $M^{(2)}$ admits jumps on $\Sigma^{(2)}$ of the form $M^{(2)}_+=M^{(2)}_-(1+R_{\tau}^{(2)})$ with $R_{\tau}^{(2)}$ precisely given by (\ref{e def R^2-}) and (\ref{e def R^2+}).\\
The lack of analyticity is measured by means of the following differential operator:
\begin{equation*}
    \db:=\frac12\left(\frac{\partial}{\partial x}+i\frac{\partial}{\partial y}\right),\quad(\zeta=x+iy).
\end{equation*}
We would have $\db\mathcal{W}=0$ if $\mathcal{W}$ was analytic. The general case is
\begin{equation}\label{e del W}
    \db\mathcal{W}(\tau;\zeta):=
    \left\{
      \begin{array}{ll}    \vspace{1mm}
        \left[
          \begin{array}{cc}
            0 & -\db R_k(\zeta)e^{i\tau Z(\zeta)} \\
            0 & 0 \\
          \end{array}
        \right]
        , & \zeta\in\Omega_k,\quad k\in\{1,4\}\\ \vspace{1mm}
        \left[
          \begin{array}{cc}
            0 & 0 \\
            -\db R_k(\zeta)e^{-i\tau Z(\zeta)} & 0 \\
          \end{array}
        \right]
        , &  \zeta\in\Omega_k,\quad k\in\{2,3\},\\ \vspace{1mm}
        \left[
          \begin{array}{cc}
            0 & 0 \\
            \db R_k(\zeta)e^{i\tau Z(\zeta)} & 0 \\
          \end{array}
        \right], & \zeta\in\Omega_k,\quad k\in\{5,8\}, \\ \vspace{1mm}
        \left[
          \begin{array}{cc}
            0 & \db R_k(\zeta)e^{i\tau Z(\zeta)} \\
            0 & 0 \\
          \end{array}
        \right], &\zeta\in\Omega_k,\quad k\in\{6,7\}, \\
        \left[
          \begin{array}{cc}
            0 & 0 \\
            0 & 0 \\
          \end{array}
        \right]
        ,&\zeta\in\Omega_9\cup\Omega_{10}.
      \end{array}
    \right.
\end{equation}
Similar to \cite{LiuPerrySulem2017,Jenkins2014,Cuccagna2014,Dieng2008} by explicit construction the following Lemma can be obtained.
\begin{lem}\label{l key estimate db R}
    Let $\rho,\breve{\rho}$ satisfy assumptions (\ref{e assumption1})--(\ref{e assumption4}). Then for $j=1,...,8$ there exist functions $R_k:\overline{\Omega}_k\to\C$ satisfying (\ref{e def R1})--(\ref{e def R8}), so that for $k\in\{1,2,5,6\}$ and $\zeta\in\Omega_k$
    \begin{equation}\label{e key estimate db R 1256}
       |\db R_k(\zeta)|\leq c_1(\|\rho\|_{H^1(\R)}+ \|\breve{\rho}\|_{H^1(\R)}) |\zeta+1|^{-1/2}+c_2 (|\rho'(\Real(\zeta))|+ |\breve{\rho}'(\Real(\zeta))|)+ \Gamma_0|\db \chi(\zeta)|,
    \end{equation}
    whereas for $k\in\{3,4,7,8\}$ and  $\zeta\in\Omega_k$,
    \begin{equation}\label{e key estimate db R 3478}
       |\db R_k(\zeta)|\leq c_1(\|\rho\|_{H^1(\R)}+ \|\breve{\rho}\|_{H^1(\R)}) |\zeta-1|^{-1/2}+c_2 (|\rho'(\Real(\zeta))|+ |\breve{\rho}'(\Real(\zeta))|)+\Gamma_0|\db \chi(\zeta)|.
    \end{equation}
\end{lem}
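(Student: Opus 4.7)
The plan is to give each $R_k$ as an explicit interpolation of its two prescribed boundary values, and then read off $\db R_k$ directly. I focus on $R_1$; the other seven cases are analogous, with the sign of the exponent, the stationary phase point, and the presence or absence of the factor $(1-\chi)$ adjusted according to (\ref{e def R1})--(\ref{e def R8}). Introducing polar coordinates $\zeta+1=\sigma e^{i\theta}$ so that $\Omega_1$ is the sector $\theta\in(\tfrac{3\pi}{4},\pi)$, and picking $\phi\in C^\infty([\tfrac{3\pi}{4},\pi];[0,1])$ with $\phi(\pi)=1$ and $\phi(\tfrac{3\pi}{4})=0$, I set
\begin{equation*}
  f_{\mathrm{ax}}(\zeta):=\breve{\rho}(\Real\zeta)\,\delta^{-2}(\zeta),\qquad f_{\mathrm{ray}}(\zeta):=\breve{\rho}(-1)(\delta_0^-)^{-2}(-(\zeta+1))^{-2i\nu_0^-},
\end{equation*}
and define $R_1(\zeta):=\phi(\theta)f_{\mathrm{ax}}(\zeta)+(1-\phi(\theta))f_{\mathrm{ray}}(\zeta)$, which matches (\ref{e def R1}) on both pieces of $\partial\Omega_1$.

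Using $\db=\tfrac{1}{2}e^{i\theta}(\partial_\sigma+\tfrac{i}{\sigma}\partial_\theta)$ in these coordinates, and exploiting that $\delta$ is analytic on $\Omega_1$ (which is disjoint from $[-1,1]$) while $(-(\zeta+1))^{-2i\nu_0^-}$ is analytic off $[-1,\infty)$, I obtain $\db f_{\mathrm{ray}}=0$ and $\db f_{\mathrm{ax}}=\tfrac{1}{2}\breve{\rho}'(\Real\zeta)\delta^{-2}(\zeta)$, hence
\begin{equation*}
  \db R_1=\phi(\theta)\cdot\tfrac{1}{2}\breve{\rho}'(\Real\zeta)\delta^{-2}(\zeta)+\frac{ie^{i\theta}\phi'(\theta)}{2\sigma}\bigl[f_{\mathrm{ax}}(\zeta)-f_{\mathrm{ray}}(\zeta)\bigr].
\end{equation*}
The first term is bounded by $c_2|\breve{\rho}'(\Real\zeta)|$ via Proposition \ref{p properties delta}(iii). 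For the second, Proposition \ref{p delta near +-1} gives $|\delta^{-2}(\zeta)-(\delta_0^-)^{-2}(-(\zeta+1))^{-2i\nu_0^-}|\leq c(\|\rho\|_{H^{1}}+\|\breve{\rho}\|_{H^{1}})|\zeta+1|^{1/2}$, while the Sobolev embedding $H^1(\R)\hookrightarrow C^{1/2}(\R)$ yields $|\breve{\rho}(\Real\zeta)-\breve{\rho}(-1)|\leq c\|\breve{\rho}\|_{H^1}|\zeta+1|^{1/2}$. Combining these bounds with the uniform control on $|\breve{\rho}|$ and $|\delta^{\pm 2}|$ produces $|f_{\mathrm{ax}}-f_{\mathrm{ray}}|\leq c_1(\|\rho\|_{H^1}+\|\breve{\rho}\|_{H^1})|\zeta+1|^{1/2}$, so the interpolation term is bounded by $c_1(\|\rho\|_{H^1}+\|\breve{\rho}\|_{H^1})|\zeta+1|^{-1/2}$. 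This gives (\ref{e key estimate db R 1256}) for $k=1$, with the $|\db\chi|$ contribution trivially zero since $\chi$ plays no role in the construction here.

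For the remaining seven regions I apply the same recipe. For $k\in\{3,4,7,8\}$ the stationary phase point shifts to $+1$ and one works in $\zeta-1=\sigma e^{i\theta}$, which produces the $|\zeta-1|^{-1/2}$ power appearing in (\ref{e key estimate db R 3478}). For $k\in\{2,3,6,7\}$ the ray-side boundary datum carries the factor $(1-\chi)$, and I incorporate this factor verbatim into $f_{\mathrm{ray}}$; the product rule then contributes one additional summand controlled by $\Gamma_0|\db\chi|$ thanks to (\ref{e assumption3}), and for $k\in\{1,4,5,8\}$ this term vanishes identically so its harmless inclusion in the unified statement is justified. The main technical step is the $1/2$-Hölder pinching of $f_{\mathrm{ax}}$ and $f_{\mathrm{ray}}$ at the stationary phase point, which relies entirely on Proposition \ref{p delta near +-1}; everything else is routine bookkeeping with polar coordinates, the product rule, and the standard partition-of-unity gluing between neighbouring sectors.
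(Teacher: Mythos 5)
Your construction is essentially the same as the paper's: you interpolate between the two prescribed boundary values of $R_k$ using a smooth angular cutoff $\phi(\arg(\zeta\pm1))$, exploit that both pieces are analytic in $\Omega_k$ so only the $\breve\rho(\Real\zeta)$ (resp.\ $\rho(\Real\zeta)$) factor and the cutoff itself contribute to $\db R_k$, and then bound the cutoff term using the $\tfrac12$-H\"older pinch of axis and ray data at the stationary phase point via Proposition~\ref{p delta near +-1} and the embedding $H^1(\R)\hookrightarrow C^{1/2}$. The paper carries out exactly this with $k=2$ as the model case (your formula $\tfrac{ie^{i\theta}\phi'(\theta)}{2\sigma}$ is the bound $|\db G|\leq c|\zeta+1|^{-1}$ in the paper's (\ref{e db G})).

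Two small remarks so the construction also serves the later steps, not just the Lemma as literally stated. First, the paper multiplies \emph{both} summands by $(1-\chi)$ (see (\ref{e construction of R2})), whereas you attach $(1-\chi)$ only to $f_{\mathrm{ray}}$; either choice satisfies (\ref{e def R1})--(\ref{e def R8}) and yields the asserted bound, since on the real boundary $\chi\equiv0$. Second, and more substantively, for $k\in\{2,3,6,7\}$ the paper's angular cutoff $g$ is required to be identically $1$ on $[0,\pi/6]$ (Figure~\ref{f g}); this is not needed for the pointwise estimate (\ref{e key estimate db R 1256}) you prove, but it is what guarantees, together with the support of $\chi$ near $\pm i$, that $R_2$ and $R_3$ agree on $[0,i]$ (and $R_6$, $R_7$ on $[0,-i]$), so that the function $\mathcal{W}$ of (\ref{e def W}) is continuous across the imaginary segment as asserted in Remark~\ref{r sense of chi}. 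Your closing phrase about ``partition-of-unity gluing between neighbouring sectors'' gestures at this, but it deserves to be made explicit: a generic $\phi$ with $\phi(\pi)=1$, $\phi(3\pi/4)=0$ does not automatically produce matching $R_k$'s on the shared boundary, and without that matching the mixed $\db$-RHP \ref{dbrhp M^2} would acquire an unwanted extra jump on $[-i,i]$.
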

\begin{proof}
    The functions $R_k$ can be defined explicitly. Let us exemplarily consider the case $k=2$. For $\zeta\in\Omega_2$ we use $0<\arg(\zeta+1)<\pi/4$ and set $G(\zeta):= g(\arg(\zeta+1))$, where the function $g:[0,\pi/4]\to[0,1]$ is a smooth function such that $g(\varphi)=1$ for all $\varphi\in [0,\pi/6]$ and $g(\pi/4)=0$, see Figure \ref{f g}. \begin{figure}
\begin{center}
\begin{tikzpicture}
\draw 	[->]  	(-0.2,0) -- (7,0) ;
\draw 	[->]  	(0,-0.2) -- (0,2.5) ;
\node at (4,-0.5) {$\frac{\pi}{6}$};
\node at (6,-0.5) {$\frac{\pi}{4}$};
\node at (-0.4,2) {$1$};
\node at (7.2,-0.2) {$\varphi$};
\node at (5.4,1.8) {$g(\varphi)$};
\draw[thick] (4,2).. controls (5,2) and (5,1.5)..(6,0);
\draw 	[thick]  	(0,2) -- (4,2);
\draw  	(0,2) -- (-0.1,2);
\draw  	(4,0) -- (4,-0.1);
\draw  	(6,0) -- (6,-0.1);
\end{tikzpicture}
\end{center}
  \caption{The function $g$ used in the construction of the functions $R_k$ in the proof of Lemma \ref{l key estimate db R}} \label{f g}
\end{figure}
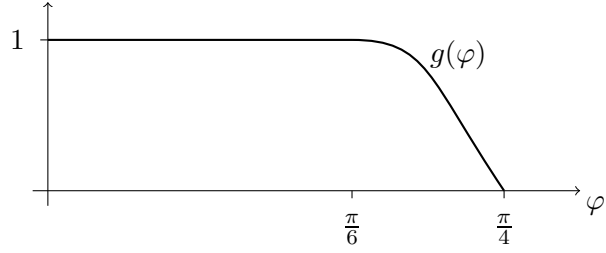
    The function $G$ is continuous in $\overline{\Omega}_2\setminus\{-1\}$ and fulfills
    \begin{equation*}
        G(\zeta)=
        \left\{
          \begin{array}{ll}
            1, & \hbox{for }\zeta\in (-1,0), \\
            0, & \hbox{for }\zeta\in \Sigma_2^{(2)}.
          \end{array}
        \right.
    \end{equation*}
    Moreover, we can find a constant $c$ such that
    \begin{equation}\label{e db G}
        |\db G(\zeta)|\leq c|\zeta+1|^{-1}
    \end{equation}
    for all $\zeta\in \Omega_2$. Now let us use the function just introduced for an interpolation between the first and second line of (\ref{e def R2}):
    \begin{equation}\label{e construction of R2}
        \begin{aligned}
            R_2(\zeta)&:= G(\zeta) \frac{\rho(\Real(\zeta))}{1+\rho(\Real(\zeta)) \breve{\rho}(\Real(\zeta))} \delta^{2}(\zeta) (1-\chi(\zeta))\\
            &\qquad + \left(1-G(\zeta)\right)
            \frac{\rho(-1)}{1+\rho(-1)\breve{\rho}(-1)} (\delta_0^{-})^{2}(-(\zeta+1)) ^{2i\nu_0^{-}}(1-\chi(\zeta))
        \end{aligned}
    \end{equation}
    Immediately from this definition and from the definitions of $G$ and $\chi$, (\ref{e chi}), we can verify (\ref{e def R2}). Thus it remains to prove the desired bound (\ref{e key estimate db R 1256}). A direct computation yields $|\db R_2(\zeta)|\leq W_1+W_2+W_3$ with
    \begin{equation*}
    \begin{aligned}
        W_1&=C_1 |\db G(\zeta)| \left|\frac{\rho(\Real(\zeta))}{1+\rho(\Real(\zeta)) \breve{\rho}(\Real(\zeta))} \delta^{2}(\zeta)- \frac{\rho(-1)}{1+\rho(-1)\breve{\rho}(-1)} (\delta_0^{-})^{2}(-(\zeta+1)) ^{2i\nu_0^{-}} \right|,\\
        W_1&= C_2 \left|\db\left( \frac{\rho(\Real(\zeta))}{1+\rho(\Real(\zeta)) \breve{\rho}(\Real(\zeta))} \right)\right|, \\
        W_3&=C_3 \Gamma_0 |\db \chi(\zeta)|,
    \end{aligned}
    \end{equation*}
    where the constants $C_1,C_2,C_3$ are independent of $\rho$ and $\breve{\rho}$ by assumption. Using $|f(a)- f(b)|\leq c \|f\|_{H^1(\R)}|a-b|^{1/2}$ for all functions $f\in H^{1}(\R)$, Proposition \ref{p delta near +-1} and (\ref{e db G}) we find
    \begin{equation*}
        W_1\leq c_1 (\|\rho\|_{H^1(\R)}+ \|\breve{\rho}\|_{H^1(\R)})|\zeta+1|^{-1/2}.
    \end{equation*}
    Differentiating yields
    \begin{equation*}
        W_2\leq c_2 (|\rho'(\Real(\zeta))|+ |\breve{\rho}'(\Real(\zeta))|),
    \end{equation*}
    where $c_2$ depends on the constant $\Gamma_0$ in the assumption (\ref{e assumption2}). This completes the proof of (\ref{e key estimate db R 1256}) for $k=2$. The other $R_k$'s can be defined similar to (\ref{e construction of R2}). The idea in each case is to use $\arg(\zeta\pm 1)$ and the function $g$ to interpolate between the first and second line of (\ref{e def R1})--(\ref{e def R8}). For technical reasons it is necessary to include the function $\chi(\zeta)$, see Remark \ref{r sense of chi} below.
\end{proof}
\begin{remark}\label{r sense of chi}
    Since $\mathcal{W}$ is defined piecewise on $\Omega_1,...,\Omega_{10}$, one would expect a discontinuity on the line $[-i,i]$. But the reader might check that the presence of the cutoff function $\chi$, introduced in (\ref{e chi}), and the fact that the function $g$ is constant on $[0,\pi/6]$, as depicted in Figure \ref{f g}, avoid that kind of discontinuity for $\mathcal{W}$.
\end{remark}
As explained in the summary, the definition of $R^{(2)}_{\tau}$ in (\ref{e def R^2-}) and (\ref{e def R^2+}) involve that for $\tau\to\infty$, $R^{(2)}_{\tau}(\zeta)\to 0$ point-wise for all $\zeta\in\Sigma^2\setminus\{\pm 1\}$, see also (\ref{e sign of i Z}) and Figure \ref{f signaturetable}. However, we do not have $\lim_{\tau\to\infty} \|R^{(2)}_{\tau}\|_{L^{\infty}(\Sigma^{(2)})}=0$. But we will need the following.
\begin{prop}\label{p L^1norm of R^2}
    Under the assumption (\ref{e assumption1})--(\ref{e assumption4}) there exists a constant $c>0$ such that
    \begin{equation*}
        \|R^{(2)}_{\tau}\|_{L^{1}(\Sigma^{(2)})}\leq  c\,\Gamma_0\,\tau^{-1/2}
    \end{equation*}
    for all $\tau\in\R^+$. Here, the constant $\Gamma_0$ is defined in (\ref{e assumption3}).
\end{prop}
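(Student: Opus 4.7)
The plan is to bound the modulus of $R^{(2)}_\tau$ pointwise on each piece of $\Sigma^{(2)}$ by $C\Gamma_0 \cdot |e^{\pm i\tau Z(\zeta)}|$, and then integrate the oscillatory factor against arc length, exploiting the fact that $\Sigma^{(2)}$ sits in the correct regions of the signature chart (\ref{e sign of i Z}) so that each exponential is in fact decaying in $\tau$.

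First, inspecting (\ref{e def R^2-}) and (\ref{e def R^2+}), on each of the eight rays the nonzero entry of $R^{(2)}_\tau$ is a product of the following $\tau$-independent factors: a value of $\rho$ or $\breve{\rho}$ (possibly divided by $1+\rho\breve{\rho}$) at $\pm 1$, a factor $(\delta_0^\pm)^{\pm 2}$ of unit modulus (Proposition \ref{p delta near +-1}), a complex power $(\pm(\zeta\mp 1))^{\pm 2i\nu_0^\mp}$, and possibly the smooth cutoff $1-\chi$. By assumption (\ref{e assumption4}) the exponent $\nu_0^\mp$ is real, so the modulus of the complex power is at most $e^{2\pi|\nu_0^\mp|}$, and (\ref{e assumption3}) together with the uniform positivity of $1+\rho\breve{\rho}$ bounds $|\nu_0^\mp|$ by a quantity depending only on $\Gamma_0$. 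Combined with (\ref{e assumption3}) this gives a pointwise estimate $|R^{(2)}_\tau(\zeta)| \leq C\Gamma_0\,|e^{\pm i\tau Z(\zeta)}|$ on each piece.

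Next I would parametrize each ray as $\zeta = \pm 1 + s e^{i\theta}$ with $\theta\in\{\pm\pi/4,\pm 3\pi/4\}$ chosen according to Figure \ref{f Sigma2}. The explicit identity
\begin{equation*}
\Real(iZ(\zeta)) = -\tfrac{1}{2}\Imag(\zeta)\bigl(1-|\zeta|^{-2}\bigr),
\end{equation*}
combined with $|\zeta|^2 = 1 \pm \sqrt{2}\,s + s^2$ on each ray, yields a uniform lower bound
\begin{equation*}
\Real\bigl(\mp iZ(\zeta)\bigr) \leq -c_0 \min(s^2, s), \qquad s>0,
\end{equation*}
with an absolute constant $c_0>0$; the quadratic regime near $s=0$ reflects the stationary-phase expansion $Z(\zeta) = \pm 1 \pm \tfrac{1}{2}(\zeta\mp 1)^2 + O(|\zeta\mp 1|^3)$ already used in Step 5 of the summary, while the linear regime takes over for $s$ of order one. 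Integrating,
\begin{equation*}
\int_{\Sigma^{(2)}_j} |e^{\pm i\tau Z(\zeta)}|\,|d\zeta| \leq \int_0^1 e^{-c_0\tau s^2}\,ds + \int_1^\infty e^{-c_0\tau s}\,ds \leq C\tau^{-1/2},
\end{equation*}
and summing over the eight rays (or truncated segments, for the inner pieces, where the $1-\chi$ factor reduces arc length without affecting the $\tau^{-1/2}$ rate controlled by the quadratic decay at $\pm 1$) completes the estimate.

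The main technical obstacle is the second step: verifying the uniform lower bound $\Real(\mp iZ(\zeta)) \leq -c_0\min(s^2,s)$ \emph{globally} along each ray, not only locally near the stationary phase points. This amounts to checking that the four outer rays stay outside the unit disk and the four inner rays stay inside it, with angles aligned to the signature chart Figure \ref{f signaturetable}; this is a routine but careful bookkeeping task because the algebraic sign of $\Imag\zeta\cdot(1-|\zeta|^{-2})$ must be the correct one on each of the eight pieces. Once the geometry is verified, everything else is a direct Gaussian/Laplace-type integration, and the constant $c$ in the statement is produced by combining the factor $C(\Gamma_0)$ from the prefactors with the absolute constant from the exponential integrals.
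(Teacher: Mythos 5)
Your proposal is correct and follows essentially the same route as the paper's proof: bound the jump pointwise by a constant times the oscillatory exponential using (\ref{e assumption3}) and the boundedness of the unit-modulus prefactors and the complex power, then parametrize each ray, establish quadratic decay of $\Real(\mp iZ)$ near the stationary points $\pm 1$ (linear decay at infinity on the outer rays, with the cutoff $1-\chi$ handling the inner rays near $\pm i$), and integrate to obtain $\tau^{-1/2}$. The paper carries this out with explicit ray parametrizations and the piecewise comparison functions $I(a)$ and $\check I(a)$, while you package the same estimate through the universal identity $\Real(iZ) = -\tfrac12\Imag(\zeta)(1-|\zeta|^{-2})$ and the formula $|\zeta|^2 = 1\pm\sqrt 2\,s+s^2$; this is a cosmetic reorganization rather than a different argument.
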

\begin{proof}
    We start with calculating the $L^1$-norm on  $\Sigma_3^{(2)}\subset\Sigma^{(2)}$. For that we use the parametrization $\zeta=a+(1-a)i$ with $0\leq a\leq 1$. A simple computation shows that $\Imag(Z(a+(1-a)i))\leq-a(a-1)^2\leq0$. We immediately see that for
    \begin{equation}\label{e def help fct I}
        I(a):=
        \left\{
          \begin{array}{ll}
            -\frac{1}{4}a^2, & 0\leq a \leq \frac{1}{2},\phantom{\int_{\int_{\int}}} \\
            -\frac{1}{4}(a-1)^2,&\frac{1}{2}\leq a \leq 1,
          \end{array}
        \right.
    \end{equation}
    we have $\Imag(Z(a+(1-a)i))\leq I(a)\leq0$.
    Furthermore, by assumption (\ref{e assumption3}) and, since $(\zeta -1)^{-2i\nu_0^{+}}$ is bounded on $\Sigma_3^{(2)}$, we find a constant $c_1$ such that
    \begin{equation*}
         \|R^{(2)}_{\tau}\|_{L^{1}(\Sigma_3^{(2)})}\leq c_1\,\Gamma_0 \int_{\Sigma_3^{(2)}} \left|e^{-i\tau Z(\zeta)}\right|d\zeta\leq c_1\,\Gamma_0\sqrt{2}\int_0^1 e^{\tau I(a)}da=2c_1\,\Gamma_0\sqrt{2}\int_0^{\frac12} e^{-\tau a^2/4}da\leq \frac{c\,\Gamma_0}{\tau^{1/2}}.
    \end{equation*}
    The same argument can be used to estimate $\|R^{(2)}_{\tau}\|_{L^{2}(\Sigma_j^{(2)})}$ for $j\in\{2,6,7\}$. Now we consider the ray $\Sigma_4^{(2)}$ and use the parametrization $\zeta=1+a(1+i)$. Having a look at the imaginary part of $Z(1+a(1+i))$ we discover that it is possible to identify positive constants $a_0,b_1$ and $b_2$ such that
    \begin{equation}\label{e estimate I check}
        0\leq \check{I}(a)\leq\Imag(Z(1+a(1+i))),
        \qquad\text{for}\qquad
        \check{I}(a):=
        \left\{
          \begin{array}{ll}
            b_1 a^2, & 0\leq a \leq  a_0,\phantom{\int_{\int_{\int}}} \\
            b_2 a,&a_0\leq a.
          \end{array}
        \right.
    \end{equation}
    Analogously to the above estimates we now find
    \begin{equation*}
         \|R^{(2)}_{\tau}\|_{L^{2}(\Sigma_4^{(2)})}\leq c_1\,\Gamma_0\int_0^{\infty} e^{-\tau I(a)}da=c_1\,\Gamma_0\left[\int_0^{a_0} e^{-b_1\tau a^2}da+\int_{a_0}^{\infty} e^{-b_2\tau a}da\right]\leq c\,\Gamma_0(\tau^{-1/2}+\tau^{-1}).
    \end{equation*}
    The same can be done for the remaining rays $\Sigma^{(2)}_1$, $\Sigma^{(2)}_5$ and $\Sigma^{(2)}_8$. Thus, the proof of the Proposition is done.
\end{proof}
The reason for splitting the notation in (\ref{e def R^2-}) and (\ref{e def R^2+}) into two contributions will become clear later in Proposition \ref{p M^4 approx M^5}. Therein we will show that as $\tau\to \infty$ the interaction between the two crosses
\begin{equation}\label{e def Sigma 4-}
    \Sigma^{(4-)}:=\Sigma^{(2)}_1\cup \Sigma^{(2)}_2\cup \Sigma^{(2)}_5\cup \Sigma^{(2)}_6
\end{equation}
and
\begin{equation}\label{e def Sigma 4+}
    \Sigma^{(4+)}:=\Sigma^{(2)}_3\cup \Sigma^{(2)}_4\cup \Sigma^{(2)}_7\cup \Sigma^{(2)}_8
\end{equation}
with regard to the reconstruction formula (\ref{e rec u 2}) is negligible. See Figure \ref{f Sigma4} for an illustration of the contours $\Sigma^{(4-)}$ and $\Sigma^{(4+)}$. \\

\section{Analysis of pure RHP}\label{s pure RHP}
The RHP contribution (\ref{e jump M^2}) is mainly responsible for the long-time asymptotics of $q(\tau)$ stated in Theorem \ref{t main thm general}. We can provide this explicit result since the function $q^{(3)}(\tau)$ associated to RHP($\Sigma^{(3)}$,$R_{\tau}^{(3)}$) converges to the sum $q^{(5-)}(\tau)+q^{(5+)}(\tau)$, where both $q^{(5\pm)}$ are associated to model RHP's which can be solved explicitly.
\subsection{Two model RHPs}\label{s two model RHPs}
The Riemann-Hilbert problems RHP($\Sigma^{(5\pm)}$,$R_{\tau}^{(5\pm)}$) with $\Sigma^{(5\pm)}$ depicted in Figure \ref{f Sigma5} and $R_{\tau}^{(5\pm)}$ given in (\ref{e def R^5-}) and (\ref{e def R^5+}) are explicitly solvable. Usually the solution procedure is presented in the following way. We begin with defining a change of variables
\begin{equation}\label{e scaling - case}
  \eta(\zeta)=-\sqrt{\tau}(\zeta+1),\quad
  \zeta(\eta)=\frac{-1}{\sqrt{\tau}}\eta-1,
\end{equation}
and set
\begin{equation}\label{e M^PC- <-> M^5-}
    M^{(PC-)}(\tau;\eta)=M^{(5-)}(\tau;\zeta(\eta)), \quad \eta\in\C\setminus  \Sigma^{(PC)},
\end{equation}
with the new contour
\begin{figure}
\centering
\vskip 15pt
\begin{tikzpicture}[scale=0.9]
%%%%%%%%%%%%%%%%%
%
%		Origin
%
%%%%%%%%%%%%%%%%%

\draw 	[->, thick]  	(4,4) -- (5,5) ;								 %	Sigma_1
\draw		[thick] 		(5,5) -- (6,6) ;
\draw		[->, thick] 	(2,6) -- (3,5) ;								 % Sigma_2
\draw		[thick]		(3,5) -- (4,4);	
\draw		[->, thick]	(2,2) -- (3,3);								 %	Sigma_3
\draw		[thick]		(3,3) -- (4,4);
\draw		[->,thick]	(4,4) -- (5,3);								 % Sigma_4
\draw		[thick]  		(5,3) -- (6,2);
\draw 	[->]  	(1,4) -- (7,4) ;
\draw 	[->]  	(4,1) -- (4,7) ;
%%%%%%%%%%%%%%%%%
\node at (6.7,6.3)  	{$\Sigma_2^{(PC)}$};
\node at (1.5,6.3) 	{$\Sigma_1^{(PC)}$};
\node at (1.5,2.3)	{$\Sigma_3^{(PC)}$};
\node at (6.7,2.3) 	{$\Sigma_4^{(PC)}$};

\end{tikzpicture}
\caption{The contour $\Sigma^{(PC)}$ of the model Riemann-Hilbert problems.}
\label{f SigmaPC}
\end{figure}
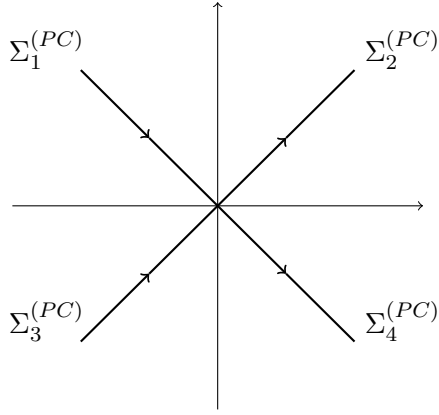
\begin{equation}\label{e def Sigma^PC}
    \Sigma^{(PC)}:=(e^{i\pi/4}\R)\cup( e^{-i\pi/4}\R).
\end{equation}
As depicted in Figure \ref{f SigmaPC} we let $\Sigma^{(PC)}$ inherit the orientation of $\R$. Thus, it is important to notice that $\Sigma^{(PC)}$ is not simply the image of $\Sigma^{(5-)}$ under the transformation $\zeta\to\eta$ defined in (\ref{e scaling - case}). This is because the change of variables in (\ref{e scaling - case}) would also rotate the contour by an angle of $\pi$ and thus reverse the orientation. As an important consequence we have $M^{(PC-)}_{\pm}(\tau;\eta)=M_{\mp}^{(5-)}(\tau;\zeta(\eta))$ for $\eta\in\Sigma^{(PC)}$. It follows that the condition
\begin{equation*}
    M_+^{(5-)}(\tau;\zeta)=M_-^{(5-)}(\tau;\zeta) \left(1+R^{(5-)}_{\tau}(\zeta)\right), \qquad \zeta\in \Sigma^{(5-)},
\end{equation*}
is transformed into the jump condition
\begin{equation*}
    \begin{aligned}
        M^{(PC-)}_+(\tau;\eta)&=M^{(PC-)}_- (\tau;\eta)\left(1+R^{(5-)}_{\tau} (\zeta(\eta))\right)^{-1}\\
        &=M^{(PC-)}_- (\tau;\eta)\left(1-R^{(5-)}_{\tau} (\zeta(\eta))\right), \qquad \eta\in \Sigma^{(PC)}.
    \end{aligned}
\end{equation*}
This gives rise to the definition
\begin{equation}\label{e def R^PC-}
    R_{\tau}^{(PC-)}(\eta):=-R_{\tau}^{(5-)}(\zeta(\eta)) \qquad \eta\in \Sigma^{(PC)}.
\end{equation}

With a view to (\ref{e def R^5-}) it will be useful to have the following identities
\begin{equation*}
    e^{i\tau \left(1+\frac{1}{2}(\zeta+1)^2\right)}= e^{i\eta^2/2}e^{i\tau},\qquad
    (-(\zeta+1))^{2i\nu_0^{-}}=\eta^{2i\nu_0^{-}} e^{-i\nu_0^{-}\ln(\tau)}.
\end{equation*}
Note that we define $-\pi<\arg(\eta)<\pi$ so that the branch cut of $\eta^{2i\nu_0^{-}}$ is given by $\R^-$ whereas $(-(\zeta+ 1))^{2i\nu_0^{-}}$ is cut along $(-1,\infty)$. Define new parameters
\begin{equation}\label{e def r0-}
    \begin{aligned}
        &\rho_0^-:=\rho(-1)(\delta_0^-)^2e^{i\tau }e^{-i\nu_0^{-}\ln(\tau)},\\
        &\breve{\rho}_0^-:=\breve{\rho}(-1)(\delta_0^-)^{-2} e^{-i\tau }e^{i\nu_0^{-}\ln(\tau)},
    \end{aligned}
\end{equation}
such that $\rho_0^-\breve{\rho}_0^-=\rho(-1)\breve{\rho}(-1)$ and $\nu_0^-=\frac{1}{2\pi}\log(1+\rho_0^-\breve{\rho}_0^-)$.
Using the notation just introduced we find for $\eta\in \Sigma^{(PC)}$
\begin{equation}\label{e formula R^PC-}
    R_{\tau}^{(PC-)}(\eta)=
    \left\{
      \begin{array}{ll}    \vspace{1mm}
        \left[
          \begin{array}{cc}
             0& \frac{-\breve{\rho}_0^-} {1+\rho_0^-\breve{\rho}_0^-} \eta^{-2i\nu_0^{-}}e^{-i\eta^2/2}  \\
            0 & 0 \\
          \end{array}
        \right]
        , & \eta\in \Sigma^{(PC)}_1:=e^{-i\pi/4}\R_-\\ \vspace{1mm}
        \left[
          \begin{array}{cc}
            0 & 0 \\
            -\rho_0^-\eta^{2i\nu_0^{-}}e^{i\eta^2/2} & 0 \\
          \end{array}
        \right]
        , &  \eta\in\Sigma^{(PC)}_2:= e^{i\pi/4}\R_+\\ \vspace{1mm}
        \left[
          \begin{array}{cc}
            0 & 0 \\
            \frac{-\rho_0^-}{1+\rho_0^-\breve{\rho}_0^-} \eta^{2i\nu_0^{-}}e^{i\eta^2/2} & 0 \\
          \end{array}
        \right], & \eta\in\Sigma^{(PC)}_3 :=e^{i\pi/4}\R_-,\\ \vspace{1mm}
        \left[
          \begin{array}{cc}
             0& -\breve{\rho}_0^- \eta^{-2i\nu_0^{-}}e^{-i\eta^2/2} \\
            0 & 0 \\
          \end{array}
        \right], &\eta\in\Sigma^{(PC)}_4 :=e^{-i\pi/4}\R_+. \\
      \end{array}
    \right.
\end{equation}
We may write $R^{(PC-)}_{\tau}(\eta)$ in the form
\begin{equation}\label{e R^5- alternative}
    R_{\tau}^{(PC-)}(\eta)=S(\eta)R_0^-[S(\eta)]^{-1},\qquad S(\eta):= \eta^{-i\nu_0^-\sigma_3} e^{-\frac{i}{4}\eta^2\sigma_3},
\end{equation}
where $R_0^-$ is shown in Figure \ref{f R0-}\begin{figure}
\centering
\vskip 15pt
\begin{tikzpicture}[scale=0.9]
%%%%%%%%%%%%%%%%%
%
%		Origin
%
%%%%%%%%%%%%%%%%%
\draw [fill] (4,4) circle [radius=0.075];						 % black dot at origin
\node at (4.0,3.65) {$0$};										 % label for origin
%%%%%%%%%%%%%%%%%
%
%		Contour lines
%
%%%%%%%%%%%%%%%%%
\draw 	[->, thick]  	(4,4) -- (5,5) ;								 %	Sigma_1
\draw		[thick] 		(5,5) -- (7,7) ;
\draw		[->, thick] 	(1,7) -- (3,5) ;								 % Sigma_2
\draw		[thick]		(3,5) -- (4,4);	
\draw		[->, thick]	(1,1) -- (3,3);								 %	Sigma_3
\draw		[thick]		(3,3) -- (4,4);
\draw		[->,thick]	(4,4) -- (5,3);								 % Sigma_4
\draw		[thick]  		(5,3) -- (7,1);
%%%%%%%%%%%%%%%%%
%
%		Branch cut on right
%
%%%%%%%%%%%%%%%%%
\draw [  thick, red, decorate, decoration={snake,amplitude=0.5mm}] (0,4)  -- (4,4);				 % snake on right
\node at (6.5,4) {$-\pi<\arg(\eta)<\pi$};
%%%%%%%%%%%%%%%%%
%
%		Contour Labels
%
%%%%%%%%%%%%%%%%%
\node at (7,6)  {$\Sigma_2^{(PC)}$};
\node at (3,6) 	{$\Sigma_1^{(PC)}$};
\node at (3,2)	{$\Sigma_3^{(PC)}$};
\node at (7,2) 	{$\Sigma_4^{(PC)}$};
%%%%%%%%%%%%%%%%%
%
%		Four jump matrices
%
%%%%%%%%%%%%%%%%%
\node at (8,8) {${
\left[
  \begin{array}{cc}
    0 & 0 \\
            -\rho_0^-&0\\
  \end{array}
\right]
}$};						% Jump matrix for \Sigma_1
\node at (0,8) {${
\left[
  \begin{array}{cc}
    0& \frac{-\breve{\rho}_0^-} {1+\rho_0^-\breve{\rho}_0^-}\\
            0 & 0 \\
  \end{array}
\right]
}$};						%	Jump matrix for \Sigma_2
\node at (0,0) {${
\left[
  \begin{array}{cc}
    0 & 0 \\
    \frac{-\rho_0^-}{1+\rho_0^-\breve{\rho}_0^-} & 0 \\
  \end{array}
\right]
}$};						% Jump matrix for \Sigma_3
\node at (8,0) {${
\left[
  \begin{array}{cc}
    0 & -\breve{\rho}_0^-  \\
    0 & 0 \\
  \end{array}
\right]
}$};						%	Jump matrix for \Sigma_4
\end{tikzpicture}
\caption{The Jump Matrix $R_0^-$. The snaked line illustrates the branch cut of $\eta^{2i\nu_0^-}$.}
\label{f R0-}
\end{figure}. Now let us repeat the above procedure for the positive stationary phase point $+1$. That means to transform RHP($\Sigma^{(5+)}$,$R_{\tau}^{(5+)}$) into a Riemann-Hilbert problem on $\Sigma^{(PC)}$. We start with introducing another change of variables
\begin{equation}\label{e scaling + case}
  \eta(\zeta)=\sqrt{\tau}(\zeta-1),\quad
  \zeta(\eta)=\frac{1}{\sqrt{\tau}}\eta+1,
\end{equation}
and set
\begin{equation}\label{e M^PC+ <-> M^5+}
    M^{(PC+)}(\tau;\eta):=M^{(5+)} (\tau;\zeta(\eta)).
\end{equation}
In contrast to the scaling (\ref{e scaling - case}) used for the model RHP at the negative stationary phase point we do not rotate the contour if we use the new variable $\eta$ defined in (\ref{e scaling + case}). For that reason, $M^{(PC+)}(\tau,\eta)$ defined in (\ref{e M^PC+ <-> M^5+}) is a solution of RHP($\Sigma^{(PC)}$,$R_{\tau}^{(PC+)}$) where we recall the definition of $\Sigma^{(PC)}$, see (\ref{e def Sigma^PC}), and the jump is simply given by $R_{\tau}^{(PC+)}(\eta):=R_{\tau}^{(5+)}(\zeta(\eta))$. The following identities hold:
\begin{equation*}
    e^{i\tau \left(1+\frac{1}{2}(\zeta-1)^2\right)}= e^{i\eta^2/2}e^{i\tau},\qquad
    (\zeta-1)^{2i\nu_0^{+}}=\eta^{2i\nu_0^{+}}e^{-i\nu_0^+\ln(\tau)}.
\end{equation*}
We set
\begin{equation}\label{e def r0+}
    \begin{aligned}
        &\rho_0^+:=\rho(1)(\delta_0^+)^2e^{-i\tau }e^{i\nu_0^{+}\ln(\tau)},\\
        &\breve{\rho}_0^+:=\breve{\rho}(1)(\delta_0^+)^{-2} e^{i\tau }e^{-i\nu_0^{-}\ln(\tau)},
    \end{aligned}
\end{equation}
so that $\rho_0^+\breve{\rho}_0^+=\rho(1)\breve{\rho}(1)$ and $\nu_0^+=\frac{1}{2\pi}\log(1+\rho_0^+\breve{\rho}_0^+)$. We find
\begin{equation}\label{e formula R^PC+}
    R^{(PC+)}_{\tau}(\eta)=
    \left\{
      \begin{array}{ll}    \vspace{1mm}
        \left[
          \begin{array}{cc}
             0&0 \\
            \frac{\rho_0^+}{1+\rho_0^+\breve{\rho}_0^+} \eta^{-2i\nu_0^{+}}e^{-i\eta^2/2} & 0 \\
          \end{array}
        \right]
        , & w\in\Sigma^{(PC)}_1\\ \vspace{1mm}
        \left[
          \begin{array}{cc}
            0 &  \breve{\rho}_0^+\eta^{2i\nu_0^{+}} e^{i\eta^2/2}  \\
            0 & 0 \\
          \end{array}
        \right]
        , &  w\in\Sigma^{(PC)}_2,\\ \vspace{1mm}
        \left[
          \begin{array}{cc}
             0& \frac{\breve{\rho}_0^+} {1+\rho_0^+\breve{\rho}_0^+} \eta^{2i\nu_0^{+}} e^{i\eta^2/2} \\
            0 & 0 \\
          \end{array}
        \right]
        , & w\in\Sigma^{(PC)}_3,\\ \vspace{1mm}
        \left[
          \begin{array}{cc}
            0 & 0 \\
            \rho_0^+ \eta^{-2i\nu_0^{+}} e^{-i\eta^2/2} & 0 \\
          \end{array}
        \right]
        , &w\in\Sigma^{(PC)}_4. \\
      \end{array}
    \right.
\end{equation}
In Figure \ref{f R0+} 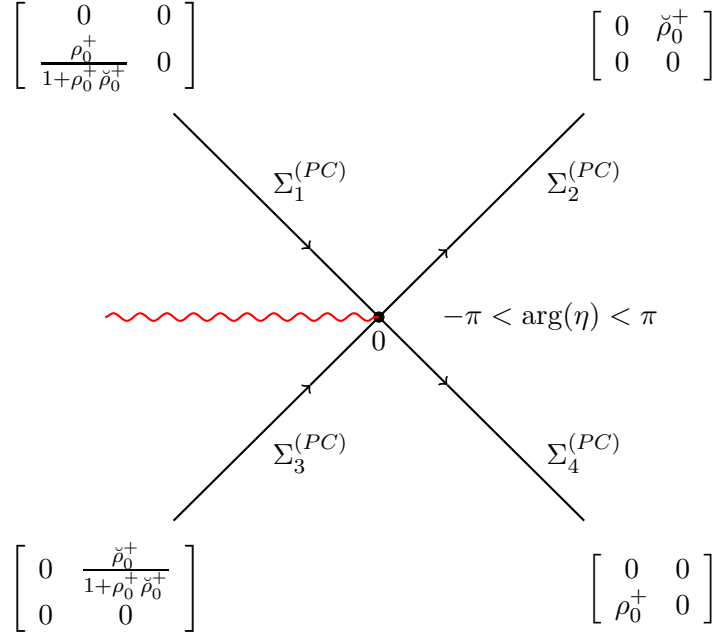
\begin{figure}
\centering
\vskip 15pt
\begin{tikzpicture}[scale=0.9]
%%%%%%%%%%%%%%%%%
%
%		Origin
%
%%%%%%%%%%%%%%%%%
\draw [fill] (4,4) circle [radius=0.075];						 % black dot at origin
\node at (4.0,3.65) {$0$};										 % label for origin
%%%%%%%%%%%%%%%%%
%
%		Contour lines
%
%%%%%%%%%%%%%%%%%
\draw 	[->, thick]  	(4,4) -- (5,5) ;								 %	Sigma_1
\draw		[thick] 		(5,5) -- (7,7) ;
\draw		[->, thick] 	(1,7) -- (3,5) ;								 % Sigma_2
\draw		[thick]		(3,5) -- (4,4);	
\draw		[->, thick]	(1,1) -- (3,3);								 %	Sigma_3
\draw		[thick]		(3,3) -- (4,4);
\draw		[->,thick]	(4,4) -- (5,3);								 % Sigma_4
\draw		[thick]  		(5,3) -- (7,1);
%%%%%%%%%%%%%%%%%
%
%		Branch cut on right
%
%%%%%%%%%%%%%%%%%
\draw [  thick, red, decorate, decoration={snake,amplitude=0.5mm}] (0,4)  -- (4,4);				 % snake on right
\node at (6.5,4) {$-\pi < \arg(\eta) < \pi$};
%%%%%%%%%%%%%%%%%
%
%		Contour Labels
%
%%%%%%%%%%%%%%%%%
\node at (7,6)  {$\Sigma_2^{(PC)}$};
\node at (3,6) 	{$\Sigma_1^{(PC)}$};
\node at (3,2)	{$\Sigma_3^{(PC)}$};
\node at (7,2) 	{$\Sigma_4^{(PC)}$};
%%%%%%%%%%%%%%%%%
%
%		Four jump matrices
%
%%%%%%%%%%%%%%%%%
\node at (8,8) {${
\left[
  \begin{array}{cc}
    0 &  \breve{\rho}_0^+ \\
    0& 0 \\
  \end{array}
\right]
}$};						% Jump matrix for \Sigma_1
\node at (0,8) {${
\left[
  \begin{array}{cc}
    0 & 0  \\
    \frac{\rho_0^+}{1+\rho_0^+\breve{\rho}_0^+} & 0 \\
  \end{array}
\right]
}$};						%	Jump matrix for \Sigma_2
\node at (0,0) {${
\left[
  \begin{array}{cc}
    0 & \frac{\breve{\rho}_0^+} {1+\rho_0^+\breve{\rho}_0^+} \\
    0 & 0 \\
  \end{array}
\right]
}$};						% Jump matrix for \Sigma_3
\node at (8,0) {${
\left[
  \begin{array}{cc}
    0 & 0\\
    \rho_0^+ & 0 \\
  \end{array}
\right]
}$};						%	Jump matrix for \Sigma_4
\end{tikzpicture}
\caption{The Jump Matrix $R_0^+$. The snaked line illustrates the branch cut of $\eta^{2i\nu_0^+}$.}
\label{f R0+}
\end{figure}  we illustrate the matrix $R_0^+$ defined on $\Sigma^{(PC)}$, where $R_0^+$ has the meaning that
\begin{equation*}
    1+R_{\tau}^{(PC+)}(\eta)=S(\eta) R_0^+ [S(\eta)]^{-1},\quad S(\eta)=\eta^{i\nu_0^+\sigma_3} e^{\frac{i}{4}\eta^2\sigma_3}.
\end{equation*}
The solution of the two Riemann-Hilbert problems RHP($\Sigma^{(PC)}$,$R_{\tau}^{(PC\pm)}$) is not worked out in this paper. However, the reader might wonder why the superscript $(PC)$ is used for the model Riemann-Hilbert problems. Here, $PC$ stands for Parabolic Cylinder functions. This well-known class of functions plays a certain role in the derivation of the following which we take from \cite{LiuPerrySulem2017}.
\begin{prop}\label{p solution PC}
    Given complex non-zero constants $\rho_0^{\pm}$ and $\breve{\rho}_0^{\pm}$ such that $1+\rho_0^{\pm}\breve{\rho}_0^{\pm}\in\R^+$, the Riemann-Hilbert problems
    RHP($\Sigma^{(PC)}$,$R_{\tau}^{(PC\pm)}$) with $R_{\tau}^{(PC\pm)}$ defined in (\ref{e formula R^PC-}) and (\ref{e formula R^PC+}) and with $\Sigma^{(PC)}$ depicted in Figure \ref{f SigmaPC} are solvable. The solutions take the form
    \begin{equation}\label{e expansion M^PC+-}
        M^{(PC\pm)}(\tau;\eta)=1+\frac{1}{\eta}
        \left[
          \begin{array}{cc}
             0 & \mp i\beta^{\pm}_{12} \\
             \pm i\beta^{\pm}_{21} & 0 \\
          \end{array}
        \right]
        + \mathcal{O}\left(\frac{1}{\eta^2}\right),
    \end{equation}
    as $|\eta|\to\infty$, where the constants $\beta^{\pm}_{12}$ and $\beta^{\pm}_{21}$ can be computet from $\rho^{\pm}_0$ and $\breve{\rho}^{\pm}_0$ through
    \begin{equation}\label{e beta12-and beta21}
        \begin{aligned}
            &\beta^-_{12}=\frac{\sqrt{2\pi} e^{\pi\nu_0^-/2} e^{i\pi/4}}{\rho_0^-\Gamma(i\nu_0^-)}
            ,&&\qquad\beta^-_{21}= \frac{\sqrt{2\pi}e^{\pi\nu_0^-/2} e^{-i\pi/4}}{\breve{\rho}_0^- \Gamma(-i\nu_0^-)},\\
            &\beta^+_{12}=\frac{\sqrt{2\pi}e^{\pi\nu_0^+/2} e^{3\pi i/4}}{\rho_0^+\Gamma(-i\nu_0^+)}
            ,&&\qquad\beta^+_{21}= \frac{\sqrt{2\pi}e^{\pi\nu_0^+/2} e^{-3\pi i/4}}{\breve{\rho}_0^+\Gamma(i\nu_0^+)}.
            \phantom{\int^{\int^{\int}\int^{\int^{\int}}}}
        \end{aligned}
    \end{equation}
\end{prop}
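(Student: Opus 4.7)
The plan is to reduce each model problem to the parabolic cylinder equation; I describe the $-$ case, the $+$ case being completely analogous with rows and columns interchanged and $\alpha\to-\alpha$. Using the factorization $1+R_\tau^{(PC-)}(\eta)=S(\eta)(1+R_0^-)S(\eta)^{-1}$ noted just before the proposition with $S(\eta)=\eta^{-i\nu_0^-\sigma_3}e^{-i\eta^2\sigma_3/4}$, I would set $\Psi(\eta):=M^{(PC-)}(\tau;\eta)S(\eta)$. Since $S$ is continuous across each ray of $\Sigma^{(PC)}$ (the branch cut of $\eta^{-i\nu_0^-}$ lies on $\R^-$, disjoint from $\Sigma^{(PC)}$), the jump of $\Psi$ across $\Sigma^{(PC)}$ becomes the $\eta$-independent matrix $1+R_0^-$, while on $\R^-$ one inherits a constant diagonal jump $e^{-2\pi\nu_0^-\sigma_3}$ from $S$ alone. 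Solvability of RHP($\Sigma^{(PC)}$,$R_\tau^{(PC-)}$) thereby reduces to a piecewise-constant-jump problem, for which a solution is produced by the standard vanishing lemma once the positivity $1+\rho_0^-\breve\rho_0^-\in\R^+$ and the Schwarz-type symmetry of $R_0^-$ are exploited.

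Second, I would apply Liouville's theorem to $H(\eta):=\Psi'(\eta)\Psi(\eta)^{-1}$, which extends to an entire function because all jumps of $\Psi$ are $\eta$-independent. Inserting (\ref{e expansion M^PC+-}) together with the identity $S'S^{-1}=-i\nu_0^-\sigma_3/\eta-i\eta\sigma_3/2$ yields
\[
H(\eta)=-\tfrac{i\eta}{2}\sigma_3-\tfrac{i}{2}[M_1,\sigma_3]+O(\eta^{-1}),
\]
where $M_1$ is the $1/\eta$ coefficient of $M^{(PC-)}$ in (\ref{e expansion M^PC+-}). Liouville forces $H$ to be a polynomial of degree one and the $O(\eta^{-1})$ remainder to vanish. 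The resulting linear system $\Psi'=H\Psi$ is then equivalent, after eliminating one component, to the parabolic cylinder equation $\psi''+(\alpha+\tfrac{1}{2}-\eta^2/4)\psi=0$ with $\alpha=\pm i\nu_0^-$ depending on the row, whose general solution is a linear combination of the Weber functions $D_\alpha(\pm e^{\pm i\pi/4}\eta)$.

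Third, I would select the correct parabolic-cylinder solution in each of the four sectors cut out by $\Sigma^{(PC)}\cup\R^-$ by matching the normalization $M^{(PC-)}=\Psi S^{-1}\to 1$ as $|\eta|\to\infty$, which uses the dominant asymptotic $D_\alpha(\eta)\sim\eta^\alpha e^{-\eta^2/4}$ in its principal sector. The prescribed jump $R_0^-$ across each $\Sigma_j^{(PC)}$ then reduces to the classical three-term connection formula
\[
D_\alpha(\eta)=e^{-i\pi\alpha}D_\alpha(-\eta)+\frac{\sqrt{2\pi}}{\Gamma(-\alpha)}e^{-i\pi(\alpha+1)/2}D_{-\alpha-1}(-i\eta),
\]
which fixes all free constants and, upon reading off the $1/\eta$ coefficient of $\Psi S^{-1}$ with $\alpha=i\nu_0^-$, produces the explicit values $\beta^-_{12}$ and $\beta^-_{21}$ of (\ref{e beta12-and beta21}); the $\Gamma$-factors come directly from the connection formula, the exponential prefactors from $e^{\pm\pi\nu_0^-/2}=|e^{\pm i\pi\alpha/2}|$. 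The main obstacle is the bookkeeping in this step: the orientation of $\Sigma^{(PC)}$ inherited from $\R$, the branch convention $-\pi<\arg\eta<\pi$, and the triangular structure of $R_0^\pm$ on each of the four rays must all be tracked simultaneously to pick the correct asymptotic sector for $D_\alpha$. Once these conventions are fixed, (\ref{e beta12-and beta21}) drops out by Stirling's asymptotic and the Euler reflection identity $\Gamma(z)\Gamma(1-z)=\pi/\sin(\pi z)$.
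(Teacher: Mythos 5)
The paper itself gives no proof of this proposition: the sentence immediately preceding it states that the result is \emph{taken from} \cite{LiuPerrySulem2017}, where the cross model problem is solved in terms of parabolic cylinder functions. Your sketch reproduces exactly that standard derivation (going back to Its and Deift--Zhou): conjugate by $S$ to obtain piecewise-constant jumps on $\Sigma^{(PC)}\cup\R^-$, pass to $H=\Psi'\Psi^{-1}$ and apply Liouville, reduce to the Weber equation, and match sectors via the connection formula for $D_\alpha$. So in spirit your approach coincides with that of the cited source, and the bookkeeping issues you flag (orientation of $\Sigma^{(PC)}$, branch of $\eta^{2i\nu_0^\pm}$, placement of the triangular factors in $R_0^\pm$) are indeed where the real work lies.

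There is one step that your sketch passes over too lightly and that is a genuine technical point in all of the cited literature. When you say ``Liouville forces $H$ to be a polynomial of degree one,'' you are assuming $H$ is entire, but $S'S^{-1}$ contains the term $-i\nu_0^-\sigma_3/\eta$, so $H(\eta)=M'M^{-1}+M\,S'S^{-1}M^{-1}$ a priori has a candidate simple pole at the vertex $\eta=0$ of the augmented cross $\Sigma^{(PC)}\cup\R^-$. Since $0$ lies on the contour, $M^{(PC\pm)}$ is not analytic there, and one must show that the potential pole coming from $M\,S'S^{-1}M^{-1}$ cancels against one in $M'M^{-1}$. This is where the local behaviour of the solution at the intersection point is analysed: one checks that the cyclic product of the five constant jump matrices at $0$ equals the identity, determines the local singularity structure of $\Psi$ at $0$, and verifies that $H$ in fact extends analytically through the origin. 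Without this step the Liouville argument does not close. With it, the rest of your outline — the factorization $1+R_{\tau}^{(PC-)}=S(1+R_0^-)S^{-1}$, the reduction to the Weber equation, the selection of the recessive parabolic cylinder solution in each sector, and the extraction of $\beta^{\pm}_{12},\beta^{\pm}_{21}$ from the connection formula together with the reflection and Stirling identities — is correct and is precisely the argument the proposition's statement is imported from.
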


%As a direct consequence of this Proposition we can conclude the first line of (\ref{e q^5- q^5+}) as follows:
%\begin{equation*}
%    \begin{aligned}
%        q^{(5-)}(t,x) &= \lim_{w\to\infty} w\cdot [M^{(5-)}(t,x;w)]_{12} &&\\
%        &= -\frac{w_0}{\sqrt{\tau}} \lim_{|\zeta|\to\infty} \zeta\cdot [M^{(PC-)}(t,x;\zeta)]_{12}&&\qquad(\text{see (\ref{e scaling - case})})\\
%        &= i\frac{w_0}{\sqrt{\tau}}\beta^-_{12}&&\\
%        &= \frac{i}{\sqrt{t-x}} \sqrt{w_0}\beta^-_{12} && \qquad(\text{note that $ \frac{\tau}{w_0}=t-x$})\\
%        &=\frac{1}{\sqrt{t-x}} e^{-i\tau+i\nu_0^-\ln(\tau)} \frac{\sqrt{2\pi}e^{\pi\nu_0^-/2} e^{3i\pi/4}}{\sqrt{w_0}r(-w_0)(\delta_0^-)^2 \Gamma(i\nu_0^-)}\\
%        &=\frac{1}{\sqrt{t-x}}e^{-i\tau-i |b^-(\frac{x}{t})|^2\ln(\tau)} b^-\left(\frac{x}{t}\right)&&\qquad(\text{see (\ref{e def b-})})
%    \end{aligned}
%\end{equation*}
%Since the expression $\frac{\sqrt{2\pi}e^{\pi\nu_0^-/2} e^{3i\pi/4}}{\sqrt{w_0}r(-w_0)(\delta_0^-)^2 \Gamma(i\nu_0^-)}$ is depending only on $w_0$ and not on $\tau$, it does depend on the ratio $x/t$ only, see Remark \ref{r w0 depends on x/t}, and it makes sense to define
%\begin{equation}\label{e def b-}
%    b^-\left(\frac{x}{t}\right):=\frac{\sqrt{2\pi}e^{\pi\nu_0^-/2} e^{3i\pi/4}}{\sqrt{w_0}r(-w_0)(\delta_0^-)^2 \Gamma(i\nu_0^-)}.
%\end{equation}
%We directly see from (\ref{e |beta-|}) that $|b^-|^2=|\sqrt{w_0}\beta^-_{12}|^2=-\nu_0^-$, and thus, the function $b^-$ is bounded even for $x/t\to\pm1$.
%\subsection{Model RHP at the positive stationary phase point.}

Using the relations (\ref{e M^PC- <-> M^5-}) and (\ref{e M^PC+ <-> M^5+}) and the two different scalings (\ref{e scaling - case}) and (\ref{e scaling + case}) we obtain $M^{(5\pm)}(\tau;\zeta)=M^{(PC\pm)} (\tau;\pm\sqrt{\tau}(\zeta\mp 1))$ and then we find that
\begin{equation*}
    \lim_{\zeta\to\infty} \zeta\cdot \left(M^{(5\pm)}(\tau;\zeta)-1\right) =
    \frac{\pm 1}{\sqrt{\tau}}
    \lim_{\eta\to\infty} \eta\cdot \left(M^{(PC\pm)}(\tau;\eta)-1\right)
\end{equation*}
and
\begin{equation*}
    M^{(5\pm)}(\tau;0)=M^{(PC\pm)} (\tau;-\sqrt{\tau}).
\end{equation*}
Thus, if we substitute the definitions of $\rho_0^{\pm}$ and $\breve{\rho}_0^{\pm}$, see (\ref{e def r0-}) and (\ref{e def r0+}), into the formulas for $\beta^+_{12}$ and $\beta^-_{12}$, see (\ref{e beta12-and beta21}), we find the following proposition as a Corollary to Proposition \ref{p solution PC}:
\begin{prop}\label{p M^5}
    Under the assumptions that $|\log(1+\rho(\pm 1)\breve{\rho}(\pm 1)|\leq c$ for some constant $c>0$, and that $\rho(\pm1)\neq0$ and $\breve{\rho}(\pm1)\neq0$, the Riemann-Hilbert problems RHP($\Sigma^{(5\pm)}$,$R^{(5\pm)}$) are uniquely solvable and the functions
    \begin{equation*}
       q^{(5\pm)}(\tau)=\lim_{\zeta\to\infty} \zeta\cdot \left[M^{(5\pm)}(\tau;\zeta)\right]_{12}
    \end{equation*}
    are explicitly given by
    \begin{equation}\label{e q^5- q^5+}
        \begin{aligned}
            q^{(5-)}(\tau)&=\frac{e^{-i\tau }e^{i\nu_0^{-}\ln(\tau)}}{\tau^{1/2}} \frac{\sqrt{2\pi}e^{\pi\nu_0^-/2} e^{-i\pi/4}} {\rho(-1)(\delta_0^-)^2\Gamma(i\nu_0^-)}\\
            q^{(5+)}(\tau)&=\frac{e^{i\tau }e^{-i\nu_0^{+}\ln(\tau)}}{\tau^{1/2}} \frac{\sqrt{2\pi}e^{\pi\nu_0^+/2} e^{i\pi/4}} {\rho(1)(\delta_0^+)^2\Gamma(-i\nu_0^+)}
        \end{aligned}
    \end{equation}
    Moreover, we have
    \begin{equation}\label{e M^5(0)}
        M^{(5\pm)}(\tau;0)=
        \left[
          \begin{array}{cc}
            1 & 0 \\
            0 & 1 \\
          \end{array}
        \right]+\mathcal{O}(\tau^{-1/2}),
    \end{equation}
    as $\tau\to\infty$. The implied constant is independent of $\rho(\pm1)$ and $\breve{\rho}(\pm1)$. Also,
    \begin{equation}\label{e L^infty M^5}
        \|M^{(5\pm)}(\tau;\cdot)\|_{L^{\infty}(\C)}\leq C.
    \end{equation}
\end{prop}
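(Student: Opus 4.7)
My plan is to derive Proposition \ref{p M^5} as a direct corollary of Proposition \ref{p solution PC}, transporting solvability, the leading-order asymptotics at infinity, and boundedness from the parabolic cylinder model problems RHP($\Sigma^{(PC)}$, $R_\tau^{(PC\pm)}$) to RHP($\Sigma^{(5\pm)}$, $R^{(5\pm)}$) via the explicit changes of variable (\ref{e scaling - case}) and (\ref{e scaling + case}) together with the identifications (\ref{e M^PC- <-> M^5-}) and (\ref{e M^PC+ <-> M^5+}). Concretely, these identifications yield
\begin{equation*}
M^{(5\pm)}(\tau;\zeta) = M^{(PC\pm)}\bigl(\tau;\pm\sqrt{\tau}(\zeta\mp 1)\bigr),
\end{equation*}
which lets every quantity on the $\zeta$ side be read off the $\eta$ side.

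First I would verify the hypotheses of Proposition \ref{p solution PC}. By (\ref{e def r0-}) and (\ref{e def r0+}) we have $\rho_0^\pm \breve{\rho}_0^\pm = \rho(\pm 1)\breve{\rho}(\pm 1)$, which is real by assumption (\ref{e assumption4}); combined with the hypothesis $|\log(1+\rho(\pm 1)\breve{\rho}(\pm 1))|\leq c$ this gives $1+\rho_0^\pm\breve{\rho}_0^\pm \in \mathbb{R}^+$, and the non-vanishing of $\rho(\pm 1),\breve{\rho}(\pm 1)$ transfers to $\rho_0^\pm,\breve{\rho}_0^\pm$. Thus Proposition \ref{p solution PC} provides unique solutions $M^{(PC\pm)}$, and the change-of-variable identity above transports this uniqueness to RHP($\Sigma^{(5\pm)}$, $R^{(5\pm)}$).

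Second, to derive (\ref{e q^5- q^5+}), I would take the $(1,2)$-entry of the change-of-variable identity, multiply by $\zeta$, and let $\zeta\to\infty$; the expansion (\ref{e expansion M^PC+-}) then gives $q^{(5\pm)}(\tau) = \mp i\beta^\pm_{12}/\sqrt{\tau}$. Substituting (\ref{e beta12-and beta21}) for $\beta_{12}^\pm$ and then (\ref{e def r0-})--(\ref{e def r0+}) for $\rho_0^\pm$, and using the elementary identities $-i\,e^{i\pi/4} = e^{-i\pi/4}$ and $-i\,e^{3\pi i/4} = e^{i\pi/4}$ to absorb the $\mp i$ factor, produces the stated formulas. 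For (\ref{e M^5(0)}), the same identity at $\zeta = 0$ reads $M^{(5\pm)}(\tau;0) = M^{(PC\pm)}(\tau;\mp\sqrt{\tau})$, and since $|\mp\sqrt{\tau}|\to\infty$, applying (\ref{e expansion M^PC+-}) yields $M^{(5\pm)}(\tau;0) = \mathbb{1} + \mathcal{O}(\tau^{-1/2})$, with the implied constant controlled by $|\beta^\pm_{12}|, |\beta^\pm_{21}|$ and hence by $|\rho(\pm 1)|,|\breve{\rho}(\pm 1)|$ and $|\nu_0^\pm|$ only.

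The main obstacle is the uniform $L^\infty$ bound (\ref{e L^infty M^5}). The subtlety is that I must establish that $M^{(PC\pm)}(\tau;\cdot)$ is bounded on all of $\mathbb{C}$ with a constant independent of $\tau$. The saving observation is that $\rho_0^\pm$ and $\breve{\rho}_0^\pm$ depend on $\tau$ only through unimodular phases, and $\nu_0^\pm$ is $\tau$-independent, so the family of jump matrices $R_0^\pm$ displayed in Figures \ref{f R0-} and \ref{f R0+} ranges over a compact set; the explicit construction of $M^{(PC\pm)}$ via parabolic cylinder functions (cf.\ the proof sketched for Proposition \ref{p solution PC} in \cite{LiuPerrySulem2017}) then produces a solution whose sup-norm depends continuously on these bounded data. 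Transporting by the (isometric on $\mathbb{C}$) changes of variable preserves this $L^\infty$ bound and yields (\ref{e L^infty M^5}), completing the proof.
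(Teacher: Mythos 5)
Your approach is the same as the paper's: Proposition \ref{p M^5} is obtained as a corollary of Proposition \ref{p solution PC} via the scalings (\ref{e scaling - case}), (\ref{e scaling + case}) and the identity $M^{(5\pm)}(\tau;\zeta)=M^{(PC\pm)}\bigl(\tau;\pm\sqrt{\tau}(\zeta\mp1)\bigr)$, substituting (\ref{e def r0-})--(\ref{e def r0+}) into (\ref{e beta12-and beta21}), which is precisely what the paragraph preceding the proposition carries out. Two small slips that do not affect the result: the change of variable contributes a factor $\pm1/\sqrt{\tau}$ to the leading coefficient, so the correct intermediate relation is $q^{(5\pm)}(\tau)=-i\beta^{\pm}_{12}/\sqrt{\tau}$ in both cases rather than $\mp i\beta^{\pm}_{12}/\sqrt{\tau}$ (you then apply $-i$ to both, so your final formulas still match, and similarly $M^{(5\pm)}(\tau;0)=M^{(PC\pm)}(\tau;-\sqrt{\tau})$ in both cases), and the change of variable is a bijection of $\C$ but not an isometry, though bijectivity is all that is needed to transport the $L^{\infty}$ bound.
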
 
\subsection{Truncated crosses}\label{s M^4 approx M^5}
This subsection is devoted to the Riemann-Hilbert problems RHP($\Sigma^{(4\pm)}$,$R_{\tau}^{(4\pm)}$), where the contours $\Sigma^{(4+)}$ and $\Sigma^{(4-)}$ are depicted in Figure \ref{f Sigma4}. We recall that $R_{\tau}^{(4\pm)}$ are given by
\begin{equation*}
    R_{\tau}^{(4\pm)}=\left.R_{\tau}^{(2)}\right| _{\Sigma^{(4\pm)}}.
\end{equation*}
See (\ref{e def R^2-}) and (\ref{e def R^2+}) for the definition of $R_{\tau}^{(2)}$. 
Our next basic result is the following proposition.
\begin{prop}\label{p M^4 approx M^5}
    Under the same assumptions as in Proposition \ref{p M^5}, there exist a constant $\tau_0>0$ such that
    the Riemann-Hilbert problems RHP($\Sigma^{(4\pm)}$,$R^{(4\pm)}$) are uniquely solvable for all $\tau\geq\tau_0$. Moreover, there exist positive constants $C_1$ and $C_2$ such that the functions
    \begin{equation*}
       q^{(4\pm)}(\tau)=\lim_{\zeta\to\infty} \zeta\cdot \left[M^{(4\pm)}(\tau;\zeta)\right]_{12}
    \end{equation*}
    satisfy
    \begin{equation}\label{e M^4 approx M^5}
        \left|q^{(4\pm)}(\tau) -q^{(5\pm)}(\tau)\right|\leq C_1\tau^{-1},
    \end{equation}
    for all $\tau\geq\tau_0$, where $q^{(5\pm)}(\tau)$ are given in (\ref{e q^5- q^5+}).  We also have
    \begin{equation}\label{e L^infty M^4}
        \|M^{(4\pm)}(\tau;\cdot)\|_{L^{\infty}(\C)}\leq C_1,
    \end{equation}
    and for a fixed $\epsilon>0$ and any $\zeta_0$ with $\dist(\Sigma^{(4\mp)},\zeta_0)>\epsilon$,
    \begin{equation}\label{e M^4(zeta0)}
        \left|M^{(4\pm)}(\tau;\zeta_0)-
        \left[
          \begin{array}{cc}
            1 & 0 \\
            0 & 1 \\
          \end{array}
        \right]\right|\leq C_2\tau^{-1/2}.
    \end{equation}
    $C_1$ and $\tau_0$ depend on $\Gamma_0$ as in (\ref{e assumption3}) only. $C_2$ may also depend on $\epsilon$.
\end{prop}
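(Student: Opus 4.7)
The natural strategy is the standard small-norm comparison between the truncated-cross RHPs and the fully resolved parabolic-cylinder models constructed in Proposition \ref{p M^5}. Since $\Sigma^{(5\pm)} \supset \Sigma^{(4\pm)}$ and $M^{(5\pm)}$ is uniformly bounded by (\ref{e L^infty M^5}), I would define
\begin{equation*}
    E^{(\pm)}(\tau;\zeta) := M^{(4\pm)}(\tau;\zeta)\,[M^{(5\pm)}(\tau;\zeta)]^{-1},
\end{equation*}
which is well-defined on $\C\setminus\Sigma^{(5\pm)}$ and satisfies a normalized RHP on $\Sigma^{(5\pm)}$ with jump
\begin{equation*}
  V_E(\zeta) = \left\{
    \begin{array}{ll}
      M^{(5\pm)}_-(I+R^{(4\pm)})(I+R^{(5\pm)})^{-1}(M^{(5\pm)}_-)^{-1}, & \zeta\in\Sigma^{(4\pm)},\\
      M^{(5\pm)}_-(I+R^{(5\pm)})^{-1}(M^{(5\pm)}_-)^{-1}, & \zeta\in\Sigma^{(5\pm)}\setminus\Sigma^{(4\pm)}.
    \end{array}
  \right.
\end{equation*}
Proving the proposition reduces to showing that $V_E - I$ is small in $L^1\cap L^2\cap L^\infty$ uniformly as $\tau\to\infty$ and then invoking the Beals--Coifman small-norm machinery.

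The heart of the estimate is the behavior of $V_E - I$ on $\Sigma^{(4\pm)}$. Since $R^{(5\pm)}$ is obtained from $R^{(4\pm)}$ by replacing the full phase $Z(\zeta)$ with its second-order Taylor polynomial at $\pm 1$, and $|Z(\zeta)-(\mp 1 \mp (\zeta\mp 1)^2/2)|\le C|\zeta\mp 1|^3$, the difference of jump matrices is pointwise bounded by $\tau |\zeta\mp 1|^3\, e^{-c\tau|\zeta\mp 1|^2}$, the exponential decay along the four rays being precisely the signature-table observation (\ref{e sign of i Z}) and Figure \ref{f signaturetable}. The change of variable $\eta=\pm\sqrt\tau(\zeta\mp 1)$ transforms the corresponding line integral into $\tau^{-1}\int s^3 e^{-cs^2}\,ds$, giving $\|V_E-I\|_{L^1(\Sigma^{(4\pm)})}=O(\tau^{-1})$, and analogously $O(\tau^{-3/4})$ in $L^2$ and $O(\tau^{-1/2})$ in $L^\infty$. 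On the complement $\Sigma^{(5\pm)}\setminus\Sigma^{(4\pm)}$ the contour is separated from the stationary point, so $|e^{\pm i\tau Z(\zeta)}|$ is exponentially small in $\tau$, producing negligible contributions to every $L^p$. Conjugation by $M^{(5\pm)}_-$ preserves these bounds via (\ref{e L^infty M^5}).

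Small-norm RHP theory applied to $E^{(\pm)}$ then yields, for $\tau\geq\tau_0$, existence and uniqueness of $E^{(\pm)}$ together with the representation
\begin{equation*}
    E^{(\pm)}(\zeta) = I + \frac{1}{2\pi i}\int_{\Sigma^{(5\pm)}} \frac{\mu(s)(V_E(s)-I)}{s-\zeta}\,ds,\qquad \|\mu-I\|_{L^2}=O(\tau^{-1/2}).
\end{equation*}
Unique solvability of RHP($\Sigma^{(4\pm)},R^{(4\pm)}$) follows from the factorization $M^{(4\pm)}=E^{(\pm)}M^{(5\pm)}$, and the uniform bound (\ref{e L^infty M^4}) from (\ref{e L^infty M^5}). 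For the comparison (\ref{e M^4 approx M^5}), I would expand $\zeta[E^{(\pm)}(\zeta)-I]_{12}$ at infinity and split $\int\mu(V_E-I)=\int(V_E-I)+\int(\mu-I)(V_E-I)$: the first term is $O(\|V_E-I\|_{L^1})=O(\tau^{-1})$ and the second is $O(\|\mu-I\|_{L^2}\|V_E-I\|_{L^2})=O(\tau^{-5/4})$. Finally, (\ref{e M^4(zeta0)}) is obtained by combining $E^{(\pm)}(\zeta_0)-I=O(\tau^{-1})$, derived from the Cauchy integral via the distance assumption $|s-\zeta_0|\geq\epsilon$, with $M^{(5\pm)}(\zeta_0)-I=O(\tau^{-1/2})$, which follows from the asymptotic expansion (\ref{e expansion M^PC+-}) in the rescaled variable $\eta=\pm\sqrt\tau(\zeta_0\mp 1)$ at bounded distance from $\pm 1$.

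The chief technical hurdle is the sharp $L^1$ bound on the phase error: a naive pointwise estimate would only yield $O(\tau^{-1/2})$, whereas the correct balance of the cubic Taylor remainder $\tau|\zeta\mp 1|^3$ against the Gaussian weight $e^{-c\tau|\zeta\mp 1|^2}$ effectively localizes the error to an interval of length $\tau^{-1/2}$ around the stationary point and produces the crucial $O(\tau^{-1})$ gain underlying (\ref{e M^4 approx M^5}). The rest of the argument (exponential decay off $\Sigma^{(4\pm)}$, conjugation estimates, and Beals--Coifman invertibility) is essentially routine given this key bound.
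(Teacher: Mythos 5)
Your proof is correct and follows essentially the same route as the paper: you form the error factor $E^{(\pm)}=M^{(4\pm)}[M^{(5\pm)}]^{-1}$, observe that by nilpotency of the triangular jumps its jump matrix differs from the identity by a conjugate of $R^{(4\pm)}_\tau-R^{(5\pm)}_\tau$, and reduce the key estimate to the pointwise bound $\tau|\zeta\mp1|^3 e^{-c\tau|\zeta\mp1|^2}$ coming from the cubic Taylor remainder of the phase against the Gaussian localization. This delivers the same $\|\cdot\|_{L^1}=O(\tau^{-1})$ and $\|\cdot\|_{L^\infty}=O(\tau^{-1/2})$ bounds that the paper feeds into the small-norm machinery of Theorem \ref{t rhp theory} to obtain (\ref{e M^4 approx M^5})--(\ref{e M^4(zeta0)}).
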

\begin{proof}
    We only give a proof for the "$-$'' case of (\ref{e M^4 approx M^5})--(\ref{e M^4(zeta0)}). The idea is to construct the solution of RHP($\Sigma^{(4-)}$,$R_{\tau}^{(4-)}$) from the solution of RHP($\Sigma^{(5-)}$,$R_{\tau}^{(5-)}$) which is provided by Proposition \ref{p M^5}. Therefore we seek for a matrix-valued function $F$ such that
    \begin{equation}\label{e solution formula M^4}
        M^{(4-)}(\tau;\zeta)=F(\tau;\zeta)M^{(5-)}(\tau;\zeta).
    \end{equation}
    A direct computation shows that $F$ needs to be the solution of a normalized Riemann-Hilbert problem RHP($\Sigma^{(5-)}$,$R_{\tau}^{(F)}$), where the jump is given by
    \begin{equation}\label{e formula R^F}
        1+R_{\tau}^{(F)}=M_-^{(5-)}(1+R_{\tau}^{(4-)})
        (1-R_{\tau}^{(5-)})\left[M_-^{(5-)}\right]^{-1},\quad \zeta\in\Sigma^{(5-)}.
    \end{equation}
    Here we set $R^{(4-)}_{\tau}(\zeta)=0$ for $\zeta\in\Sigma^{(5-)}\setminus\Sigma^{(4-)}$. Otherwise (\ref{e formula R^F}) would not make sense because $R_{\tau}^{(4-)}$ is not everywhere defined on $\Sigma^{(5-)}$. Our goal is to apply the small norm RHP theory presented in the Appendix \ref{a RHP theory}, see Theorem \ref{t rhp theory}. This requires bounds for the $L^{\infty}$ and $L^1$ norms of $R^{(F)}_{\tau}$. For this purpose we use the triangularity of $1+R_{\tau}^{(4-)}$ and $1+R_{\tau}^{(5-)}$ and arrange (\ref{e formula R^F}) in the following way.
    \begin{equation*}
        R_{\tau}^{(F)}(\zeta)=M_-^{(5-)}(\tau;\zeta) (R_{\tau}^{(4-)}(\zeta) -R_{\tau}^{(5-)}(\zeta)) \left[M_-^{(5-)}(\tau;\zeta)\right]^{-1}
    \end{equation*}
    We learn that
    for all $w\in\Sigma^{(5-)}$,
    \begin{equation}\label{e estimate R^F}
        |R^{(F)}_{\tau}(w)|\leq c |R_{\tau}^{(4-)}(w)-R_{\tau}^{(5-)}(w)|.
    \end{equation}
    The constant $c$ is determined by $\|M_-^{(5-)}\|_{L^{\infty}(\Sigma^{(5-)})}$ and thus independent of $\tau$, see (\ref{e L^infty M^5}). Using the notation
    \begin{equation*}
        \widetilde{Z}(\zeta) =1+\frac{1}{2}(\zeta+1)^2
    \end{equation*}
    we can find a constant $c$ such that
    \begin{equation*}
        |Z(\zeta)-\widetilde{Z}(\zeta)|\leq c |\zeta+1|^3,\quad\text{for all }|\zeta+1|\leq \frac{1}{2}.
    \end{equation*}
    It follows that for all $\zeta\in B_{1/2}(-1):=\{\zeta\in\C:|\zeta+1|< 1/2\}$ we have
    \begin{equation*}
        \left|e^{\pm i\tau Z(\zeta)} -e^{\pm i\tau \widetilde{Z}(\zeta)}\right| \leq c\tau \left|e^{\pm i\tau\widetilde{Z}(\zeta)} \right||\zeta+1|^3.
    \end{equation*}
    Taking $\zeta\in\Sigma_2^{(5-)}\cap B_{1/2}(-1)$ and parameterizing $\zeta+1=se^{i\pi/4}$ with $0\leq s \leq 1/2$ we find
    \begin{equation*}
        |e^{i\tau Z(\zeta)} -e^{ i\tau \widetilde{Z}(\zeta)}|\leq c\gamma_{\tau}(s),\qquad\text{ with }\gamma_{\tau}(s)=\tau e^{-\tau s^2/2}s^3.
    \end{equation*}
    As it is shown easily, $\gamma_{\tau}$ has the following properties
    \begin{equation*}
        \|\gamma_{\tau}\|_{L^{\infty}(\R_+)}\leq c_1\tau^{-1/2},\qquad
        \|\gamma_{\tau}\|_{L^{1}(0,1/2)}\leq c_2\tau^{-1}.
    \end{equation*}
    From these observations and similar estimates on the other rays $\Sigma_1^{(5-)}$, $\Sigma_3^{(5-)}$ and $\Sigma_4^{(5-)}$ and from (\ref{e assumption3}) we can deduce that
    \begin{equation*}
        \|R_{\tau}^{(4-)}-R_{\tau}^{(5-)} \|_{L^{\infty}(\Sigma^{(5-)}\cap B_{1/2}(-1))}\leq c \Gamma_0 \tau^{-1/2}
    \end{equation*}
    and
    \begin{equation*}
        \|R_{\tau}^{(4-)}-R_{\tau}^{(5-)}\|_{L^{1}(\Sigma^{(5-)}\cap B_{1/2}(-1))}\leq c \Gamma_0\tau^{-1}.
    \end{equation*}
    In order to show that equivalent estimates are also available on the contour $\Sigma^{(5-)}\setminus B_{1/2}(-1)$ we use the estimate $|R_{\tau}^{(4-)}-R_{\tau}^{(5-)}|\leq |R_{\tau}^{(4-)}|+|R_{\tau}^{(5-)}|$. Elementary computations show that the $L^{\infty}$-norm and also the $L^1$-norm of both, $R_{\tau}^{(4-)}$ and $R_{\tau}^{(5-)}$, decay exponentially as $\tau\to\infty$. Thus, combining all estimates, we finally find $\|R_{\tau}^{(F)}\|_{L^{\infty}(\Sigma^{(5-)})}\leq c_1 \tau^{-1/2}$ and $\|R_{\tau}^{(F)}\|_{L^{1}(\Sigma^{(5-)})}\leq c_2 \tau^{-1}$ with constants independent of $\tau$. By (\ref{e estimate R^F}), Theorem \ref{t rhp theory} and (\ref{e solution formula M^4}) these estimates are sufficient for (\ref{e M^4 approx M^5})--(\ref{e L^infty M^4}) to be valid. The Proposition is proven.
\end{proof} 
\subsection{Combining the two crosses}\label{s M^3 approx M^4}
In this subsection we attend to the Riemann-Hilbert part of the mixed $\db$-RHP \ref{dbrhp M^2} Therefore we will construct the solution $M^{(3)}$ of RHP($\Sigma^{(3)}$,$R_{\tau}^{(3)}$) from the two solutions of RHP($\Sigma^{(4\pm)}$,$R_{\tau}^{(4\pm)}$), which were constructed in the proof of Proposition \ref{p M^4 approx M^5} from two model Riemann-Hilbert problems. We recall that  the contour $\Sigma^{(3)}=\Sigma^{(2)}$ is depicted in Figure \ref{f Sigma2} and we also recall that
\begin{equation*}
    R_{\tau}^{(3)}(\zeta)=R_{\tau}^{(2)}(\zeta),\qquad \zeta \in \Sigma^{(3)}.
\end{equation*}
See (\ref{e def R^2-}) and (\ref{e def R^2+}) for the definition of $R_{\tau}^{(2)}$.
We have the following proposition.
\begin{prop}\label{p M^3 approx M^4}
    Under the same assumptions as in Proposition \ref{p M^5}, there exist a constant $\tau_0>0$ such that
    the Riemann-Hilbert problem RHP($\Sigma^{(3)}$,$R^{(3)}$) is uniquely solvable for all $\tau\geq\tau_0$. Moreover, there exists a positive constants $C$ such that the function
    \begin{equation*}
       q^{(3)}(\tau)=\lim_{\zeta\to\infty} \zeta\cdot \left[M^{(3)}(\tau;\zeta)\right]_{12}
    \end{equation*}
    satisfies
    \begin{equation}\label{e M^3 approx M^4}
        \left|q^{(3)}(\tau) -\left(q^{(4-)}(\tau)+ q^{(4+)}(\tau)\right)\right|\leq C\tau^{-3/4},
    \end{equation}
    for all $\tau\geq\tau_0$, where $q^{(4\pm)}(\tau)$ satisfy (\ref{e M^4 approx M^5}).  We also have
    \begin{equation}\label{e L^infty M^3}
        \|M^{(3)}(\tau;\cdot)\|_{L^{\infty}(\C)}\leq C,
    \end{equation}
    and
    \begin{equation}\label{e M^3(zeta0)}
        \left|M^{(3)}(\tau;0)-
        \left[
          \begin{array}{cc}
            1 & 0 \\
            0 & 1 \\
          \end{array}
        \right]\right|\leq C\tau^{-1/2},
    \end{equation}
    Either of the constants, $C$ and $\tau_0$,  does depend on the constant $\Gamma_0$ given in (\ref{e assumption3}) only.
\end{prop}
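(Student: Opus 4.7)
I would attack Proposition \ref{p M^3 approx M^4} via a global parametrix built from the single-cross solutions $M^{(4\pm)}$ of Proposition~\ref{p M^4 approx M^5}, and then absorb the mismatch through small-norm Riemann-Hilbert theory. Define
\begin{equation*}
    P(\tau;\zeta) := M^{(4-)}(\tau;\zeta)\,M^{(4+)}(\tau;\zeta), \qquad E(\tau;\zeta):= M^{(3)}(\tau;\zeta)\,[P(\tau;\zeta)]^{-1}.
\end{equation*}
Both factors of $P$ exist for $\tau\geq\tau_0$ by Proposition~\ref{p M^4 approx M^5}. Since $M^{(4+)}$ is analytic across $\Sigma^{(4-)}$ and vice versa, a direct computation shows that $E$ is holomorphic off $\Sigma^{(3)}=\Sigma^{(4-)}\cup\Sigma^{(4+)}$ and normalized at infinity, and its jump reduces to a commutator: on $\Sigma^{(4-)}$,
\begin{equation*}
    E_+ = E_-\,V^{(E)},\qquad V^{(E)}-I = M^{(4-)}_- \bigl[\,M^{(4+)}-I,\;R^{(4-)}\,\bigr]\,[M^{(4+)}]^{-1}(I+R^{(4-)})^{-1}[M^{(4-)}_-]^{-1},
\end{equation*}
and a symmetric expression on $\Sigma^{(4+)}$. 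The commutator structure is the crucial gain.

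Next estimate $V^{(E)}$ in $L^\infty$, $L^1$, and $L^2$. The estimate (\ref{e M^4(zeta0)}) gives $|M^{(4+)}(\tau;\zeta)-I|\leq C\tau^{-1/2}$ for $\zeta\in\Sigma^{(4-)}$ at positive distance from $\Sigma^{(4+)}$; combined with the uniform bound (\ref{e L^infty M^4}) at the (finitely many) meeting points and with the $L^1$ bound of Proposition~\ref{p L^1norm of R^2} restricted to $\Sigma^{(4-)}$, one obtains
\begin{equation*}
    \|V^{(E)}\|_{L^\infty(\Sigma^{(3)})}\leq C\tau^{-1/2},\qquad \|V^{(E)}\|_{L^1(\Sigma^{(3)})}\leq C\tau^{-1},\qquad \|V^{(E)}\|_{L^2(\Sigma^{(3)})}\leq C\tau^{-3/4},
\end{equation*}
the last by interpolation. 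The small-norm machinery (Theorem~\ref{t rhp theory}) then yields unique solvability of the RHP for $E$ with $\|E_--I\|_{L^2(\Sigma^{(3)})}\leq C\tau^{-3/4}$, unique solvability of RHP($\Sigma^{(3)}$,$R^{(3)}$) through $M^{(3)}=EP$, and the $L^\infty$ bound (\ref{e L^infty M^3}) via the product with the bounded $M^{(4\pm)}$.

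For the asymptotic, expand $M^{(4\pm)}(\tau;\zeta)=I+A^\pm(\tau)/\zeta+O(\zeta^{-2})$ with $[A^\pm]_{12}=q^{(4\pm)}(\tau)$, so that $P=I+(A^-+A^+)/\zeta+O(\zeta^{-2})$ and
\begin{equation*}
    q^{(3)}(\tau)=\lim_{\zeta\to\infty}\zeta[M^{(3)}-I]_{12}=\lim_{\zeta\to\infty}\zeta[E-I]_{12}+q^{(4-)}(\tau)+q^{(4+)}(\tau).
\end{equation*}
The Plemelj representation gives $\lim_{\zeta\to\infty}\zeta[E-I]_{12}=-\tfrac{1}{2\pi i}\int_{\Sigma^{(3)}}[E_-V^{(E)}]_{12}\,dw$, bounded by $\|V^{(E)}\|_{L^1}+\|E_--I\|_{L^2}\|V^{(E)}\|_{L^2}\leq C\tau^{-1}+C\tau^{-3/2}\leq C\tau^{-3/4}$, establishing (\ref{e M^3 approx M^4}). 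Finally (\ref{e M^3(zeta0)}) follows by evaluating $M^{(3)}(\tau;0)=E(\tau;0)M^{(4-)}(\tau;0)M^{(4+)}(\tau;0)$: $\zeta_0=0$ is well-separated from both crosses, so (\ref{e M^4(zeta0)}) gives $M^{(4\pm)}(\tau;0)=I+O(\tau^{-1/2})$, and the Cauchy representation of $E(\tau;0)-I$ together with the previous $L^1$ and $L^2$ bounds of $V^{(E)}$ gives $E(\tau;0)=I+O(\tau^{-1})$.

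The main technical obstacle is the pair of intersection points $(0,\pm 2)$ where the two contours $\Sigma^{(4-)}$ and $\Sigma^{(4+)}$ meet: the control (\ref{e M^4(zeta0)}) of $|M^{(4+)}-I|$ on $\Sigma^{(4-)}$ degenerates there. The remedy is to combine (\ref{e L^infty M^4}) with the fact, visible from the signature table (Figure~\ref{f signaturetable}) and the proof of Proposition~\ref{p L^1norm of R^2}, that $R^{(4\pm)}$ is exponentially small sufficiently far from $\pm 1$; this confines any bad contribution to a region where the jump is itself $O(e^{-c\tau})$, harmless for both the $L^\infty$ and $L^1$ bounds above. Alternatively one may perturb $\Sigma^{(2)}$ so that the crosses are disjoint and then localize the argument in fixed disks around $\pm 1$.
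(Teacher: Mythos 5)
Your overall strategy coincides with the paper's: you set $M^{(3)}=E\,M^{(4-)}M^{(4+)}$, exploit the commutator gain in the jump of $E$, and close with the small-norm machinery of Theorem~\ref{t rhp theory}. Two remarks, one cosmetic and one substantive.

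The cosmetic one: with the ordering $P=M^{(4-)}M^{(4+)}$ the jump of $E$ on $\Sigma^{(4+)}$ actually cancels. Indeed, on $\Sigma^{(4+)}$ the factor $M^{(4-)}$ is analytic and $P_+=P_-(1+R^{(4+)})$ directly, so $E_+=E_-$ there; the RHP for $E$ lives on $\Sigma^{(4-)}$ alone, which is exactly what the paper uses. Your claimed ``symmetric expression on $\Sigma^{(4+)}$'' is therefore vacuous; this does not hurt the estimates, but there is no second cross to worry about. (Also, the two crosses meet at $\pm i$, not at $(0,\pm 2)$.)

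The substantive one concerns your remedy at the meeting points. Near the intersection points $\pm i$ the bound $|M^{(4+)}-I|\leq C\tau^{-1/2}$ from (\ref{e M^4(zeta0)}) indeed degenerates, so something is needed. You propose relying on exponential smallness of $R^{(4\pm)}$ ``sufficiently far from $\pm 1$.'' This fails precisely at the meeting points: $Z(\pm i)=\tfrac12\bigl(\pm i+(\pm i)^{-1}\bigr)=0$, so $|e^{\pm i\tau Z(\pm i)}|=1$ and there is no exponential decay there (the points $\pm i$ sit on the unit circle where $\Real(iZ)=0$ in the signature table, Figure~\ref{f signaturetable}). What actually makes the argument work is the smooth cutoff $\chi$ introduced in (\ref{e chi}): it equals $1$ in fixed disks of radius $3/5$ around $\pm i$, and the $(1-\chi)$ factor built into $R^{(2)}_\tau$ on the inner segments $\Sigma^{(2)}_{2},\Sigma^{(2)}_3,\Sigma^{(2)}_6,\Sigma^{(2)}_7$ (see (\ref{e def R^2-}) and (\ref{e def R^2+})) forces $R^{(3)}=R^{(2)}$ to vanish identically in a neighborhood of $\pm i$. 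That is exactly the paper's observation: for every $\zeta\in\Sigma^{(4-)}$ either $\dist(\zeta,\Sigma^{(4+)})>1/3$ (where (\ref{e M^4(zeta0)}) applies) or $R^{(3)}_\tau(\zeta)=0$. You should invoke the cutoff $\chi$, not exponential decay; your alternative of perturbing $\Sigma^{(2)}$ so that the crosses are disjoint would also work but requires reworking the earlier deformation. Once this is fixed, your estimates $\|V^{(E)}-I\|_{L^\infty}\lesssim\tau^{-1/2}$ and $\|V^{(E)}-I\|_{L^1}\lesssim\tau^{-1}$ follow and the rest of your argument matches the paper.
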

\begin{proof}
    We consider RHP($\Sigma^{(4-)}$,$R_{\tau}^{(E)}$), where the jump matrix $R_{\tau}^{(E)}$ is given by
    \begin{equation*}
        1+R_{\tau}^{(E)}=M_-^{(4-)}M^{(4+)} (1+R_{\tau}^{(3-)})[M^{(4+)}]^{-1}(1-R_{\tau}^{(3-)}) [M_-^{(4-)}]^{-1},\quad \zeta\in\Sigma^{(4-)}.
    \end{equation*}
    Denoting the solution of RHP($\Sigma^{(4-)}$,$R_{\tau}^{(E)}$) by $E(\tau,\zeta)$ we then have
    \begin{equation*}
        M^{(3)}(\tau;w)=E(\tau;w)M^{(4-)}(\tau;w)M^{(4+)}(\tau;w),
    \end{equation*}
    which is verified by computing explicitly the the jumps on $\Sigma^{(3)}=\Sigma^{(4-)}\cup\Sigma^{(4+)}$. Furthermore it follows that
    \begin{equation*}
        q^{(3)}(\tau) =[E_1(\tau)]_{12}+q^{(4-)}(\tau)+ q^{(4+)}(\tau),
    \end{equation*}
    where
    \begin{equation*}
        E(\tau;\zeta)=1+\frac{E_1(\tau)}{\zeta}+ \mathcal{O}\left(\zeta^{-2}\right),\qquad\text{as }\zeta\to\infty.
    \end{equation*}
    Thus, it suffices to show that RHP($\Sigma^{(4-)}$,$R^{(E)}$) is indeed solvable and we have to prove estimates on $E_1$. Similar to the above proof of Proposition \ref{p M^4 approx M^5} we intend to apply theory for RHPs with jump matrix $R^{(E)}$ near zero, see Appendix \ref{a RHP theory}. It follows from Theorem \ref{t rhp theory} that (\ref{e M^3 approx M^4})--(\ref{e M^3(zeta0)}) are proven if the following estimates can be verified.
    \begin{equation}\label{e L^infty R^E}
        \|R^{(E)}_{\tau}(\cdot)\|_{L^{\infty}(\Sigma^{(4-)})}\leq c_1\tau^{-1/2}
    \end{equation}
    and
    \begin{equation}\label{e L^2 R^E}
        \|R^{(E)}_{\tau}(\cdot)\|_{L^2(\Sigma^{(4-)})}\leq c_2\tau^{-1}.
    \end{equation}
    We will provide the proof of (\ref{e L^infty R^E}) and (\ref{e L^2 R^E}) in the following. Writing
    \begin{eqnarray*}
    % \nonumber to remove numbering (before each equation)
      R_{\tau}^{(E)}\!\!&=&M_-^{(4-)}\left(M^{(4+)}-1\right) R_{\tau}^{(3-)} [M^{(4+)}]^{-1} \left(1-R_{\tau}^{(3-)}\right) [M_-^{(4-)}]^{-1}\\
       && + M_-^{(4-)}R_{\tau}^{(3-)} \left(1-R_{\tau}^{(3-)}\right) \left([M^{(4+)}]^{-1}-1\right)[M_-^{(4-)}]^{-1},\qquad \zeta\in\Sigma^{(4-)},
    \end{eqnarray*}
    where we also used that $[R_{\tau}^{(3-)}]^2=0$, we learn that
    \begin{equation}\label{e e1}
        \left|R_{\tau}^{(E)}(\zeta)\right|\leq c \left|M^{(4+)}(\zeta)-1\right|\cdot \left|R_{\tau}^{(3-)}(\zeta)\right|.
    \end{equation}
    Here the constant $c$ is determined by $L^{\infty}(\Sigma^{(4-)})$-norm of $M_-^{(4-)}$, $M^{(4+)}$ and $R^{(3-)}$ and thus independent of $\tau$, see (\ref{e L^infty M^4}). For any $\zeta\in\Sigma^{(4-)}$ we have either $\dist(\Sigma^{(4+)},\zeta)>1/3$ or $R_{\tau}^{(3-)}(\zeta)=0$ which follows from the cut-off function occurring in (\ref{e def R^2-}) and (\ref{e def R^2+}). Making use of this observation and considering estimate (\ref{e M^4(zeta0)}) we realize that (\ref{e e1}) implies
    \begin{equation*}
        \left|R_{\tau}^{(E)}(\zeta)\right|\leq c \tau^{-1/2} \left|R_{\tau}^{(3-)}(\zeta)\right|.
    \end{equation*}
    Thus, by (\ref{e assumption3}) we find (\ref{e L^infty R^E}). Inequality (\ref{e L^2 R^E}) follows by Proposition \ref{p L^1norm of R^2}. The proof of the Proposition is now completed.
\end{proof}

\section{Analysis of pure $\db$ problem}\label{s pure db} The remaining part of the proof of Theorem \ref{t main thm general} is the analysis of the $\db$-part of $M^{(2)}$ in (\ref{e def M^2}), see (\ref{e db M^2}). We will show that the contribution of $\db \mathcal{W}$ in (\ref{e db M^2}) goes to $0$ to higher order and the asymtotics of $q^{(2)}$ through equation (\ref{e def q^j}) are determined by the RHP part of $M^{(2)}$, see (\ref{e jump M^2}).
$\db$-problem \ref{db} is solved by finding a solution of $D=1+J(D)$, where
\begin{equation}\label{e def J}
    J(D)(\zeta):=\frac{1}{\pi}\int_{\C} \frac{D(s)\Upsilon(s)}{s-\zeta}dA(s).
\end{equation}
The definition of $\Upsilon$ was given in (\ref{e db D}). For $J$ we can prove the following.
\begin{prop}\label{p operator J}
    For the operator in (\ref{e def J}) we have $J:L^{\infty}(\C)\to L^{\infty}(\C)$ and there exists a $C>0$ such that
    \begin{equation}\label{e J norm}
        \|J\|_{L^{\infty}(\C)\to L^{\infty}(\C)}\leq C\tau^{1/4}
    \end{equation}
    for all $\tau\in\R^+$. $C$ depends on $\rho$ and $\breve{\rho}$ through theirs $H^{1,1}(\R)$-norms and $\Gamma_0$, see (\ref{e assumption3}).
\end{prop}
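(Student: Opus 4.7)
The plan is to estimate the operator norm by passing the supremum inside the integral:
\begin{equation*}
\|J(D)\|_{L^\infty(\C)} \leq \|D\|_{L^\infty(\C)} \cdot \sup_{\zeta \in \C} \frac{1}{\pi} \int_\C \frac{|\Upsilon(s)|}{|s-\zeta|}\, dA(s),
\end{equation*}
so everything reduces to a uniform-in-$\zeta$ bound on the integral. Since $\det M^{(3)} \equiv 1$, the $L^\infty$ bound (\ref{e L^infty M^3}) of Proposition \ref{p M^3 approx M^4} together with the definition of $\Upsilon$ in (\ref{e db D}) yields $|\Upsilon(s)| \leq C|\db\mathcal{W}(s)|$. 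By (\ref{e del W}), the support of $\db\mathcal{W}$ lies in $\Omega_1 \cup \cdots \cup \Omega_8$, and on each $\Omega_k$ its entries are controlled by $|\db R_k(s)| \cdot |e^{\pm i\tau Z(s)}|$. The integral thus splits into eight contributions, each handled by the same template.

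On each $\Omega_k$, Lemma \ref{l key estimate db R} decomposes $|\db R_k|$ into three pieces: a singular piece $|s\mp 1|^{-1/2}$ (with coefficient proportional to $\|\rho\|_{H^1}+\|\breve{\rho}\|_{H^1}$), a derivative piece $|\rho'(\Real s)|+|\breve{\rho}'(\Real s)|$, and a cutoff piece $|\db\chi(s)|$. The exponential factor is exploited using the signature table (Figure \ref{f signaturetable}) together with the expansion $Z(s) = \mp 1 - \tfrac{1}{2}(s\mp 1)^2 + \mathcal{O}(|s\mp 1|^3)$ near each stationary point: setting $s\mp 1 = u + iv$ in the appropriate wedge, one obtains $|e^{\pm i\tau Z(s)}| \lesssim e^{-c\tau uv}$. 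Each of the three pieces is then integrated against the Cauchy kernel by H\"older's inequality with conjugate exponents $p>2$ and $q=p/(p-1)<2$:
\begin{equation*}
\int_{\Omega_k} \frac{|f(s)|\,e^{-c\tau uv}}{|s-\zeta|}\, dA(s) \leq \|f \cdot e^{-c\tau uv}\|_{L^p(\Omega_k)} \cdot \Bigl\|\tfrac{1}{|\cdot-\zeta|}\Bigr\|_{L^q(\Omega_k)}.
\end{equation*}
After the rescaling $(u,v) = \tau^{-1/2}(\tilde{u},\tilde{v})$, the $L^p$-norm for the singular piece becomes $C\tau^{1/4-1/p}$ with the Jacobian contributing $\tau^{-1}$; the derivative piece is bounded by $\|\rho'\|_{L^2}+\|\breve{\rho}'\|_{L^2}$ after applying Fubini and integrating the transverse Gaussian; and the $\db\chi$ piece is supported near $\pm i$, off the real axis, where $|e^{\pm i\tau Z}|$ decays exponentially in $\tau$, contributing arbitrarily fast decay.

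The main obstacle is ensuring that the $L^q$-norm of the Cauchy kernel is bounded uniformly in $\zeta \in \C$. Over all of $\C$ this norm is infinite, and the key observation is that the Gaussian factor $e^{-c\tau uv}$ forces the effective support of the integrand for the first two pieces into a ball of radius $\sim \tau^{-1/2}$ around the stationary point, on which $\int_{B_R(\zeta)} |s-\zeta|^{-q}\,dA \lesssim R^{2-q}$ holds uniformly in $\zeta$ (and for the $\db\chi$ piece the effective support is a fixed compact set). Combining the powers of $\tau$ from the $L^p$ and $L^q$ factors and summing over $k$ produces the asserted operator-norm bound, with the constant $C$ determined by $\|\rho\|_{H^{1,1}(\R)} + \|\breve{\rho}\|_{H^{1,1}(\R)}$ (via the Lemma and the $L^2$-norms of the derivatives) and by $\Gamma_0$ (via the pointwise control on $\rho,\breve{\rho},\rho/(1+\rho\breve{\rho}),\breve{\rho}/(1+\rho\breve{\rho})$ entering the $\db\chi$ piece).
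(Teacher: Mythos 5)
Your overall architecture matches the paper's: decompose $J=J_1+\dots+J_8$ over the regions $\Omega_k$, invoke Lemma \ref{l key estimate db R} to split $|\db R_k|$ into a singular piece, a derivative piece, and a cutoff piece, and bound each against the Cauchy kernel. The divergence --- and the gap --- is in how you handle the Cauchy kernel's $L^q$-norm, and it is not a fixable slip but a structural mistake.

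You propose applying two-dimensional H\"older over $\Omega_k$ and then controlling $\|\,|\cdot-\zeta|^{-1}\|_{L^q(\Omega_k)}$ by restricting to ``a ball of radius $\sim\tau^{-1/2}$'' which the Gaussian $e^{-c\tau uv}$ is said to enforce. This fails for two reasons. First, $e^{-c\tau uv}$ is not a valid upper bound for $|e^{\pm i\tau Z(s)}|$ on all of $\Omega_k$: writing $s=1+u+iv$ on $\Omega_4$ we have $\Imag Z(s)=\tfrac{v}{2}\bigl(1-\tfrac{1}{|s|^2}\bigr)$, which behaves like $uv$ only near the stationary point but tends to $v/2$ as $u\to\infty$; the quadratic Taylor expansion of $Z$ you use is a local statement. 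Second, and more fundamentally, the set where the decaying factor is $O(1)$ is not a ball: $\{uv\lesssim 1/\tau\}$ is an unbounded hyperbolic region containing the whole strip $\{0<v\lesssim 1/\tau,\;u\ge v\}$, which reaches arbitrarily far from the stationary point. Hence the effective support is not contained in $B_{\tau^{-1/2}}(\pm1)$, the $L^q$-norm of $|\cdot-\zeta|^{-1}$ over the actual integration region is infinite for every $q<2$, and once you have already applied H\"older you cannot truncate inside the $L^q$-factor (the Gaussian lives in the other factor). The same objection applies to the derivative piece: $\rho'$ is only in $L^2$, so $p>2$ H\"older is not available, and the phrase ``applying Fubini and integrating the transverse Gaussian'' is not an argument.

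The correct mechanism, which the paper uses, respects the anisotropy: one bounds the exponential only by its supremum over the non-decaying ($u$-) direction, $\sup_{u\geq v}|e^{i\tau Z}|\leq e^{-\tau\check I(v)}$, applies Cauchy--Schwarz in the $u$-variable alone to produce a one-dimensional Cauchy factor $\|\,|\cdot-\zeta|^{-1}\|_{L^2_u}\lesssim |v-\beta|^{-1/2}$ (which is finite and uniform), and then integrates the remaining one-variable expression $\int_0^\infty e^{-\tau\check I(v)}|v-\beta|^{-1/2}\,dv\lesssim\tau^{-1/4}$ using the symmetrization trick (\ref{e y-beta story}). The $\tau^{1/4}$ arises from this last one-dimensional Gaussian integral against the half-power singularity, not from any two-dimensional ball truncation. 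Replacing the 2D H\"older/ball truncation by the 1D Cauchy--Schwarz-in-$u$ followed by the $v$-integration is the missing ingredient in your argument.
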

\begin{proof}
    We decompose the operator into $J=J_1+...+J_8$, where
    \begin{equation*}
        J_k (D)(\zeta):=\frac{1}{\pi}\int_{\Omega_k} \frac{D(s)\Upsilon(s)}{s-\zeta}dA(s),\qquad k\in\{1,...,8\},
    \end{equation*}
    and prove (\ref{e J norm}) first for $J_4$ and afterwards for $J_3$. Other values for $j$ are similar to one of these. By Lemma \ref{l key estimate db R} it is clear that to bound $J_4$ it is enough to bound $I_1$, $I_2$ and $I_3$ defined by
    \begin{equation*}
          I_1:=\int_{\Omega_4} \frac{ \left(|\rho\,'(\Real(s))|+ |\breve{\rho}\,'(\Real(s))|\right)\left|e^{i\tau Z(s)}\right|} {|s-\zeta|} dA(s),\quad
          I_2:=\int_{\Omega_4} \frac{|s-1|^{-1/2}|e^{i\tau Z(s)}|} {|s-\zeta|} dA(s),
    \end{equation*}
    \begin{equation*}      
          I_3:=\Gamma_0\int_{\Omega_4} \frac{\left|\db \chi(s)\right| \left|e^{i\tau Z(s)}\right|} {|s-\zeta|} dA(s).
    \end{equation*}
    We start with presenting two useful estimates. First, by monotonicity and by (\ref{e estimate I check}) we have
    \begin{equation}\label{e e^theta  on Omega4}
        \sup_{x\in[y,\infty)}|e^{i\tau Z(1+x+iy)}| =|e^{i\tau Z(1+y+iy)}|\leq e^{-\tau \check{I}(y)}
    \end{equation}
    for any $y>0$. Secondly, we will need the following observation which can be obtained by elementary computations. For arbitrary $\alpha,\beta\in\R$ and any real $y\neq\beta$ we have
    \begin{equation}\label{e L^2 of |s-w|^-1}
        \left\|\frac{1}{\sqrt{(x-\alpha)^2 +(y-\beta)^2}}\right\|_{L^2_x(\R)}\leq
        c|y-\beta|^{-1/2}.
    \end{equation}
    Let us now begin with estimating $I_1$. Writing $s-1=x+iy$ and $\zeta-1=\alpha+i\beta$ and making use of (\ref{e e^theta  on Omega4}) and (\ref{e L^2 of |s-w|^-1}) we obtain
    \begin{equation}\label{e estimate I1}
        \begin{aligned}
            I_1&=&& \int_0^{\infty}\int_y^{\infty} \frac{(|\rho\,'(x+1)|+|\breve{\rho}\,'(x+1)|)\left|e^{i\tau Z(1+x+iy)}\right|}{\sqrt{(x-\alpha)^2 +(y-\beta)^2}}dxdy\\
            &\leq&&(\|\rho\,'\|_{L^2(\R)} +\|\breve{\rho}\,'\|_{L^2(\R)} ) \int_{0}^{\infty}e^{-\tau \check{I}(y)} \left\|\frac{1}{\sqrt{(x-\alpha)^2 +(y-\beta)^2}}\right\|_{L^2_x((y,\infty))}dy \\
            &\leq&& c(\|\rho\,'\|_{L^2(\R)} +\|\breve{\rho}\,'\|_{L^2(\R)} ) \int_{0}^{\infty}e^{-\tau \check{I}(y)} |y-\beta|^{-1/2}dy.
        \end{aligned}
    \end{equation}
    By monotonicity of $\check{I}$ we find for any $\beta\in\R$
    \begin{equation}\label{e y-beta story}
            \int_{\R}\frac{e^{-\tau \check{I}(|y|)}}{ |y-\beta|^{1/2}} dy\leq\int_{|y|\leq |y-\beta|}\frac{e^{-\tau \check{I}(|y|)}}{|y|^{1/2}} dy
            +\int_{|y|\geq |y-\beta|}\frac{e^{-\tau \check{I}(|y-\beta|)}}{|y-\beta|^{1/2}}dy
            \leq 2\int_{\R}\frac{e^{-\tau \check{I}(|y|)}}{ |y|^{1/2}} dy.
    \end{equation}
    Furthermore, by definition of $\check{I}$ and appropriate substitutions we find
    \begin{equation*}
        \int_{0}^{\infty}\frac{e^{-\tau \check{I}(|y|)}} {\sqrt{y}} dy \leq \frac{c_1}{\tau^{1/4}} \int_{0}^{\infty}\frac{e^{-y^2}}{\sqrt{y}} dy+  \frac{c_2}{\tau^{1/2}} \int_{0}^{\infty}\frac{e^{-y}}{\sqrt{y}} dy\leq c \left(\frac{1}{\tau^{1/4}}+\frac{1}{\tau^{1/2}}\right)
    \end{equation*}
    and thus
    \begin{equation*}
        I_1\leq  \frac{c}{\tau^{1/4}}(\|\rho\,'\|_{L^2(\R)} +\|\breve{\rho}\,'\|_{L^2(\R)} ) .
    \end{equation*}
    Proceeding like in (\ref{e estimate I1}) we now consider
    \begin{equation*}
        I_2\leq \int_{0}^{\infty}e^{-\tau \check{I}(y)} \left\|\frac{1}{(x^2 +y^2)^{1/4}}\right\|_{L^2_x((y,\infty))}\left\|\frac{1}{\sqrt{(x-\alpha)^2 +(y-\beta)^2}}\right\|_{L^{2}_x((y,\infty))}dy.
    \end{equation*}
    A direct computation shows that $\left\|(x^2 +y^2)^{-1/4}\right\|_{L^2_x((y,\infty))}$ does not depend on $y>0$ . Thus, we can copy the arguments from above in order to obtain $I_2\leq c \tau^{-1/4}$. Similarly, $I_3\leq c \Gamma_0 \tau^{-1/4}$ which proves (\ref{e J norm}) for $J_4$. The case $J_3$ is similar. We define
    \begin{equation*}
        K_1:=\int_{\Omega_3} \frac{\left(|\rho\,'(\Real(s))|+ |\breve{\rho}\,'(\Real(s))|\right)|e^{-i\tau Z(s)}|} {|s-\zeta|} dA(s),\quad
        K_2:=\int_{\Omega_3} \frac{|s-1|^{-1/2}|e^{-i\tau Z(s)}|} {|s-\zeta|} dA(s),
    \end{equation*}
    \begin{equation*}
          K_3:=\Gamma_0\int_{\Omega_3} \frac{\left|\db \chi(s)\right| \left|e^{i\tau Z(s)}\right|} {|s-\zeta|} dA(s),
    \end{equation*}
    such that by Lemma \ref{l key estimate db R} $\|J_3\|_{L^{\infty}(\C)\to L^{\infty}(\C)}\leq c(K_1+K_2+K_3)$. Using again obvious properties of the Joukowsky transform $Z$ we can easily see that for all $y\in(0,1)$,
    \begin{equation}\label{e e^theta on Omega3}
        \sup_{x\in(-1,-y]}|e^{-i\tau Z(1+x+iy)}| =|e^{-i\tau Z(1-y+iy)}|\leq e^{\tau I(y)},
    \end{equation}
    where $I(y)$ is the same function as defined in (\ref{e def help fct I}). Using (\ref{e e^theta on Omega3}) and (\ref{e L^2 of |s-w|^-1}) we find
    \begin{equation*}
        \begin{aligned}
            K_1&=&& \int_0^{1}\int_{-1}^{-y} \frac{(|\rho\,'(x+1)|+|\breve{\rho}\,'(x+1)|) \left|e^{i\tau Z(1+x+iy)}\right|}{\sqrt{(x-\alpha)^2 +(y-\beta)^2}}dxdy\\
            &\leq&&(\|\rho\,'\|_{L^2(\R)} +\|\breve{\rho}\,'\|_{L^2(\R)} ) \int_{0}^{1}e^{\tau I(y)} \left\|\frac{1}{\sqrt{(x-\alpha)^2 +(y-\beta)^2}}\right\|_{L^2_x((-1,-y))}dy \\
            &\leq&& c(\|\rho\,'\|_{L^2(\R)} +\|\breve{\rho}\,'\|_{L^2(\R)} ) \int_{0}^{1}e^{\tau I(y)} |y-\beta|^{-1/2}dy\\
            &=&& c(\|\rho\,'\|_{L^2(\R)} +\|\breve{\rho}\,'\|_{L^2(\R)} ) \left\{\int_{0}^{1/2}e^{-\frac{\tau}{4}y^2} |y-\beta|^{-1/2}dy+ \int_{1/2}^{1}e^{-\frac{\tau}{4} (y-1)^2} |y-\beta|^{-1/2}dy\right\}\\
            &=&& c(\|\rho\,'\|_{L^2(\R)} +\|\breve{\rho}\,'\|_{L^2(\R)} ) \left\{\int_{0}^{1/2}e^{-\frac{\tau}{4} y^2} |y-\beta|^{-1/2}dy+ \int_0^{1/2}e^{-\frac{\tau}{4} y^2} |y+1-\beta|^{-1/2}dy\right\}.\\
        \end{aligned}
    \end{equation*}
    Applying the idea of (\ref{e y-beta story}) we immediately end up with $K_1\leq  c (\|\rho\,'\|_{L^2(\R)} +\|\breve{\rho}\,'\|_{L^2(\R)} )\tau^{-1/4}$. For estimating $K_2$ we write the same as we did for $I_2$:
    \begin{equation*}
        K_2\leq \int_{0}^{1}e^{\tau I(y)} \left\|(x^2 +y^2)^{-1/4}\right\|_{L^2_x((-1,-y))} \left\|\frac{1}{\sqrt{(x-\alpha)^2 +(y-\beta)^2}}\right\|_{L^2_x((-1,-y))}dy
    \end{equation*}
    Since $\left\|(x^2 +y^2)^{-1/4}\right\|_{L^2_x((-w_0,-y))}$ can be bounded by a constant that is independent of $y$ we deduce $K_2\leq  c \tau^{-1/4}$. Since $\chi$ is finitely supported we can estimate $K_3$ in the same way as $K_1$ and obtain $K_3\leq  c\Gamma_0\tau^{-1/4}$ Therefore, we have shown that $\|J_3+J_4\|_{L^{\infty}\to L^{\infty}}\leq c \tau^{-1/4}(1+\|\breve{\rho}\,'\|_{L^2(\R)} +\|\rho'\|_{L^2(\R)})$. This suffices to prove the Proposition.
\end{proof}
\begin{lem}\label{l existence and asymptotic of db problem}
    For sufficiently large $\tau>0$, there exists a unique solution $D$ for $\db$-problem \ref{db}. For $|\Imag(\zeta)|\to \infty$ $D$ has the property that
    \begin{equation}\label{e D expansion}
        D(\tau;\zeta)=1+\frac{D_1(\tau)}{\zeta}+\mathcal{O} \left(\zeta^{-2}\right),
    \end{equation}
    where
    \begin{equation}\label{e estimate D1}
        |D_1(\tau)|\leq C\tau^{-3/4}.
    \end{equation}
    Moreover,
    \begin{equation}\label{e estimate D(zeta1)}
        |D(\tau;0)-1|\leq C\tau^{-3/4}.
    \end{equation}
    Here the constant $C$ is independent of $\tau$ and depends on $\rho$ and $\breve{\rho}$ through theirs $H^{1,1}(\R)$-norms and $\Gamma_0$ as in (\ref{e assumption3}).
\end{lem}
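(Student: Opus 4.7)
The plan is to write $\db$-Problem \ref{db} as a fixed-point equation $D = 1 + J(D)$ and then extract the asymptotic expansion of $D$ from the resulting integral representation. By the calculations in the proof of Proposition \ref{p operator J} (which actually yield $\|J\|_{L^{\infty}(\C)\to L^{\infty}(\C)} \leq C\tau^{-1/4}$), for $\tau$ sufficiently large the operator $J$ has small norm, so $I - J$ is invertible on $L^{\infty}(\C)$ by Neumann series and $D := (I-J)^{-1}(1)$ is the unique bounded solution with $\|D\|_{L^{\infty}(\C)} \leq 2$.

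Expanding the Cauchy kernel $(s - \zeta)^{-1} = -\zeta^{-1} + \mathcal{O}(\zeta^{-2})$ in (\ref{e def J}) gives (\ref{e D expansion}) with $D_1(\tau) = -\pi^{-1} \int_{\C} D(s)\Upsilon(s)\, dA(s)$. Since $\|D\|_{L^{\infty}}$ is controlled and $M^{(3)}$ is bounded with bounded inverse (using $\det M^{(3)} = 1$ together with (\ref{e L^infty M^3})), one has $|\Upsilon(s)| \leq C|\db \mathcal{W}(s)|$, and (\ref{e estimate D1}) reduces to the key $L^{1}$-bound $\int_{\C} |\db \mathcal{W}(s)| \, dA(s) \leq C\tau^{-3/4}$. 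I would prove this by invoking Lemma \ref{l key estimate db R} to split $|\db R_k|$ into its three contributions and controlling the exponential factor $|e^{\pm i\tau Z(s)}|$ via the Taylor expansion $Z(s) = Z(\mp 1) + \tfrac{1}{2}(s \pm 1)^2 + \mathcal{O}(|s \pm 1|^3)$ together with the signature table in Figure \ref{f signaturetable}: on the portion of $\Omega_k$ near $\mp 1$ along the rays $\arg(s \pm 1) \in \{\pm \pi/4\}$ one obtains $|e^{\pm i\tau Z(s)}| \leq e^{-c\tau|s \pm 1|^2}$, and exponential-in-$\tau$ decay elsewhere in $\Omega_k$. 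Passing to polar coordinates around $\mp 1$ and rescaling $r \mapsto r/\sqrt{\tau}$, the singular piece $|s \pm 1|^{-1/2}$ produces $\int_0^\infty r^{1/2}e^{-c\tau r^2}\, dr = C\tau^{-3/4}$, while the $|\rho'(\Real(s))|+|\breve{\rho}'(\Real(s))|$ piece is estimated by doing the transverse integral first (gaining a factor $\tau^{-1/2}$) and then applying Cauchy--Schwarz in the longitudinal direction against $\rho', \breve{\rho}' \in L^2(\R)$ (gaining another $\tau^{-1/4}$). The cutoff piece $|\db \chi|$ contributes exponentially small terms because the support of $\db \chi$ lies strictly away from the stationary phase points.

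For (\ref{e estimate D(zeta1)}) I would use $D(\tau;0) - 1 = J(D)(0) = -\pi^{-1}\int_{\C} s^{-1} D(s)\Upsilon(s)\, dA(s)$; since $|s|^{-1}$ is bounded on neighbourhoods of $\pm 1$ (which carry the dominant contribution) and is locally integrable in $\R^2$ elsewhere, while the exponential factor decays rapidly both as $s \to 0$ (using the pole of $Z(s) = \tfrac{1}{2}(s + s^{-1})$ at the origin combined with the signature table) and on the remainder of each $\Omega_k$, the same polar-coordinate calculation as in the previous step, now with an additional harmless bounded factor, delivers (\ref{e estimate D(zeta1)}). The main technical obstacle is the $L^1$-bound $\int_{\C}|\db \mathcal{W}| \leq C\tau^{-3/4}$: the singular factor $|s \mp 1|^{-1/2}$ from Lemma \ref{l key estimate db R} is too singular to be absorbed by the Gaussian weight without exploiting the precise second-order Taylor expansion of $Z$ at the stationary phase points, and the $L^2$ contribution from $\rho', \breve{\rho}'$ must be handled by a one-dimensional Cauchy--Schwarz argument rather than the $L^2 \to L^2$ Cauchy mapping property used in the proof of Proposition \ref{p operator J}.
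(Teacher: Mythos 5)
Your plan follows the paper's proof closely in structure: Neumann series for $D=1+J(D)$ using Proposition~\ref{p operator J} (and you are right that the exponent in (\ref{e J norm}) should read $\tau^{-1/4}$), the Cauchy--kernel expansion giving $D_1(\tau)=-\pi^{-1}\int_\C D\Upsilon\,dA$, reduction via the uniform bounds $\|D\|_{L^\infty}, \|M^{(3)}\|_{L^\infty}\lesssim 1$ to a weighted $L^1$-bound on $\db\mathcal{W}$, and the three-way split of $|\db R_k|$ from Lemma~\ref{l key estimate db R}. The one step that is not right as stated is the claimed pointwise exponential bound: $|e^{\pm i\tau Z(s)}|\le e^{-c\tau|s\mp 1|^2}$ does \emph{not} hold throughout $\Omega_k$ near the stationary phase points, and ``exponential-in-$\tau$ decay elsewhere'' also fails along the real boundary of $\Omega_k$. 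Setting $s\mp 1 = x+iy = re^{i\theta}$, the quadratic Taylor term of $iZ$ has real part proportional to $-xy=-\tfrac{1}{2}r^2\sin 2\theta$, which degenerates as $\theta\to 0$; since each $\Omega_k$ is a wedge with one side on $\R$, it contains points with $\theta$ arbitrarily small, where there is no Gaussian-in-$r$ decay at all. This is exactly why the paper decomposes $\Omega_k$ as in Figure~\ref{f decomposition Omega4} and works with the weaker bounds $e^{-c_1\tau xy}$ near $\pm 1$ and $e^{-c_2\tau y}$ farther out, see (\ref{e estimate e^theta Omega41 and Omega 42}). Fortunately, your conclusions survive once the Gaussian is replaced by $e^{-c\tau xy}$: the polar estimate for the $|s\mp 1|^{-1/2}$ piece picks up the convergent angular integral $\int_0^{\pi/4}(\sin 2\theta)^{-3/4}d\theta$ and still returns $\tau^{-3/4}$, and for the $|\rho'|+|\breve{\rho}'|$ piece the transverse $y$-integral of $e^{-c\tau xy}$ produces a profile in $x$ whose $L^2_x$-norm is still of order $\tau^{-3/4}$, matching the Cauchy--Schwarz computation the paper performs. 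So the overall argument and the final rates are correct, but the stated pointwise bound needs to be replaced by the degenerate one before the estimates go through; the accounting ``gain $\tau^{-1/2}$ then $\tau^{-1/4}$'' should likewise be read as an $L^\infty$-then-$L^2$ heuristic for this non-Gaussian profile rather than a literal factoring.
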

\begin{proof}
    We find a solution of $\db$-problem \ref{db} by solving the integral equation $D=1+J(D)$. By the above proposition this equation is uniquely solvable in the space $L^{\infty}(\C)$ for all $\tau\geq\tau_0$ for some $\tau_0>0$. Moreover, we have $\|D\|_{L^{\infty}(\C)}\leq c$ uniformly for all $\tau\geq\tau_0$. Therefore the coefficient $D_1$ in the expansion (\ref{e D expansion}) can be expressed by $D_1=\frac{1}{\pi}\int_{\C}D \Upsilon dA$. It follows that to bound $|D_1|$ it suffices to bound $\int_{\Omega_j}\Upsilon dA$ for $j=1,...,8$. Let us consider the case of $j=4$. Analogously to the proof of Proposition \ref{p operator J} we have to bound the following three integrals:
    \begin{equation*}
          I_1':=\int_{\Omega_4} \left(|\rho\,'(\Real(s))|+ |\breve{\rho}\,'(\Real(s))|\right)\left|e^{i\tau Z(s)}\right|dA(s),\quad
          I_2':=\int_{\Omega_4} |s-1|^{-1/2}|e^{i\tau Z(s)}|dA(s),
    \end{equation*}
    \begin{equation*}
          I_3':=\Gamma_0\int_{\Omega_4} \left|\db \chi(s)\right| \left|e^{i\tau Z(s)}\right| dA(s).
    \end{equation*}
    Writing again $s-1=x+iy$ and differentiating between cases $x\leq 1$ and $x\geq 1$, see Figure \ref{f decomposition Omega4}
    \begin{figure}
\begin{center}
\begin{tikzpicture}
\path[fill=black,opacity=0.21] (1,0) -- (1,3) -- (-2,0) -- (1,0);
\path[fill=black,opacity=0.09] (1,0) -- (1,3) -- (3,5) -- (3,0) -- (1,0);
\path[fill=black,opacity=0.21] (-5,1.5) -- (-3.5,1.5) -- (-2,0) -- (-5,0) -- (-5,1.5);
\path[fill=black,opacity=0.09] (-5,1.5) -- (-3.5,1.5) -- (-5,3) -- (-5,1.5);
\draw 	[->]  	(-5.5,0) -- (3.5,0) ;
\draw 	[->]  	(-5,-.5) -- (-5,5) ;
\draw  	(-2.2,-0.2) -- (3.2,5.2) ;
\draw  	(-1.8,-0.2) -- (-5.,3) ;
\draw 	[thick]  	(-2,.1) -- (-2,-.1) ;
\draw 	[thick]  	(1,.1) -- (1,-.1) ;
\draw 	[dashed]  	(1,0) -- (1,3) ;
\draw 	[dashed]  	(-5,1.5) -- (-3.5,1.5) ;

\node at (-2,-.5) {$1$};
\node at (1,-.5) {$2$};
\node at (-5.5,3) {$i$};
\node at (-5.5,1.5) {$\frac{i}{2}$};
\node at (-.1,.8) {$\Omega_{4,1}$};
\node at (2.2,1.8) {$\Omega_{4,2}$};
\node at (-4,.65) {$\Omega_{3,1}$};
\node at (-4.5,1.9) {$\Omega_{3,2}$};
\end{tikzpicture}
\end{center}
  \caption{Decompositon of $\Omega_3$ and $\Omega_4$ for the proof of Lemma \ref{l existence and asymptotic of db problem}.} \label{f decomposition Omega4}
\end{figure}
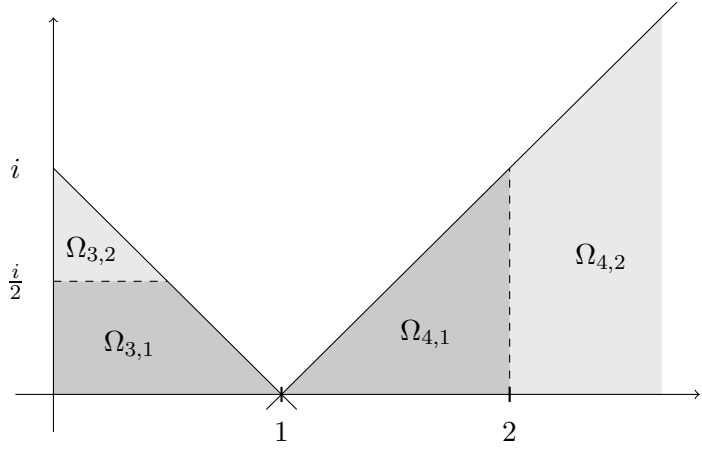 
    , we can obtain from elementary computations that
    \begin{equation}\label{e estimate e^theta Omega41 and Omega 42}
        \left|e^{-i\tau \theta_{w_0}(s)}\right|\leq
        \left\{
          \begin{array}{ll}
            e^{-\tau c_1 xy}, & s\in\Omega_{4,1}, \\
            e^{-\tau c_2 y}, & s\in\Omega_{4,2}.
          \end{array}
        \right.
    \end{equation}
    We can now begin with estimating $I_1'$.
    \begin{eqnarray*}
    % \nonumber to remove numbering (before each equation)
      I_1' &\leq& \int_{0}^{\infty}\int_y^{\infty}(|\breve{\rho}\,' (x+1)|+|\rho\,' (x+1)|)e^{-\tau c_1 xy}dxdy\\&&\quad + \int_{0}^{\infty}\int_y^{\infty}(|\breve{\rho}\,' (x+1)|+|\rho\,' (x+1)|)e^{-\tau c_2 y}dxdy  \\
       &\leq& (\|\rho\,'\|_{L^{2}(\R)} + \|\breve{\rho}\,'\|_{L^{2}(\R)} ) \int_{0}^{\infty}\|e^{-\tau c_1 xy }\|_{L_x^{2}(y,\infty)}dy+  (\|\rho\,'\|_{L^{1}(\R)} + \|\breve{\rho}\,'\|_{L^{1}(\R)} ) \int_{0}^{\infty}e^{-\tau c_2 y}dy\\
       &\leq& c (\|\rho\|_{H^{1,1}(\R)} +\|\breve{\rho}\|_{H^{1,1}(\R)} ) \left(\frac{1}{\tau^{1/2}} \int_{0}^{\infty}\frac{e^{-\tau c_1 y^2}}{y^{1/2}}dy+\frac{1}{\tau}\int_{0}^{\infty}e^{-c_2 y}dy\right)\\
       &\leq& c (\|\rho\|_{H^{1,1}(\R)} +\|\breve{\rho}\|_{H^{1,1}(\R)} ) \left(\frac{1}{\tau^{3/4}} +\frac{1}{\tau}\right).
    \end{eqnarray*}
    Recalling the bound $\|(x^2 +y^2)^{-1/4}\|_{L^2_x((y,\infty))}\leq c$ from the proof of Proposition \ref{p operator J} we can bound $I_2'$ $I_3'$ in a similar way so that finally
    \begin{equation*}
        \left|\int_{\Omega_4}\Upsilon(s) dA(s)\right|\leq c \tau^{-3/4}.
    \end{equation*}
    Let us now estimate $\int_{\Omega_3}\Upsilon(s) dA(s)$. Writing again $s-1=x+iy$ we have for any $y\in(0,\frac{1}{2})$ and any $x\in(-1,-y)$ that
    \begin{equation*}
        1-|s|^2>-cx,
    \end{equation*}
    where $c$ is a sufficiently small and positive constant not depending on $x$ and $y$ . This observation can be used to find $|e^{i\tau Z(s)}|\leq e^{\tau xy/2}$ for $s\in \Omega_{3,1}$ as depicted in Figure \ref{f decomposition Omega4}. It follows that
    \begin{eqnarray*}
    % \nonumber to remove numbering (before each equation)
      \int_{\Omega_{3,1}} |{\rho}\,'(\Real(s))||e^{-i\tau Z(s)}| dA(s) &\leq&
      \int_0^{1}\int_{-1}^{-y} |{\rho}\,'(x+1)|\left|e^{\tau  xy/2}\right|dxdy\\
      &\leq&\|{\rho}\,'\|_{L^2(\R)} \int_0^{\infty}\|e^{-\tau  xy/2}\|_{L_x^2(y,\infty)}dy\\
      &\leq& C\tau^{-3/4}\|{\rho}\,'\|_{L^2(\R)}
    \end{eqnarray*}
    and the same for $\breve{\rho}$ instead of $\rho$. By the same argument we also find
    \begin{equation*}
        \int_{\Omega_{3,1}} \left(|s-1|^{-1/2}+|\db\chi(s)|\right)|e^{-i\tau Z(s)}| dA(s) \leq C\tau^{-3/4}.
    \end{equation*}
    To bound the integral over $\Omega_{3,2}$ (see Figure \ref{f decomposition Omega4}) we write $s-i=x+iy$. For $y\in(-1/2,0)$ and $x\in(0,-y)$ (which is equivalent to $s\in\Omega_{3,2}$) we find
    \begin{equation*}
        |s|^2-1=(x^2+y^2)+2y\leq 2y^2+2y\leq y.
    \end{equation*}
    Thus, $|e^{-i\tau Z(s)}|= e^{\tau \frac{\Imag(s)}{2}\frac{|s|^2-1}{|s|^2}}\leq e^{\tau y/4}$ for $s\in\Omega_{3,2}$. It follows that
    \begin{eqnarray*}
    % \nonumber to remove numbering (before each equation)
      \int_{\Omega_{3,2}} |\rho'(\Real(s))|\left|e^{-i\tau Z(s)}\right| dA(s) &\leq&
      \int_{-1/2}^0\int_{0}^{-y} |\rho'(x)|e^{\tau y/4}dxdy\\
      &\leq&\|\rho'\|_{L^2(\R)}\int_0^{\infty}e^{-\tau y/4}\sqrt{y}dy\\
      &\leq& C\tau^{-3/2}\|\rho'\|_{L^2(\R)}
    \end{eqnarray*}
    and the same for $\breve{\rho}$ instead of $\rho$. By the same argument we also find
    \begin{equation*}
        \int_{\Omega_{3,2}} \left(|s-1|^{-1/2}+|\db\chi(s)|\right)|e^{-i\tau Z(s)}| dA(s) \leq C\tau^{-3/2}.
    \end{equation*}
    Altogether we have proven that
    \begin{equation*}
        \left|\int_{\Omega_3\cup\Omega_4}\Upsilon(\tau;s) dA(s)\right|\leq c \tau^{-3/4}.
    \end{equation*}
    Since other regions $\Omega_j$ can be considered in a similar way we can conclude the proof of the Lemma.
\end{proof} 
\section{Soliton resolution}\label{s solitons}
The preceding sections were dealing with pure radiation solutions of (\ref{e mtm}). That is the case $N=0$ in RHP's \ref{rhp M} and \ref{rhp M hat}. Let us now assume $N\geq1$ and $\mathcal{S}(u_0,v_0)=(p;\{\lambda_j,C_j\}_{j=1}^N)$ as in (\ref{e def Scattering data}). For velocities $-1<v_1\leq v_2<1$ and initial points $-\infty < x_1\leq x_2 < \infty$ we want to compute the asymptotic behavior of solutions $(u,v)$ for (\ref{e mtm}) in the space-time cone
\begin{equation}\label{e space-time cone}
    \mathcal{K}(v_1,v_2,x_1,x_2)=\{(t,x): t>0\text{ and } x=\xi+\eta t\text{ for }\xi\in[x_1,x_2], \eta\in[v_1,v_2]\}.
\end{equation}
For arbitrary $\epsilon>0$ and sufficiently large $t$ the set $\mathcal{K}$ corresponds to those $(t,x)$ for which $z_0=w_0^{-1}\in (L_1-\epsilon,L_2+\epsilon)$ with
\begin{equation}\label{e L-j}
    L_j=\sqrt{\frac{1-v_j}{1+v_j}},\qquad j\in \{1,2\}.
\end{equation}
We set
\begin{equation}\label{e Lambda(S)}
    \Lambda(\mathcal{K})=\{k:L_1\leq |\lambda_k|^2 \leq L_2\}
\end{equation}
and it will turn out that only solitons corresponding to eigenvalues $\lambda_k$ with $k\in\Lambda(\mathcal{K})$ will be visible in $\mathcal{K}$. The remaining solitons corresponding to eigenvalues $\lambda_k$ with $k$ belonging to one of the sets
\begin{equation}\label{e Lambda(S) left right}
    \Lambda^{\leftarrow}(\mathcal{K})=\{k:|\lambda_k|^2< L_1\},\qquad
    \Lambda^{\rightarrow}(\mathcal{K})=\{k:|\lambda_k|^2 > L_2\},
\end{equation}
will eventually leave $\mathcal{S}$ to the left (if $k\in \Lambda^{\leftarrow}(\mathcal{K})$) or to the right (if $k\in \Lambda^{\rightarrow}(\mathcal{K})$).
A first step of proving the soliton resolution conjecture is to prove the following Theorem:
\begin{thm}\label{t solres1}
    Suppose that $u_0,v_0\in H^2(\R)\cap H^{1,1}(\R)$ with scattering data $\mathcal{S}(u_0,v_0)=(p;\{\lambda_k,C_k\}_{k=1}^N)$. For a given space-time cone $\mathcal{K}(v_1,v_2,x_1,x_2)$ of the form (\ref{e space-time cone}), define new scattering data $$(\widetilde{p}; \{\lambda_k,\widetilde{C}_k\} _{k\in\Lambda(\mathcal{K})})$$ by
    \begin{equation}\label{e p tilde Ck tilde}
        \widetilde{p}(\lambda)=p(\lambda)\prod_{j\in \Lambda^{\leftarrow}(\mathcal{K})} \frac{\overline{\lambda}_j^2}{\lambda_j^2} \left(\frac{\lambda^2-\lambda^2_j} {\lambda^2-\overline{\lambda}_j^2}\right)^2,\qquad \widetilde{C}_k=C_k \prod_{j\in \Lambda^{\leftarrow}(\mathcal{K})} \frac{\overline{\lambda}_j^2}{\lambda_j^2} \left(\frac{\lambda_k^2-\lambda^2_j} {\lambda_k^2-\overline{\lambda}_j^2}\right)^2,
    \end{equation}
    and denote by $(u_{\mathcal{K}}(t,x),v_{\mathcal{K}}(t,x))$ the solution of (\ref{e mtm}) with the modified scattering data. Then, as $t\to\infty$ in $\mathcal{K}(v_1,v_2,x_1,x_2)$ we have
    \begin{equation}\label{e u - u S}
        |u(x,t)-u_{\mathcal{K}}(t,x)|+ |v(x,t)-v_{\mathcal{K}}(t,x)| \leq ce^{-\theta t}
    \end{equation}
    with positive constants $C$ and $\theta$ not depending on $t$.
\end{thm}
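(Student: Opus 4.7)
The plan is to adapt the standard Beals--Coifman / Deift--Zhou peel--off argument (of the DNLS flavor, as in \cite{JenkinsLiuPerrySulem2018b}) to RHP \ref{rhp M} and RHP \ref{rhp M hat} simultaneously. A direct calculation shows that the exponential factor in the residue condition at the pole $w_j=\lambda_j^{-2}$ of RHP \ref{rhp M} has modulus $\exp\bigl\{-\tfrac{\mathrm{Im}(w_j)}{2}(1+|w_j|^{-2})(x-\nu_j t)\bigr\}$, where $\nu_j$ is precisely the soliton velocity in (\ref{e 1-sol}). For $(t,x)\in\mathcal{K}(v_1,v_2,x_1,x_2)$ this is bounded by $e^{-\theta t}$ with $\theta=\theta(v_1,v_2,\lambda_j)>0$ whenever $\nu_j\notin[v_1,v_2]$; the sign of the exponent determines whether the decay is automatic or the triangular structure of the residue must first be reversed.

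First I would perform a Blaschke--type conjugation to dispose of the poles indexed by $\Lambda^{\leftarrow}(\mathcal{K})$. Setting
\[
    B(w):=\prod_{j\in \Lambda^{\leftarrow}(\mathcal{K})} \frac{w-w_j}{w-\overline{w}_j}, \qquad \widetilde{M}(t,x;w):=M(t,x;w)\,B(w)^{-\sigma_3},
\]
the factor $B$ is analytic off $\{\overline{w}_j\}$, unimodular on $\mathbb R$, and has a simple zero at each $w_j$. Thus $\widetilde M$ satisfies a new RHP whose jump on $\mathbb{R}$ carries reflection coefficient $r(w)B(w)^{-2}$ and whose $\Lambda^{\leftarrow}$ poles have either been erased or been transferred across $\mathbb R$ with swapped triangularity. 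Translating back to $\lambda$ via (\ref{e new reflection coeff}) gives exactly the formula for $\widetilde p$ in (\ref{e p tilde Ck tilde}), while evaluating $B(w_k)^{-2}$ at the surviving poles $w_k=\lambda_k^{-2}$ (with $\overline w_j=\overline\lambda_j^{-2}$) recovers the modified norming constants $\widetilde C_k$. A parallel Blaschke conjugation is done on $\widehat M$ in the $z$--picture; the compatibility relations (\ref{e relation r and r hat}), (\ref{e rel new poles}) and (\ref{e rel new c_j}) ensure that both transformed problems correspond to the same pair $(u_{\mathcal{K}},v_{\mathcal K})$ provided by Theorem \ref{t inverse map}.

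Next, each remaining ``unwanted'' pole (those in $\Lambda^{\leftarrow}\cup\Lambda^{\rightarrow}$ that survive or have been relocated) is absorbed into a jump on a small disk $\partial U_k$ via the standard device of multiplying $\widetilde M$ on $U_k$ by $I-N_k/(w-w_k)$, with $N_k$ the nilpotent residue matrix. Thanks to Step 1 the entries of $N_k$ now involve only the \emph{decaying} exponentials $e^{\pm i\theta_k}$, so $\|N_k\|_{\partial U_k}\leq C e^{-\theta t}$ uniformly on $\mathcal K$. Let $M_{\mathcal K}$ be the solution of RHP \ref{rhp M} associated to $(\widetilde p;\{\lambda_k,\widetilde C_k\}_{k\in\Lambda(\mathcal K)})$ (existence is guaranteed by Theorem \ref{t inverse map} together with the B\"acklund argument recalled before (\ref{e rec u 2})). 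The ratio $E:=\widetilde M^{\mathrm{loc}}\,M_{\mathcal K}^{-1}$ has no pole on $\mathbb C\setminus\bigcup_k\partial U_k$ and no jump on $\mathbb R$ or at the surviving poles of $M_{\mathcal K}$, so it solves a normalized RHP with jump $I+\mathcal O(e^{-\theta t})$. Small--norm RHP theory (Appendix \ref{a RHP theory}) yields $E=I+\mathcal O(e^{-\theta t})$ uniformly on $\mathbb C$. Substituting $\widetilde M=E\,M_{\mathcal K}\,B^{\sigma_3}$ into the reconstruction formulae (\ref{e rec u 2}), (\ref{e M(0)}) and running the mirror argument for $\widehat M$ via (\ref{e rec v 2}), (\ref{e M hat (0)}) delivers (\ref{e u - u S}).

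The hardest piece beyond routine bookkeeping is to confirm that the Blaschke conjugations carried out independently in the $w$-- and $z$--pictures produce consistent modified scattering data in the sense of Theorem \ref{t inverse map}: the identities (\ref{e relation r and r hat}), (\ref{e rel new poles}), (\ref{e rel new c_j}) must remain valid after modification, and the prefactor $\overline\lambda_j^2/\lambda_j^2$ in (\ref{e p tilde Ck tilde}) must emerge correctly from the substitutions $w=\lambda^{-2}$ and $z=\lambda^2$ inside $B$. A secondary point is that the decay rate $\theta$ degenerates as $\nu_j$ approaches $\{v_1,v_2\}$; the assumption $\nu_j\notin[v_1,v_2]$ for $j\notin\Lambda(\mathcal K)$ (which is built into the definitions (\ref{e Lambda(S)})--(\ref{e Lambda(S) left right})) provides a uniform positive lower bound for $\theta$ and hence the uniformity of (\ref{e u - u S}) throughout $\mathcal K$.
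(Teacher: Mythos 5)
Your high-level strategy — a Blaschke conjugation to flip the triangularity of the residue conditions at the $\Lambda^{\leftarrow}(\mathcal{K})$ poles, followed by absorption of the now exponentially small residues into a small-norm jump on small disks — is the right one and coincides with what the paper does (the paper defers exactly this machinery to Lemmas 3.1 and 3.2 of \cite{Saalmann2015}); the exponential rate you compute for the residue factor is also correct. The first concrete problem is the sign of your conjugation. With $B(w)=\prod_{j\in\Lambda^{\leftarrow}(\mathcal{K})}\frac{w-w_j}{w-\overline{w}_j}$ having a simple zero at $w_j$, the transformation $\widetilde{M}=MB^{-\sigma_3}$ multiplies the first column of $M$, which by (\ref{e res M}) is the column carrying the simple pole at $w_j$, by $B^{-1}$, and therefore creates a double pole instead of erasing it. You need $\widetilde{M}=MB^{\sigma_3}$, after which the jump matrix carries reflection coefficient $rB^{2}$ (not $rB^{-2}$), the pole at $w_j$ persists but with upper-triangular residue matrix, and its entry is proportional to the reciprocal of the original residue entry — exponentially small in $\mathcal{K}$ precisely because the original entry grows there. (Also, the pole is not "transferred across $\mathbb{R}$"; it stays at $w_j$ with flipped triangularity.)

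The second, more subtle gap concerns the prefactor $\prod_{j\in\Lambda^{\leftarrow}(\mathcal{K})}\overline{\lambda}_j^2/\lambda_j^2$ in (\ref{e p tilde Ck tilde}). It does not, as you assert, merely "emerge from the substitutions": rewriting $B(w)^2$ in $\lambda$ via $w=\lambda^{-2}$ yields $\prod\overline{\lambda}_j^4/\lambda_j^4\left(\frac{\lambda^2-\lambda_j^2}{\lambda^2-\overline{\lambda}_j^2}\right)^2$, one power of $\overline{\lambda}_j^2/\lambda_j^2$ too many. The missing unimodular factor $\prod\overline{w}_j/w_j=B(0)^{-1}$ (compare (\ref{e r tilde Ck tilde})) comes from the two-point structure of the MTM reconstruction: the leading coefficient (\ref{e rec u 2}) is read off at $w\to\infty$, where $B\to1$, whereas (\ref{e M(0)}) is read off at $w=0$, where $[\widetilde{M}(0)]_{11}=B(0)[M(0)]_{11}$. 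The conjugated RHP therefore reconstructs $B(0)\cdot u$ rather than $u$, and one must invoke the phase-shift invariance of Remark \ref{r e^i alpha u} to pass from the raw conjugated data $(rB^2;\{w_k,c_kB(w_k)^2\}_{k\in\Lambda(\mathcal{K})})$ to the actual $(\widetilde{r};\{w_k,\widetilde{c}_k\})$. Without this correction the limit $u_{\mathcal{K}}$ you obtain differs from $u$ by the fixed unimodular factor $B(0)$, and (\ref{e u - u S}) fails. Once these two points are repaired, the small-disk absorption, the small-norm estimate, and the $w$/$z$-picture consistency check you flag are indeed the routine remainder.
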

\begin{remark}
  The formulae in (\ref{e p tilde Ck tilde}) can also be expressed in terms of the transformed sets of scattering data $\mathcal{S}_w(u_0,v_0) =(r, \{w_j,c_j\}_{j=1}^N)$ and $\mathcal{S}_z(u_0,v_0) =(\widehat{r},\{z_j,\widehat{c}_j\}_{j=1}^N)$. Namely, the new transformed data
  \begin{equation*}
    (\widetilde{r}; \{w_k,\widetilde{c}_k\} _{k\in\Lambda(\mathcal{K})}),\qquad
    (\widetilde{\hat{r}}; \{z_k,\widetilde{\hat{c}}_k\} _{k\in\Lambda(\mathcal{K})})
  \end{equation*}
  are given by
  \begin{equation}\label{e r tilde Ck tilde}
        \begin{aligned}
          &\widetilde{r}(w)=r(w)\prod_{j\in \Lambda^{\leftarrow}(\mathcal{K})} \frac{\overline{w}_j}{w_j} \left(\frac{w-w_j} {w-\overline{w}_j}\right)^2,&& \widetilde{c}_k=c_k \prod_{j\in \Lambda^{\leftarrow}(\mathcal{K})} \frac{\overline{w}_j}{w_j} \left(\frac{w_k-w_j} {w_k-\overline{w}_j}\right)^2,\\
          &\widetilde{\hat{r}}(z)=\hat{r}(z)\prod_{j\in \Lambda^{\leftarrow}(\mathcal{K})} \frac{\overline{z}_j}{z_j} \left(\frac{z-z_j} {z-\overline{z}_j}\right)^2,&& \widetilde{\hat{c}}_k=\hat{c}_k \prod_{j\in \Lambda^{\leftarrow}(\mathcal{K})} \frac{\overline{z}_j}{z_j} \left(\frac{z_k-z_j} {z_k-\overline{z}_j}\right)^2.
        \end{aligned}
  \end{equation}
  These expressions can be derived from the relations (\ref{e new reflection coeff}), (\ref{e rel new c_j}) and (\ref{e rel new poles}) and the following equalities
  \begin{equation*}
     \frac{\overline{\lambda}_j^2}{\lambda_j^2} \left(\frac{\lambda^2-\lambda^2_j} {\lambda^2-\overline{\lambda}_j^2}\right)^2=
    \frac{\overline{z}_j}{z_j} \left(\frac{z-z_j} {z-\overline{z}_j}\right)^2=\frac{\overline{w}_j}{w_j} \left(\frac{w-w_j} {w-\overline{w}_j}\right)^2,
  \end{equation*}
  that are true for any complex $\lambda^2=z=w^{-1}$ and $\lambda^2_j=z_j=w^{-1}_j$.
\end{remark}
\begin{center}
\begin{figure}

\begin{tikzpicture}

\draw[->] (-3.2,0) -- (3.2,0) node[below] {$x$};
\draw[->] (0,-0.2) -- (0,5.6) node[left] {$t$};
\draw[thick,domain=1.5:2.3] plot (\x,{6.31*(\x-1.5)}) ;
\draw[thick, domain=-3.15:0.5] plot (\x,{-1.38*(\x-0.5)}) ;
\draw[domain=-3.05:0, dashed] plot (\x,-\x) ;
\draw[domain=0:3.05, dashed] plot (\x,\x) ;
\draw [fill] (.5,0) circle [radius=0.05];
\draw [fill] (1.5,0) circle [radius=0.05];
\node at (.5,-.3) {$x_1$};
\node at (1.5,-.3) {$x_2$};
\node at (1,2) {$\mathcal{K}$};
\node at (2.3,6.31*.8+.3) {$x-v_2t=x_2$};
\node at (-3.15,-3.65*-1.38+.3) {$x-v_1t=x_1$};
\path[fill=black,opacity=0.21] (0.5,0) -- (1.5,0) --(2.3,6.31*.8) -- (-3.15,-3.65*-1.38) --(0.5,0);

\draw[->] (4.5,0) -- (10.7,0) node[below] {$\quad\Real(\lambda)$};
\draw[->] (10,-.1) -- (10,4.6) node[left] {$\Imag(\lambda)$};
\draw[thick] (10,4)  arc (90:180:4);
\draw[thick] (10,2.5)  arc (90:180:2.5);
\draw[thick] (10,4)  -- (10.07,4);
\draw[thick] (10,2.5)  -- (10.07,2.5);
\draw[thick] (7.5,0)  -- (7.5,-.07);
\draw[thick] (6,0)  -- (6,-.07);
\path[fill=black,opacity=0.21] (10,4)  arc (90:180:4) --(7.5,0) arc (180:90:2.5)--  (10,4);
\draw [fill] (7,1.8) circle [radius=0.03];
\node at (7,1.55) {$\lambda_1$};
\draw [fill] (6.3,3) circle [radius=0.03];
\node at (6.3,2.75) {$\lambda_2$};
\draw [fill] (7,4.2) circle [radius=0.03];
\node at (7,3.95) {$\lambda_9$};
\draw [fill] (8.5,1.5) circle [radius=0.03];
\node at (8.5,1.25) {$\lambda_4$};
\draw [fill] (9.5,1.25) circle [radius=0.03];
\node at (9.5,1) {$\lambda_5$};
\draw [fill] (8.35,3) circle [radius=0.03];
\node at (8.35,2.75) {$\lambda_7$};
\draw [fill] (9.5,3.7) circle [radius=0.03];
\node at (9.5,3.45) {$\lambda_6$};
\draw [fill] (5.2,.8) circle [radius=0.03];
\node at (5.2,.55) {$\lambda_3$};
\draw [fill] (5,4) circle [radius=0.03];
\node at (5,3.75) {$\lambda_8$};
\draw [fill] (5,2) circle [radius=0.03];
\node at (5,1.7) {$\lambda_{10}$};
\node at (6,.65-1) {$\sqrt{L_2}$};
\node at (7.5,.65-1) {$\sqrt{L_1}$};
%\node at (5.5,.7) {$-L_2$};
%\node at (6.5,.7) {$-L_1$};
\end{tikzpicture}
  \caption{Inside a cone $\mathcal{K}(v_1,v_2,x_1,x_2)$ the asymptotic behavior of $u(t,x)$ and $v(t,x)$ is preassigned by the reduced spectrum shaded on the right which only contains eigenvalues $\lambda_j$ such that $L_1\leq|\lambda_j|^2\leq L_2$ where $L_1$ and $L_2$ are numbers determined from $v_1$ and $v_2$, (\ref{e L-j}).\label{f solres}}
\end{figure}
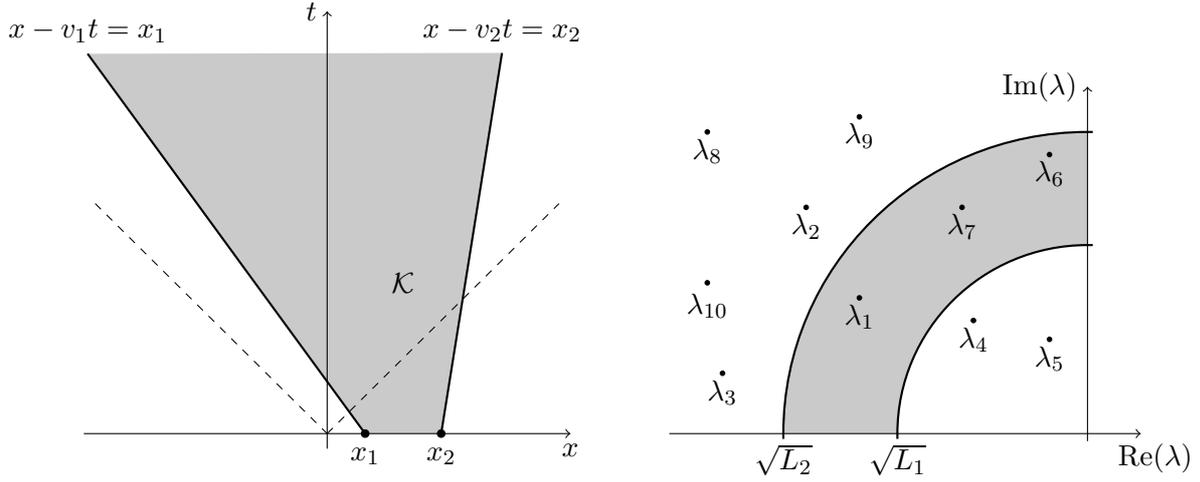
\end{center} 
The proof of Theorem \ref{t solres1} can be obtained from the proofs of Lemma 3.1 and 3.2 in \cite{Saalmann2015} applied to RHP's \ref{rhp M} and \ref{rhp M hat} separately. The technique of steepest descent is not used for this result, because the asymptotic functions $u_{\mathcal{K}}$ and $v_{\mathcal{K}}$ in (\ref{e u - u S}) still have a non-vanishing reflection coefficient $\widetilde{p}$ given by (\ref{e p tilde Ck tilde}) or, equivalently, (\ref{e r tilde Ck tilde}).
\\
Theorem \ref{t solres1} already implies the resolution of $(u,v)$ into the sum of solutions $(u_{\mathcal{K}_1},v_{\mathcal{K}_1}),..., (u_{\mathcal{K}_K},v_{\mathcal{K}_K})$ where the sets $\mathcal{K}_1,...,\mathcal{K}_K$ are supposed to have parameters $v^{(k)}_1,v^{(k)}_2$ sufficiently close together such that $\Lambda(\mathcal{K}_k)$ is either empty or $|\lambda_{j_1}|=|\lambda_{j_2}|$ for all $j_1,j_2\in\Lambda(\mathcal{K}_k)$. Applying Theorem \ref{t main} to those sets $\mathcal{K}_k$ for which $\Lambda(\mathcal{K}_k)=\emptyset$ we find $|u(t,x)|+|v(t,x)|=\mathcal{O}(t^{-1/2})$ in these sectors. Let us now state the complementing result which describes the behavior of $u$ and $v$ in any narrow cone $\mathcal{K}_k$ for which the contributing eigenvalues $\{\lambda_j:j\in\Lambda(\mathcal{S}_k)\}$ form a non-empty set.
\begin{thm}\label{t sol stab}
    Let $L_0\in\R^+$ and suppose that $u_0,v_0\in H^2(\R)\cap H^{1,1}(\R)$ with scattering data $$\mathcal{S}(u_0,v_0)=(p, \{\lambda_j,C_j\}_{j=1}^n)$$ and transformed scattering data
    \begin{equation*}
      \mathcal{S}_w(u_0,v_0) =(r,\{w_k,c_k\}_{k=1}^n),\qquad\mathcal{S}_z(u_0,v_0) =(\widehat{r},\{z_j,\widehat{c}_j\}_{j=1}^n).
    \end{equation*}
    such that
    \begin{equation}\label{e |w_1|=...=|w_n|}
        L_0=|\lambda_1|^2=...=|\lambda_n|^2.
    \end{equation}
    Define $\mathcal{D}=
    \{\lambda_j,\widetilde{C}_j\}_{j=1}^n$ by
    \begin{equation}\label{e def new C tilde j}
      \begin{aligned}
        \widetilde{C}_j &=&& C_j \exp\left\{ \frac{-1}{\pi i}\left(\int_{-\infty}^{-L_0^{-1}} + \int_{L_0^{-1}}^{\infty}\right) \log(1+w|r(w)|^2) \left(\frac{1}{w-w_j}-\frac{1}{2w}\right)dw\right\} \\
       &= &&C_j  \exp\left\{ \frac{1}{\pi i}\int_{-L_0}^{L_0} \log(1+z|\hat{r}(z)|^2) \left(\frac{1}{z-z_j}-\frac{1}{2z}\right)dz\right\}
      \end{aligned}
    \end{equation}
    Then using the notation as in Definition \ref{d solitons} we have for $z_0=w_0^{-1}\in(L_0-\eps,L_0+\eps)$ ($\eps$ sufficiently small) that
    \begin{equation}\label{e u - u^br}
        |u(x,t)-u_{sol}(t,x;\mathcal{D})|+ |v(x,t)-v_{sol}(t,x;\mathcal{D})| \leq ct^{-1/2}
    \end{equation}
    The constant $c$ depends on the initial data through $\mathcal{S}(u_0,v_0)$ and on $\epsilon$.
\end{thm}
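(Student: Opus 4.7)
The strategy is to carry the steepest-descent analysis of Sections \ref{s scalar RHP}--\ref{s pure db} through in the presence of the $n$ eigenvalues. Because $|\lambda_j|^2=L_0$ for every $j$, in the scaled variables of Lemma \ref{l M and M hat <-> M^0} the poles of the Joukowsky-type problem lie approximately on the unit circle in the upper half-plane (for the $M$-problem) and in the lower half-plane (for the $\widehat{M}$-problem), and, by simplicity together with the fact that $\lambda_j$ lies in the open second quadrant, they stay bounded away from the stationary phase points $\pm1$. For $\eps$ small enough they also keep a uniform positive distance from the deformed contour $\Sigma^{(2)}$, so all deformations of Section \ref{s dbextension} go through without picking up extra residues.

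The first two transformations are those of Lemma \ref{l M and M hat <-> M^0} and of Step~1 of Section \ref{s summary}: the $d$-conjugation converting RHP \ref{rhp M} into RHP \ref{rhp M^0}, followed by the $\delta$-conjugation $M^{(1)}=M^{(0)}[\delta]^{\sigma_3}$ from Section \ref{s scalar RHP}. Both factors are analytic at every pole, so the pole structure is preserved but the residue matrix at $w_j$ is conjugated by $[d(w_j)\delta(w_j/w_0)]^{\sigma_3}$; this multiplies $c_j$ by the scalar $[d(w_j)\delta(w_j/w_0)]^2$. Writing the logarithm of this scalar as a Cauchy integral and passing back to $C_j$ via (\ref{e rel new c_j}) will produce the first line of (\ref{e def new C tilde j}): the integration over $\R\setminus[-L_0^{-1},L_0^{-1}]$ reflects the fact that the support of the jump of $d$ is all of $\R$ while that of $\delta$ is $[-w_0,w_0]\approx[-L_0^{-1},L_0^{-1}]$, so the two contributions partially cancel. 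Repeating the same bookkeeping for the $\widehat{M}$-problem (where Lemma \ref{l M and M hat <-> M^0}(ii) involves no $d$-factor) yields the second line of (\ref{e def new C tilde j}) directly; the subtractions $-\tfrac{1}{2w}$ and $-\tfrac{1}{2z}$ arise from the normalizations $\delta(\infty)=1$ and $d(\infty)=1$.

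With the residues now of the form $\{w_j,\widetilde{c}_j\}$, I would factor the solution as $M^{(3)}=M^{(sol)}\,E$, where $M^{(sol)}$ solves the reflectionless RHP with the modified data and therefore, by the inverse-scattering map of Section \ref{s ist}, reconstructs precisely $u_{sol}(t,x;\mathcal{D})$ and $v_{sol}(t,x;\mathcal{D})$. Because the poles have been absorbed into $M^{(sol)}$, the error $E$ satisfies a pole-free normalized RHP on $\Sigma^{(2)}$ together with a $\db$-contribution, both obtained by conjugating the data of Sections \ref{s pure RHP}--\ref{s pure db} by $M^{(sol)}$. Since $M^{(sol)}$ remains uniformly bounded on the deformed contours and on the support of $\db\mathcal{W}$ (the poles being at fixed positive distance), the small-norm theory of those sections and Lemma \ref{l existence and asymptotic of db problem} apply and give $E=1+O(\tau^{-1/2})$ together with an $O(\tau^{-1/2})$ bound on the $1/\zeta$-coefficient. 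Substituting the factorization into the reconstruction formulas (\ref{e rec u 2}), (\ref{e M(0)}), (\ref{e rec v 2}), (\ref{e M hat (0)}), and doing the same for the pure-soliton solution, produces the estimate (\ref{e u - u^br}).

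The main obstacle is uniformity as $z_0\to L_0$: in that limit the scaled poles approach the unit circle exactly where $\Real(iZ)$ changes sign and where the deformed contours sit, and the $L^\infty$- and $L^1$-bounds used in Sections \ref{s pure RHP}--\ref{s pure db} threaten to blow up. One must therefore choose $\eps$, the opening angles of the crosses, and the cutoff $\chi$ so as to keep $M^{(sol)}$ and its inverse uniformly bounded on the contours involved; this in turn forces the constant $c$ in (\ref{e u - u^br}) to depend on $\min_j|\Imag(w_j)|$ and hence on the initial data through $\mathcal{S}(u_0,v_0)$, as announced. A second, more bookkeeping-heavy step is to carry out the residue-conjugation argument in detail and check that the integral in (\ref{e def new C tilde j}) comes out with precisely the branch of $\log(1+w|r(w)|^2)$ and the regularization at the origin asserted there, rather than merely an expression of the same form.
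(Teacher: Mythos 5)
Your overall strategy is the right one and matches the paper's in spirit — carry the steepest-descent deformations through with the poles intact, then peel off a reflectionless (soliton) block and control the remaining pole-free error by small-norm theory and the $\db$-analysis. The paper does the peeling in the opposite order: it factors $M^{(2)}=D\,M^{(mer)}\,M^{(3)}$ with $M^{(3)}$ the pole-free small-norm RHP built first, and then assigns to $M^{(mer)}$ residue conditions that are \emph{conjugated by} $M^{(3)}$; only after showing $M^{(3)}\to 1$ (Proposition \ref{p M^3 approx M^4}) and $\delta(\zeta_j)=\Delta_j+\mathcal{O}(\tau^{-1/2})$ does $M^{(mer)}$ collapse to a genuine soliton RHP $M^{(sol-0)}$. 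Your factorization $M^{(sol)}E$ is workable but requires $M^{(sol)}$ and $[M^{(sol)}]^{-1}$ to be built first and proven uniformly bounded on $\Sigma^{(3)}$ before small-norm theory for $E$ can be invoked; the paper avoids this circularity by conjugating the residues by the already-known $M^{(3)}$. That said, three concrete points in your sketch are either wrong or missing.

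First, the conjugation bookkeeping has a sign error. The map $M\mapsto M[d(w)]^{-\sigma_3}$ of (\ref{e M <-> M^0}) followed by $M^{(0)}\mapsto M^{(0)}[\delta(\zeta)]^{\sigma_3}$ of (\ref{e def M^1}) conjugates the lower-triangular residue matrix so that $c_j$ becomes $c_j\,\delta(\zeta_j)^2/d(w_j)^2$ (compare (\ref{e res M^0}) and (\ref{e res M^2})), not $c_j[d(w_j)\delta(\zeta_j)]^2$. Only because the $d$-factor appears with a \emph{negative} exponent does the cancellation of the two Cauchy integrals over $[-L_0^{-1},L_0^{-1}]$ occur, leaving the integral over $\R\setminus[-L_0^{-1},L_0^{-1}]$ in the first line of (\ref{e def new C tilde j}). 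With your sign you would get a sum of the two integrals, not a difference.

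Second, your attribution of the regularizing term $-\tfrac{1}{2w}$ to ``the normalizations $\delta(\infty)=1$ and $d(\infty)=1$'' is incorrect. That term comes from the extra scalar factor $d(0)/\Delta_0$, which enters because the reconstruction formulas (\ref{e rec u 2}) and (\ref{e M(0)}) evaluate $M$ at $\zeta=0$ (not at $\infty$), and one then invokes Remark \ref{r e^i alpha u} to absorb this constant phase into the norming constants. Without tracking the $\zeta=0$ evaluation you cannot obtain (\ref{e def new C tilde j}) exactly.

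Third, you restrict to the cone $z_0\in(L_0-\eps,L_0+\eps)$ but treat it as a single regime. The paper further splits into $|L_0^{-1}-w_0|\le\tau^{-1/2}$, where the scaled poles $\zeta_j=w_j/w_0$ sit inside $\Omega_9$ and the soliton block dominates, and $|L_0^{-1}-w_0|>\tau^{-1/2}$, where the poles drift away, the soliton amplitude is itself $\mathcal{O}(\tau^{-1/2})$, and (\ref{e u - u^br}) holds trivially because both sides are radiation-sized (via the argument of Theorem \ref{t solres1} together with Theorem \ref{t main}). Your proposal does not provide an argument in the second sub-regime, and the uniformity worry you raise near $|\zeta_j|=1$ is in fact resolved precisely by this dichotomy rather than by shrinking $\eps$ or adjusting cross angles: the poles never need to be far from $\Sigma^{(2)}$, they only need to lie in $\Omega_9\cup\Omega_{10}$ where $\mathcal W\equiv 1$.
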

\begin{proof}
The proof is obtained by repeating the proof of Theorem \ref{t main thm general}. What makes the difference is the presence of singularities in the RHP's. If RHP \ref{rhp M} admits poles at $w_1,...,w_n,\overline{w}_1,...,\overline{w}_n$ (recall $w_j=\lambda_j^{-2}$), then the function $M^{(0)}(\zeta;\tau)$ defined in terms of $M$ by (\ref{e M <-> M^0}) will admit poles at $\zeta_1,...,\zeta_n,\overline{\zeta}_1 ,...,\overline{\zeta}_n$, where $\zeta_j=w_j/w_0$. The precise residuum relations that we have to add to RHP \ref{rhp M^0} are the following:
\begin{equation}\label{e res M^0}
    \begin{aligned}
        \res_{\zeta=\zeta_j}M^{(0)}(\tau;\zeta) &=\lim_{\zeta\to \zeta_j}M^{(0)}(\tau;\zeta)
        \left[
          \begin{array}{cc}
            0 & 0 \\
            \frac{\zeta_jc_j}{d(w_j)^2}e^{-i\tau Z(\zeta_j)} & 0 \\
          \end{array}
        \right],\\
        \res_{\zeta=\overline{\zeta}_j} M^{(0)}(\tau;\zeta)&=\lim_{\zeta\to \overline{\zeta}_j} M^{(0)}(\tau;\zeta)
        \left[
          \begin{array}{cc}
            0 &
            \frac{-\overline{c}_jd(\overline{w}_j)^2}{w_0} e^{i\tau Z(\overline{\zeta}_j)}\\
            0&0 \\
          \end{array}
        \right].\\
    \end{aligned}
\end{equation}
Assuming
\begin{equation}\label{e w_0 approx L_0}
    |L_0^{-1}-w_0|\leq \tau^{-1/2}
\end{equation}
and using (\ref{e |w_1|=...=|w_n|}) we know that the poles $\zeta_1,...,\zeta_n$ lie in the region $\Omega_9$, see Figure \ref{f Omega1-8}. This latter fact is useful because it guarantees that the modifications $M^{(0)}\mapsto M^{(1)}\mapsto M^{(2)}$ given by the explicit formulas (\ref{e def M^1}) and (\ref{e def M^2}) lead to a matrix valued function $M^{(2)}$ which is still meromorphic around $\zeta_1,...,\zeta_n,\overline{\zeta}_1,...,\overline{\zeta}_n$. This is because $\mathcal{W}=1$ on $\Omega_9\cup\Omega_{10}$ In fact, $M^{(2)}$ is a solution for the mixed $\db$-RHP \ref{dbrhp M^2} amended by
\begin{equation}\label{e res M^2}
    \begin{aligned}
        \res_{\zeta=\zeta_j}M^{(2)}(\tau;\zeta) &=\lim_{\zeta\to \zeta_j}M^{(2)}(\tau;\zeta)
        \left[
          \begin{array}{cc}
            0 & 0 \\
            \frac{\zeta_jc_j\delta(\zeta_j)^2}{d(w_j)^2} e^{-i\tau Z(\zeta_j)} & 0 \\
          \end{array}
        \right],\\
        \res_{\zeta=\overline{\zeta}_j} M^{(2)}(\tau;\zeta)&=\lim_{\zeta\to \overline{\zeta}_j} M^{(2)}(\tau;\zeta)
        \left[
          \begin{array}{cc}
            0 &
            \frac{-\overline{c}_jd(\overline{w}_j)^2}{w_0 \delta(\overline{\zeta}_j)^2}e^{i\tau Z(\overline{\zeta}_j)}\\
            0&0 \\
          \end{array}
        \right].\\
    \end{aligned}
\end{equation}
In contrast to the soliton-free case where we use decomposition (\ref{e equ M^2=DM^3}) to seperate the $\db$-part and the RHP-part of $M^{(2)}$, here we need the following decomposition:
\begin{equation}\label{e equ M^2=D M^mer M^3}
    M^{(2)}(\tau;\zeta)=D(\tau;\zeta) M^{(mer)}(\tau;\zeta)M^{(3)}(\tau;\zeta) .
\end{equation}
The idea is to present the $\db$-component of $M^{(2)}$ by $D$, the singularities by $M^{(mer)}$  and the RHP part by $M^{(3)}(\tau;\zeta)$. To be more precise let us specify:
\begin{itemize}
  \item[(i)] $M^{(3)}$ is supposed to be the solution for RHP($\Sigma^{(3)}$,$R_{\tau}^{(3)}$) with $R_{\tau}^{(3)}$ given in (\ref{e def R^3 Sigma^3}). In particular it is possible to apply Proposition \ref{p M^3 approx M^4}.
  \item[(ii)] $M^{(mer)}(\tau;\zeta)$ is supposed to be a function meromorphic on $\C$ with singularities at $\zeta_1,...,\zeta_n,\overline{\zeta}_1, ...,\overline{\zeta}_n$ satisfying
      \begin{equation}\label{e res M^mer}
        \begin{aligned}
        \res_{\zeta=\zeta_j}M^{(mer)}(\tau;\zeta) &=\lim_{\zeta\to \zeta_j}M^{(mer)}(\tau;\zeta) [M^{(3)}(\tau;\zeta)]^{-1}
        \left[
          \begin{array}{cc}
            0 & 0 \\
            \frac{\zeta_jc_j\delta(\zeta_j)^2}{d(w_j)^2} e^{-i\tau Z(\zeta_j)} & 0 \\
          \end{array}
        \right]M^{(3)}(\tau;\zeta),\\
        \res_{\zeta=\overline{\zeta}_j} M^{(mer)}(\tau;\zeta)&=\lim_{\zeta\to \overline{\zeta}_j} M^{(mer)}(\tau;\zeta) [M^{(3)}(\tau;\zeta)]^{-1}
        \left[
          \begin{array}{cc}
            0 &
            \frac{-\overline{c}_jd(\overline{w}_j)^2}{w_0 \delta(\overline{\zeta}_j)^2}e^{i\tau Z(\overline{\zeta}_j)}\\
            0&0 \\
          \end{array}
        \right]M^{(3)}(\tau;\zeta).\\
    \end{aligned}
      \end{equation}
  \item[(ii)] $D(\tau;\zeta)$ is supposed to be the solution of $\db$-problem \ref{db} with
      \begin{equation}\label{e def Upsilon mer}
          \Upsilon(\tau;\zeta)=M^{(3)}(\tau;\zeta) M^{(mer)}(\tau;\zeta) \db\mathcal{W}(\tau;\zeta) \left[M^{(3)}(\tau;\zeta) M^{(mer)}(\tau;\zeta)\right]^{-1}
      \end{equation}
      replacing the expression given for $\Upsilon$ in (\ref{e db D}). Since $\|M^{(mer)}(\tau;\cdot)\|_{L^{\infty}(\C\setminus (\Omega_9\cup\Omega_{10}))}\sim \mathcal{O}(1)$, all estimates for $\Upsilon$ that are presented in the analysis of (\ref{e def J}) also hold for the case (\ref{e def Upsilon mer}). As a consequence we can use Lemma \ref{l existence and asymptotic of db problem} for our analysis of $M^{(2)}$ through (\ref{e equ M^2=D M^mer M^3}).
\end{itemize}
Let us recall the definition (\ref{e def q^0}) of $q^{(0)}(\tau)$. By the explicit transformation formulae (\ref{e def M^1}) and (\ref{e def M^2}), by Proposition \ref{p M^3 approx M^4} and by Lemma \ref{l existence and asymptotic of db problem} it follows that
\begin{equation}\label{e M^0 approx M^mer}
   \begin{aligned}
      \left|q^{(0)}(\tau)-\lim_{\zeta\to\infty} \zeta\left[M^{(mer)}(\tau;\zeta)\right]_{12}\right|&\leq c \tau^{-1/2}\\
      \left|M^{(0)}(\tau,0)- M^{(mer)}(\tau,0) [\delta(0)]^{-\sigma_3}\right|&\leq c \tau^{-1/2}.
   \end{aligned}
\end{equation}
We learn that the remaining part of the proof is to analyze $M^{(mer)}$ but we also notice that (\ref{e res M^mer}) does not describe the residuum condition of soliton solutions of the MTM system because, for instance, in the first line of (\ref{e res M^mer}) the term
\begin{equation*}
    [M^{(3)}(\tau;\zeta)]^{-1}
        \left[
          \begin{array}{cc}
            0 & 0 \\
            \frac{\zeta_jc_j\delta(\zeta_j)^2}{d(w_j)^2} e^{-i\tau Z(\zeta_j)} & 0 \\
          \end{array}
        \right]M^{(3)}(\tau;\zeta)
\end{equation*}
is not of the same form as the original one in (\ref{e res M^0}). However, we have $M^{(3)}(\tau;\zeta)\to 1$ as $\tau\to\infty$, see (\ref{e M^3(zeta0)}), and also $\delta(\zeta_j)=\Delta_j + \mathcal{O}(\tau^{-1/2})$ for
\begin{equation}\label{e def Delta j}
  \Delta_j:= \exp\left\{ \frac{1}{2\pi i}\int_{-L_0^{-1}}^{L_0^{-1}} \frac{\log(1+w|r(w)|^2) }{w-w_j}dw \right\},
\end{equation}
which is a consequence of our assumtion (\ref{e w_0 approx L_0}). Then, by the arguments of Lemmas 6.1 and 6.3 in \cite{Saalmann2015} we obtain the following result: the meromorphic function $M^{(sol-0)}(\tau;\zeta)$ with singularities at $\zeta_1,...,\zeta_n,\overline{\zeta}_1, ...,\overline{\zeta}_n$ satisfying
\begin{equation}\label{e res M^sol}
        \begin{aligned}
        \res_{\zeta=\zeta_j}M^{(sol-0)}(\tau;\zeta) &=\lim_{\zeta\to \zeta_j}M^{(sol-0)}(\tau;\zeta)
        \left[
          \begin{array}{cc}
            0 & 0 \\
            \frac{\zeta_jc_j\Delta_j^2}{d(w_j)^2} e^{-i\tau Z(\zeta_j)} & 0 \\
          \end{array}
        \right],\\
        \res_{\zeta=\overline{\zeta}_j} M^{(sol-0)}(\tau;\zeta)&=\lim_{\zeta\to \overline{\zeta}_j} M^{(sol-0)}(\tau;\zeta)
        \left[
          \begin{array}{cc}
            0 &
            \frac{-\overline{c}_jd(\overline{w}_j)^2}{w_0 \delta(\overline{w}_j/L_0)^2}e^{i\tau Z(\overline{\zeta}_j)}\\
            0&0 \\
          \end{array}
        \right],\\
    \end{aligned}
\end{equation}
has, under the assumption (\ref{e w_0 approx L_0}), the following properties:
\begin{equation}\label{e M^mer approx M^sol-0}
      \begin{array}{c}
         \displaystyle\left|\lim_{\zeta\to\infty} \zeta\left[M^{(mer)}(\tau;\zeta)\right]_{12} -\lim_{\zeta\to\infty} \zeta\left[M^{(sol-0)}(\tau;\zeta) \right]_{12}\right|\leq c \tau^{-1/2},\\
         \displaystyle \left|M^{(mer)}(\tau,0) -M^{(sol-0)}(\tau,0)\right|\leq c \tau^{-1/2}.\\
      \end{array}
\end{equation}
Now, let us go back to the original coordinates $t$ and $x$. We recall formula (\ref{e M <-> M^0}) and set analogously
\begin{equation*}
    M^{(sol)}(t,x;w):=M^{(sol-0)}(\tau;w/w_0).
\end{equation*}
From (\ref{e res M^sol}) it follows that $M^{(sol)}(t,x;w)$ satisfies exactly the RHP for the multi-soliton $u_{sol}(t,x;\mathcal{D}')$ with data $\mathcal{D}'=\{\lambda_j,C_j'\}_{j=1}^n$ where
\begin{equation*}
    C'_j=C_j\frac{\Delta_j^2}{d(w_j)^2}.
\end{equation*}
Moreover, by combining (\ref{e M^0 approx M^mer}) and (\ref{e M^mer approx M^sol-0}) and using the reconstruction formulaa (\ref{e rec u 2})--(\ref{e M(0)}) we find
\begin{eqnarray*}
% \nonumber to remove numbering (before each equation)
  u(t,x) &=& \left[M(t,x;w)\right]_{11} \overline{\lim_{|w|\to\infty}w \cdot\left[M(t,x;w)\right]_{12}} \\
   &=& \frac{d(0)}{\delta(0)} [M^{(sol)}(t,x;0)]_{11} \overline{\lim_{w\to\infty}w\cdot [M^{(sol)}(t,x;0)]_{12}}+ \mathcal{O}(\tau^{-1/2})\\
   &=&\frac{d(0)}{\delta(0)}u_{sol}(t,x;\mathcal{D}')+ \mathcal{O}(t^{-1/2}).
\end{eqnarray*}
Similarly to (\ref{e def Delta j}) we find
$\delta(0)=\Delta_0 + \mathcal{O}(\tau^{-1/2})$ for
\begin{equation}\label{e def Delta 0}
  \Delta_0:= \exp\left\{ \frac{1}{2\pi i}\int_{-L_0^{-1}}^{L_0^{-1}} \frac{\log(1+w|r(w)|^2) }{w}dw \right\}.
\end{equation}
We mention that (\ref{e w_0 approx L_0}) is again a necessary condition. So far we have $u(t,x) =\frac{d(0)}{\Delta_0}u_{sol}(t,x;\mathcal{D}')+ \mathcal{O}(t^{-1/2})$. Making additionally use of Remark \ref{r e^i alpha u}, we end up with $u(t,x) =u_{sol}(t,x;\mathcal{D})+ \mathcal{O}(t^{-1/2})$. Here the scattering data are given by  $\mathcal{D}=\{\lambda_j,\widetilde{C}_j\}_{j=1}^n$ with modified norming constants
\begin{equation*}
  \widetilde{C}_j =C'_j\frac{d(0)}{\Delta_0} =C_j\frac{d(0)}{\Delta_0}\frac{\Delta_j^2}{d(w_j)^2}.
\end{equation*}
It is left to the reader to verify that the latter expression is equivalent to both lines in (\ref{e def new C tilde j}).

Repeating the above line of argument for RHP \ref{rhp M hat} it follows that
\begin{equation*}
    v(t,x)=v_{sol} (t,x;\widetilde{D})+ \mathcal{O}(t^{-1/2})
\end{equation*}
with the same scattering data as for $u$. Thus, the remaining part of the proof is a discussion of the region where (\ref{e w_0 approx L_0}) does not hold. Equivalently, the region, where $|L_0^{-1}-w_0|>\tau^{-1/2}$. It is a fact that in those regions $(u(t,x),v(t,x))$ behaves like a pure radiation solution and thus $|u(t,x)|+|v(t,x)|\leq c \tau^{-1/2}$ by Theorem \ref{t main}. The proof of this fact can be provided with arguments similar to Theorem \ref{t solres1}. In particular, if $|L_0^{-1}-w_0|>\tau^{-1/2}$, then $|u_{sol} (t,x;\widetilde{D})|+|v_{sol} (t,x;\widetilde{D})|\leq c \tau^{-1/2}$ and we can conclude (\ref{e u - u^br}).
\end{proof} 
\appendix
\section{Existence theory for RHPs}\label{a RHP theory}
The following is presented for the convenience of the reader  even though it does not contain something new.
Let $\Sigma\subset\C$ be a finite union of smooth curves equipped with an orientation. We will call such objects oriented contours. To each contour $\Sigma$ we can associate the Cauchy operator
\begin{equation*}
    C_{\Sigma}[f](\zeta):=\frac{1}{2\pi i}\int_{\Sigma}\frac{f(s)}{s-\zeta}ds,\quad \zeta\in\C\setminus\Sigma.
\end{equation*}
For $\zeta\in\Sigma$ it is a fact that $C_{\Sigma}[f](\zeta')$ can approach different values as $\zeta'\to\zeta$, depending on the side of $\Sigma$ on which the limit is taken. If one moves along the contour in the direction of the orientation, it is a convention to say that the $\oplus$-side lies to the left. The $\ominus$-side lies to the right, respectively. See Figure \ref{f contour}
\begin{figure}
\centering
\vskip 15pt
\begin{tikzpicture}
    \draw[->,thick] (1,5) -- (2,5);
    \draw[-,thick] (2,5) -- (3,5);
    \draw[->,thick] (3,5) -- (3.25,6);
    \draw[-,thick] (3.25,6) -- (3.5,7);
    \draw[->,thick] (3,5) -- (3.5,4);
    \draw[-,thick] (3.5,4) -- (4,3);
    \draw[-,thick] (4,3)-- (5,1);
    \draw[->,thick] (7,0).. controls (6,0) and (5.5,0).. (5,1);
    \draw[->,thick] (1,0).. controls (1.1,.2) and (1,1).. (2,2);
    \draw[->,thick] (2,2).. controls (3,3) and (5,2.7).. (7,3.5);
    \draw[-,thick] (7,3.5) -- (8,3.9);
    \node at (1.8,2.25) {$\oplus$};
    \node at (2,5.3) {$\oplus$};
    \node at (2.9,6.2) {$\oplus$};
    \node at (3.8,4.1) {$\oplus$};
    \node at (6,3.5) {$\oplus$};
    \node at (4.7,.7) {$\oplus$};
    \node at (2.25,1.75) {$\ominus$};
    \node at (2,4.7) {$\ominus$};
    \node at (3.6,6) {$\ominus$};
    \node at (3.1,3.8) {$\ominus$};
    \node at (6.2,2.8) {$\ominus$};
    \node at (5.4,1.1) {$\ominus$};
\end{tikzpicture}
\caption{}\label{f contour}
\end{figure} 
for an example. This gives rise to the following definition.
\begin{equation}\label{e def Cauchy proj op}
    C^{+}_{\Sigma}[f](\zeta) :=\lim_{\substack{\zeta'\to\zeta\\ \zeta'\in \oplus\text{-side}\\ \text{ of }\Sigma}}C_{\Sigma}[f](\zeta'),\qquad
    C^{-}_{\Sigma}[f](\zeta) :=\lim_{\substack{\zeta'\to\zeta\\ \zeta'\in \ominus\text{-side}\\ \text{ of }\Sigma}}C_{\Sigma}[f](\zeta'),\qquad \zeta\in \Sigma.
\end{equation}

\begin{prop}\label{p Cauchy operator}
    \begin{itemize}
      \item [(i)] For every $f\in L^p(\Sigma)$, $1\leq p <\infty$, $\zeta\mapsto C_{\Sigma}[f](\zeta)$ is analytic for $\zeta\in\C\setminus \Sigma$ and satisfies
          \begin{equation}\label{e expansion C[f]}
            \lim_{\substack{|\zeta|\to \infty\\
            \zeta\in\C\setminus \Sigma}} \zeta \cdot C_{\Sigma}[f](\zeta)=-\frac{1}{2\pi i}\int_{\Sigma}f(s)ds
          \end{equation}
      \item [(ii)] For $f\in L^p(\Sigma)$, $1\leq p <\infty$, the values $ C^{\pm}_{\Sigma}[f](\zeta)$ exist for almost every $\zeta\in\Sigma$.
      \item [(iii)] If $1< p <\infty$, then there exists a positive constant $C_p$ such that
          \begin{equation}\label{e Cauchy operator cont}
            \|C^{\pm}_{\Sigma}[f]\|_{L^p(\Sigma)}\leq C_p\|f\|_{L^p(\Sigma)}.
          \end{equation}
      \item [(iv)] (Sokhotski-Plemelj theorem) The following relation holds:
          \begin{equation}\label{e Sokhotski-Plemelj theorem}
            C^{\pm}_{\Sigma}[f](\zeta)= \pm \frac{1}{2} f(\zeta)-\frac{i}{2}\mathcal{H}[f](\zeta),\quad \zeta\in\Sigma,
          \end{equation}
          where the Hilbert transform $\mathcal{H}$ is given by
          \begin{equation*}
            \mathcal{H}[f](\zeta):=\frac{1}{\pi} \lim_{\eps\searrow 0} \int_{\Sigma\setminus B_{\eps}(\zeta)}\frac{f(s)}{s-\zeta}ds,\quad \zeta\in\Sigma.
          \end{equation*}
      %\item [(v)] Fix $\phi\in(0,\pi)$ and $x_0\in\R$ and consider the line $L_{x_0,\phi}=\{x_0 +\xi e^{i\phi} :\xi\in\R\}$. There exist constants $c_1$ and $c_2$, not depending on $\phi$ and $x_0$  such that
%          \begin{equation*}
%            \|C_{\R}[f]\|_{L^2(L_{x_0,\phi})}\leq c_1 \|f\|_{L^2(\R)}
%          \end{equation*}
%          for all $f\in L^2(\R)$ and
%          \begin{equation*}
%            \|C_{\R}[g]\|_{H^1(L_{x_0,\phi})}\leq c_2 \|g\|_{H^1(\R)}
%          \end{equation*}
%          for all $g\in H^{1}(\R)$ satisfying $g(x_0)=0$.
    \end{itemize}
\end{prop}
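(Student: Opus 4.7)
The plan is to reduce everything to two classical inputs: the explicit Sokhotski-Plemelj computation on the real line and the M.~Riesz theorem for the Hilbert transform on $\R$. First, part~(i) is routine and I would dispose of it by dominated convergence: analyticity of $C_\Sigma[f]$ off $\Sigma$ follows because $\zeta \mapsto f(s)/(s-\zeta)$ is analytic in $\zeta$ with $|f(s)/(s-\zeta)| \leq |f(s)|/\dist(K,\Sigma)$ uniformly in $s$ for $\zeta$ in a compact $K \subset \C \setminus \Sigma$, which is integrable against $f$; the same bound justifies differentiation under the integral. The asymptotic (\ref{e expansion C[f]}) at infinity comes from the expansion $(s-\zeta)^{-1} = -\zeta^{-1} - s\zeta^{-2} + \mathcal{O}(\zeta^{-3})$ and a further application of dominated convergence.

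For part~(iv), I would first settle the model case $\Sigma = \R$ and $f \in C_c^\infty(\R)$. Splitting
\begin{equation*}
  C_\R[f](\zeta \pm i\varepsilon) = \frac{1}{2\pi i}\int_\R \frac{f(s)-f(\zeta)}{s-\zeta \mp i\varepsilon}\,ds + \frac{f(\zeta)}{2\pi i}\int_\R \frac{ds}{s-\zeta \mp i\varepsilon},
\end{equation*}
the first integrand converges pointwise as $\varepsilon \downarrow 0$ and is dominated by an $L^1$ function by the Lipschitz/compact-support properties of $f$, yielding $-\tfrac{i}{2}\mathcal{H}[f](\zeta)$; the second integral is computed explicitly by closing with a semicircle in the upper (resp.\ lower) half-plane and contributes $\pm\tfrac{1}{2}f(\zeta)$, giving (\ref{e Sokhotski-Plemelj theorem}) on $\R$. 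To pass to a general smooth oriented $\Sigma$, I would localize near each $\zeta_0 \in \Sigma$ and parametrize an arc by $s = \gamma(t)$ with $\gamma$ smooth and $\gamma'(0) \neq 0$. The Cauchy kernel then decomposes as $(s-\zeta)^{-1} = [\gamma'(0)(t-t_\zeta)]^{-1} + K_{\mathrm{reg}}(\zeta,t)$, with $K_{\mathrm{reg}}$ extending continuously across $\Sigma$; only the first piece generates a jump and reduces matters to the real-line case.

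Part~(iii) is where the real work lies. For $\Sigma = \R$ the $L^p$-boundedness of the Hilbert transform on $L^p(\R)$ for $1<p<\infty$ is the classical theorem of M.~Riesz, and (\ref{e Cauchy operator cont}) on $\R$ then follows from (\ref{e Sokhotski-Plemelj theorem}). For a general smooth $\Sigma$ the same localization as above expresses $C_\Sigma^\pm$ locally as the sum of a pulled-back real-line Cauchy projector plus integration against the bounded continuous kernel $K_{\mathrm{reg}}$; the former is $L^p$-bounded by M.~Riesz and the latter by Young's inequality, with a partition-of-unity argument to patch the local estimates together. Finally, part~(ii) follows by density: approximate $f \in L^p(\Sigma)$ by $f_n \in C_c^\infty(\Sigma)$, use a.e.\ existence of boundary values for smooth data established in the course of proving (iv), and control the convergence of the boundary values of $C_\Sigma[f - f_n]$ through the non-tangential maximal function, whose $L^p$-boundedness is a standard Cotlar-type consequence of (iii). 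The hard part will be the M.~Riesz theorem underlying (iii); everything else is then elementary and rests on the explicit Plemelj calculation together with the smoothness of $\Sigma$.
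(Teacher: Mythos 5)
The paper does not actually prove this proposition: Appendix A opens with the remark that it ``does not contain something new,'' and the proposition is stated without argument as classical background on Cauchy integrals and singular operators. There is therefore no paper proof to compare with; your sketch --- localize, reduce to the real line, invoke M.~Riesz for (iii), do the explicit Plemelj jump computation for (iv), then density plus a maximal-function bound for (ii) --- is the standard textbook route and the right plan overall.

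One step as written is wrong, though. After the split
\begin{equation*}
  C_\R[f](\zeta \pm i\varepsilon) = \frac{1}{2\pi i}\int_\R \frac{f(s)-f(\zeta)}{s-\zeta \mp i\varepsilon}\,ds + \frac{f(\zeta)}{2\pi i}\int_\R \frac{ds}{s-\zeta \mp i\varepsilon},
\end{equation*}
you claim the first integrand is ``dominated by an $L^1$ function by the Lipschitz/compact-support properties of $f$.'' That is false whenever $f(\zeta)\neq 0$: for $s$ outside $\mathrm{supp}\,f$ the integrand equals $-f(\zeta)/(s-\zeta\mp i\varepsilon)$, which decays only like $1/|s|$ and so is not in $L^1(\R)$. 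Both pieces of the split are then only conditionally convergent, and dominated convergence cannot be applied as stated; for the same reason the second integral is not absolutely convergent and its semicircle evaluation tacitly requires keeping track of the non-vanishing arc contribution. The standard repair is to subtract $f(\zeta)\chi(s)$ rather than the constant $f(\zeta)$, with $\chi\in C_c^\infty$, $\chi\equiv 1$ near $\zeta$; the first integrand is then bounded by $C\,\mathbb{1}_{\{|s-\zeta|\le 1\}} + |f(s)|\,|s-\zeta|^{-1}\mathbb{1}_{\{|s-\zeta|>1\}}\in L^1(\R)$, while the compensating integral $\int \chi(s)\,ds/(s-\zeta\mp i\varepsilon)$ is over a bounded set, so the half-residue computation goes through cleanly. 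Two smaller loose ends: the expansion in part~(i) presupposes that $\int_\Sigma f\,ds$ converges, so for unbounded $\Sigma$ it should be read under $f\in L^1(\Sigma)$ (the paper in fact only applies it with $R\in L^1\cap L^\infty$); and part~(ii) is claimed for $p=1$, but your maximal-function argument leans on (iii), which holds only for $1<p<\infty$ --- for $p=1$ one needs the weak-$(1,1)$ bound on the non-tangential maximal function from Calder\'on--Zygmund theory, which is the standard input but not literally a consequence of (iii).
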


\begin{thm}\label{t rhp theory}
    For any given contour $\Sigma\subset \C$, there exists a constant $\Lambda_{\Sigma}$ such that for all functions $R:\Sigma\to \C^{2\times 2}$ satisfying $\det(1+R)\equiv 1$, $R\in L^1(\Sigma)\cap L^{\infty}(\Sigma)$ and
    \begin{equation}\label{e R infty norm}
        \|R\|_{L^{\infty}(\Sigma)}\leq \Lambda_{\Sigma},
    \end{equation}
    the correspondent Riemann-Hilbert problem RHP($\Sigma$,$R$) is uniquely solvable. Moreover, there exists another constant $c_{\Sigma}$ such that for the solution $M$ of RHP($\Sigma$,$R$) we have
    \begin{equation}\label{e lim zeta M(zeta)}
        \left|\lim_{|\zeta|\to \infty}\zeta\cdot\left( M(\zeta)-1\right)\right|\leq c_{\Sigma} \|R\|_{L^1(\Sigma)}
    \end{equation}
    and
    \begin{equation}\label{e M(zeta0)}
        |M(\zeta_0)-1| \leq \frac{c_{\Sigma}}{\dist(\Sigma,\zeta_0)} \|R\|_{L^1(\Sigma)},\qquad \zeta_0\in\C\setminus \Sigma,
    \end{equation}

\end{thm}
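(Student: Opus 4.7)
The plan is to reformulate RHP($\Sigma$,$R$) as a singular integral equation on $\Sigma$ and then invert it by a Neumann series argument, keyed on the $L^2$-boundedness of the Cauchy projector $C^-_\Sigma$ in Proposition \ref{p Cauchy operator}(iii). Concretely, if $M$ is a solution, denote its minus boundary value by $\mu:=M_-$. The jump condition $M_+=M_-(1+R)$ together with the normalization at infinity and Proposition \ref{p Cauchy operator}(i) forces the representation
\begin{equation*}
 M(\zeta)=1+C_\Sigma[\mu R](\zeta),\qquad \zeta\in\mathbb{C}\setminus\Sigma,
\end{equation*}
and, by the Sokhotski–Plemelj formula (\ref{e Sokhotski-Plemelj theorem}), the density $\mu$ must satisfy
\begin{equation*}
 (\mathbb{1}-T_R)(\mu-1)=C^-_\Sigma[R],\qquad T_R f:=C^-_\Sigma[fR].
\end{equation*}

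The first key step is to establish the operator bound $\|T_R\|_{L^2(\Sigma)\to L^2(\Sigma)}\le C_2\|R\|_{L^\infty(\Sigma)}$ using (\ref{e Cauchy operator cont}). Choosing $\Lambda_\Sigma:=(2C_2)^{-1}$ then forces $\|T_R\|\le 1/2$ under (\ref{e R infty norm}), so that $\mathbb{1}-T_R$ is invertible on $L^2(\Sigma)$ by a convergent Neumann series. Since $R\in L^2(\Sigma)$ (which follows from $R\in L^1\cap L^\infty$), the right-hand side $C^-_\Sigma[R]$ lies in $L^2(\Sigma)$, hence $\mu-1\in L^2(\Sigma)$ is uniquely determined. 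Setting $M(\zeta):=1+C_\Sigma[\mu R](\zeta)$ then produces an analytic function on $\mathbb{C}\setminus\Sigma$ which by construction has the prescribed jump and the correct behavior at infinity; uniqueness of $M$ follows because any two solutions $M,\widetilde M$ differ by a function without jump along $\Sigma$ that vanishes at infinity, which must be identically zero by standard Liouville/removable-singularities arguments (since $\det(1+R)\equiv 1$ ensures that $M$ is nowhere singular and the difference is entire).

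The two quantitative estimates then follow directly from the integral representation. For (\ref{e lim zeta M(zeta)}), expansion (\ref{e expansion C[f]}) yields
\begin{equation*}
 \lim_{|\zeta|\to\infty}\zeta\cdot(M(\zeta)-1)=-\frac{1}{2\pi i}\int_\Sigma \mu(s)R(s)\,ds,
\end{equation*}
and splitting $\mu=1+(\mu-1)$ gives two terms. The first is bounded by $\|R\|_{L^1(\Sigma)}$; for the second, Cauchy-Schwarz together with $\|R\|_{L^2}^2\le\|R\|_{L^\infty}\|R\|_{L^1}$ and the Neumann bound $\|\mu-1\|_{L^2}\le 2\|C^-_\Sigma[R]\|_{L^2}\le c\|R\|_{L^2}$ yields a term of order $\|R\|_{L^\infty}\|R\|_{L^1}$, which under (\ref{e R infty norm}) is again controlled by $\|R\|_{L^1}$. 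For (\ref{e M(zeta0)}), write
\begin{equation*}
 M(\zeta_0)-1=\frac{1}{2\pi i}\int_\Sigma\frac{\mu(s)R(s)}{s-\zeta_0}\,ds,
\end{equation*}
bound the kernel uniformly by $\dist(\Sigma,\zeta_0)^{-1}$, and apply the same split of $\mu$ as above to obtain the claimed $\|R\|_{L^1}$ estimate.

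The only mildly delicate point is the uniqueness argument when $\Sigma$ is unbounded and may have self-intersections (as in the contours of Sections \ref{s pure RHP}–\ref{s M^3 approx M^4}); the standard resolution is to exploit $\det(1+R)\equiv 1$ to deduce $\det M\equiv 1$, so that $M$ is invertible on $\mathbb{C}\setminus\Sigma$ with bounded inverse near each self-intersection point, and then treat $M\widetilde M^{-1}$ as an entire bounded function via a contour-by-contour removable singularity analysis. Once uniqueness is secured, the Neumann estimates transfer verbatim into the bounds (\ref{e lim zeta M(zeta)})–(\ref{e M(zeta0)}) with constants depending only on the geometry of $\Sigma$ through $C_2$ in (\ref{e Cauchy operator cont}).
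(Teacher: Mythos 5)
Your proof is correct and follows essentially the same route as the paper: the Beals--Coifman singular integral reformulation $(\mathbb{1}-T_R)(\mu-1)=C^-_\Sigma[R]$ with $T_R f=C^-_\Sigma[fR]$, inverted by a Neumann series under the small-norm hypothesis (\ref{e R infty norm}), then the interpolation $\|R\|_{L^2}^2\le\|R\|_{L^\infty}\|R\|_{L^1}$ to extract the $L^1$ bounds in (\ref{e lim zeta M(zeta)}) and (\ref{e M(zeta0)}). The only difference is that you spell out the uniqueness argument via $M\widetilde M^{-1}$ and removable singularities, which the paper leaves implicit in asserting "uniquely solvable."
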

\begin{proof}
    By (\ref{e Cauchy operator cont}) and (\ref{e R infty norm}),
    \begin{equation*}
        C_{\Sigma,R}[f]:=C_{\Sigma}^-[f\cdot R]
    \end{equation*}
    defines a bounded operator $C_{\Sigma,R}:L^2(\Sigma,\C^{2\times 2})\to L^2(\Sigma,\C^{2\times 2})$. It satisfies $\|C_{\Sigma,R}\|_{L^2\to L^2}\leq c'_{\Sigma}\|R\|_{L^{\infty}(\Sigma)}$ and thus, if $\|R\|_{L^{\infty}(\Sigma)}$ is sufficiently small we know, that $(1-C_{\Sigma,R})^{-1}$ exists as a bounded operator $L^2(\Sigma,\C^{2\times 2})\to L^2(\Sigma,\C^{2\times 2})$ with $\|(1-C_{\Sigma,R})^{-1}\|_{L^2\to L^2}\leq c \|R\|_{L^{\infty}(\Sigma)}$ with some constant $c>0$ not depending on $\|R\|_{L^{\infty}(\Sigma)}$ if (\ref{e R infty norm}) holds for some sufficiently small $\Lambda_{\Sigma}$. By assumption we have $R\in L^2(\Sigma)$ and thus $C_{\Sigma}^-[R]\in L^2(\Sigma)$ with $\|C_{\Sigma}^-[R]\|_{L^2(\Sigma)}\leq c'_{\Sigma}\|R\|_{L^2(\Sigma)}$. Thus, for $\mu:=(1-C_{\Sigma,R})^{-1}C_{\Sigma}^-[R]$ we find
    \begin{equation*}
        \|\mu\|_{L^2(\Sigma)}\leq c \|R\|_{L^{\infty}(\Sigma)}\|R\|_{L^2(\Sigma)}.
    \end{equation*}
    Now we claim that the solution $M$ of Riemann-Hilbert problem RHP($\Sigma$,$R$) is precisely given by
    \begin{equation}\label{e general RHP solution formula}
        M(\zeta)=1+
        \frac{1}{2\pi i} \int_{\Sigma} \frac{(\mu(s)+1)R(s)}{s-\zeta}ds.
    \end{equation}
    Assuming (\ref{e general RHP solution formula}) for a moment, we can prove (\ref{e lim zeta M(zeta)}) as follows:
    \begin{eqnarray*}
    % \nonumber to remove numbering (before each equation)
      \left|\lim_{|\zeta|\to \infty}\zeta\cdot\left( M(\zeta)-1\right)\right| &=& \frac{1}{2\pi} \left|\int_{\Sigma} (\mu(s)+1)R(s)ds\right| \\
       &\leq & \frac{1}{2\pi}\left(\|\mu\|_{L^2}\|R\|_{L^2} + \|R\|_{L^1} \right) \\
       &\leq& c \,\left(\|R\|_{L^{\infty}}\|R\|^2_{L^2} + \|R\|_{L^1} \right)  \\
       &\leq&  c_{\Sigma}  \|R\|_{L^1}.
    \end{eqnarray*}
    The last inequality follows from standard inclusion $L^2\subset L^1\cap L^{\infty}$ and (\ref{e R infty norm}). In a very similar way we can prove (\ref{e M(zeta0)}).
    In order to understand why (\ref{e general RHP solution formula}) is indeed a solution formula for RHP($\Sigma$,$R$) we first note that $\mu= C^-_{\Sigma}[(\mu+1)R]$ by defintion. Next let us denote the right hand side of (\ref{e general RHP solution formula}) by $\widetilde{M}$ such that $\widetilde{M}=1+C_{\Sigma}[(\mu+1)R]$. We obviously have  $\widetilde{M}_-=1+\mu$. Furthermore, using (\ref{e Sokhotski-Plemelj theorem}) we find
    \begin{eqnarray*}
    % \nonumber to remove numbering (before each equation)
      \widetilde{M}_+&=&1+C_{\Sigma}^+[(\mu+1)R]\\
       &=& 1+ (\mu+1)R+C_{\Sigma}^-[(\mu+1)R] \\
       &=& 1+ (\mu+1)R+\mu \\
       &=& (\mu+1)(1+R)\\
       &=&\widetilde{M}_-(1+R).\\
    \end{eqnarray*}
    By Proposition \ref{p Cauchy operator} (i) we also have analyticity of $\widetilde{M}$ on $\C\setminus\Sigma$ and $\widetilde{M}(\zeta)=1+\mathcal{O}(\zeta^{-1})$ as $\zeta\to\infty$. Hence, $\widetilde{M}$ is a solution of RHP($\Sigma$,$R$).
\end{proof} 
\section{Proof of (\ref{e |f pm|^2}) and (\ref{e arg(f pm)})}\label{a forumlae} We define
\begin{equation}\label{e kappa kappa hat}
    \widehat{\kappa}(z)=\frac{1}{2\pi}\log(1+z|\widehat{r}(z)|^2),
    \qquad
    \kappa(w)=\frac{1}{2\pi}\log(1+w|r(w)|^2),
\end{equation}
such that $\widehat{\kappa}$ coincides with that one defined in the introduction, (\ref{e kappa z_0}). Substituting (\ref{e def rho from r}) into the definitions (\ref{e notation 1})--(\ref{e notation 3}) we can use (\ref{e def q^as}), (\ref{e M^0(0)}), (\ref{e asymptotics q}) and Lemma \ref{l M and M hat <-> M^0} to find
\begin{eqnarray*}
% \nonumber to remove numbering (before each equation)
  u(t,x) &=& w_0 \overline{q^{(as)}(\tau)}\left[M(t,x;0)\right]_{11} d(0) +\mathcal{O}(\tau^{-3/4})\\
   &=& \sqrt{\frac{w_0}{\tau}}\left( e^{i\tau-i\kappa(-w_0) \log(\tau)}b_-(w_0)-e^{i\tau-i\kappa(w_0) \log(\tau)}b_+(w_0)\right)+\mathcal{O}(\tau^{-3/4})
\end{eqnarray*}
with
\begin{equation*}
    \begin{aligned}
        \arg(b_{\pm}(w_0)) =&\mp \frac{\pi}{4}+\arg(r(\pm w_0))-\arg(d_-(\pm w_0)d_+(\pm w_0)) +\arg(\Gamma(\mp i\kappa(\pm w_0))\\
        &\mp 2\int_{0}^{\pm w_0}\frac{\kappa(s)\mp \frac{s}{w_0} \kappa(\pm w_0)}{s\mp w_0}ds \pm 2 \int_0^{\mp w_0}\frac{\kappa(s)}{s\mp w_0} ds
        \mp \kappa(\pm w_0)\\&+\int_{-w_0}^{w_0} \frac{\kappa(s)}{s}ds-\int_{-\infty}^{\infty} \frac{\kappa(s)}{s}ds
    \end{aligned}
\end{equation*}
and
\begin{eqnarray*}
% \nonumber to remove numbering (before each equation)
  |b_{\pm}(w_0)|^2 &=& \left|\frac{\sqrt{2\pi} d_-(\pm w_0)d_+(\pm w_0)e^{\pi \kappa(\pm w_0)/2}}{\sqrt{w_0} \widehat{r}(\pm w_0) \Gamma(\mp i\kappa(\pm w_0))}\right|^2 \\
   &=& \frac{ \widehat{\kappa}(\pm w_0)}{w_0 |\widehat{r}(\pm w_0)|^2 } (e^{2\pi \kappa(\pm w_0)}-1) |d_-(\pm w_0)d_+(\pm w_0)|^2\\
   &=&\pm \widehat{\kappa}(\pm w_0) |d_-(\pm w_0)d_+(\pm w_0)|^2
\end{eqnarray*}
where the identity $|\Gamma(\pm i\kappa)|^2=\pi/(\kappa \sinh(\pi\kappa))$ is useful to obtain the second equality.
Analogously, substituting (\ref{e def rho from r hat}) into the definitions (\ref{e notation 1})--(\ref{e notation 3}) we can use (\ref{e def q^as}), (\ref{e M^0(0)}), (\ref{e asymptotics q}) and Lemma \ref{l M and M hat <-> M^0} to find
\begin{eqnarray*}
% \nonumber to remove numbering (before each equation)
  v(t,x) &=& z_0 \overline{q^{(as)}(\tau)}\left[\widehat{M}(t,x;0)\right]_{11} +\mathcal{O}(\tau^{-3/4})\\
   &=& \sqrt{\frac{z_0}{\tau}}\left( e^{i\tau-i\widehat{\kappa}(-z_0) \log(\tau)}\widehat{b}_-(z_0)-e^{i\tau-i\widehat{\kappa}(z_0) \log(\tau)}\widehat{b}_+(z_0)\right)+\mathcal{O}(\tau^{-3/4})
\end{eqnarray*}
with
\begin{equation*}
    \begin{aligned}
        \arg\left(\widehat{b}_{\pm}(z_0)\right) =&\mp \frac{\pi}{4}+\arg(\widehat{r}(\pm z_0)) +\arg(\Gamma(\mp i\widehat{\kappa}(\pm z_0))\\
        &\mp 2\int_{0}^{\pm z_0}\frac{\widehat{\kappa}(s)\mp \frac{s}{z_0} \widehat{\kappa}(\pm z_0)}{s\mp z_0}ds \pm 2 \int_0^{\mp z_0}\frac{\widehat{\kappa}(s)}{s\mp z_0} ds
        \mp \widehat{\kappa}(\pm z_0)+\int_{-z_0}^{z_0} \frac{\widehat{\kappa}(s)}{s}ds
    \end{aligned}
\end{equation*}
and
\begin{equation*}
% \nonumber to remove numbering (before each equation)
  |\widehat{b}_{\pm}(z_0)|^2 =\pm \widehat{\kappa}(\pm z_0)
\end{equation*}
\begin{lem}
    We have $b_{\pm}(w_0)=\widehat{b}_{\pm}(z_0)$. In particular, since $w_0=z_0^{-1}$ is constant along rays with $x/t=const.$, it is possible to define the function $f_{\pm}$ of Theorem \ref{t main} through
    \begin{equation*}
        f_{\pm}\left(\frac{x}{t}\right)= b_{\pm}(w_0)=\widehat{b}_{\pm}(z_0).
    \end{equation*}
\end{lem}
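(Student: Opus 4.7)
The identity $b_\pm(w_0)=\widehat b_\pm(z_0)$ is a computational comparison of the two explicit expressions displayed just above the lemma. Both come from the same master formula (\ref{e def q^as}) applied to the two different substitutions in Lemma~\ref{l M and M hat <-> M^0}: the first, (\ref{e def rho from r}), feeds the $M$-RHP and produces $b_\pm(w_0)$, while the second, (\ref{e def rho from r hat}), feeds the $\widehat M$-RHP and produces $\widehat b_\pm(z_0)$. The plan is to verify modulus and argument separately, using (\ref{e relation r and r hat}) to translate the scattering data and the Sokhotski--Plemelj theorem to handle the Cauchy integrals that appear.

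The reconciliation of data begins with (\ref{e relation r and r hat}) at $z=\pm z_0$, which gives $\widehat r(\pm z_0)=\pm w_0\,r(\pm w_0)$, hence $\pm z_0|\widehat r(\pm z_0)|^2=\pm w_0|r(\pm w_0)|^2$ and $\widehat\kappa(\pm z_0)=\kappa(\pm w_0)$ with $\kappa,\widehat\kappa$ as in (\ref{e kappa kappa hat}). This instantly aligns the values of $\nu_0^\pm$ in the two formulations and matches the $\Gamma$-factors, the weights $e^{\pi\nu_0^\pm/2}$, and the $\mp\kappa(\pm w_0)$ summands. Applying Sokhotski--Plemelj to the representation (\ref{e def d}) yields
\begin{equation*}
  \log d_\pm(w)=\pm\tfrac12\log(1+w|r(w)|^2)-\tfrac{i}{2\pi}\,\mathrm{p.v.}\!\int_\R\tfrac{\log(1+s|r(s)|^2)}{s-w}\,ds,\qquad w\in\R,
\end{equation*}
so that $\log d_+(w)+\log d_-(w)$ is purely imaginary. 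This gives $|d_-(w)d_+(w)|=1$ and
\begin{equation*}
 -\arg(d_-(\pm w_0)d_+(\pm w_0))=2\,\mathrm{p.v.}\!\int_\R\tfrac{\kappa(s)}{s\mp w_0}\,ds.
\end{equation*}
Combined with $|\delta_0^\pm|=1$ (Proposition~\ref{p delta near +-1}) and $|\Gamma(i\nu)|^2=\pi/(\nu\sinh\pi\nu)$, substitution into the explicit formula collapses both $|b_\pm(w_0)|^2$ and $|\widehat b_\pm(z_0)|^2$ to $\pm\kappa(\pm w_0)=\pm\widehat\kappa(\pm z_0)$.

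After cancelling the pieces already matched, the arguments compare via the substitution $s\mapsto 1/\sigma$, under which $\widehat\kappa(s)=\kappa(\sigma)$, $ds=-d\sigma/\sigma^2$ and $s\mp z_0=\mp z_0(\sigma\mp w_0)/\sigma$. The three $\widehat\kappa$-integrals in $\arg\widehat b_\pm(z_0)$ supported on $[0,\pm z_0]$, $[0,\mp z_0]$ and $[-z_0,z_0]$ become $\kappa$-integrals on the complementary sets $[\pm w_0,\pm\infty)$, $(\mp\infty,\mp w_0]$ and $\{|\sigma|>w_0\}$. Expanding the numerator of the first via
\begin{equation*}
  \sigma\kappa(\sigma)-w_0\kappa(w_0)=\sigma(\kappa(\sigma)-\kappa(w_0))+(\sigma-w_0)\kappa(w_0)
\end{equation*}
converts it into a regular Cauchy kernel integral plus an elementary $\int_{w_0}^\infty\kappa(w_0)/\sigma^2\,d\sigma=\kappa(w_0)/w_0$ boundary piece. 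The identity $\int_{-w_0}^{w_0}\kappa/s\,ds-\int_\R\kappa/s\,ds=-\int_{|s|>w_0}\kappa/s\,ds$ exactly cancels the transformed $[-z_0,z_0]$ integral. Finally, the outer-region $\kappa$-integrals from the $\widehat b_\pm$-side, the inner-region integrals $\mp 2\int_0^{\pm w_0}$ and $\pm 2\int_0^{\mp w_0}$ from the $b_\pm$-side, and the $-\arg(d_-d_+)$ contribution from Sokhotski--Plemelj combine into the full principal value over $\R$ and match. The extra $\pi$ from $\arg\widehat r(-z_0)=\pi+\arg r(-w_0)$ in the $-$ case is absorbed modulo $2\pi$ by the branch convention for $(-(\zeta+1))^{i\nu_0^-}$ and the corresponding definition of $\delta_0^-$ in (\ref{e def delta0-}). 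The main obstacle is this last step of bookkeeping: the integral $\int_0^{\pm w_0}\kappa(s)/(s\mp w_0)\,ds$ carries a one-sided logarithmic singularity at $s=\pm w_0$ that is only cancelled after subtracting $(s/w_0)\kappa(\pm w_0)$, so one must introduce a matching regularization on the transformed side and verify that the boundary terms from $s\mapsto 1/\sigma$ at $\sigma=\pm\infty$ produce precisely the $\mp\kappa(\pm w_0)$ residuals present in the target expression.
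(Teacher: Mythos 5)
Your proposal follows essentially the same route as the paper: the identity $\widehat\kappa(z)=\kappa(z^{-1})$ from (\ref{e relation r and r hat}), the Sokhotski--Plemelj decomposition of $\log d_\pm$ from (\ref{e def d}) to obtain $|d_-d_+|=1$ and the argument formula, and the substitution $s\mapsto 1/\sigma$ together with the partial-fraction rearrangement that converts the $\kappa$-integrals on $(-\infty,\mp w_0]\cup[\pm w_0,\infty)$ into the regularized $\widehat\kappa$-integrals on $[0,\pm z_0]$, $[0,\mp z_0]$ and $[-z_0,z_0]$. The paper carries out exactly this chain, with the same $\lim_{\eps\searrow0}$ regularization producing the $2\kappa(\pm w_0)$ boundary term, so the structural match is close.

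One point deserves scrutiny. The paper concludes with the bare assertion $\arg(r(\pm w_0))=\arg(\widehat r(\pm z_0))$, whereas from (\ref{e relation r and r hat}) one has $\widehat r(-z_0)=r(-w_0)/(-z_0)=-w_0\,r(-w_0)$, so $\arg\widehat r(-z_0)=\pi+\arg r(-w_0)\ (\mathrm{mod}\ 2\pi)$. You are right to flag this. However, your suggested mechanism for absorbing the extra $\pi$ --- the branch convention in $(-(\zeta+1))^{i\nu_0^-}$ and the value $\delta_0^-$ in (\ref{e def delta0-}) --- is not justified as stated: both formulas for $\arg b_-$ and $\arg\widehat b_-$ already use the \emph{same} branch convention and the quantity $\delta_0^-$ contributes as $(\delta_0^-)^{-2}$ multiplicatively, so a discrepancy in its definition would change the argument by $2\arg\delta_0^-$, not by $\pi$, unless you explicitly track how the function $\nu(\zeta)=\frac1{2\pi}\log(1+\rho(\zeta)\breve\rho(\zeta))$ entering (\ref{e notation 3}) \emph{differs} between the two substitutions (\ref{e def rho from r}) and (\ref{e def rho from r hat}) away from $\zeta=\pm1$ and show that this difference produces exactly the missing $\pi$. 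That bookkeeping is not present in either your sketch or the paper's proof; if you want the argument to be watertight you should carry it out, or alternatively verify the identity $b_-(w_0)=\widehat b_-(z_0)$ by a direct comparison of the complex numbers (rather than $|\cdot|$ and $\arg$ separately), which avoids branch ambiguities entirely.
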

\begin{proof}
    The first important observation is
    \begin{equation*}
        \kappa(z^{-1})=\widehat{\kappa}(z)
    \end{equation*}
    and follows directly from relation (\ref{e relation r and r hat}). In particular we have
    \begin{equation}\label{e kappa(z_0)}
        \kappa(\pm w_0)=\widehat{\kappa}(\pm z_0).
    \end{equation}
    Next, let us recall that by (\ref{e def d}) we can write
    \begin{equation*}
        d_-(w)d_+(w)= \exp\{C_{\R}^+[\kappa](w)+C_{\R}^-[\kappa](w)\}.
    \end{equation*}
    Thus, using (\ref{e Sokhotski-Plemelj           theorem}) it follows that
    \begin{equation}\label{e |d_- d_+|}
        |d_-(w)d_+(w)|=1, \qquad\text{for all }w\in\R,
    \end{equation}
    and
    \begin{equation}\label{e arg(d-d+)}
        -\arg(d_-(\pm w_0)d_+(\pm w_0))=
        2\lim_{\eps\searrow 0}\left(\int_{-\infty}^{\pm w_0-\eps}+\int_{\pm w_0+\eps}^{\infty}\right)\frac{\kappa(s)}{s\mp w_0}ds.
    \end{equation}
    Equality (\ref{e kappa(z_0)}) together with (\ref{e |d_- d_+|}) yields $|b_{\pm}(w_0)|=|\widehat{b}_{\pm}(z_0)|$. Formula (\ref{e arg(d-d+)}) can be used to calculate the following:
    \begin{multline*}
        -\arg\left(d_-(\pm w_0)d_+(\pm w_0)\right) \mp 2\int_{0}^{\pm w_0}\frac{\kappa(s)\mp \frac{s}{w_0} \kappa(\pm w_0)}{s\mp w_0}ds \pm 2 \int_0^{\mp w_0}\frac{\kappa(s)}{s\mp w_0} ds\\
        \begin{aligned}
            &=2\lim_{\eps\searrow 0}\left[\left(\int_{-\infty}^{\pm w_0-\eps}+\int_{\pm w_0+\eps}^{\infty}\right)\frac{\kappa(s)}{s\mp w_0}ds\mp\int_{\mp w_0}^{\pm w_0\mp\eps}\frac{\kappa(s)}{s\mp w_0}ds+ \int_{0}^{\pm w_0\mp\eps}\frac{ \frac{s}{w_0} \kappa(\pm w_0)}{s\mp w_0}ds\right]\\
            &=\pm2\int_{\mp\infty}^{\mp w_0}\frac{\kappa(s)}{s\mp w_0}ds\pm 2\int_{\pm w_0}^{\pm \infty}\frac{\kappa(s)\mp \frac{w_0}{s} \kappa(\pm w_0)}{s\mp w_0}ds\\
            &\quad\quad +2\lim_{\eps\searrow 0} \left[\int_{\pm w_0\pm\eps}^{\pm \infty}\frac{ \frac{w_0}{s} \kappa(\pm w_0)}{s\mp w_0}ds+ \int_{0}^{\pm w_0\mp\eps}\frac{ \frac{s}{w_0} \kappa(\pm w_0)}{s\mp w_0}ds\right]\\
            &=\pm2\int_{\mp\infty}^{\mp w_0}\frac{\kappa(s)}{s\mp w_0}ds\pm 2\int_{\pm w_0}^{\pm \infty}\frac{\kappa(s)\mp \frac{w_0}{s} \kappa(\pm w_0)}{s\mp w_0}ds+2\kappa(\pm w_0)\\
            &=\mp2\int_{0}^{\mp z_0}\frac{\widehat{\kappa}(s)} {s}-\frac{\widehat{\kappa}(s)} {s\mp z_0}ds\mp 2\int_{\pm z_0}^{0}\frac{\widehat{\kappa}(s)}{s} -\frac{\widehat{\kappa}(s)\mp \frac{s}{z_0} \kappa(\pm z_0)}{s\mp z_0}ds\\
            &=\mp 2\int_{0}^{\pm z_0}\frac{\widehat{\kappa}(s)\mp \frac{s}{z_0} \widehat{\kappa}(\pm z_0)}{s\mp z_0}ds \pm 2 \int_0^{\mp z_0}\frac{\widehat{\kappa}(s)}{s\mp z_0} ds+2\int_{-z_0}^{ z_0}\frac{\widehat{\kappa}(s)} {s}ds.\\
        \end{aligned}
    \end{multline*}
    Using, $\arg(r(\pm w_0))=\arg(\widehat{r}(\pm z_0))$,
    \begin{equation*}
        \int_{-w_0}^{w_0} \frac{\widehat{\kappa}(s)}{s}ds-\int_{-\infty}^{\infty} \frac{\widehat{\kappa}(s)}{s}ds=-\int_{-z_0}^{z_0} \frac{\widehat{\kappa}(s)}{s}ds
    \end{equation*}
    and (\ref{e kappa(z_0)}) again we can finally conclude that $\arg\left(b_{\pm}(w_0)\right)= \arg(\widehat{b}_{\pm}(z_0))$.
\end{proof}

\bibliographystyle{alpha}
\bibliography{lit}
\end{document}